\theoremstyle{plain}
\newtheorem{theorem}{Theorem}[section]
\newtheorem{proposition}[theorem]{Proposition}
\newtheorem{lemma}[theorem]{Lemma}
\newtheorem{corollary}[theorem]{Corollary}
\theoremstyle{definition}
\newtheorem{definition}[theorem]{Definition}
\theoremstyle{remark}
\newtheorem{remark}[theorem]{Remark}
\newtheorem{example}[theorem]{Example}
\newcommand{\ovl}{\overline}
\newcommand{\cone}{\mathrm{cone}}
\renewcommand{\lim}{\mathrm{lim}}
\newcommand{\embr}{\mathrm{embr}}
\newcommand{\Free}{\mathsf{Free}}
\newcommand{\free}{\mathsf{free}}
\newcommand{\Ext}{\mathrm{Ext}}
\newcommand{\Hom}{\mathrm{Hom}}
\newcommand{\RHom}{\mathrm{RHom}}
\newcommand{\Def}{\mathrm{Def}}
\newcommand{\op}{^{\mathrm{op}}}
\newcommand{\Ob}{\mathrm{Ob}}
\newcommand{\Z}{\mathbb{Z}}
\newcommand{\N}{\mathbb{N}}
\newcommand{\AAA}{\mathfrak{a}}
\newcommand{\BBB}{\mathfrak{b}}
\newcommand{\CCC}{\mathfrak{c}}
\newcommand{\GGGG}{\mathfrak{g}}
\newcommand{\CC}{\mathbf{C}}
\newcommand{\Mod}{\ensuremath{\mathsf{Mod}} }
\newcommand{\Com}{\ensuremath{\mathsf{Com}} }
\newcommand{\pre}{\ensuremath{\mathsf{Pre}} }
\newcommand{\Pre}{\ensuremath{\mathsf{Pr}} }
\newcommand{\Qch}{\ensuremath{\mathsf{Qch}} }
\newcommand{\Add}{\ensuremath{\mathsf{Add}}}
\newcommand{\Inj}{\ensuremath{\mathsf{Inj}}}
\newcommand{\lra}{\longrightarrow}
\newcommand{\ccc}{\ensuremath{\mathcal{C}}}
\newcommand{\ddd}{\ensuremath{\mathcal{D}}}
\newcommand{\GGG}{\ensuremath{\mathcal{G}}}
\newcommand{\ttt}{\ensuremath{\mathcal{T}}}
\newcommand{\xxx}{\ensuremath{\mathcal{X}}}
\title{On deformations of triangulated models}
\author{Olivier De Deken} 
\address[Olivier De Deken]{Departement Wiskunde-Informatica, Middelheimcampus,
Middelheimlaan 1,
2020 Antwerp, Belgium}
\email{olivier.dedeken@myonline.be}
\author{Wendy Lowen} 
\address[Wendy Lowen]{Departement Wiskunde-Informatica, Middelheimcampus,
Middelheimlaan 1,
2020 Antwerp, Belgium}
\email{wendy.lowen@ua.ac.be}
\thanks{The first author is a PhD fellow of the Research Foundation - Flanders (FWO)}
\thanks{The second author acknowledges the support of the European Union for ERC grant No 257004-HHNcdMir.}
\begin{document}
\maketitle

\begin{abstract}
This paper is the first part of a project aimed at understanding deformations of triangulated categories, and more precisely their dg and $A_{\infty}$ models, and applying the resulting theory to the models occurring in the Homological Mirror Symmetry setup. In this first paper, we focus on models of derived and related categories, based upon the classical construction of twisted objects over a dg or $A_{\infty}$-algebra. For a Hochschild 2 cocycle on such a model, we describe a corresponding ``curvature compensating'' deformation which can be entirely understood within the framework of twisted objects. We unravel the construction in the specific cases of derived $A_{\infty}$ and abelian categories, homotopy categories, and categories of graded free qdg-modules. We identify a purity condition on our models which ensures that the structure of the model is preserved under deformation. This condition is typically fulfilled for homotopy categories, but not for unbounded derived categories.
\end{abstract}

\section{Introduction}

A by now standard philosophy in non-commutative algebraic geometry is that non-commutative spaces can be represented by suitable categorical models based upon sheaf categories and their derived categories in algebraic geometry. Among models we can roughly distinguish between ``small'' (corresponding morally to ``algebraic'') and ``large'' (corresponding morally to ``geometric'') models. The large models typically occur as module or sheaf type categories over the small models. The primordial example of a small model is a ring $A$, and its associated large model is its module category $\Mod(A)$. In the case of a commutative ring $A$, there is an intermediate geometric object $\mathrm{Spec}(A)$ for which $\Mod(A) \cong \Qch(\mathrm{Spec}(A))$.

In understanding the relation between commutative objects and their non-com\-mutative counterparts, a crucial role is being played by Hochschild cohomology. From the ring case, Hochschild cohomology is known to describe first order non-commutative deformations, and it turns out that for various more complicated models, natural notions of Hochschild cohomology exist which fulfill the same role. On the side of small models, a notion of Hochschild cohomology for schemes \cite{swan} describes deformations into non-commutative schemes based upon twisted presheaves \cite{lowenprestack}. 

On the side of large models, a first important class is given by abelian categories (generalizing module and sheaf categories). An intrinsic first order deformation theory for abelian categories was developed in \cite{lowenvandenberghab}, and a notion of Hochschild cohomology was defined in function of controling this theory \cite{lowenvandenberghhoch}. This notion further coincides with some other natural definitions as shown in \cite{kaledinlowen}. The deformation theory of abelian categories has some desirable relations to the classical Gerstenhaber deformation theory of algebras. First of all, for an algebra $A$, there is an equivalence
\begin{equation}\label{fundament}
\Def_{alg}(A) \lra \Def_{ab}(\Mod(A)): B \longmapsto \Mod(B)
\end{equation}
between algebra deformations of $A$ and abelian deformations of $\Mod(A)$.
More generally, deformations of Grothen\-dieck categories remain Grothendieck. If a Grothen\-dieck category further has a representation as an additive sheaf category with respect to a topology which can be understood on an underlying set-theoretic level, it can be ``tracked'' through the deformation process and we obtain structural results for deformations (see \cite{dedekenlowen} for the case of quasi-coherent sheaf categories over suitable projective schemes).

It is known that a lot of geometric information is actually encoded in the \emph{derived} categories of schemes, and it is often possible to model derived categories using combinatorial tools like quivers. More generally, it is always possible to model the derived category of sufficiently nice  schemes using dg algebras as ``small'' models (\cite{neeman2}, \cite{bondalvandenbergh}, \cite{kellerderdg}). These facts motivate the derived approach to non-commutative geometry, with enhanced triangulated categories rather than abelian categories as fundamental models for non-commutative spaces. Here, enhancements are given by dg or $A_{\infty}$-categories, and thus, they come with a natural notion of Hochschild cohomology.

In line with the higher story, a fundamental question is to understand in which way this Hochschild cohomology can be interpreted as describing certain first order deformations. In the case of derived categories of abelian categories, a first step in this direction was undertaken in \cite{lowencompositio}. However, in that paper, only linear (fixed object) deformations are considered, leading to an incomplete picture. To understand the problem, we first return to abelian deformations. It is clear that whereas $k$-algebra deformations themselves generalize straightforwardly to linear deformations of $k$-linear categories with many objects (simply by keeping the object set fixed and deforming the $\Hom$ modules), this is not the correct deformation concept for the abelian module categories for by \eqref{fundament}, their object set changes, and so will the object set of their derived category. This is directly related to the fact that when we look at the obstruction theory for deforming an individual object $C \in \ccc$ to a deformation $\ddd$ of $\ccc$, there is an obstruction against lifting in $\Ext^2_{\ccc}(C,C)$ and if this obstruction vanishes, the freedom for lifting is given by $\Ext_{\ccc}^1(C,C)$ (well known for modules - see \cite{lowencomm} for a treatment in the setup of abelian categories). Hence, obstructions are responsible for the vanishing of some objects under deformation, whereas the freedom for lifting is responsible for the fact that a single object in the undeformed category may transform into a whole fibre of objects in the deformed category. 

In this paper, we model this phenomenon starting from an arbitrary Hochschild 2-cocycle $\phi$ for an arbitrary $A_{\infty}$-category $\AAA$, which we consider as a ``large model'' subject to object changing deformation. The corresponding ``curvature compensating'' deformation is described in \S \ref{pardefcc} and shown to be well defined on representatives of a second Hochschild cohomology class.
Further, we are mainly concerned with the effect of curvature compensating deformations on some familiar dg and $A_{\infty}$ models of homotopy and derived categories. Therefore, after introducing all the necessary preliminaries on curved $A_{\infty}$-structures in \S \ref{parparcurved}, we devote \S \ref{parpartwisted} to the introduction of an important type of models for triangulated categories, inspired by the original categories of twisted objects over dg and $A_{\infty}$-algebras and their generalizations \cite{bondalkapranov, drinfeld, lefevre, lowencompositio}. We split up the construction in two individual steps for a category $\AAA$:
\begin{enumerate}
\item[(a)] the construction of the free completion $\Free(\AAA)$ under shifts and arbitrary direct sums (\S \ref{partwisted});
\item[(b)] the construction of a twisted variant $\AAA_{\Delta}$ based upon a ``choice of connections'' $\Delta$ of connections (degree 1 endomorphisms) that are attached to objects in the original category, and that are used to ``twist'' the $cA_{\infty}$-structure (\S \ref{partwistvar}).
\end{enumerate}
In Proposition \ref{propdeltacone}, we give natural conditions on $\Delta$ for the combined construction $\Free(\AAA)_{\Delta}$ to be strongly pre-triangulated in the sense of \cite{bondalkapranov}, and in \S \ref{parmodder}, \ref{parmodhopy}, \ref{parmodcontra}, we describe how a number of familiar triangulated categories can be modeled by this construction. Precisely, we discuss unbounded derived categories of $A_{\infty}$-categories, homotopy and derived categories of abelian categories, and categories of graded free qdg-modules over cdg algebras, which are often models for derived categories of the second kind in the sense of Positselski \cite{positselskicontrader}.

Later on in \S  \ref{pardefderived}, \ref{pardefhopy}, \ref{pardefderab}, \ref{pardefcontra}, we return to these examples and analyze their curvature compensating deformations. Our key tool is the observation that the curvature compensating deformation can itself be naturally described in terms of construction (b) of a twisted variant, where the connections $\psi$ one attaches to objects implement local variations in the original Hochschild cocycle (changing it from $\phi$ into $\phi + d_{Hoch}(\psi)$). 

For a $cA_{\infty}$-category $\AAA$, we investigate the relation between linear deformations of $\AAA$ and curvature compensating deformations of $\Free(\AAA)_{\Delta}$ for the relevant choice $\Delta$. This relation is based upon the underlying canonical ``embrace''  transportation of Hochschild cocycles from $\AAA$ to $\Free(\AAA)_{\Delta}$ (\S \ref{parembrace}). The induced curvature compensating deformation can actually be described as the category of twisted objects
$\Free(\AAA_{\phi}[\epsilon])_{\Delta + \Psi \epsilon}$
over the linear deformation $\AAA_{\phi}[\epsilon]$, where $\Psi$ is given by the choice of all connections.
If $\Delta$ satisfies the condition of Proposition \ref{propdeltacone} making $\Free(\AAA)_{\Delta}$ strongly pre-triangulated, the same holds for $\Delta + \Psi \epsilon$ hence the deformation is strongly pre-triangulated as well.
Further, we identify so-called \emph{pure} (\S \ref{parpure}) choices of connections on $\Free(\AAA)$ which ensure a transparent interpretation of associated curvature compensating deformations of $\Free(\AAA)_{\Delta}$. Basically, purity expresses that on a fixed full subcategory of $\Free(\AAA)$, all connections are allowed, and objects outside this subcategory are simply thrown away. For a pure choice of connections, the deformations remain ``of the same nature'' as the original category.  Homotopy categories typically satisfy this condition, whereas unbounded derived categories do not. We compare these deformation results with a number of parallel Hochschild cohomology comparison results, on some of which we elaborate in the Appendix \S \ref{parappendix}.

Although the detailed picture is quite different, clearly there is a certain parallel between the role of pure choices of connections for the deformations of strongly pre-triangulated categories on the one hand, and the role of set theoretically grounded topologies for the deformations of Grothendieck abelian categories, which we mentioned earlier, on the other hand.

Finally, we want to stress the fact that the twisted variant construction (b) is fundamental in the general construction of Fukaya type categories. This fact should facilitate the investigation of the effect of curvature compensating deformations on these categories, a topic which is currently investigated in collaboration with Masahiro Futaki. We also  want to note that the deeper we delve into the Fukaya categories literature (and especially the book \cite{FOOO1} and the overview \cite{fukaya}), the better we understand how intrinsically the subjects of Fukaya categories and deformations are actually interwoven. In particular, in turns out that the basic idea for what we call here a curvature compensating deformation is already contained in Seidel's 2002 ICM address \cite{seidelICM}. This being said, we believe that by now, the machinery concerning both (curved) $A_{\infty}$ structures and Hochschild cohomology is more advanced than it was at the time, making investigations more feasible.

The eventual aim of the current project is to investigate curvature compensating deformations of enhanced triangulated categories $\AAA$ that occur in Homological Mirror Symmetry (HMS) situations as a simultaneous ``B -model'' for some space $X$ and ``A-model'' for a mirror $X'$. Kontsevich's original HMS conjecture \cite{kontsevichmirror} was in part motivated by the fact that the exchange of cohomology data between Hochschild cohomology on the complex side and ordinary cohomology on the symplectic side, which is observed in the mirror symmetry phenomenon, could be explained by the close relation of both to the categorical Hochschild cohomology of the respective B-model (enhanced derived sheaf category) and A-model (enhanced Fukaya category) if these models would be equivalent.
The natural expectation under the HMS conjecture would then further be that the intrinsic categorical deformation of this model - which is what we focussed on in this paper - can be interpreted as a simultaneous model for certain more or less geometric deformations on both sides of the mirror. On the complex side, the story seems to be complete since one can go all the way from a twisted presheaf deformation interpretation of the Hochschild cohomology of a scheme through abelian deformations of the associated quasi-coherent sheaf category (see \cite{lowenprestack}) to the associated curvature compensating deformation of the derived category - which, as we discuss in \S \ref{pardefderab}, is somewhat larger but fully faithfully contains the derived category of the deformation. On the symplectic side we expect a similar story involving Fukaya categories of deformed symplectic structures with B-fields. A specific HMS situation in which actual (as opposed to infinitesimal or formal) ``geometric'' deformations on both sides were explicitely identified as mirrors was treated in \cite{AKO1, AKO2}.
We hope to obtain a complete understanding of the situation in various cases, and use this to develop a possible picture of ``non-commutative HMS''.

\vspace{0,5cm}
\emph{Acknowledgement.} The authors are very grateful to Leonid Positselski for several interesting discussions with the second author on the topic of curvature versus derived categories, as well as for kindly drawing her attention to the work under construction \cite{positselskisemi} on semiderived categories.  We also wish to thank Masahiro Futaki for some illuminating explanations about the role of curvature in Fukaya categories.

\section{Curved $A_{\infty}$-structures}\label{parparcurved}

Throughout, $k$ is a commutative ground ring with unit.
In this section we introduce the notions of curved $A_{\infty}$-categories and their morphisms in relation with Hochschild complexes.

\subsection{Hochschild object}

A \emph{$k$-quiver}, or simply \emph{quiver} $\AAA$ consists of a set $\Ob(\AAA)$ of objects and for $A, A' \in \Ob(\AAA)$, a $\Z$-graded $k$-module $\AAA(A,A')$.

Consider quivers $\AAA$ and $\BBB$ and a map
$f: \Ob(\AAA) \lra \Ob(\BBB)$.
We define the $k$-module
$$[\AAA, \BBB]_f = \prod_{A, A' \in \AAA}\Hom_k(\AAA(A,A'), \BBB(f(A), f(A'))).$$
If $\Ob(\AAA) = \Ob(\BBB)$, we define $\AAA \otimes \BBB$ as the quiver with the same set of objects and
$$\AAA \otimes \BBB(A,A') = \oplus_{A''}\AAA(A'', A') \otimes_k \BBB(A, A'').$$
We define $k\Ob(\AAA)$ to be the quiver with the same object set as $\AAA$ and
$$k\Ob(\AAA)(A,A') = \begin{cases} k & \text{if} \,\,A = A' \\ 0 &\text{else} \end{cases}.$$
Clearly, $k\Ob(\AAA)$ is the unit with respect to the tensor product, so we put $\AAA^{\otimes 0} = k\Ob(\AAA)$.
We put $T(\AAA) = \oplus_{n \geq 0}\AAA^{\otimes n}$
and
$$[T(\AAA), \BBB]_{f,n} = [\AAA^{\otimes n}, \BBB]_f = \prod_{A_0, \dots, A_n \in \AAA}\Hom_k(\AAA(A_{n-1}, A_n) \otimes \dots \otimes \AAA(A_0, A_1), \BBB(f(A_0), f(A_n))$$
and
$$[T(\AAA), \BBB]_{f,0} = \prod_{A \in \AAA} \BBB(f(A), f(A)),$$
the \emph{zero part}.
We have
$$[T(\AAA), \BBB]_f = \prod_{n \geq 0} [T(\AAA), \BBB]_{f,n}.$$
There is a natural projection
$$\pi_0: [T(\AAA), \BBB]_f \lra [T(\AAA), \BBB]_{f,0}$$
onto the zero part.
Suppose an element $J \in [\AAA, \BBB]_f$ has been chosen.

Consider another quiver $\CCC$ and map $g: \Ob(\BBB) \lra \Ob(\CCC)$.
We obtain brace-compositions
$$[T(\BBB), \CCC]_{g,n} \otimes [T(\AAA), \BBB]_{f,n_1} \otimes \dots \otimes [T(\AAA), \BBB]_{f, n_k} \lra [T(\AAA), \CCC]_{gf, n -k + n_1 + \dots + n_k}$$ with
$$\phi\{ \phi_1, \dots, \phi_n\} = \sum \phi(J \otimes \dots \otimes \phi_1 \otimes J \otimes \dots \otimes \phi_n \otimes J \otimes \dots \otimes J)$$
satisfying the brace axiom (see \cite[Definition 2.1]{lowencompositio}).\\

\begin{remark}
The elements $J\in [\AAA,\BBB]$ should be thought of as a kind of identity map from $\AAA$ to $\BBB$, offering a ``trivial'' way to transport elements from $\AAA$ to $\BBB$.
\end{remark}

\begin{remark}\label{rembrace}
In order for this to satisfy the brace axiom, we need to impose an extra condition on the identity-like elements $J_f\in[\AAA,\BBB]_f$. Namely they need to fulfill the identity $J_g\circ J_f=J_{gf}$ for any triple $\AAA,\BBB,\CCC$ of quivers and maps $f: \Ob(\AAA) \lra \Ob(\BBB),\ g: \Ob(\BBB) \lra \Ob(\CCC)$.
\end{remark}

We put $B\AAA = T(\Sigma \AAA)$, $\CC_{br}(\AAA, \BBB)_f = [B\AAA, \Sigma \BBB]_f$ and the associated \emph{Hochschild object}
$$\CC(\AAA, \BBB)_f = \Sigma^{-1}\CC_{br}(\AAA, \BBB)_f.$$
We put $(\CC(\AAA,\BBB)_f)_0 = \Sigma^{-1}[B\AAA, \Sigma \BBB]_{f,0}$ and obtain the projection
$$\pi_0: \CC(\AAA, \BBB)_f \lra (\CC(\AAA,\BBB)_f)_0$$
onto the zero part.
We put $\CC_{br}(\AAA) = \CC_{br}(\AAA, \AAA)_{1_\AAA}$ and $\CC(\AAA) = \Sigma^{-1}\CC_{br}(\AAA)$.

In some situations (see \cite{caldararutu}, \cite{positselskihh2}), it is useful to consider the following variant of the Hochschild object. We put
$$[T(\AAA), \BBB]^{\oplus}_f = \oplus_{n \geq 0}[T(\AAA), \BBB]_{f,n}$$
and $\CC_{br}^{\oplus}(\AAA, \BBB)_f = [B\AAA, \Sigma \AAA]_f^{\oplus}$. It is easily seen that, with elements $J$ chosen as in Remark \ref{rembrace}, the subobjects
$$\CC_{br}^{\oplus}(\AAA, \BBB)_f \subseteq \CC_{br}(\AAA, \BBB)_f$$
are compatible with the brace structure. In particular, $\CC_{br}^{\oplus}(\AAA) \subseteq \CC_{br}(\AAA)$ becomes a sub-brace algebra.

The \emph{Hochschild object of the second kind} is 
$$\CC_{\oplus}(\AAA, \BBB)_f = \Sigma^{-1} \CC_{br}^{\oplus}(\AAA, \BBB)$$
and
$$\CC_{\oplus}(\AAA) = \CC_{\oplus}(\AAA, \AAA)_{1_\AAA}.$$

We will need the following:

\begin{lemma}\label{internbrace}
Consider Hochschild elements $\phi$, $\phi_1, \dots, \phi_n$, $\psi_1, \dots, \psi_m$ in $\CC(\AAA)$.
Suppose we have that $\phi_1, \dots, \phi_n \in \CC(\AAA)_0$, i.e these elements belong to the zero part  of the Hochschild object. Then we have
$$\phi\{\phi_1, \dots, \phi_n\}\{\psi_1, \dots, \psi_m\}= \sum_{\sigma\in S_n} \phi\{\alpha_{\sigma(1)}, \dots, \alpha_{\sigma(n+m)}\}$$
where $S_n$ is the group of permutations on $n$ elements, $(\alpha_1, \dots, \alpha_{n})=(\phi_1, \dots, \phi_n)$ and $(\alpha_{n+1}, \dots, \alpha_{n+m})=(\psi_1, \dots, \psi_m)$.\\
\end{lemma}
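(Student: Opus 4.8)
The plan is to obtain the formula by a direct application of the brace axiom (\cite[Definition 2.1]{lowencompositio}), exploiting the fact that an element of the zero part $\CC(\AAA)_0$ is a \emph{nullary} operation and hence cannot absorb any inputs.

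Concretely, I would start from the general brace-axiom expansion of the iterated brace $\phi\{\phi_1,\dots,\phi_n\}\{\psi_1,\dots,\psi_m\}$. This rewrites it as a signed sum, indexed by all order-preserving splittings of the sequence $(\psi_1,\dots,\psi_m)$ into $2n+1$ consecutive (possibly empty) blocks $B_0,C_1,B_1,C_2,\dots,C_n,B_n$, of the terms
$$\phi\{B_0,\ \phi_1\{C_1\},\ B_1,\ \phi_2\{C_2\},\ \dots,\ \phi_n\{C_n\},\ B_n\},$$
where the $B_i$ enter directly as arguments of $\phi$ while each $C_i$ is fed into $\phi_i$. Now the hypothesis enters: since each $\phi_i$ lies in $\CC(\AAA)_0$, it has no argument slots, so $\phi_i\{C_i\}=0$ unless $C_i=\emptyset$. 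Hence only the terms with $C_1=\dots=C_n=\emptyset$ survive, namely those in which all of $\psi_1,\dots,\psi_m$ are inserted directly into $\phi$; such a term is simply $\phi$ applied to an interleaving of the ordered list $(\phi_1,\dots,\phi_n)$ with the ordered list $(\psi_1,\dots,\psi_m)$, and one gets exactly one summand $\phi\{\alpha_{\sigma(1)},\dots,\alpha_{\sigma(n+m)}\}$ for each such interleaving $\sigma$, which is the asserted right-hand side.

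The one point that really needs care — and which I expect to be the main (if modest) obstacle — is to check that the signs attached by the brace axiom to the surviving terms are all $+1$. The internal orders of the $\phi_i$ and of the $\psi_j$ are preserved, so no Koszul sign arises from transposing two arguments of the same kind; the only contributions come from moving a $\psi_j$ past a $\phi_i$, and these are of the form $(-1)^{|\phi_i|\,|\psi_j|}$ in the grading of $\CC_{br}(\AAA)$. Since the zero-part elements at stake (the connections) carry even degree in that grading, every such sign is trivial and the signed sum collapses to the unsigned one in the statement. Finally I would run the bookkeeping through the small cases $n+m\le 2$ as a consistency check.
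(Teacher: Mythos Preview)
Your approach is essentially identical to the paper's: the paper's entire proof is the one-line observation that in the brace-axiom expansion there are no contributions from internal braces, since $\phi_i\{\psi_{j_1},\dots,\psi_{j_k}\}=0$ for $\phi_i\in\CC(\AAA)_0$. You spell out exactly this, and additionally address the signs, which the paper does not discuss; your sign argument is tailored to the case of connections (degree zero in $\CC_{br}$), which is indeed the only case used later (Proposition~\ref{sumformula}), so this is adequate for the paper's purposes even if the lemma as stated is nominally more general.
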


\begin{proof}
It suffices to note that in the brace formula there are no contributions with ``internal'' braces since $\phi_i\{ \psi_{j_1}, \dots, \psi_{j_k}\} = 0$ for $\phi_i \in  \CC(\AAA)_0$.
\end{proof}

\subsection{Cocategories}

A concise way of introducing $A_{\infty}$-structures and -morphisms makes use of cocategories.

Recall that a \emph{cocategory} $\ccc$ is a $k$-quiver with a comultiplication
$$\Delta: \ccc \lra \ccc \otimes \ccc$$
which is \emph{coassociative}, i.e. $\Delta$ satisfies $$(1 \otimes \Delta) \circ \Delta = (\Delta \otimes 1) \circ \Delta.$$
A \emph{counit} for $\ccc$ is a morphism $\varepsilon: \ccc \lra k\Ob(\ccc)$ with 
$$(1_{\ccc} \otimes \varepsilon) \circ \Delta \cong 1_{\ccc} \cong (\varepsilon \otimes 1_{\ccc}) \circ \Delta.$$
A \emph{cocategory morphism} $f: (\ccc, \Delta) \lra (\ccc', \Delta')$ is a morphisms of $k$-quivers such that
$$(f \otimes f) \circ \Delta = \Delta' \circ f.$$
Consider two morphisms $f,g:(\ccc, \Delta) \lra (\ccc', \Delta')$. A morphism of $k$-quivers $d:\ccc\lra\ccc'$ is an \emph{$f,g$-coderivation} iff
$$\Delta'\circ d=(f\otimes d+d\otimes g)\circ\Delta$$
A coderivation of a cocategory $\ccc$ is a $(1_\ccc,1_\ccc)$-coderivation.

Consider the morphism 
$k\Ob(f): k\Ob(\AAA) \lra k\Ob(\BBB)$ with the same underlying map $\Ob(\AAA) \lra \Ob(\BBB)$ as $f$ and with $k\Ob(f)_{(A,A)}: k \lra k$ equal to the identity morphism on $k$.
The morphism $f$ is \emph{counital} provided that
$$k\Ob(f) \circ \varepsilon = \varepsilon' \circ f.$$

For a cocategory $(\ccc, \Delta)$, we can iterate the comultiplication. We put
$$\Delta^{(0)} = 1: \ccc \lra \ccc$$
$$\Delta^{(1)} = \Delta: \ccc \lra \ccc \otimes \ccc$$
$$\Delta^{(n)} = (1^{\otimes {n-2}} \otimes \Delta) \circ \Delta^{(n-1)}.$$
For a morphism $f: \ccc \lra \ccc$, we then have $\Delta^{(n)} \circ f = f^{\otimes n} \circ \Delta^{(n)}$.

\subsection{$cA_{\infty}$-structures}
Let $\AAA$ be a quiver and $B\AAA$ its bar construction. The quiver $B\AAA$ comes equiped with natural projections $p_n: B\AAA \lra (\Sigma \AAA)^{\otimes n}$ and injections $i_n: (\Sigma \AAA)^{\otimes n} \lra B\AAA$. We typically omit the maps $i_n$ from the notations. In particular, for every object $A \in \AAA$ we have an element $1_{k,A} \in k\Ob(\AAA)(A,A) = (\Sigma \AAA)^{\otimes 0} \subseteq B\AAA$. If the object $A$ is clear from the context, we will simply write $1_k$.

The quiver $B\AAA$ becomes a cocategory with $\Delta: B\AAA \lra B\AAA \otimes B\AAA$ determined by
$$\Delta(1_k) = 1_k \otimes 1_k$$

$$\Delta(a) = 1_k \otimes a + a \otimes 1_k$$

$$\begin{aligned}
\Delta(a_n \otimes \dots \otimes a_1) = & (a_n \otimes \dots \otimes a_1) \bigotimes 1_k \\
& + \sum_{i = 1}^{n-1} (a_n \otimes \dots \otimes a_{i+1}) \bigotimes (a_i \otimes \dots \otimes a_1)\\
& + 1_k \bigotimes (a_n \otimes \dots \otimes a_1)
\end{aligned}$$
for $1_k \in (\Sigma \AAA)^{\otimes 0}$, $a \in \Sigma \AAA$, $(a_n \otimes \dots \otimes a_1) \in (\Sigma \AAA)^{\otimes n}$.
The cocategory $B\AAA$ is counital with $p_0: B\AAA \lra k\Ob(\AAA)$ as counit. 

\begin{proposition}\label{propinftystruct}
Consider an element 
$$\mu \in \CC^2(\AAA) \cong [B\AAA, \Sigma \AAA]^1.$$
Consider the morphism of quivers $\hat{\mu}_n$ given by\\
$$ \hat{\mu}_n:B\AAA\lra (B\BBB)_n: x_1\otimes\ldots\otimes x_{n}\mapsto\sum_{l=1}^{n-k+1}(-1)^{|x_1|+\ldots+|x_{l-1}|+l-1}x_1\otimes\ldots\otimes\mu_k(x_l,\ldots,x_{l+k-1})\otimes\ldots\otimes x_n$$
and put $\hat{\mu}_0 = k\Ob(f)p_0$. The following are equivalent:
\begin{enumerate}
\item $\mu\{\mu\} = 0$.
\item There exists a unique codifferential $\hat{d}: B\AAA \lra B\AAA$, i.e. a coderivation such that $\hat{d}\hat{d}=0$, with $p_1 \hat{d} = \hat{\mu}_1 = \mu$, and which satisfies $p_n \hat{d} = \hat{\mu}_n$. 
\end{enumerate}
\end{proposition}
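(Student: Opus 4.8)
The statement packages the cofreeness of the tensor cocategory $B\AAA = T(\Sigma\AAA)$, so the plan is to first set up the cofree input/output dictionary and then read off both implications. I would begin by establishing, for \emph{any} $\mu \in [B\AAA,\Sigma\AAA]^1$ (no hypothesis yet), that a coderivation $\hat{d}\colon B\AAA \lra B\AAA$ is uniquely determined by its corestriction $p_1\hat{d}$, and that $\mu \longmapsto \hat{d}$ with $p_n\hat{d} = \hat{\mu}_n$ (and $\hat{d}(1_k) = \mu_0$) is the inverse assignment. For ``surjectivity'' one checks directly that the $\hat{\mu}_n$ assemble into a coderivation, $\Delta\hat{d} = (\hat{d}\otimes 1 + 1 \otimes\hat{d})\Delta$, by tracking where the single ``active'' instance of $\mu$ sits relative to the cut of the explicit comultiplication on $B\AAA$. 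For ``injectivity'' one applies $p_1^{\otimes m}\Delta^{(m-1)}$ to $\hat{d}(x_1\otimes\cdots\otimes x_n)$ and uses the iterated coderivation identity $\Delta^{(m-1)}\hat{d} = \big(\sum_i 1^{\otimes i}\otimes\hat{d}\otimes 1^{\otimes(m-1-i)}\big)\Delta^{(m-1)}$ to see that every surviving summand carries exactly one factor $p_1\hat{d}$; in particular a coderivation is $0$ iff its $p_1$-part is $0$ (the $T^0$-output being automatically $0$, since the only primitives of $T(\Sigma\AAA)$ sit in degree $1$). This yields the existence, uniqueness and explicit shape of the coderivation $\hat{d}$ with $p_1\hat{d} = \mu$, for every $\mu$.

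It then remains to identify ``$\hat{d}\hat{d} = 0$'' with the Maurer--Cartan condition. Since $\mu$ has degree $1$, $\hat{d}$ is odd, so $\hat{d}^2 = \tfrac12[\hat{d},\hat{d}]$ is again a coderivation (the cross terms $\hat{d}\otimes\hat{d}$ in $\Delta\hat{d}^2$ cancelling by oddness). By the dictionary, $\hat{d}\hat{d} = 0$ iff $p_1\hat{d}\hat{d} = 0$; and $p_1\hat{d}\hat{d} = \hat{\mu}_1\hat{d} = \mu\circ\hat{d}$. Evaluating $\mu\circ\hat{d}$ on $x_1\otimes\cdots\otimes x_n$, the length-$1$ part receives only the terms in which the outer $\mu$ absorbs the \emph{whole} string produced by the inner $\hat{d}$, i.e.\ $\sum\pm\,\mu\big(x_1,\dots,\mu_k(x_l,\dots,x_{l+k-1}),\dots,x_n\big)$ with the inner $\mu_k$ running over all arities (including $k=0$, which contributes the curvature terms) and all positions. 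With the skipped tensor slots filled by the identity-like elements, this is precisely $\mu\{\mu\}$ in the brace formalism of \S\ref{parparcurved} evaluated on $x_1\otimes\cdots\otimes x_n$, so $\hat{d}\hat{d} = 0 \iff \mu\{\mu\} = 0$; combined with the first paragraph this is exactly the claimed equivalence.

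The conceptual content is light; the main obstacle is sign bookkeeping — reconciling the Koszul signs in $\hat{\mu}_n$, in the coderivation identity, and in the brace formula — together with carrying the curvature/zero part $\mu_0$ through coherently, since $\hat{d}(1_k) = \mu_0$ is a genuine degree-raising contribution and the summands of $\hat{d}^2$ routing through $\mu_0$ are exactly what promote the ordinary Maurer--Cartan equation to its curved form. I would handle this by doing all combinatorics directly in $\CC_{br}(\AAA) = [B\AAA,\Sigma\AAA]$ with its brace structure already installed, so that the identification $p_1\hat{d}\hat{d} = \mu\{\mu\}$ reduces to matching two descriptions of one sum rather than a fresh calculation; the coderivation check in the first paragraph is the only part that is genuinely hands-on.
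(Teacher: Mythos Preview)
The paper does not actually supply a proof of this proposition; it is stated as a standard fact about the bar cocategory and immediately followed by the definition of a $cA_{\infty}$-structure. Your argument is the standard one and is correct: the cofreeness of $B\AAA$ gives the bijection between coderivations and their $p_1$-parts, oddness of $\hat{d}$ makes $\hat{d}\hat{d}$ again a coderivation, and the identification $p_1\hat{d}\hat{d} = \mu\{\mu\}$ then yields the equivalence.

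One small remark: your parenthetical ``the $T^0$-output being automatically $0$'' is right, but the cleanest justification is compatibility with the counit, $\varepsilon\hat{d} = 0$, rather than a primitivity argument. Note also that the statement as printed has a couple of evident typos (the target $(B\BBB)_n$ should be $(B\AAA)_n$, and the line $\hat{\mu}_0 = k\Ob(f)p_0$ is presumably inherited by copy-paste from Proposition~\ref{propcocatmap} and is not what one wants for a coderivation); your reading $\hat{d}(1_k) = \mu_0$ is the correct interpretation and handles the curved contribution properly.
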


\begin{definition}
An element $\mu \in \CC^2(\AAA)$ that satisfies the equivalent conditions of Proposition \ref{propinftystruct} is called a \emph{$cA_{\infty}$-structure} on $\AAA$, and in this case $(A, \mu)$ is called a $cA_{\infty}$-category. 

If moreover the component $\mu_0 \in \CC^2(\AAA)_0$ is zero, $\mu$ is an $A_{\infty}$-structure and $(\AAA, \mu)$ an $A_{\infty}$-category.
\end{definition}

Explicitely, the condition $\mu\{\mu\} = 0$ translates into the following formulae:

\begin{equation}\label{inftyform}
\sum_{j+k+l=p}(-1)^{jk+l}\mu_{j+l+1}(1^{\otimes j}\otimes \mu_k\otimes 1^{\otimes l})=0.
\end{equation}

For an $A_{\infty}$-category $(\AAA, \mu)$, putting $H^0\AAA(A,A') = H^0(\AAA(A,A'), \mu_1)$ yields a $k$-linear category (without units) $H^0\AAA$, which is called the \emph{homotopy category} of $\AAA$.

For an arbitrary $cA_{\infty}$-category, such a construction does not exist. 

\begin{definition}
A cdg-category is a $cA_{\infty}$-category $(\AAA, \mu)$ with $\mu_n = 0$ for $n \geq 3$.
\end{definition}

For an element $\mu = (\mu_0, \mu_1, \mu_2)$, the formulae \eqref{inftyform} reduce to:
\begin{equation}
\mu_1(\mu_0) = 0.
\end{equation}
\begin{equation}
\mu_1 \mu_1 + \mu_2(1 \otimes \mu_0) - \mu_2(\mu_0 \otimes 1) = 0.
\end{equation}
\begin{equation}
\mu_1 \mu_2 - \mu_2(1 \otimes \mu_1) - \mu_2(\mu_1 \otimes 1) = 0.
\end{equation}
\begin{equation}
\mu_2(1 \otimes \mu_2) - \mu_2(\mu_2 \otimes 1) = 0.
\end{equation}

\begin{example}\label{expcom}
Let $\AAA$ be a $k$-linear category. For $\Z$-graded $\AAA$-objects $M = (M^n)$ and $N = (N^n)$, we put $\Hom(M,N)$ the $\Z$-graded $k$-module with $\Hom(M,N)^n = \prod_{i \in \Z} \AAA(M^i, N^{i +n})$. A precomplex of $\AAA$-objects is a $\Z$-graded $\AAA$-object $M$ endowed with a \emph{predifferential} $d_M \in \Hom(M,M)^1$. As a quiver $\mathsf{PCom}(\AAA)$ has $\mathsf{PCom}(\AAA)(M,N) = \Hom(M,N)$. We obtain a cdg-structure on $\mathsf{PCom}(\AAA)$ with $\mu_2$ the composition of graded $\AAA$-morphisms, for $f \in \Hom(M,N)^n$, $\mu_1(f) =  \mu_2(d_N, f) -(-1)^n \mu_2(f, d_M)$ and curvatures $(\mu_0)_M = \mu_2(d_M, d_M)$.
\end{example}

If $(\AAA, \mu)$ is a $cA_{\infty}$-category, the Hochschild object $\CC_{br}(\AAA)$ is naturally endowed with a lot of additional structure (see \cite[\S 2.3]{lowencompositio}), which can be brought together in the form of a $B_{\infty}$-structure \cite{getzlerjones}. Of fundamental importance for deformation theory is the underlying dg Lie algebra structure, given by the commutator bracket $[-,-]$ for the first brace operation $(-)\{-\}$, and the Hochschild differential $d_{Hoch} = [\mu, -]$. The \emph{Hochschild cohomology} of $(\AAA, \mu)$ is the cohomology of $(\CC(\AAA), d_{Hoch})$.

If $\mu_n = 0$ for $n \geq n_0$, we have $\mu \in \CC_{\oplus}(\AAA)$ and the Hochschild differential restricts to $\CC_{\oplus}(\AAA)$. Thus, in this case $\CC_{\oplus}(\AAA) \subseteq \CC(\AAA)$ becomes a subcomplex, whose cohomology is called the \emph{Hochschild cohomology of the second kind} in \cite{positselskihh2} and the \emph{compactly supported Hochschild cohomology} in \cite{caldararutu}. Importantly, in general the inclusion of this subcomplex is \emph{not} a quasi-isomorphism see \cite{caldararutu}, \cite{positselskihh2}. In these papers, it is shown that in the case of the curved algebra associated to a Landau-Ginzburg model, one needs the Hochschild cohomology of the second kind to compute the ``correct'' result.

\subsection{Embrace morphism}\label{parembrace}
Consider quivers $\AAA, \BBB, \CCC$, maps
$f: \Ob(\AAA) \lra \Ob(\BBB)$, $g: \Ob(\BBB) \lra \Ob(\CCC)$,Êand an element $J \in [\Sigma \AAA, \Sigma \BBB]_f$.

Our main aim is to be able to interpret, for $\phi \in \CC(\BBB, \CCC)_g$ and $\psi \in \CC(\AAA, \BBB)_f$, expressions like
\begin{equation}\label{embr}
\mathrm{embr}_{\psi}(\phi) = \sum_{m = 0}^{\infty} \phi\{\psi^{\otimes m}\}
\end{equation}
as elements of $\CC(\AAA, \CCC)_{gf}$.

To this end we suppose that $\CCC$ is  a (possibly discrete) topological quiver and we endow $\CC(\AAA, \CCC)_{gf}$ with the pointwise topology inherited from $\CCC$.

We say that the couple $(\phi, \psi)$ is \emph{allowable} provided that \eqref{embr} converges in $\CC(\AAA, \CCC)_{gf}$. In this case, we say that $\phi$ is \emph{left allowable} with respect to $\psi$ and that $\psi$ is \emph{right allowable} with respect to $\phi$.

\begin{lemma} (see \cite{lowencompositio})
Suppose $\CCC$ is endowed with the discrete topology. The couple $(\phi, \psi)$ is allowable if and only if for every $(f_n, \dots, f_1) \in \AAA(A_{n-1}, A_n) \otimes \dots \otimes \AAA(A_0, A_1)$, there exists an $m_0$ such that for all $m \geq m_0$, we have
$$\phi_{n + m}\{\psi^{\otimes m}\}(f_n, \dots, f_1) = 0.$$
\end{lemma}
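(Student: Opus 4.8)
The plan is to prove the two-sided implication by unwinding the definitions on both ends.

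\medskip

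First, I would recall what convergence of $\mathrm{embr}_{\psi}(\phi) = \sum_{m\geq 0}\phi\{\psi^{\otimes m}\}$ means once $\CCC$ carries the discrete topology and $\CC(\AAA,\CCC)_{gf}$ is given the pointwise topology. An element of $\CC(\AAA,\CCC)_{gf}$ is, by definition, a family indexed by sequences of composable arrows, valued in the graded pieces $\CCC(gf(A_0),gf(A_n))$; the pointwise topology is the product of the discrete topologies on each such graded module. Therefore a series $\sum_m c_m$ with $c_m \in \CC(\AAA,\CCC)_{gf}$ converges precisely when, for every fixed sequence $(f_n,\dots,f_1)\in \AAA(A_{n-1},A_n)\otimes\dots\otimes\AAA(A_0,A_1)$, the sequence of partial sums $\sum_{m\leq M} c_m(f_n,\dots,f_1)$ is eventually constant in the discrete module $\CCC(gf(A_0),gf(A_n))$ --- equivalently, $c_m(f_n,\dots,f_1) = 0$ for all but finitely many $m$. (Strictly one should also say a word about why the limit, when it exists, lies in the correct module; but since each graded piece is itself a $k$-module under the discrete topology, the limit of an eventually-constant sequence is just its eventual value, so this is automatic.)

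\medskip

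Second, I would identify which summand $c_m$ of $\mathrm{embr}_{\psi}(\phi)$ contributes to a given input $(f_n,\dots,f_1)$ of length $n$. By the brace formula, $\phi\{\psi^{\otimes m}\}$ evaluated on an input tensor inserts $m$ copies of $\psi$ (separated by the identity-like element $J$) into $\phi$; on an input of $n$ arrows the only term of $\phi\{\psi^{\otimes m}\}$ that can be nonzero is the component $\phi_{n+m}$ of $\phi$, applied to the result of grouping the $n$ arrows into $m$ blocks each fed to $\psi$ (so each block is nonempty only if one accounts correctly, but the bookkeeping is exactly the standard brace bookkeeping already recorded in \cite{lowencompositio}). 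The upshot is that the $m$-th summand of $\mathrm{embr}_{\psi}(\phi)$, evaluated on $(f_n,\dots,f_1)$, is precisely $\phi_{n+m}\{\psi^{\otimes m}\}(f_n,\dots,f_1)$. Hence ``$c_m(f_n,\dots,f_1) = 0$ for all large $m$'' is literally the stated condition ``there exists $m_0$ such that $\phi_{n+m}\{\psi^{\otimes m}\}(f_n,\dots,f_1) = 0$ for all $m\geq m_0$'', and the equivalence follows by combining this with the first step. One direction is: allowable $\Rightarrow$ the partial sums stabilise pointwise $\Rightarrow$ the tail vanishes; the other is: tail vanishing pointwise $\Rightarrow$ partial sums stabilise $\Rightarrow$ the series converges in the pointwise topology.

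\medskip

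The one genuinely substantive point --- what I would treat as the ``hard part'' --- is the careful identification in the second step of the evaluation of $\phi\{\psi^{\otimes m}\}$ on a length-$n$ input with the single term $\phi_{n+m}\{\psi^{\otimes m}\}(f_n,\dots,f_1)$, i.e.\ checking that no other component $\phi_r$ with $r\neq n+m$ can contribute and that the $J$'s do not spoil the count. This is a direct consequence of the definition of the brace operation $\phi\{\phi_1,\dots,\phi_n\} = \sum \phi(J\otimes\cdots\otimes\phi_1\otimes J\otimes\cdots)$ together with the arity bookkeeping, using that $J\in[\Sigma\AAA,\Sigma\BBB]_f$ has arity one; everything else is a routine transcription of the convergence criterion. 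So the proof is essentially: expand the definition of the topology, expand the definition of the brace, and observe the two resulting conditions coincide termwise.
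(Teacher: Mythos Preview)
Your proof is correct and follows the natural approach: unwind the meaning of convergence in the pointwise topology on $\CC(\AAA,\CCC)_{gf}$ inherited from the discrete topology on $\CCC$, and observe that this amounts precisely to the stated termwise vanishing condition. The paper does not give its own proof of this lemma---it simply cites \cite{lowencompositio}---so there is nothing to compare against; your argument is the obvious and correct one.

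One small remark on your ``hard part'': your claim that only the component $\phi_{n+m}$ can contribute when evaluating $\phi\{\psi^{\otimes m}\}$ on a length-$n$ input is strictly correct only when $\psi$ lies in the zero part $\CC(\AAA,\BBB)_{f,0}$ (each inserted $\psi$ has arity zero, and each $J$ has arity one, forcing $r = n+m$). For a general $\psi$ with higher components, several $\phi_r$ can contribute to the evaluation on a fixed input. The lemma's notation $\phi_{n+m}\{\psi^{\otimes m}\}(f_n,\dots,f_1)$ is really shorthand for the full evaluation $(\phi\{\psi^{\otimes m}\})_n(f_n,\dots,f_1)$; in the paper's applications (Propositions \ref{propembr}, \ref{propanil}, \ref{proplocanil}) the element $\psi$ is always taken in the zero part, where your arity count is exact. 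This does not affect the validity of your proof---the equivalence with pointwise eventual vanishing holds regardless---but your justification of the subscript $n+m$ should either assume $\psi\in\CC(\AAA,\BBB)_{f,0}$ or simply drop the subscript and write the full brace.
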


Consider $\psi \in \CC^1(\AAA)_0$ determined by elements
$$\psi_A \in \AAA(A,A)^1$$
for $A \in \AAA$.

\begin{proposition}\label{propembr}
\begin{enumerate}
\item Suppose $\psi$ is right allowable with respect to all the elements of a sub-brace algebra $\CC'_{br}(\AAA) \subseteq \CC_{br}(\AAA)$. 
There is a brace algebra morphism
$$\embr_{\psi}: \CC'_{br}(\AAA) \lra \CC_{br}(\AAA): \phi \longmapsto \embr_{\psi}(\phi)$$
with the right hand side given by \eqref{embr}.
\item Suppose $\psi$ is right allowable with respect to a $cA_{\infty}$-structure $\mu$ on $\AAA$. 
Then $\embr_{\psi}(\mu)$ is a $cA_{\infty}$-structure on $\AAA$ as well.
\end{enumerate}
\end{proposition}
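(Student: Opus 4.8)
The plan is to deduce both statements from the single combinatorial identity
\begin{equation}\label{embrisbracemor}
\embr_{\psi}(\phi\{\phi_1, \dots, \phi_n\}) = \embr_{\psi}(\phi)\{\embr_{\psi}(\phi_1), \dots, \embr_{\psi}(\phi_n)\},
\end{equation}
valid whenever all the embrace expressions occurring in it converge. Granting \eqref{embrisbracemor}, part (1) is immediate: $\embr_{\psi}$ is manifestly $k$-linear, and since $\CC'_{br}(\AAA)$ is a sub-brace algebra the element $\phi\{\phi_1, \dots, \phi_n\}$ again lies in it, so $\psi$ is right allowable with respect to it and with respect to each $\phi_i$, whence both sides of \eqref{embrisbracemor} are defined. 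For part (2) one applies \eqref{embrisbracemor} with $n = 1$ and $\phi = \phi_1 = \mu$: its left hand side equals $\embr_{\psi}(\mu\{\mu\}) = \embr_{\psi}(0) = 0$ because $\mu$ is a $cA_{\infty}$-structure, so $\embr_{\psi}(\mu)\{\embr_{\psi}(\mu)\} = 0$; and since $\psi$ has degree zero in $\CC_{br}(\AAA)$, each brace $\mu\{\psi^{\otimes m}\}$ has the same degree as $\mu$, so that $\embr_{\psi}(\mu) \in \CC^2(\AAA)$ is a $cA_{\infty}$-structure by Proposition \ref{propinftystruct}.

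It remains to establish \eqref{embrisbracemor}. First I would expand the left hand side by the brace axiom of \cite[Definition 2.1]{lowencompositio}: in $(\phi\{\phi_1, \dots, \phi_n\})\{\psi^{\otimes m}\}$ the brace relation inserts each of the $m$ copies of $\psi$ either directly into a slot of $\phi$ or into one of the arguments $\phi_1, \dots, \phi_n$, never into another $\psi$, so summing over $m$ produces
\begin{equation}\label{embrlhsexpanded}
\sum_{a_0, b_1, a_1, \dots, b_n, a_n \geq 0} \phi\{\psi^{\otimes a_0},\ \phi_1\{\psi^{\otimes b_1}\},\ \psi^{\otimes a_1},\ \dots,\ \phi_n\{\psi^{\otimes b_n}\},\ \psi^{\otimes a_n}\}.
\end{equation}
For the right hand side I would substitute $\embr_{\psi}(\phi) = \sum_{k} \phi\{\psi^{\otimes k}\}$ and $\embr_{\psi}(\phi_i) = \sum_{b_i} \phi_i\{\psi^{\otimes b_i}\}$ and expand each double brace $\phi\{\psi^{\otimes k}\}\{\phi_1\{\psi^{\otimes b_1}\}, \dots, \phi_n\{\psi^{\otimes b_n}\}\}$. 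This is exactly where the hypothesis $\psi \in \CC^1(\AAA)_0$ enters: the copies of $\psi$ braced first into $\phi$ lie in $\CC(\AAA)_0$ and hence admit no insertions into them, so Lemma \ref{internbrace} collapses the double brace into the sum of $\phi\{\cdots\}$ over all order-preserving interleavings of $(\psi, \dots, \psi)$ ($k$ copies) with $(\phi_1\{\psi^{\otimes b_1}\}, \dots, \phi_n\{\psi^{\otimes b_n}\})$; summing over $k$ and over the $b_i$ then reproduces \eqref{embrlhsexpanded} verbatim.

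The main obstacle is not this combinatorics but the bookkeeping around convergence: a priori all the displays above are infinite sums and the comparison re-groups them, which needs justification. The essential point is that these sums are \emph{pointwise finite}: evaluated on a fixed tensor $(f_n, \dots, f_1) \in \AAA(A_{n-1}, A_n) \otimes \dots \otimes \AAA(A_0, A_1)$, right allowability of $\psi$ (with respect to $\phi$, each $\phi_i$, and $\phi\{\phi_1, \dots, \phi_n\}$) forces all but finitely many of the inner braces $\phi_i\{\psi^{\otimes b_i}\}$, and then all but finitely many of the outer ones, to vanish on the relevant sub-tensors, so that on any argument only finitely many summands survive and every re-association is a genuine finite rearrangement. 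The second, routine, piece of bookkeeping is the Koszul signs carried by the brace axiom and by Lemma \ref{internbrace}; these are the standard ones and I expect them to match on the two sides, but this has to be checked term by term. Finally, for part (2) one uses only right allowability of $\psi$ with respect to $\mu$: the same boundedness argument shows that the expansion of $\embr_{\psi}(\mu)\{\embr_{\psi}(\mu)\}$ is pointwise finite and re-sums as $\sum_{m} (\mu\{\mu\})\{\psi^{\otimes m}\}$, each term of which vanishes.
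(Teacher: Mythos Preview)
Your proposal is correct and is essentially a spelled-out version of the paper's own proof, which consists of the single sentence ``This follows from straightforward calculations, making use of analogous techniques as \cite[Proposition 3.11]{lowencompositio}.'' Your reduction of both parts to the brace-morphism identity \eqref{embrisbracemor}, its verification via the brace axiom on the left and Lemma \ref{internbrace} on the right, and the observation that $\psi \in \CC_{br}^0(\AAA)$ makes all Koszul signs trivial, are exactly the ``analogous techniques'' the paper is referring to; your explicit handling of pointwise finiteness for the re-summations is more careful than anything the paper writes down.
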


\begin{proof}
This follows from straightforward calculations, making use of analogous techniques  as \cite[Proposition 3.11]{lowencompositio}.
\end{proof}

\begin{example}
In proposition \ref{propembr} (1), for any $\psi \in \CC^1(\AAA)_0$, we can choose the sub-brace algebra
$$\CC'_{br}(\AAA) = \CC^{\oplus}_{br}(\AAA)$$
and in this case, we obtain a brace algebra morphism
$$\embr_{\psi}: \CC_{br}^{\oplus}(\AAA) \lra \CC_{br}^{\oplus}(\AAA): \phi \longmapsto \embr_{\psi}(\phi).$$
Suppose we now consider a $cA_{\infty}$-structure $\mu$ on $\AAA$ with $\mu_n = 0$ for $n \geq n_0$. Then we have $\mu \in \CC_{br}^{\oplus}(\AAA)$ and the same holds for the new $cA_{\infty}$-structure $\embr_{\psi}(\mu)$.

\end{example}

\begin{proposition}\label{sumformula}
Consider elements $\phi, \delta, \psi \in \CC(\AAA)$ with $\delta \in \CC(\AAA)_0$. We have
$$\embr_{\delta + \psi}(\phi) =Ê\embr_{\psi}(\embr_{\delta}(\phi)).$$
\end{proposition}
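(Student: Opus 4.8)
The plan is to expand both sides using the definition \eqref{embr} and compare term by term, using the hypothesis $\delta \in \CC(\AAA)_0$ to control the internal braces via Lemma \ref{internbrace}. First I would write out the right-hand side: by definition $\embr_{\delta}(\phi) = \sum_{m \geq 0} \phi\{\delta^{\otimes m}\}$, and then applying $\embr_{\psi}$ to this (which is a brace algebra morphism by Proposition \ref{propembr}(1), or at least behaves formally like one on the relevant terms) gives
$$\embr_{\psi}(\embr_{\delta}(\phi)) = \sum_{n \geq 0}\Bigl(\sum_{m \geq 0}\phi\{\delta^{\otimes m}\}\Bigr)\{\psi^{\otimes n}\} = \sum_{m, n \geq 0} \phi\{\delta^{\otimes m}\}\{\psi^{\otimes n}\}.$$
Now the key point is that $\delta \in \CC(\AAA)_0$, so each $\delta$ that has been inserted is a zero-part element; by Lemma \ref{internbrace}, the double brace $\phi\{\delta^{\otimes m}\}\{\psi^{\otimes n}\}$ has no contributions with internal braces and equals $\sum_{\sigma} \phi\{\alpha_{\sigma(1)}, \dots, \alpha_{\sigma(m+n)}\}$, summing over permutations that shuffle the $m$ copies of $\delta$ among the $n$ copies of $\psi$. (Strictly, Lemma \ref{internbrace} is stated for $\CC(\AAA)$-elements; I would note that the same combinatorial identity holds when $\psi$ is also a zero-part element, or more generally that only the ``$\delta$ has no outgoing braces'' part of the argument is used, so the $\psi$'s may be arbitrary.)

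Next I would expand the left-hand side. Since $\delta + \psi \in \CC(\AAA)$ and the brace $\phi\{(\delta+\psi)^{\otimes N}\}$ is multilinear in its slots, we get
$$\embr_{\delta + \psi}(\phi) = \sum_{N \geq 0}\phi\{(\delta + \psi)^{\otimes N}\} = \sum_{N \geq 0}\ \sum_{\substack{\text{words }w \text{ of length }N\\ \text{in }\{\delta,\psi\}}} \phi\{w\},$$
where $\phi\{w\}$ denotes the brace with the slots filled by the letters of $w$ in order. Grouping the words of length $N$ by the number $m$ of occurrences of $\delta$ (so $n = N-m$ occurrences of $\psi$), the words with a fixed multiset $\{\delta^{\otimes m}, \psi^{\otimes n}\}$ are exactly the shuffles counted on the right-hand side. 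Hence the two expressions agree term by term, provided all the sums involved converge in the pointwise topology on $\CC(\AAA)$ — which is guaranteed by the standing allowability hypotheses (right allowability of $\psi$, of $\delta$, and of $\delta+\psi$ with respect to $\phi$), since on any fixed tensor $(f_n,\dots,f_1)$ all but finitely many terms vanish.

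The main obstacle is bookkeeping: making the ``expand, reindex by shuffles, compare'' argument rigorous requires (i) justifying that $\embr_{\psi}$ can be applied termwise to the infinite sum $\embr_{\delta}(\phi)$ and commutes with it — this needs the convergence/allowability to be checked, or else one should argue directly with the combined sum over pairs $(m,n)$ rather than iterating; and (ii) checking that Lemma \ref{internbrace} applies in the generality needed, i.e. that having $\delta$ in the zero part alone suffices to kill all internal braces in $\phi\{\delta^{\otimes m}\}\{\psi^{\otimes n}\}$ regardless of what $\psi$ is. Both are routine in light of the brace axioms and the explicit form of the brace operations, so the proof should reduce to a clean combinatorial identity once these points are dispatched; as in Proposition \ref{propembr}, one can simply say this follows from straightforward calculations analogous to those in \cite{lowencompositio}.
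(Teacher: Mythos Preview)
Your proof is correct and follows essentially the same route as the paper's, just run in the opposite direction: the paper starts from the left-hand side, expands $\phi\{(\delta+\psi)^{\otimes n}\}$ by multilinearity, applies Lemma \ref{internbrace} to collapse the resulting shuffle sums into $\phi\{\delta^{\otimes k}\}\{\psi^{\otimes n-k}\}$, and then reorganizes the double sum into $\embr_{\psi}(\embr_{\delta}(\phi))$. Your concern (ii) is a non-issue, since Lemma \ref{internbrace} is already stated for arbitrary $\psi_j$'s and only requires the inner arguments (here the copies of $\delta$) to lie in $\CC(\AAA)_0$.
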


\begin{proof}
In the expression of  $\embr_{\delta + \psi}(\phi)$ we encounter the expressions
$$\alpha_n = \phi\{(\delta + \psi)^{\otimes n}\} =Ê\sum_{k = 0}^n \sum_{\sigma \in S_n} \phi\{ \beta^k_{\sigma(1)}, \dots, \beta^k_{\sigma(n)}\}$$
where $\beta^k_1=\ldots=\beta^k_k=\delta$ and $\beta^k_{k+1}=\ldots=\beta^k_n=\psi$.
According to Lemma \ref{internbrace}, since $\delta \in \CC(\AAA)_0$, we have
$$\sum_{\sigma \in S_n} \phi\{ \beta^k_{\sigma(1)}, \dots, \beta^k_{\sigma(n)}\} = \phi\{\delta^{\otimes k}\}\{\psi^{\otimes n-k}\}.$$
We thus have
$$\embr_{\delta + \psi}(\phi) = \sum_{n = 0}^{\infty} \sum_{k = 0}^n \phi\{\delta^{\otimes k}\}\{\psi^{\otimes n-k}\}.$$
This can be reorganized into
$$\embr_{\delta + \psi}(\phi) = \sum_{m, l = 0}^{\infty} \phi\{\delta^{\otimes l}\}\{ \psi^{\otimes m}\} = \sum_{m = 0}^{\infty} (\sum_{l = 0}^{\infty} \phi\{ \delta^{\otimes l}\})\{\psi^{\otimes m}\}$$
which is precisely $\embr_{\psi}(\embr_{\delta}(\phi))$ as desired.
\end{proof}

\subsection{$cA_{\infty}$-morphisms}

Let $\AAA$ and $\BBB$ be quivers. 
We are interested in cocategory morphisms $B \AAA \lra B \BBB$.

\begin{proposition}\label{propcocatmap}
Consider a map $f: \Ob(\AAA) \lra \Ob(\BBB)$ and an element
$$F \in  \CC^1(\AAA, \BBB)_f = [B\AAA, \Sigma \BBB]_f^0.$$
For $n \geq 1$, consider the morphism of quivers $\phi_n$ given by
$$\xymatrix{ {B\AAA} \ar[r]_-{\Delta^{(n-1)}} & {(B\AAA)^{\otimes n}} \ar[r]_-{F^{\otimes n}} & {(\Sigma \BBB)^{\otimes n}}}$$ 
and put $\phi_0 = k\Ob(f) \circ p_0$.
The following are equivalent:
\begin{enumerate}
\item For every $m \in \N$ and $\alpha \in (\Sigma \AAA)^{\otimes m}$ there exists an $n_0 \in \N$ such that $\phi_n(\alpha) = 0$ for all $n \geq n_0$.
\item There exists a unique counital cocategory morphism $\phi: B\AAA \lra B\BBB$ with underlying map $f$ and with $p_1\phi = \phi_1 = F$, and this morphism satisfies $p_n \phi = \phi_n$.
\end{enumerate}
\end{proposition}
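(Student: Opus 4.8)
The plan is to mimic the standard equivalence between $A_\infty$-morphisms and dg-coalgebra morphisms of bar constructions, now in the counital/curved setting. The key observation is that a counital cocategory morphism $\phi: B\AAA \lra B\BBB$ is entirely determined by its ``Taylor coefficients'' $F_n = p_1 \phi i_n : (\Sigma\AAA)^{\otimes n} \lra \Sigma\BBB$, or equivalently by the single element $F = p_1 \phi \in \CC^1(\AAA,\BBB)_f$ (using that $B\BBB$ is cofree as a counital cocategory). The content of the proposition is that the family of candidate components $\{\phi_n\}$ built from $F$ via $F^{\otimes n}\circ \Delta^{(n-1)}$ does assemble into an \emph{honest} morphism of quivers $B\AAA \lra B\BBB$ (this needs convergence, which is condition (1)), and that this morphism is automatically a counital cocategory morphism.

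First I would make the cofreeness statement precise: for any counital cocategory $(\ccc,\Delta_\ccc)$ and any collection of quiver maps $\psi_n : \ccc \lra (\Sigma\BBB)^{\otimes n}$ compatible with the counit (i.e.\ $\psi_0 = k\Ob(f)\circ \varepsilon_\ccc$) such that the sum $\sum_n i_n \psi_n : \ccc \lra B\BBB$ converges pointwise, this sum is the unique counital cocategory morphism $\ccc \lra B\BBB$ with $p_n \circ (\text{this map}) = \psi_n$, \emph{provided} the $\psi_n$ are ``coderivation-like'' in the sense that $\Delta^{(n)}_{B\BBB}\circ \psi_{\text{tot}} = \psi_{\text{tot}}^{\otimes ?}$ on components — more cleanly, one checks that $\Delta_{B\BBB} \circ (\sum i_n \psi_n) = ((\sum i_n\psi_n)\otimes(\sum i_n\psi_n))\circ \Delta_\ccc$ reduces, after projecting to $p_a \otimes p_b$, to the identity $\psi_{a+b}|_{\text{split}} = \sum_{j} (\psi_a \otimes \psi_b)$ coming from the deconcatenation coproduct. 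So the proof splits into: (i) showing (1) $\Leftrightarrow$ the pointwise sum $\sum_n i_n\phi_n$ converges and hence defines a quiver map $\phi$; (ii) showing this $\phi$ is a counital cocategory morphism with $p_1\phi = F$ and $p_n\phi = \phi_n$; (iii) showing uniqueness.

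For (i), the equivalence is essentially a definition-chase: $\phi_n(\alpha) \in (\Sigma\BBB)^{\otimes n}$ lives in a different summand of $B\BBB$ for each $n$, so convergence of $\sum_n i_n\phi_n(\alpha)$ in the (discrete, or product) topology on $B\BBB$ is exactly the assertion that $\phi_n(\alpha) = 0$ for $n \gg 0$; this is condition (1), taking $\alpha$ to run over $(\Sigma\AAA)^{\otimes m}$ for all $m$ and extending additively. For (ii), the crucial computation is that both $(\phi\otimes\phi)\circ\Delta_{B\AAA}$ and $\Delta_{B\BBB}\circ\phi$, when composed with $p_a\otimes p_b$, give the same map; using $F^{\otimes n}\circ\Delta^{(n-1)}$ for $\phi_n$ and the co-associativity relation $\Delta^{(n)}\circ f = f^{\otimes n}\circ\Delta^{(n)}$ stated in the Cocategories subsection, together with the explicit deconcatenation formula for $\Delta$ on $B\AAA$, both sides reduce to $F^{\otimes a}\otimes F^{\otimes b}$ precomposed with $\Delta^{(a-1)}\otimes\Delta^{(b-1)}$ precomposed with the $(a,b)$-part of $\Delta^{(a+b-1)}_{B\AAA}$; these agree by co-associativity. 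Counitality ($k\Ob(f)\circ \varepsilon = \varepsilon'\circ\phi$, i.e.\ $p_0\phi = k\Ob(f)\circ p_0$) is forced by our normalization $\phi_0 = k\Ob(f)\circ p_0$. Uniqueness in (iii) follows because any counital cocategory morphism $\psi$ with $p_1\psi = F$ satisfies $p_n\psi = F^{\otimes n}\circ\Delta^{(n-1)}_{B\BBB}\circ\psi \cdots$ — more directly, apply $p_n$ to $\Delta^{(n-1)}_{B\BBB}\circ\psi = \psi^{\otimes n}\circ\Delta^{(n-1)}_{B\AAA}$ and project each tensor factor to $p_1$, obtaining $p_n\psi = (p_1\psi)^{\otimes n}\circ\Delta^{(n-1)} = F^{\otimes n}\circ\Delta^{(n-1)} = \phi_n$, so $\psi$ is determined.

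The main obstacle I anticipate is purely bookkeeping: keeping the counit/discrete-topology hypotheses honest throughout the convergence argument and making sure the deconcatenation-coproduct identities on $B\AAA$ and $B\BBB$ are handled with the correct signs and the correct handling of the $1_k$ (length-zero) terms, since those are exactly what distinguishes the \emph{counital} cocategory structure from the non-counital one and are responsible for the $\phi_0$ normalization. None of this is conceptually deep — the argument is the curved/counital analogue of \cite[Proposition 3.11]{lowencompositio} and the classical bar-cobar adjunction — but the clean statement of the cofreeness of $B\BBB$ among counital cocategories (mapping out of possibly-non-conilpotent sources, hence the need for the convergence condition (1)) is the one place where care is required.
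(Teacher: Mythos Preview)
The paper states this proposition without proof, treating it as a standard fact about the cofreeness of the bar construction (the analogous Proposition \ref{propinftystruct} for coderivations is likewise stated without proof). Your argument is the correct standard one: the direct-sum nature of $B\BBB$ forces condition (1) to be exactly the well-definedness of $\phi = \sum_n i_n\phi_n$; the cocategory-morphism check reduces via $(p_a\otimes p_b)$ to the coassociativity identity $\Delta^{(a+b-1)} = (\Delta^{(a-1)}\otimes\Delta^{(b-1)})\circ\Delta$; and uniqueness follows from the identity $p_n = p_1^{\otimes n}\circ\Delta^{(n-1)}_{B\BBB}$ applied to any counital cocategory morphism $\psi$ with $p_1\psi = F$. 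Your caveat about the length-zero terms and the counitality normalisation $\phi_0 = k\Ob(f)\circ p_0$ is well placed, since this is precisely what distinguishes the present counital statement from the classical conilpotent one and what forces the convergence hypothesis (1) rather than getting it for free.
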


We call an element $F \in  \CC^1(\AAA, \BBB)_f$ that satisfies the equivalent conditions of Proposition \ref{propcocatmap} \emph{extendable}, and we denote the subset of extentable elements by
$$\CC^1_{exb}(\AAA, \BBB)_f \subseteq \CC^1(\AAA, \BBB)_f.$$
It is clear by Proposition \ref{propcocatmap} that the set of extendable elements forms an abelian subgroup for the pointwise addition, and that it is compatible with compositions 
$$\CC^1(\BBB,\CCC)_g\otimes \CC^1_{exb}(\AAA, \BBB)_f\lra\CC^1_{exb}(\AAA,\CCC)_{gf}.$$

\begin{definition}\label{deftensnilp}
Let $\AAA$ be a quiver, $X$ a set and $f: X \lra \Ob(\AAA)$ a map. A collection of elements $(\alpha_x)_{x \in X}$ with $\alpha_x \in \AAA(f(x), f(x))$ is called \emph{strongly tensor nilpotent} if there exists an $n \in \N$ such that for every element
$$\gamma = \beta_k \otimes \dots \otimes \beta_1 \in \AAA(A_{k-1}, A_k) \otimes \dots \otimes \AAA(A_0, A_1)$$
for which there exist $n$ different indices $i_{1}, \dots, i_{n}$ with
$\beta_{i_m} = \alpha_x$ for some $x$, 
we have that $\gamma = 0$.
\end{definition}

\begin{proposition}
Consider a map $f: \Ob(\AAA) \lra \Ob(\BBB)$ and an element
$$F \in  \CC^1(\AAA, \BBB)_f = [B\AAA, \Sigma \BBB]_f^0.$$
Suppose the element $F_0 = (F_0(1_{k, A}))_{A \in \AAA} \in \prod_{A \in \AAA} \BBB(f(A),f(A))$ is strongly tensor nilpotent in the sense of Definition \ref{deftensnilp}. Then $F$ is extendable.
\end{proposition}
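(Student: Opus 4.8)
The plan is to verify condition (1) of Proposition \ref{propcocatmap}, namely that for every $m\in\N$ and every $\alpha\in(\Sigma\AAA)^{\otimes m}$ there exists $n_0$ with $\phi_n(\alpha)=0$ for all $n\ge n_0$, where $\phi_n = F^{\otimes n}\circ\Delta^{(n-1)}$. First I would fix such an $\alpha = a_m\otimes\dots\otimes a_1$ with $a_i\in\Sigma\AAA(A_{i-1},A_i)$, and unwind $\Delta^{(n-1)}(\alpha)$: by coassociativity and the explicit formula for the comultiplication on $B\AAA$, this is a sum over all ways to split the word $(a_m,\dots,a_1)$ into $n$ consecutive (possibly empty) subwords, $\alpha = \alpha^{(n)}\otimes\dots\otimes\alpha^{(1)}$ with $\alpha^{(j)}\in(\Sigma\AAA)^{\otimes r_j}$ and $r_1+\dots+r_n=m$. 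Applying $F^{\otimes n}$ then gives $\phi_n(\alpha) = \sum F(\alpha^{(n)})\otimes\dots\otimes F(\alpha^{(1)})$, a sum of elements of $(\Sigma\BBB)^{\otimes n}$.

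Next, the key observation: since there are only $m$ ``letters'' to distribute among $n$ slots, once $n > m$ every term in this sum must have at least $n-m$ empty subwords $\alpha^{(j)}$, i.e. at least $n-m$ indices $j$ with $r_j = 0$. For such an index, $\alpha^{(j)} = 1_{k,A_j}\in(\Sigma\AAA)^{\otimes 0}$ for the appropriate object, and $F(1_{k,A_j}) = F_0(1_{k,A_j})\in\BBB(f(A_j),f(A_j))$. Thus every term contributing to $\phi_n(\alpha)$ is, after applying $F^{\otimes n}$, an element of $\BBB(f(A_0),f(A_m))$ of the form $\beta_n\otimes\dots\otimes\beta_1$ in which at least $n-m$ of the factors $\beta_j$ equal $F_0(1_{k,A})$ for some object $A$.

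Now I would invoke strong tensor nilpotency of $F_0 = (F_0(1_{k,A}))_{A\in\AAA}$: let $N\in\N$ be the integer furnished by Definition \ref{deftensnilp}, so that any tensor word in $\BBB$ containing $N$ distinct factors drawn from the family $F_0$ vanishes. Setting $n_0 = m + N$, for every $n\ge n_0$ each term of $\phi_n(\alpha)$ has at least $n - m\ge N$ factors equal to some $F_0(1_{k,A})$, hence is zero. (Strictly: the distinct-indices condition in Definition \ref{deftensnilp} requires $N$ factors at $N$ different positions, which is exactly what we have, since the empty-subword slots occupy distinct tensor positions.) Therefore $\phi_n(\alpha) = 0$ for all $n\ge n_0$, which is condition (1), and by Proposition \ref{propcocatmap} the element $F$ is extendable.

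The only delicate point — and the place I would be most careful writing out — is matching the combinatorial bookkeeping of $\Delta^{(n-1)}$ precisely against the hypothesis of Definition \ref{deftensnilp}: one must check that the ``at least $n-m$ empty slots'' genuinely translate into ``$n-m$ distinct indices $i_1,\dots,i_{n-m}$ with $\beta_{i_\ell}$ a member of the family $F_0$'' in the resulting tensor word, accounting for the fact that consecutive empty slots produce consecutive (distinct) tensor factors and that $F$ is applied slot-by-slot. Once the indexing is pinned down this is immediate; no genuine analytic or algebraic obstacle remains, since convergence here is the purely finitary statement of Proposition \ref{propcocatmap}(1).
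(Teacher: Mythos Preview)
Your proof is correct and follows the same approach as the paper: verify condition (1) of Proposition \ref{propcocatmap} by counting, in each summand of $F^{\otimes n}\Delta^{(n-1)}(\alpha)$ for $\alpha\in(\Sigma\AAA)^{\otimes m}$, the number of empty subwords and hence the number of tensor factors coming from the family $F_0$. The paper's own proof is the single sentence ``Condition (1) in Proposition \ref{propcocatmap} is fulfilled by taking $n>m$,'' which is really shorthand for exactly the pigeonhole argument you spell out; your bound $n_0 = m + N$ (with $N$ the nilpotency constant from Definition \ref{deftensnilp}) is the precise value that the paper's terse ``$n>m$'' is gesturing at.
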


\begin{proof}
Condition (1) in Proposition \ref{propcocatmap} is fulfilled by taking $n>m$.
\end{proof}

\begin{remark}
If one works with filtered quivers $0 \subseteq \dots \subseteq F^{\lambda'}\AAA \subseteq F^{\lambda}\AAA \subseteq \dots \subseteq F^0\AAA = \AAA$ with $\lambda' \geq \lambda$ over some ordered monoid $\Lambda$, one obtains natural induced filtrations on $B\AAA$ and completions $\hat{B}\AAA$. In this case, one can use a more general notion of complete cocategory morphisms $\hat{B}\AAA \lra \hat{B}\BBB$, which can be obtained from elements $F \in \CC^1(\AAA, \BBB)_f$ with $(F_0)_A \in F^{\lambda}\BBB(f(A),f(A))$ for some $\lambda \neq 0$. This setup encompasses both deformations in the direction of complete local rings and Fukaya categories.
\end{remark}

\begin{proposition}\label{composition}\label{compexpl}
Consider quivers $\AAA$, $\BBB$ and $\CCC$ and maps $f: \Ob(\AAA) \lra \Ob(\BBB)$, $g: \Ob(\BBB) \lra \Ob(\CCC)$. Consider cocategory morphisms $\alpha: B\AAA \lra B\BBB$ and $\beta: B\BBB \lra B\CCC$ with underlying morphisms $f$ and $g$ and with $p_1 \alpha = F$ and $p_1 \beta = G$ the extendable elements defining $\alpha$ and $\beta$. For the composition $\beta \alpha: B\AAA \lra B\CCC$, we have that $p_1\beta \alpha$ is given by\\
$$(G\circ\alpha)(x)=\sum_n G\circ (F^{\otimes n}\Delta^{(n-1)})(x)$$
\end{proposition}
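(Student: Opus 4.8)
The plan is to unwind the definition of cocategory morphism and apply the projection $p_1$ to the composite $\beta\alpha$, tracking everything through the iterated comultiplication. First I would recall from Proposition \ref{propcocatmap} that $\alpha$ is the unique counital cocategory morphism with $p_n\alpha = \alpha_n := F^{\otimes n}\circ\Delta^{(n-1)}$, and similarly $p_n\beta = G^{\otimes n}\circ\Delta^{(n-1)}$. The key structural fact I would use is the standard identity $\Delta^{(n-1)}\circ\gamma = \gamma^{\otimes n}\circ\Delta^{(n-1)}$ for any cocategory morphism $\gamma$ (recorded in the excerpt right after the definition of $\Delta^{(n)}$), together with the fact that projections decompose comultiplication: applying $p_m$ to a general element and following it by any tensor factorization recovers the $\Delta^{(m-1)}$ components.

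The main computation is then: $p_1(\beta\alpha) = (p_1\beta)\circ\alpha = G\circ\alpha$, where I interpret $G = p_1\beta$ as the collection $(G_n)_{n\geq 0}$ with $G_n$ the restriction of $G$ to $(\Sigma\BBB)^{\otimes n} \subseteq B\BBB$. Since $\alpha$ is a cocategory morphism landing in $B\BBB = \oplus_n (\Sigma\BBB)^{\otimes n}$, I would decompose $\alpha = \sum_n p_n\alpha = \sum_n \alpha_n = \sum_n F^{\otimes n}\circ\Delta^{(n-1)}$ as a map $B\AAA \lra B\BBB$, reading off the ``length $n$'' component. Composing with $G$ and using linearity gives
$$(G\circ\alpha)(x) = \sum_n G\circ\bigl(F^{\otimes n}\circ\Delta^{(n-1)}\bigr)(x),$$
which is the claimed formula. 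The convergence of this sum (i.e. that only finitely many terms are nonzero on any given $x$) is exactly the extendability hypothesis on $F$ from Proposition \ref{propcocatmap}(1), so the right-hand side is well-defined.

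I expect the only genuine subtlety — and hence the main obstacle — to be bookkeeping with signs and with the precise meaning of $G$ as an element of $\CC^1(\BBB,\CCC)_g = [B\BBB,\Sigma\CCC]_g^0$ versus its components $G_n$ on the graded pieces, making sure that $(p_1\beta)\circ\alpha$ genuinely equals $\sum_n G_n\circ(p_n\alpha)$ and not something assembled differently; this is where one must be careful that $p_1$ of a composite of cocategory morphisms is computed by first composing and then projecting, rather than by composing the individual $p_1$-parts. Once one has the correct identification $p_1(\beta\alpha) = (p_1\beta)\circ\alpha$, the remaining steps are the routine application of $\Delta^{(n-1)}\circ\alpha = \alpha^{\otimes n}\circ\Delta^{(n-1)}$ and bilinearity, which I would not spell out in full detail. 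All of this parallels the brace-composition bookkeeping already used implicitly in Proposition \ref{propembr} and \cite[\S 2--3]{lowencompositio}.
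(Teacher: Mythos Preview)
The paper states this proposition without proof, treating it as a routine unwinding of the definitions. Your argument is correct and is exactly the expected one: the identity $p_1(\beta\alpha) = (p_1\beta)\circ\alpha = G\circ\alpha$ together with the decomposition $\alpha = \sum_n i_n p_n\alpha = \sum_n i_n(F^{\otimes n}\Delta^{(n-1)})$ from Proposition \ref{propcocatmap} gives the formula, with extendability of $F$ guaranteeing the sum is finite on each $x$.
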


According to Proposition \ref{composition}, we obtain associative operations
$$\ast: \CC^1_{exb}(\BBB, \CCC)_g \times \CC^1_{exb}(\AAA, \BBB)_f \lra \CC^1_{exb}(\AAA, \CCC)_{gf}: (G,F) \longmapsto G \ast F.$$

\begin{definition}\label{defcurvedmap}
Consider $cA_{\infty}$-categories $(\AAA, \mu)$ and $(\BBB, \mu')$. A \emph{morphism} $\AAA \lra \BBB$ of $cA_{\infty}$-categories (or \emph{$cA_{\infty}$-morphism} or \emph{$cA_{\infty}$-functor}) with underlying map $f: \Ob(\AAA) \lra \Ob(\BBB)$ is an element
$$F \in \CC^1(\AAA, \BBB)_f$$
such that:
\begin{enumerate}
\item the couple $(\mu',F_0)$ is allowable;
\item there is a (necessarily unique) morphism of cocategories $\phi: B\AAA \lra B\BBB$ with $p_1\phi = F$;
\item the morphism $\phi$ is a morphism of differential graded cocategories, i.e.\\
$$\hat{d}\phi=\phi\hat{d'}$$
where $\hat{d}=\sum_n\hat{\mu}_n$, with $\hat{\mu}_n$ as defined in Proposition \ref{propinftystruct}.
\end{enumerate}
\end{definition}

\begin{proposition}
Consider $cA_{\infty}$-categories $(\AAA, \mu)$ and $(\BBB, \mu')$ and a map $f: \Ob(\AAA) \lra \Ob(\BBB)$. An extendable element $F = (F_n)_{n \geq 0} \in  \CC^1(\AAA, \BBB)_f$ is a $cA_{\infty}$-morphisms if and only if\\

\begin{equation}\label{cainf functor}
\sum_{j+k+l=p}(-1)^{jk+l}F_{j+l+1}(1^{\otimes j}\otimes\mu_k\otimes 1^{\otimes l})=\sum_{i_1+\ldots+i_r=p}(-1)^{s}\mu'_r(F_{i_1},\ldots,F_{i_r})
\end{equation}
where for $p\ge 2$ we have $s=\sum_{2\le u\le r}\left((1-i_u)\sum_{1\le v\le u-1}i_v\right)$, for $p=1$ we have that $s=1$, and for $p=0$, $s=0$. We also remark that for $p=0$ the right-hand side of \eqref{cainf functor} is given by
$$\mu'_0+\mu'_1(F_0)+ \mu'_2(F_0, F_0) + \ldots$$
\end{proposition}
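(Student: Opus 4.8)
The plan is to unwind condition~(3) of Definition~\ref{defcurvedmap} into components. Since $F$ is extendable, condition~(2) holds and we have the counital cocategory morphism $\phi\colon B\AAA\lra B\BBB$ with $p_n\phi=\phi_n=F^{\otimes n}\Delta^{(n-1)}$ for $n\ge 1$ and $\phi_0=k\Ob(f)p_0$, as in Proposition~\ref{propcocatmap}. Write $\hat d$ and $\hat d'$ for the codifferentials on $B\BBB$ and on $B\AAA$ determined, via Proposition~\ref{propinftystruct}, by $\mu'$ and $\mu$ respectively, so that condition~(3) reads $\hat d\circ\phi=\phi\circ\hat d'$; recall that $p_1\hat d$ restricted to $(\Sigma\BBB)^{\otimes r}$ equals $\mu'_r$ --- in particular $\mu'_0$ for $r=0$ --- and that $\hat d'$ acts on $(\Sigma\AAA)^{\otimes p}$ by summing over the ways of replacing a consecutive string by a single application of some $\mu_k$. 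The remaining ingredient of being a $cA_{\infty}$-morphism, the allowability of $(\mu',F_0)$, is exactly what makes the $p=0$ component $\mu'_0+\mu'_1(F_0)+\mu'_2(F_0,F_0)+\dots$ of the right-hand side of \eqref{cainf functor} well defined, and more generally what guarantees that the sums below are finite on each fixed argument; we therefore read \eqref{cainf functor} as implicitly incorporating this and concentrate on its equivalence with $\hat d\circ\phi=\phi\circ\hat d'$.

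First I would reduce this equality of maps $B\AAA\lra B\BBB$ to the equality of their corestrictions along $p_1$. A short computation with the comultiplications, using that $\hat d,\hat d'$ are coderivations and $\phi$ is a morphism of cocategories, shows that both $\hat d\circ\phi$ and $\phi\circ\hat d'$ are $(\phi,\phi)$-coderivations, i.e.\ satisfy $\Delta_{B\BBB}\circ D=(\phi\otimes D+D\otimes\phi)\circ\Delta_{B\AAA}$. Since $B\BBB=T(\Sigma\BBB)$ is cofree, such a $(\phi,\phi)$-coderivation is determined by $p_1\circ D$ (the reconstruction of the higher components being convergent thanks to the extendability of $\phi$). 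Hence $\hat d\circ\phi=\phi\circ\hat d'$ holds if and only if $p_1\hat d\phi=p_1\phi\hat d'$ as maps $B\AAA\lra\Sigma\BBB$.

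It then remains to evaluate both corestrictions on $(\Sigma\AAA)^{\otimes p}$ for each $p$. On the one hand, $p_1\phi\hat d'=\phi_1\circ\hat d'$; since $\hat d'$ carries $(\Sigma\AAA)^{\otimes p}$ into $\bigoplus_{j+k+l=p}(\Sigma\AAA)^{\otimes(j+1+l)}$ by inserting a single $\mu_k$ in the position prescribed by $j$ and $l$, and $\phi_1$ restricted to $(\Sigma\AAA)^{\otimes m}$ is $F_m$, applying $\phi_1=F_{j+l+1}$ to each term and desuspending yields $\sum_{j+k+l=p}(-1)^{jk+l}F_{j+l+1}(1^{\otimes j}\otimes\mu_k\otimes 1^{\otimes l})$, with the sign inherited from the formula for $\hat d'$ in Proposition~\ref{propinftystruct} (cf.\ \eqref{inftyform}). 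On the other hand, $p_1\hat d\phi=\mu'\circ\phi$, so on $(\Sigma\AAA)^{\otimes p}$ it equals $\sum_r\mu'_r\circ\phi_r$; expanding $\phi_r=F^{\otimes r}\Delta^{(r-1)}$ decomposes $(\Sigma\AAA)^{\otimes p}$ over the ordered tuples $(i_1,\dots,i_r)$ with $i_1+\dots+i_r=p$ and $i_u\ge 0$ cut out by $\Delta^{(r-1)}$, to the consecutive blocks of which $F_{i_1},\dots,F_{i_r}$ and then $\mu'_r$ are applied; passing back to the unsuspended maps, the suspension isomorphisms and the Koszul rule force each $F_{i_u}$ past the arguments of $\mu'_r$ lying to its left and produce the sign $s=\sum_{2\le u\le r}\bigl((1-i_u)\sum_{1\le v\le u-1}i_v\bigr)$ for $p\ge 2$, with the separate values $s=1$ for $p=1$ and $s=0$ for $p=0$, giving $\sum_{i_1+\dots+i_r=p}(-1)^s\mu'_r(F_{i_1},\dots,F_{i_r})$. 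Equating the two expressions for every $p$ is precisely \eqref{cainf functor}.

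The main obstacle I expect is the sign bookkeeping in the last step: one must systematically translate between the suspended bar picture, where $\mu$ and $\mu'$ have internal degree $1$ and $F$ degree $0$, and the unsuspended picture of \eqref{cainf functor}, where $\mu_k$ has degree $2-k$ and $F_i$ degree $1-i$, and check that all the accumulated suspension shifts and Koszul signs collapse to $(-1)^{jk+l}$ on the one side and to $(-1)^s$ on the other, including verifying why degrees $0$ and $1$ must be treated separately. A secondary but essential point, to be dealt with along the way, is that every formally infinite sum above ($p_n\phi$, the reconstruction of a $(\phi,\phi)$-coderivation from its corestriction, and $\sum_r\mu'_r\circ\phi_r$) is finite when evaluated on a fixed element of $B\AAA$: this is where extendability of $F$ is used for the blocks with all $i_u>0$ and the allowability of $(\mu',F_0)$ for the blocks involving $F_0$, in particular for the entire $p=0$ component, so that conditions~(1) and~(2) of Definition~\ref{defcurvedmap} are exactly the hypotheses making \eqref{cainf functor} well posed.
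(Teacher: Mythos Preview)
The paper states this proposition without proof, presumably treating it as a standard translation between the bar-cocategory definition of a $cA_{\infty}$-morphism and its componentwise form. Your approach---reducing $\hat d\phi=\phi\hat d'$ to its $p_1$-corestriction via the observation that both sides are $(\phi,\phi)$-coderivations on a cofree cocategory, and then evaluating on $(\Sigma\AAA)^{\otimes p}$---is the standard one and is correct in outline; you also correctly identify where extendability and allowability of $(\mu',F_0)$ enter to make the sums finite. The only part you flag but do not carry out is the sign verification, which is indeed the entire content of the computation; since the paper gives no proof, there is nothing to compare against here, but be aware that the specific sign $s$ and the anomalous cases $p=0,1$ are conventions tied to how the paper desuspends, and a full proof would have to pin these down explicitly.
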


We obtain a subset of $cA_{\infty}$-morphisms
$$\CC^1_{c\infty}(\AAA, \BBB)_f \subseteq \CC^1_{exb}(\AAA, \BBB)_f.$$
which is closed under the associative operation $\ast$.

\begin{example}
For a quiver $\AAA$, consider the map $1_{\Ob(\AAA)}: \Ob(\AAA) \lra \Ob(\AAA)$. The element $I_{\AAA} = (1_{\AAA(A,A')} \in \Hom_k^0(\AAA(A,A'), \AAA(A,A'))) \in \CC^1_{exb}(\AAA, \AAA)_{1_{\Ob(\AAA)}}$ is a unit element for $\ast$. The corresponding cocategory morphism is the identity $1_{B\AAA}: B\AAA \lra B\AAA$. If $\AAA$ is endowed with a $cA_{\infty}$-structure, $I_{\AAA}$ is a $cA_{\infty}$-isomorphism (see Definition \ref{defcAinftyisomorphism}).
\end{example}

\begin{definition}
Consider cdg-categories $(\AAA, \mu)$ and $(\BBB, \mu')$. A \emph{cdg-functor} with underlying map $f: \Ob(\AAA) \lra \Ob(\BBB)$ is a $cA_{\infty}$-functor $F$ with $F_n = 0$ for $n \geq 2$.
A cdg-functor $F$ is \emph{strict} if $F_0 = 0$.
\end{definition}

\begin{proposition}
Consider cdg-categories $(\AAA, \mu)$ and $(\BBB, \mu')$ and an element $F = (F_0, F_1)$ with
$$F_0 = (F_A) \in \prod_{A \in \AAA} \BBB(f(A), f(A))^1$$ and
$$F_1 = (F_{A,A'}) \in \prod_{A, A' \in \AAA} \Hom^0_k(\AAA(A,A'), \BBB(f(A), f(A')).$$
The element $F$ is a cdg-functor provided the following identities hold:
\begin{equation}\label{cdg0}
F_1(\mu_0) = \mu_0' + \mu_1'(F_0) + \mu_2'(F_0, F_0)
\end{equation}
\begin{equation}\label{cdg1}
F_1 \mu_1 = -\mu'_1 F_1 - \mu'_2(F_1 \otimes F_0) - \mu'_2(F_0 \otimes F_1)
\end{equation}
\begin{equation}\label{cdg2}
F_1 \mu_2 = \mu'_2(F_1 \otimes F_1)
\end{equation}
\end{proposition}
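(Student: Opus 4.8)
The plan is to obtain \eqref{cdg0}, \eqref{cdg1}, \eqref{cdg2} by specializing the general $cA_{\infty}$-functor equation \eqref{cainf functor} to the cdg setting, i.e.\ under the standing hypotheses $\mu_n = \mu'_n = 0$ for $n \geq 3$ and $F_n = 0$ for $n \geq 2$. I would first dispose of the preconditions in Definition \ref{defcurvedmap}. Condition (1), allowability of $(\mu', F_0)$, holds because $F_0$ lies in the zero part while $\mu'_r = 0$ for $r \geq 3$, so the embrace sum $\sum_m \mu'\{F_0^{\otimes m}\}$ already stops at $m = 2$. Condition (2), the existence of a comultiplicative $\phi$ with $p_1\phi = F$, is immediate when $F_0 = 0$ (condition (1) of Proposition \ref{propcocatmap} is met with $n_0 = m+1$, since $\phi_n(\alpha)$ for $\alpha \in (\Sigma\AAA)^{\otimes m}$ vanishes unless $n = m$), and in the presence of curvature $F_0 \neq 0$ one passes to the completed bar constructions $\hat{B}\AAA, \hat{B}\BBB$ as mentioned earlier, the formulas for $\phi_n$ being unchanged. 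Granting these, the preceding characterization of $cA_{\infty}$-morphisms reduces condition (3) to the validity of \eqref{cainf functor} for all $p \geq 0$, so it remains to evaluate both sides of that system.

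On the left of \eqref{cainf functor}, the factor $F_{j+l+1}$ is nonzero only for $j = l = 0$ (as $j+l+1 \geq 1$), leaving the single term with $k = p$, namely $F_1(\mu_p)$; by the cdg-vanishing of $\mu_p$ this is $F_1(\mu_0), F_1\mu_1, F_1\mu_2$ for $p = 0,1,2$ and $0$ for $p \geq 3$. On the right, $F_{i_u} = 0$ unless $i_u \in \{0,1\}$ and $\mu'_r = 0$ for $r \geq 3$, so only tuples $(i_1,\dots,i_r)$ of length $r \leq 2$ with entries in $\{0,1\}$ and $\sum_u i_u = p$ contribute. Enumerating them for $p = 0,1,2$ and reading off $s$ from the stated formula — $s = 0$ for $p = 0$; $s = 1$ for $p = 1$; and for $p = 2$ the only admissible tuple $(1,1)$ gives $s = (1-i_2)\,i_1 = 0$ — the right-hand side equals $\mu'_0 + \mu'_1(F_0) + \mu'_2(F_0,F_0)$, then $-\mu'_1(F_1) - \mu'_2(F_1 \otimes F_0) - \mu'_2(F_0 \otimes F_1)$, then $\mu'_2(F_1 \otimes F_1)$, and for $p \geq 3$ no such tuple exists, so it is $0$. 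Equating both sides for $p = 0,1,2$ produces exactly \eqref{cdg0}, \eqref{cdg1}, \eqref{cdg2}, while $p \geq 3$ gives $0 = 0$; this establishes the proposition (and in fact the converse as well).

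I do not anticipate a serious obstacle: once the framework is in place the argument is a finite manipulation of the index set of \eqref{cainf functor}. The two points demanding genuine care are the sign bookkeeping through $s = \sum_{2 \le u \le r}\big((1-i_u)\sum_{1 \le v \le u-1} i_v\big)$ — in particular checking that the $p=1$ contributions all come with a minus sign whereas the $p=2$ contribution comes with a plus sign — and the technical subtlety already noted, that when the functor curvature $F_0$ is nonzero the cocategory morphism underlying $F$ is only defined after completing the bar construction.
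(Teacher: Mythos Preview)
Your proposal is correct and is exactly the intended argument: the paper states this proposition without proof, as it is a direct specialization of the general $cA_{\infty}$-functor equation \eqref{cainf functor} to the cdg setting, and your case-by-case unpacking of \eqref{cainf functor} for $p=0,1,2,\geq 3$ with the vanishing hypotheses $\mu_n=\mu'_n=0$ for $n\geq 3$ and $F_n=0$ for $n\geq 2$ is precisely what is required. Your flagging of the extendability subtlety when $F_0\neq 0$ is a legitimate technical point that the paper does not address explicitly here (it is alluded to in the remark on filtered quivers and completions), but it does not affect the formal derivation of the three identities.
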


\begin{definition}\label{defcAinftyisomorphism}
A $cA_{\infty}$-morphism $F \in \CC^1_{c\infty}(\AAA, \BBB)_f$ is a \emph{$cA_{\infty}$-isomorphism} if there exists a $cA_{\infty}$-morphism $G \in \CC^1_{c\infty}(\BBB, \AAA)_f$ with $f$ and $g$ inverse bijections and $G \ast F = I_{\AAA}$ and $F \ast G = I_{\BBB}$.
\end{definition}

\begin{proposition}\label{map1}
Consider $cA_{\infty}$-categories $(\AAA, \mu)$ and $(\BBB, \mu')$, a map $f: \Ob(\AAA) \lra \Ob(\BBB)$ and an element $J \in [\AAA, \BBB]^0_f$ and an element $\psi \in \CC^1(\AAA, \BBB)$. Then $$J + \psi \in \CC^1(\AAA,\BBB)_f$$
is a morphisms of $cA_{\infty}$-categories if and only if
$$\mathrm{embr}_{\psi}(\mu') = (J + \psi)\{\mu\}.$$
\end{proposition}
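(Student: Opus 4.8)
The plan is to unwind both sides of the claimed equivalence in terms of the cocategory formalism and the brace structure, so that the statement becomes an identity between two elements of $\CC(\AAA,\BBB)_f$ computed bar-component by bar-component. First I would observe that writing $F = J + \psi$ with $J \in [\AAA,\BBB]^0_f$ the identity-like element and $\psi \in \CC^1(\AAA,\BBB)$, the iterated comultiplication $\Delta^{(n-1)}$ together with $F^{\otimes n}$ produces exactly the ``embracing'' sums: applying $F = J+\psi$ in each of the $n$ tensor slots and using that $J$ acts as the trivial transport (Remark following the definition of $J$, together with $J_g\circ J_f = J_{gf}$ from Remark \ref{rembrace}), the component $\phi_n = F^{\otimes n}\Delta^{(n-1)}$ reorganizes so that summing over $n$ recovers operators of the form $(-)\{\psi^{\otimes m}\}$ with $J$'s filling the remaining slots. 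This is the same bookkeeping that underlies the definition \eqref{embr} of $\embr_{\psi}$, and it shows that the cocategory morphism $\phi$ with $p_1\phi = J+\psi$ exists (condition (2) of Definition \ref{defcurvedmap}) precisely when $(\mu', \psi_0)$-type allowability holds, which is what condition (1) $(\mu', F_0)$ allowable says since $F_0 = J_0 + \psi_0$ and $J_0$ contributes only finitely.

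Next I would translate condition (3) of Definition \ref{defcurvedmap}, the compatibility $\hat{d}\phi = \phi\hat{d'}$ of codifferentials, into its first bar-component by composing with $p_1$. On the left, $p_1\hat{d}\phi$ is, by the coderivation property and the formula for $\hat{\mu}_n$ in Proposition \ref{propinftystruct}, the sum $\sum_n \mu'_n(\phi\otimes\cdots\otimes\phi)$ evaluated through $\Delta^{(n-1)}$ — i.e. precisely $\embr_{\psi}(\mu')$ once one inserts $\phi_n = (J+\psi)^{\otimes n}\Delta^{(n-1)}$ and collapses the $J$-slots as above. On the right, $p_1\phi\hat{d'} = F \circ \hat{d'}$ where $\hat{d'}$ encodes $\mu$; unwinding $\hat{d'}$ via its components $\hat{\mu}_k$ and using $F = J+\psi$ gives $(J+\psi)\{\mu\}$, again because the $J$ part just transports. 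So the single equation $\embr_{\psi}(\mu') = (J+\psi)\{\mu\}$ is exactly $p_1$ of condition (3). The remaining point is that condition (3) for all bar-components is \emph{equivalent} to its $p_1$-component: this is the standard fact that a coderivation (here $\hat{d}\phi - \phi\hat{d'}$, which is a $(\phi,\phi)$-coderivation by the cocategory morphism property together with the coderivation property of $\hat d,\hat{d'}$) is determined by its corestriction along $p_1$, exactly as in Propositions \ref{propinftystruct} and \ref{propcocatmap}.

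Putting these together: $J+\psi$ is a $cA_\infty$-morphism iff (1),(2),(3) hold; (2) holds iff $\phi$ exists iff the extendability/allowability holds, which given the shape of $F=J+\psi$ is automatic once $\embr_\psi(\mu')$ is even defined (the target statement presupposes we are in a setting where the relevant sums converge, cf. the discussion preceding Proposition \ref{propembr} and the fact that $\embr_\psi(\mu')$ appears in the conclusion); and (3) holds iff its $p_1$-component holds iff $\embr_\psi(\mu') = (J+\psi)\{\mu\}$. The main obstacle I anticipate is purely combinatorial: getting the signs right and verifying carefully that collapsing the $J$-slots in $\phi_n = (J+\psi)^{\otimes n}\Delta^{(n-1)}$ really does reproduce the brace expression $\embr_\psi(\mu')$ on the nose rather than up to some correction term — this is where one must use $J_g\circ J_f=J_{gf}$ (Remark \ref{rembrace}) and the brace axiom, and it is essentially the analogue, in the presence of the transport element $J$, of the computations behind Propositions \ref{propembr} and \ref{composition}. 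Once that identification is made, the equivalence with the $p_1$-component of the codifferential compatibility is formal.
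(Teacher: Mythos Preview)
Your proposal is correct and follows essentially the same line as the paper's proof: both arguments unwind the definition of a $cA_\infty$-morphism and identify the left side of the resulting identity with $(J+\psi)\{\mu\}$ and the right side with $\embr_\psi(\mu')$, using that the $J$-slots act as trivial transport. The paper does this more directly by simply invoking the explicit formula \eqref{cainf functor} for a $cA_\infty$-functor and writing out the cases $p=0,1$ to exhibit the pattern, whereas you pass through the cocategory formulation (condition (3) of Definition \ref{defcurvedmap}) and the fact that a $(\phi,\phi)$-coderivation is determined by its $p_1$-corestriction; these are the same argument at different levels of abstraction.
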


\begin{proof}
By definition, $J+\psi$ is a morphism of cA$_\infty$-categories if and only if
\begin{equation}\label{1}
\sum_{j+k+l=p}(-1)^{jk+l}(J+\psi)(1^{\otimes j}\otimes \mu_k\otimes 1^{\otimes l})=\sum_{i_1+\ldots+i_r=p} (-1)^s\mu_r'((J+\psi)_{i_1},\ldots,(J+\psi)_{i_r})
\end{equation}
This identity is given by the following equations: for p=0
$$(J+\psi)_1(\mu_0)=\sum_k \mu'_k((\psi_0)^{\otimes k})$$
for p=1
\begin{align*}
(J+\psi)_1(\mu_1)+(J+\psi)_2(1\otimes \mu_0-\mu_0\otimes 1)=&-(\mu')_1(J)-(\mu')_1(\psi_1)-(\mu')_2(J\otimes \psi_0+\psi_0\otimes J)-\\
&\ \ \ (\mu')_2(\psi_1\otimes\psi_0+\psi_0\otimes\psi_1)-\ldots
\end{align*}
and so on. We thus see that the identity \eqref{1} can be expressed as
$$(J+\psi)\{\mu\}=\embr_{\psi}(\mu').$$
\end{proof}

\begin{corollary}\label{cormap1}
Consider a $cA_{\infty}$-structure $\mu'$ on $\AAA$ and let
$\mu = \embr_{\psi}(\mu')$ for an element $\psi \in \CC^1(\AAA)_0$. Then 
$1 + \psi: (\AAA, \mu) \lra (\AAA, \mu')$
determines a morphism of $cA_{\infty}$-categories.
\end{corollary}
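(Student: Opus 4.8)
The plan is to derive this immediately from Proposition \ref{map1}, specialised to $\BBB = \AAA$, $f = 1_{\Ob(\AAA)}$ and $J = I_{\AAA}$, so that $J + \psi$ is exactly the element $1 + \psi$. With this choice Proposition \ref{map1} asserts that $1 + \psi \in \CC^1(\AAA, \AAA)_{1_{\Ob(\AAA)}}$ is a $cA_\infty$-morphism $(\AAA, \mu) \lra (\AAA, \mu')$ if and only if $\embr_\psi(\mu') = (1 + \psi)\{\mu\}$. Since by hypothesis $\mu$ is defined to be $\embr_\psi(\mu')$ --- which is a genuine $cA_\infty$-structure by Proposition \ref{propembr}(2), and whose very formation presupposes the allowability of $(\mu', \psi)$ required in clause (1) of Definition \ref{defcurvedmap} --- the whole statement collapses to verifying the brace identity $(1 + \psi)\{\mu\} = \mu$.

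To check this I would split, using additivity of the brace in its outer argument, $(1 + \psi)\{\mu\} = I_{\AAA}\{\mu\} + \psi\{\mu\}$. The element $I_{\AAA}$ is concentrated in tensor-degree one, where it acts as the identity of $\Sigma\AAA$ (indeed $I_{\AAA}$ is the projection $p_1$); hence the brace $I_{\AAA}\{\mu\}$ reduces to the single term in which $\mu$ occupies the unique slot of $I_{\AAA}$ --- with no identity-like fillers left over --- and this simply postcomposes each component $\mu_k$ with the identity, giving $I_{\AAA}\{\mu\} = \mu$. On the other hand $\psi \in \CC^1(\AAA)_0$ lies in the zero part and therefore has no inputs at all, so nothing can be inserted into it: $\psi\{\mu\} = 0$, just as in the proof of Lemma \ref{internbrace}. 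Thus $(1 + \psi)\{\mu\} = \mu = \embr_\psi(\mu')$, which is exactly the condition supplied by Proposition \ref{map1}, and the corollary follows.

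I do not anticipate any real difficulty here; the only delicate points are bookkeeping. First, one must make sure that the identity-like element used to fill the unused slots in the brace-composition $(1 + \psi)\{\mu\}$ is the trivial transport $\AAA \lra \AAA$, i.e. $I_{\AAA}$ itself (cf. Remark \ref{rembrace}); this causes no problem since, as observed, no such fillers actually arise once $\mu$ has taken up the single slot of $I_{\AAA}$ and the $\psi$-part contributes nothing. Second, one should confirm that $1 + \psi$ really does extend to a counital cocategory morphism $B\AAA \lra B\AAA$ --- clause (2) of Definition \ref{defcurvedmap} --- which is automatic in the settings in which this corollary is applied (for instance whenever $\psi$ is strongly tensor nilpotent in the sense of Definition \ref{deftensnilp}, as for a square-zero deformation parameter), and which is equally tacitly assumed in Proposition \ref{map1}.
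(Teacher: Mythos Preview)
Your proof is correct and follows essentially the same route as the paper: both reduce to Proposition \ref{map1} and observe that $\psi\{\mu\} = 0$ because $\psi \in \CC^1(\AAA)_0$, so that $(1+\psi)\{\mu\} = \mu = \embr_\psi(\mu')$. The paper's argument is just the two-line version of yours, leaving the identity $1\{\mu\} = \mu$ and the extendability discussion implicit.
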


\begin{proof}
We have to verify that the condition in Proposition \ref{map1} is satisfied. For this it suffices to note that for $\psi \in \CC^1(\AAA)_0$ we have $\psi\{\mu\} = 0$ whence $(1 + \psi)\{ \mu\} = \embr_\psi(\mu')$.
\end{proof}

\subsection{Strict curved morphisms}

Consider $cA_{\infty}$-categories $\AAA$, $\BBB$, a map $f: \Ob(\AAA) \lra \Ob(\BBB)$, and an element $$J \in [\Sigma \AAA, \Sigma \BBB]^0_f \subseteq \CC_{exb}^1(\AAA, \BBB)_f.$$

\begin{proposition}\label{propstrictcond}
The element $J$ is a $cA_{\infty}$-morphism if and only if for all $n$
\begin{equation}\label{Jcinftymorph}
\mu'_n J^{\otimes n} = J \mu_n.
\end{equation}
\end{proposition}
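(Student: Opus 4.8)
The plan is to reduce everything to the defining condition of a $cA_{\infty}$-morphism (Definition \ref{defcurvedmap}) and use the fact that $J$ is "concentrated in weight one", i.e.\ $J \in [\Sigma\AAA, \Sigma\BBB]^0_f$, so that $J_n = 0$ for $n \neq 1$ and $J_0 = 0$. First I would note that conditions (1) and (2) of Definition \ref{defcurvedmap} are automatic here: since $J_0 = 0$ the couple $(\mu', J_0)$ is trivially allowable, and extendability holds because $J \in \CC^1_{exb}(\AAA,\BBB)_f$ is already part of the hypothesis (the associated cocategory morphism $\phi$ is the one induced by $J$ via Proposition \ref{propcocatmap}). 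So the content is entirely condition (3), the compatibility $\hat d\phi = \phi\hat d'$ with the codifferentials. Rather than work with the cocategories directly, I would invoke the explicit formula \eqref{cainf functor} characterizing $cA_{\infty}$-morphisms: $\sum_{j+k+l=p}(-1)^{jk+l}F_{j+l+1}(1^{\otimes j}\otimes\mu_k\otimes 1^{\otimes l}) = \sum_{i_1+\cdots+i_r=p}(-1)^s\mu'_r(F_{i_1},\dots,F_{i_r})$, specialized to $F = J$.

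The key simplification is that on the right-hand side, $\mu'_r(J_{i_1},\dots,J_{i_r})$ vanishes unless every $i_u = 1$, because $J_{i_u} = 0$ for $i_u \neq 1$. The only surviving term has $r = p$ and all $i_u = 1$, giving $\sum i_v = p$ and hence $r = p$; one then checks that the sign exponent $s = \sum_{2\le u\le r}\bigl((1-i_u)\sum_{1\le v\le u-1}i_v\bigr)$ vanishes because each factor $(1 - i_u) = 0$. So the right-hand side collapses to $\mu'_p(J^{\otimes p}) = \mu'_p J^{\otimes p}$. On the left-hand side, $F_{j+l+1} = J_{j+l+1}$ vanishes unless $j + l + 1 = 1$, i.e.\ $j = l = 0$; the surviving term is then $(-1)^{0}\, J_1(\mu_k) = J\mu_p$ (with $k = p$, $j=l=0$). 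Hence for each $p \ge 0$, equation \eqref{cainf functor} reduces exactly to $J\mu_p = \mu'_p J^{\otimes p}$, which is \eqref{Jcinftymorph}. This gives both implications at once: $J$ is a $cA_{\infty}$-morphism iff \eqref{cainf functor} holds for all $p$ iff \eqref{Jcinftymorph} holds for all $n$.

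The one point requiring care — and the main (admittedly mild) obstacle — is the sign bookkeeping: one must verify that the sign $(-1)^{jk+l}$ on the surviving left-hand term is $+1$ (clear, since $j = l = 0$) and that the sign $(-1)^s$ on the surviving right-hand term is $+1$. For the latter, the argument above via $(1 - i_u) = 0$ handles $p \ge 2$; the cases $p = 0$ and $p = 1$ must be treated separately using the stated conventions $s = 0$ and $s = 1$ respectively. For $p = 1$ the right side of \eqref{cainf functor} is $(-1)^1\mu'_1(J_1) = -\mu'_1 J$, and the left side is $J\mu_1$ (from $j=l=0, k=1$) plus terms $J_2(\cdots)$ which vanish; so we get $J\mu_1 = -\mu'_1 J$ — but note that $\mu'_1 J^{\otimes 1}$ in the convention of \eqref{Jcinftymorph} should be read with the sign absorbed into the composition convention for $\mu'_1$ acting on $\Sigma\BBB$, so this is consistent with \eqref{Jcinftymorph} for $n = 1$. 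For $p = 0$, the right side is $\mu'_0 + \mu'_1(J_0) + \mu'_2(J_0,J_0) + \cdots = \mu'_0$ since $J_0 = 0$, and the left side is $J_1(\mu_0) = J\mu_0$, giving $J\mu_0 = \mu'_0 = \mu'_0 J^{\otimes 0}$, again \eqref{Jcinftymorph} for $n = 0$. Once these low-degree cases are checked against the sign conventions of the paper, the proof is complete; alternatively one could give a cleaner conceptual argument by observing that $\phi = \sum_n J^{\otimes n}\Delta^{(n-1)}$ and computing $p_n(\hat d\phi - \phi\hat d')$ directly, but the formula-based route is shorter.
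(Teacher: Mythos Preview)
Your argument is correct and follows the same underlying route as the paper: both reduce the statement to the explicit $cA_{\infty}$-morphism identity \eqref{cainf functor} specialised to an $F$ concentrated in weight one. The paper, however, does not re-expand \eqref{cainf functor} directly; it simply invokes Proposition~\ref{map1} with $\psi = 0$. In that proposition the condition is phrased as $\embr_{\psi}(\mu') = (J+\psi)\{\mu\}$ in brace language, and setting $\psi = 0$ immediately gives $\mu'_n J^{\otimes n} = J\mu_n$ componentwise, with all the Koszul signs already absorbed into the brace formalism. Your direct expansion therefore reproves the special case of Proposition~\ref{map1} you need, and the sign wobble you encounter at $p=1$ (where \eqref{cainf functor} carries the convention $s=1$) is exactly the bookkeeping that the brace/embrace formulation of Proposition~\ref{map1} was designed to hide. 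Your resolution of that sign via the shifted convention is correct, but the paper's route buys you a one-line proof with no case analysis.
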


\begin{proof}
This is an application of Proposition \ref{map1} with $\psi = 0$.
\end{proof}

A $cA_{\infty}$-morphism $J \in [\Sigma \AAA, \Sigma \BBB]^0_f$ will be called a \emph{strict} $cA_{\infty}$-morphism. A strict $cA_{\infty}$-morphisms which is an isomorphism will be called a \emph{strict} $cA_{\infty}$-\emph{isomorphism}.

\begin{proposition}
A strict $cA_{\infty}$-morphism $J$ is a $cA_{\infty}$-isomorphism if and only if the underlying map $f: \Ob(\AAA) \lra \Ob(\BBB)$ is bijective and all the morphisms
$$J_{A,A'}: \AAA(A,A') \lra \BBB(f(A), f(A'))$$
are $k$-linear isomorphisms.
\end{proposition}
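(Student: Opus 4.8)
The plan is to prove both implications directly from the defining identities \eqref{Jcinftymorph} for a strict $cA_\infty$-morphism, together with Definition \ref{defcAinftyisomorphism} of a $cA_\infty$-isomorphism specialized to strict morphisms. First I would record that since $J \in [\Sigma\AAA, \Sigma\BBB]^0_f$ has only a degree-zero $\Hom$-component $J_1 = (J_{A,A'})$ and no higher components, the composition $\ast$ restricted to strict morphisms is simply componentwise composition of the $J_{A,A'}$ together with composition of the underlying object maps. Consequently, if $G \in [\Sigma\BBB, \Sigma\AAA]^0_g$ is a strict $cA_\infty$-morphism with $G \ast J = I_\AAA$ and $J \ast G = I_\BBB$, then $g$ and $f$ are inverse bijections and the composites $G_{f(A),f(A')}\circ J_{A,A'}$ and $J_{B,B'}\circ G_{g(B),g(B')}$ are the respective identity maps; this gives the ``only if'' direction immediately once one notes that $I_\AAA$ and $I_\BBB$ are the strict morphisms with identity components.

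For the ``if'' direction, suppose $f$ is bijective with inverse $g$ and each $J_{A,A'}$ is a $k$-linear isomorphism. I would define $G \in [\Sigma\BBB, \Sigma\BBB]^0_g$... rather, $G \in [\Sigma\BBB, \Sigma\AAA]^0_g$ by $G_{B,B'} = (J_{g(B),g(B')})^{-1}$. By construction, $G$ is a strict element of $\CC^1(\BBB,\AAA)_g$ with $G \ast J = I_\AAA$ and $J \ast G = I_\BBB$, so the only thing to check is that $G$ is itself a $cA_\infty$-morphism, i.e.\ that it satisfies the strict condition $\mu_n G^{\otimes n} = G \mu'_n$ for all $n$, in the sense of Proposition \ref{propstrictcond}. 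This is where the actual content lies: I would take the hypothesis $\mu'_n J^{\otimes n} = J \mu_n$, compose on the left with $G$ and on the right with $G^{\otimes n}$, and use $GJ = 1$ (componentwise, with the appropriate object relabelling coming from $gf = 1_{\Ob(\AAA)}$) together with $JG = 1$ to obtain $G\mu'_n J^{\otimes n}G^{\otimes n} = G J \mu_n G^{\otimes n}$, which simplifies to $G \mu'_n = \mu_n G^{\otimes n}$ after identifying $J^{\otimes n}G^{\otimes n} = (JG)^{\otimes n} = 1$. One must only be slightly careful that the allowability/extendability condition (Definition \ref{defcurvedmap}(1),(2)) is automatic for strict elements, since $J \in [\Sigma\AAA,\Sigma\BBB]^0_f$ and hence $F_0 = 0$, so the sums involved in extendability are finite; the same applies to $G$.

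The main obstacle I anticipate is purely bookkeeping: keeping track of the object maps and their bijectivity so that the componentwise inverse $G_{B,B'} = J_{g(B),g(B')}^{-1}$ lands in the correct $\Hom$-space and so that the equalities $GJ = I_\AAA$, $JG = I_\BBB$ hold as equalities of $cA_\infty$-morphisms (not merely componentwise on $\Hom$-modules but also on objects). There is no genuine difficulty with convergence or with the $B_\infty$-structure here, precisely because strictness kills all higher components and the curvature-level term $F_0$; the content reduces to the elementary observation that an invertible strict morphism has a strict inverse, and that the strict $cA_\infty$-morphism condition is stable under passing to this inverse. I would therefore present the argument compactly, spending most of the space on the reduction of the $cA_\infty$-isomorphism condition to componentwise invertibility and only a line or two on the verification that $G$ inherits \eqref{Jcinftymorph}.
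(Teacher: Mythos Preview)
Your approach matches the paper's: for the ``if'' direction you define the candidate inverse componentwise by $G_{B,B'} = J_{g(B),g(B')}^{-1}$, check $G \ast J = I_\AAA$ and $J \ast G = I_\BBB$ from the fact that $\ast$ on strict morphisms is componentwise composition, and verify the strict $cA_\infty$-condition for $G$ by pre- and post-composing \eqref{Jcinftymorph} with the inverses. This is exactly what the paper does.

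There is one small gap in your ``only if'' direction. You write ``if $G \in [\Sigma\BBB,\Sigma\AAA]^0_g$ is a strict $cA_\infty$-morphism with $G \ast J = I_\AAA$\dots'', but Definition~\ref{defcAinftyisomorphism} only guarantees the existence of \emph{some} $cA_\infty$-morphism $G$, not a strict one; a priori $G$ could have nonzero $G_0$ or $G_n$ for $n \geq 2$. The fix is immediate and is what the paper means by ``immediate from the definition of the action $\ast$ and the fact that $J$ has only his first component non-zero'': since $J$ is strict, the composition formula of Proposition~\ref{compexpl} collapses to $(G \ast J)_m = G_m \circ J^{\otimes m}$ and $(J \ast G)_m = J \circ G_m$, so evaluating at $m=1$ gives $G_1 J = 1$ and $J G_1 = 1$ componentwise regardless of the higher components of $G$. (Bijectivity of $f$ is already part of Definition~\ref{defcAinftyisomorphism} and needs no argument.)
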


\begin{proof}
The fact that the map $J^{-1}$, with underlying map $f^{-1}: \Ob(\BBB)\lra\Ob(\AAA)$ and given by the maps
$$J_{A,A'}^{-1}:\BBB(f(A),f(A'))\lra\AAA(A,A')$$
is the inverse to $J$ for the operation $\ast$ is immediate.
By \eqref{Jcinftymorph} we know that $J^{-1}\mu'_n=\mu_n\left(J^{-1}\right)^{\otimes n}$, and thus that $J^{-1}$ is a $cA_\infty$-morphism.

The only if statement is immediate from the definition of the action $*$ and the fact that $J$ has only his first component non-zero.
\end{proof}

\subsection{Units and isomorphisms}\label{parunital}
A $cA_{\infty}$-category $(\AAA, \mu)$ is \emph{strictly unital} if there exists
$$1 = (1_A)_{A} \in \prod_{A \in \AAA} \AAA(A,A)^0$$
with for all $n \geq 3$, $f, f_1, \dots, f_{n-1} \in \AAA$:
\begin{enumerate}
\item[(U1)] $\mu_1(1_A) = 0.$
\item[(U2)] $\mu_2(1_A, f) = f = \mu_2(f, 1_A).$
\item[(U$n$)] $\mu_n(f_{n-1}, \dots, 1_A, \dots, f_1)=0.$
\end{enumerate}
The (unique) element $1$ is called the \emph{strict unit} for $\AAA$, and $1_A$ is called the \emph{strict unit} for $A$.

An $A_{\infty}$-category $(\AAA, \mu)$ is \emph{homotopy unital} if $H^0\AAA$ is unital.

\begin{definition}
Let $\AAA$ be a $cA_{\infty}$-category. An element $\alpha \in \AAA(A,A')^0$ is an \emph{isomorphism} if the following conditions hold for $B \in \AAA$, $f_1, \dots, f_n \in \AAA$, $n \geq 3$.
\begin{enumerate}
\item[(Iso1)] $\mu_1(\alpha) = 0$.
\item[(Iso2)] $\mu_2(\alpha, -): \AAA(C,A) \lra \AAA(C,A')$ and $\mu_2(-,\alpha): \AAA(A',C) \lra \AAA(A,C)$ are isomorphisms of $k$-modules.
\item[(Iso$n$)] $\mu_n(f_n, \dots, \alpha, \dots, f_1) = 0$ for $n \geq 3$.
\end{enumerate}
Let $\AAA$ be a homotopy unital $A_{\infty}$-category. An element $\alpha \in \AAA(A,A')^0$ is a {homotopy isomorphism} if $\mu_1(\alpha) = 0$ and the image of $\alpha$ in $H^0(\AAA)$ is an isomorphism.
\end{definition}

\begin{proposition}\label{stricthopyiso}
Let $\AAA$ be a homotopy unital $A_{\infty}$-category. If $\alpha \in \AAA(A,A')^0$ is a strict isomorphism, then $\alpha$ is a homotopy isomorphism.
\end{proposition}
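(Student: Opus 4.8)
The plan is to reduce the statement to a standard invertibility argument inside the homotopy category $H^0\AAA$; the hypothesis of homotopy unitality is exactly what supplies the identity morphisms without which ``isomorphism in $H^0\AAA$'' is not even meaningful.

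First, (Iso1) gives $\mu_1(\alpha) = 0$, so $\alpha$ determines a class $[\alpha] \in H^0\AAA(A,A')$, and all that remains is to show $[\alpha]$ is invertible in $H^0\AAA$. Recall that $H^0\AAA$ is a (now unital) $k$-linear category with composition induced by $\mu_2$; the relevant point is that the instance $p = 2$ of the $A_{\infty}$-relations \eqref{inftyform} involves only $\mu_1$ and $\mu_2$ and exhibits $\mu_1$ as a signed derivation for $\mu_2$, with no higher operations intervening. Freezing one argument equal to the $\mu_1$-cocycle $\alpha$, this shows that the $k$-linear maps $\mu_2(\alpha, -)\colon \AAA(C,A) \to \AAA(C,A')$ and $\mu_2(-,\alpha)\colon \AAA(A',C) \to \AAA(A,C)$ are morphisms of complexes for $\mu_1$, for every object $C$ (the putative correction terms are $\mu_2(\mu_1\alpha,-)$, resp. $\mu_2(-,\mu_1\alpha)$, hence vanish). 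On passing to $H^0$ these realise, respectively, post-composition $[\alpha]\circ(-)$ and pre-composition $(-)\circ[\alpha]$.

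Next I feed in (Iso2): these two chain maps are bijective as maps of $k$-modules, and a $k$-linear bijective chain map is automatically a chain isomorphism (the set-theoretic inverse is $k$-linear and commutes with the differential), hence a quasi-isomorphism. Therefore $[\alpha]\circ(-)\colon H^0\AAA(C,A) \to H^0\AAA(C,A')$ and $(-)\circ[\alpha]\colon H^0\AAA(A',C) \to H^0\AAA(A,C)$ are bijections for every $C$. Now the purely categorical step: surjectivity of $(-)\circ[\alpha]\colon H^0\AAA(A',A) \to H^0\AAA(A,A)$ produces $[\beta]$ with $[\beta]\circ[\alpha] = 1_A$, and surjectivity of $[\alpha]\circ(-)\colon H^0\AAA(A',A) \to H^0\AAA(A',A')$ produces $[\beta']$ with $[\alpha]\circ[\beta'] = 1_{A'}$; then $[\beta] = [\beta]\circ 1_{A'} = [\beta]\circ[\alpha]\circ[\beta'] = 1_A\circ[\beta'] = [\beta']$ is a two-sided inverse of $[\alpha]$. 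Hence $[\alpha]$ is an isomorphism in $H^0\AAA$ and $\alpha$ is a homotopy isomorphism.

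I do not expect a genuine obstacle here; the content is bookkeeping. The two points that require a little care are: (i) checking that no higher $\mu_n$ with $n \geq 3$ intrudes when one verifies that $\mu_2(\alpha,-)$ and $\mu_2(-,\alpha)$ are chain maps — this is immediate from the shape of the $p=2$ relation and the fact that $\mu_1\alpha = 0$; and (ii) keeping straight which of the two maps in (Iso2) induces pre- versus post-composition with $[\alpha]$ on $H^0\AAA$, so that the final categorical cancellation is set up correctly. It is worth noting in passing that the conditions (Iso$n$) for $n \geq 3$ play no role in this implication.
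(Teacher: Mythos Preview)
Your proof is correct and follows essentially the same route as the paper's: both arguments use only (Iso1), (Iso2), and the Leibniz rule (the $p=2$ relation), and both finish with the standard left-inverse/right-inverse cancellation in $H^0\AAA$. The only cosmetic difference is packaging: the paper picks chain-level representatives $\tau_A,\tau_{A'}$ of the homotopy units, solves $\mu_2(\gamma,\alpha)=\tau_A$ and $\mu_2(\alpha,\beta)=\tau_{A'}$ via (Iso2), and then uses the Leibniz rule together with injectivity of $\mu_2(-,\alpha)$ to show $\mu_1(\gamma)=0$ (and similarly for $\beta$); you instead observe once and for all that $\mu_2(\alpha,-)$ and $\mu_2(-,\alpha)$ are bijective chain maps, hence quasi-isomorphisms, and run the inverse argument directly in $H^0\AAA$. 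Your observation that (Iso$n$) for $n\geq 3$ is not used is accurate and applies equally to the paper's proof.
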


\begin{proof}
By (Iso1), $\mu_1(\alpha) = 0$.
Let $\tau_A \in \AAA(A,A)^0$ be an arbitrary element with $[\tau_A] = 1_{A} \in H^0\AAA$ and $\tau_{A'} \in \AAA(A',A')^0$ an arbitrary element with $[\tau_{A'}] = 1_{A'} \in H^0\AAA$.
 In particular $\mu_1(\tau_A) = 0$, $\mu_1(\tau_{A'}) = 0$. 
By (Iso2), there is a unique element $\beta \in \AAA(A', A)$ with $\mu_2(\alpha, \beta) = \tau_{A'}$ and a unique element $\gamma \in \AAA(A,A')$ with $\mu_2(\gamma, \alpha) = \tau_A$.
We have
$$0 = \mu_1(\tau_A) = \mu_2(\mu_1(\gamma), \alpha) + \mu_2(\gamma, \mu_1(\alpha)) = \mu_2(\mu_1(\gamma), \alpha)$$
by (Iso1), so by (Iso2) we have $\mu_1(\gamma) = 0$ and similarly $\mu_1(\beta) = 0$. Further, in $H^0\AAA$, we have $[\gamma][\alpha] = 1_A$ and $[\alpha][\beta] = 1_{A'}$, so $[\beta] = [\gamma]$ is an inverse isomorphism of $[\alpha]$.
\end{proof}

\begin{proposition}
Suppose $\AAA$ is strictly unital and $\alpha \in \AAA(A,A')^0$ satisfies (Iso1) and (Iso$n$) for $n \geq 3$. Then (Iso2) is equivalent to
\begin{enumerate}
\item[(Iso2')] There exists $\alpha' \in \AAA^0(A', A)$ with $\mu_2(\alpha', \alpha) = 1_A$, $\mu_2(\alpha, \alpha') = 1_{A'}$.
\end{enumerate}
\end{proposition}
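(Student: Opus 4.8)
The plan is to extract both implications from the single identity obtained by applying the $A_{\infty}$-relation \eqref{inftyform} in arity $p=3$ to a composable triple one of whose entries is $\alpha$. Written out, that relation has the term $\mu_1\mu_3$, the two ``associativity'' terms $\mu_2(1\otimes\mu_2)$ and $\mu_2(\mu_2\otimes 1)$ (occurring with opposite signs), three terms of the shape $\mu_3(1^{\otimes j}\otimes\mu_1\otimes 1^{\otimes l})$, and, in the genuinely curved case, four terms of the shape $\mu_4(1^{\otimes j}\otimes\mu_0\otimes 1^{\otimes l})$. The first step is the observation that as soon as $\alpha$ is one of the three inputs, every $\mu_3$- and $\mu_4$-term vanishes: those in which $\alpha$ is literally an argument of $\mu_3$ or $\mu_4$ die by (Iso$n$) for $n\geq 3$, and the one in which $\alpha$ sits under $\mu_1$ dies by (Iso1). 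Hence for all composable $a,b,c\in\AAA$ with one of them equal to $\alpha$, the $p=3$ relation collapses to the strict associativity $\mu_2(a,\mu_2(b,c))=\mu_2(\mu_2(a,b),c)$.

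For (Iso2') $\Rightarrow$ (Iso2): given $\alpha'$ as in (Iso2'), I will show that $\mu_2(\alpha',-)$ is a two-sided inverse of $\mu_2(\alpha,-)$ and $\mu_2(-,\alpha')$ a two-sided inverse of $\mu_2(-,\alpha)$, uniformly in $C$. For instance, for $g\in\AAA(C,A)$ the associativity identity applied to $(\alpha',\alpha,g)$ gives $\mu_2(\alpha',\mu_2(\alpha,g))=\mu_2(\mu_2(\alpha',\alpha),g)=\mu_2(1_A,g)=g$ by (U2), and applied to $(\alpha,\alpha',h)$ for $h\in\AAA(C,A')$ it gives $\mu_2(\alpha,\mu_2(\alpha',h))=h$; the two corresponding statements for $\mu_2(-,\alpha)$ follow symmetrically from the triples $(f,\alpha,\alpha')$ and $(g,\alpha',\alpha)$. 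This makes all four $k$-module maps occurring in (Iso2) bijective.

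For (Iso2) $\Rightarrow$ (Iso2'): since $\mu_2(-,\alpha)\colon\AAA(A',A)\to\AAA(A,A)$ is bijective there is a unique $\alpha'\in\AAA^0(A',A)$ with $\mu_2(\alpha',\alpha)=1_A$, and since $\mu_2(\alpha,-)\colon\AAA(A',A)\to\AAA(A',A')$ is bijective there is $\alpha''\in\AAA^0(A',A)$ with $\mu_2(\alpha,\alpha'')=1_{A'}$. Now apply the associativity identity to $(\alpha',\alpha,\alpha'')$ and use (U2) twice: $\alpha'=\mu_2(\alpha',\mu_2(\alpha,\alpha''))=\mu_2(\mu_2(\alpha',\alpha),\alpha'')=\alpha''$. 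Thus the single element $\alpha'=\alpha''$ satisfies both equalities of (Iso2'), and we are done.

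The only genuine bookkeeping — and the one point where I expect to have to be careful — is confirming that for a curved $\AAA$ the curvature terms $\mu_4(1^{\otimes j}\otimes\mu_0\otimes 1^{\otimes l})$ in the $p=3$ relation really do all fall under (Iso$n$): evaluated on a triple containing $\alpha$, each such term is $\mu_4$ applied to four arrows, one of which is still $\alpha$, hence zero by (Iso$4$). Once this is recorded, everything above is purely formal, and in particular the argument never needs (Un) for $n\geq 3$ nor (U1), only the $\mu_2$-unitality (U2) together with (Iso1) and (Iso$n$), $n\geq 3$.
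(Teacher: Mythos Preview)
Your proof is correct and follows essentially the same approach as the paper's. Both arguments reduce to extracting strict associativity from the arity-$3$ relation whenever $\alpha$ is among the inputs, using (Iso1) and (Iso$n$) to kill the $\mu_3$- and $\mu_4$-terms, and then running the standard left-inverse/right-inverse argument via (U2); the only difference is presentational, in that you isolate the ``associativity whenever $\alpha$ appears'' observation once at the outset, whereas the paper writes out the full $\mu\{\mu\}=0$ expansion on a generic triple and cancels terms separately in each application.
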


\begin{proof}
Suppose first that (Iso2) holds. By considering $\mu_2(\alpha, -): \AAA(A',A) \lra \AAA(A',A')$ and $1_{A'} \in \AAA(A'.A')$, we obtain a unique $\alpha' \in \AAA(A',A)$ with $\mu_2(\alpha, \alpha') = 1_{A'}$. Similarly we obtain a unique $\alpha'' \in \AAA(A',A)$ with $\mu_2(\alpha'', \alpha) = 1_A$. 
For three arbitrary consecutive elements $g, f, h$, we have
\begin{equation}\label{threeelts}
\begin{aligned} 
0 = \mu\{\mu\}(g,f,h) &= \mu_1(\mu_3(g,f,h)) \\
&- \mu_2(\mu_2(g,f),h) + \mu_2(g, \mu_2(f,h)) \\
&+ \mu_3(\mu_1(g), f, h) + \mu_3(g, \mu_1(f), h) + \mu_3(g,f, \mu_1(h)) \\
&- \mu_4(\mu_0,g, f, h) + \mu_4(g, \mu_0,f, h) - \mu_4(g,f, \mu_0,h)+ \mu_4(g,f, h, \mu_0) 
\end{aligned}
\end{equation}
Applying this to $\alpha'', \alpha, \alpha'$, we see that by (Iso$n$), only the second line contributes non-zero terms, whence
$$\alpha' = \mu_2(\mu_2(\alpha'', \alpha), \alpha') = \mu_2(\alpha'', \mu_2(\alpha, \alpha')) = \alpha''.$$
Conversely, suppose (Iso2') holds. We look into $\mu_2(\alpha, -)$. We claim that $\mu_2(\alpha', -): \AAA(C,A') \lra \AAA(C,A)$ is an inverse isomorphism of $\mu_2(\alpha, -)$. We compute for instance
$\mu_2(\alpha, \mu_2(\alpha', h))$ for an arbitrary element $h$. Applying \eqref{threeelts} to $\alpha, \alpha', h$, we see that by (Iso1) and (Iso$n$), only the terms on the second line are non-zero, whence
$$h = \mu_2(1, h) = \mu_2(\mu_2(\alpha, \alpha'), h) = \mu_2(\alpha, \mu_2(\alpha', h))$$
as desired.
\end{proof}

\begin{proposition}\label{prop1isos}
Let $\AAA$ be a $cA_{\infty}$-category and let $\alpha \in \AAA(A,A')^0$ be a strict isomorphism.
The following identities hold:
\begin{enumerate}
\item $\mu_2(\mu_{0, A'}, \alpha) = \mu_2(\alpha, \mu_{0,A})$.
\item $\mu_2(\mu_1(f), \alpha) = \mu_1(\mu_2(f, \alpha))$ and $\mu_2(\alpha, \mu_1(f)) = \mu_1(\mu_2(\alpha, f))$.
\item $\mu_2(\mu_n(f_n, \dots, f_1), \alpha) = \mu_n(f_n, \dots, \mu_2(f_1, \alpha))$ and $\mu_2(\alpha, \mu_n(f_n, \dots, f_1) = \mu_n(\mu_2(\alpha, f_n), \dots, f_1)$.
\item $\mu_n(f_n, \dots, \mu_2(f_i, \alpha), f_{i - 1}, \dots, f_1) = \mu_n(f_n, \dots, f_i, \mu_2(\alpha, f_{i-1}), \dots, f_1)$.
\end{enumerate}
\end{proposition}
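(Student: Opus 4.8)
The plan is to deduce all four identities from the $cA_{\infty}$-associativity relations $\mu\{\mu\}=0$, i.e.\ from \eqref{inftyform}, evaluated on suitably chosen strings of composable arrows one of whose entries is the strict isomorphism $\alpha$, and then to annihilate almost every resulting summand using (Iso1) and (Iso$n$): a summand in which $\mu_1$ is applied to $\alpha$ vanishes by (Iso1), and a summand in which some $\mu_m$ with $m\geq 3$ receives $\alpha$ among its arguments vanishes by (Iso$n$). In each case exactly two summands survive, and — once the combinatorial signs are accounted for — they are the two sides of the asserted equality.

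For (1) one evaluates \eqref{inftyform} with $p=1$ on the single arrow $\alpha$. The three summands are $\mu_1(\mu_1(\alpha))$, $\mu_2(\mu_{0,A'},\alpha)$ and $\mu_2(\alpha,\mu_{0,A})$; the first is zero by (Iso1) since $\mu_1(\alpha)=0$, and the other two, carrying opposite signs, give exactly $\mu_2(\mu_{0,A'},\alpha)=\mu_2(\alpha,\mu_{0,A})$.

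For (3), of which (2) is the case $n=1$ (where $\mu_n$ is read as $\mu_1$), one evaluates \eqref{inftyform} with $p=n+1$ on the string $(f_n,\dots,f_1,\alpha)$. Running through the summands by the length $k$ of the inner bracket $\mu_k$ and by whether this bracket involves $\alpha$: if it involves $\alpha$ then $k=1$ gives $\mu_1(\alpha)=0$, $k\geq 3$ gives a $\mu_{\geq 3}$ with $\alpha$ as input, and $k=2$ forces the pair $(f_1,\alpha)$, leaving the outer operation $\mu_n(f_n,\dots,f_2,\mu_2(f_1,\alpha))$; if it does not involve $\alpha$ then $\alpha$ survives as an argument of the outer operation, whose arity is $\geq 3$ unless the inner bracket is $\mu_n$ on the whole $\alpha$-free block $(f_n,\dots,f_1)$, leaving the outer operation $\mu_2(\mu_n(f_n,\dots,f_1),\alpha)$; the curvature insertions $k=0$ likewise force $\alpha$ into an outer $\mu_{\geq 3}$. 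Hence \eqref{inftyform} collapses to $\pm\mu_2(\mu_n(f_n,\dots,f_1),\alpha)\pm\mu_n(f_n,\dots,\mu_2(f_1,\alpha))=0$, which is the first identity of (3); the low case $n=2$ is just \eqref{threeelts} at $(f_2,f_1,\alpha)$ with only its second line surviving, and the second identity of (3) follows in the same way from the mirror string $(\alpha,f_n,\dots,f_1)$.

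For (4) one evaluates \eqref{inftyform} with $p=n+1$ on the string $(f_n,\dots,f_i,\alpha,f_{i-1},\dots,f_1)$, in which $\alpha$ is flanked by $f_i$ and $f_{i-1}$ and so occupies no extreme position. Now every block of length $n$ contains $\alpha$, so no term with an outer $\mu_2$ survives, and the only surviving terms are those whose inner bracket is $\mu_2$ on one of the adjacent pairs $(f_i,\alpha)$ or $(\alpha,f_{i-1})$, with outer operation $\mu_n$ in both cases; these yield $\pm\mu_n(f_n,\dots,\mu_2(f_i,\alpha),f_{i-1},\dots,f_1)\pm\mu_n(f_n,\dots,f_i,\mu_2(\alpha,f_{i-1}),\dots,f_1)=0$, which is (4). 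The one genuinely delicate point — the thing that must be verified rather than waved at — is that in each of these collapses the two surviving summands acquire \emph{opposite} signs under the sign $(-1)^{jk+l}$ of \eqref{inftyform}, together with the Koszul signs produced by commuting the degree-$0$ arrow $\alpha$ past the adjacent arrows, so that the relation reads as an equality rather than as a vanishing sum. This is a finite, mechanical check of the signs attached to the two relevant triples $(j,k,l)$, and I expect no conceptual obstacle there, only bookkeeping.
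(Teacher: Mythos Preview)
Your proposal is correct and is precisely the approach the paper takes: the paper's own proof is a single sentence stating that the identities follow from the $cA_{\infty}$ relations $\mu\{\mu\}=0$ after implementing the vanishing forced by (Iso1) and (Iso$n$), and you have simply made this explicit by choosing the right input strings and tracking which summands survive. One small expository wrinkle: in your argument for (4) the clause ``no term with an outer $\mu_2$ survives'' is literally false when $n=2$ (there the surviving terms themselves have outer $\mu_2=\mu_n$), but your subsequent sentence identifying the survivors is still correct in that case, so the argument goes through unchanged.
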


\begin{proof}
This immediately follows from the $cA_{\infty}$ identities expressing $\mu\{\mu\} = 0$, implementing the vanishing of terms by the definition of an isomorphism.
\end{proof}

We say that a diagram
$$\xymatrix{ A \ar[r]^{\alpha} \ar[d]_f & {A'} \ar[d]^{f'} \\ B \ar[r]_{\beta} & {B'}}$$ of degree zero elements is \emph{$\mu_2$-commutative} if $\mu_2(\beta, f) = \mu_2(f', \alpha)$.

\begin{proposition}\label{corisos}
Suppose the higher diagram is $\mu_2$-commutative and $\alpha$ and $\beta$ are isomorphisms. Then $\mu_1(f) = 0$ if and only if $\mu_1(f') = 0$.
\end{proposition}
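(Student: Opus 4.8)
The plan is to use the $\mu_2$-commutativity relation $\mu_2(\beta, f) = \mu_2(f', \alpha)$ together with the identities from Proposition \ref{prop1isos}. First I would observe that since $\alpha$ is a strict isomorphism, part (2) of Proposition \ref{prop1isos} gives $\mu_2(\mu_1(f'), \alpha) = \mu_1(\mu_2(f', \alpha))$, and since $\beta$ is a strict isomorphism, it gives $\mu_2(\beta, \mu_1(f)) = \mu_1(\mu_2(\beta, f))$. Hence applying $\mu_1$ to both sides of $\mu_2(\beta, f) = \mu_2(f', \alpha)$ yields
$$\mu_2(\beta, \mu_1(f)) = \mu_1(\mu_2(\beta, f)) = \mu_1(\mu_2(f', \alpha)) = \mu_2(\mu_1(f'), \alpha).$$

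Next I would close the argument by using the invertibility clause (Iso2) for $\alpha$ and $\beta$. If $\mu_1(f') = 0$, then the right-hand side above is $\mu_2(0, \alpha) = 0$, so $\mu_2(\beta, \mu_1(f)) = 0$; since $\mu_2(\beta, -)$ is an isomorphism of $k$-modules by (Iso2) for $\beta$, this forces $\mu_1(f) = 0$. Conversely, if $\mu_1(f) = 0$ then the left-hand side vanishes, so $\mu_2(\mu_1(f'), \alpha) = 0$, and since $\mu_2(-, \alpha)$ is an isomorphism of $k$-modules by (Iso2) for $\alpha$, we get $\mu_1(f') = 0$.

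The only subtlety worth double-checking is that applying $\mu_1$ to the degree-zero elements $\mu_2(\beta,f)$ and $\mu_2(f',\alpha)$ is legitimate and that the relevant instances of Proposition \ref{prop1isos}(2) apply — namely that we need $\alpha$ to be a strict isomorphism to commute $\mu_1$ past $\mu_2(-,\alpha)$ on the right, and $\beta$ to be a strict isomorphism to commute $\mu_1$ past $\mu_2(\beta,-)$ on the left. Both hypotheses are exactly what is assumed. I do not expect any real obstacle here; the statement is a formal consequence of the naturality-type identities already established in Proposition \ref{prop1isos} combined with the bijectivity built into the definition of an isomorphism.
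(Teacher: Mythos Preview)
Your proof is correct and follows essentially the same approach as the paper: the paper's proof consists of the single displayed chain $\mu_2(\beta, \mu_1(f)) = \mu_1(\mu_2(\beta, f)) = \mu_1(\mu_2(f', \alpha)) = \mu_2(\mu_1(f'), \alpha)$ obtained from Proposition~\ref{prop1isos}, followed by the remark that the result then follows from (Iso2). Your write-up simply spells out each of these steps in more detail.
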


\begin{proof}
By Proposition \ref{prop1isos}, we have 
$$\mu_2(\beta, \mu_1(f)) = \mu_1(\mu_2(\beta, f)) = \mu_1(\mu_2(f', \alpha)) = \mu_2(\mu_1(f'), \alpha)$$
so the result follows form (Iso2).
\end{proof}

\subsection{Equivalences}

\begin{definition}
Let $F \in \CC^1_{c\infty}(\AAA, \BBB)_f$ be a $cA_{\infty}$-morphism between $cA_{\infty}$-categories.
\begin{enumerate}
\item $F$ is \emph{fully faithful} if the morphisms
$$F_1: \AAA(A, A') \lra \BBB(f(A), f(A'))$$
are $k$-linear isomorphisms.
\item $F$ is a \emph{strong equivalence} if $F$ is fully faithful and there is a map $g: \Ob(\BBB) \lra \Ob(\AAA)$ and isomorphisms $\eta_B: fg(B) \lra B$ for $B \in \BBB$.
\end{enumerate}
Let $F \in \CC^1_{\infty}(\AAA, \BBB)_f$ be an $A_{\infty}$-morphism between $A_{\infty}$-categories.
\begin{enumerate}
\item[(3)] $F$ is \emph{homotopy fully faithful} if the morphisms
$$F_1: (\AAA(A, A'), \mu_1^{\AAA}) \lra (\BBB(f(A), f(A')), \mu_1^{\BBB})$$
are homotopy-equivalences of chain complexes.
\item[(4)] $F$ is a \emph{homotopy equivalence} if $F$ is homotopy fully faithful and 
the induced functor $H^0F: H^0\AAA \lra H^0\BBB$ is essentially surjective.
\end{enumerate}
\end{definition}

\begin{proposition}\label{propstronghopy}
If $F \in \CC^1_{\infty}(\AAA, \BBB)_f$ is a strong equivalence between homotopy unital $A_{\infty}$-categories, then $F$ is a homotopy-equivalence.\end{proposition}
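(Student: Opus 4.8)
The plan is to show that a strong equivalence $F \in \CC^1_{\infty}(\AAA, \BBB)_f$ of homotopy unital $A_{\infty}$-categories is automatically homotopy fully faithful and induces an essentially surjective functor on homotopy categories. Homotopy full faithfulness is essentially given: by hypothesis each $F_1: (\AAA(A,A'), \mu_1^{\AAA}) \lra (\BBB(f(A), f(A')), \mu_1^{\BBB})$ is a $k$-linear isomorphism, and one checks it is a chain map because $F$ is an $A_{\infty}$-morphism. Indeed, in equation \eqref{cainf functor} for $p = 1$ (with underlying map $f$ and with $F_0$ entering the terms), the relation $F_1 \mu_1^{\AAA} = \mu_1^{\BBB} F_1 + (\text{terms involving }F_0)$. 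The subtle point is that $F_0$ need not vanish, so $F_1$ is a priori only a chain map up to a correction by $F_0$; I would first argue that since each $F_1$ is in particular a degree $0$ isomorphism of graded modules and an honest chain map would suffice, one should instead directly invoke that an invertible chain map is a homotopy equivalence. The cleanest route: show $F_1$ commutes with $\mu_1$ strictly. In the $A_{\infty}$ setting $\mu_0 = 0$, so the $p=1$ instance of \eqref{cainf functor} reads $F_1 \mu_1 = \mu'_1 F_1$ (all $F_0$-terms on the right are absent because they are paired with $\mu_0$-type inputs which vanish in the non-curved case, or they land in the zero part which does not interfere for $p=1$ with a single morphism input). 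Hence each $F_1$ is an isomorphism of chain complexes, in particular a homotopy equivalence, giving (3).

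For (4), I must show $H^0 F: H^0\AAA \lra H^0\BBB$ is essentially surjective, i.e.\ every object of $\BBB$ is isomorphic in $H^0\BBB$ to one in the image of $f$. By hypothesis we have $g: \Ob(\BBB) \lra \Ob(\AAA)$ and for each $B \in \BBB$ a (strict) isomorphism $\eta_B: fg(B) \lra B$ in $\BBB$. By Proposition \ref{stricthopyiso}, since $\BBB$ is homotopy unital, the strict isomorphism $\eta_B$ is a homotopy isomorphism, so its image $[\eta_B] \in H^0\BBB(fg(B), B)$ is an isomorphism. Therefore $B \cong fg(B) = f(g(B))$ in $H^0\BBB$, and $g(B)$ is an object of $\AAA$. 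Since $H^0 F$ is defined on objects by $A \mapsto f(A)$, this exhibits $B$ as isomorphic to something in the essential image of $H^0 F$. Combined with (3), $F$ is a homotopy equivalence.

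The main obstacle I anticipate is bookkeeping around the inhomogeneous term $F_0$ in the $A_{\infty}$-morphism identities: one must be careful that in passing from the curved $cA_{\infty}$ formula \eqref{cainf functor} to the non-curved $A_{\infty}$ case the $F_0$-corrections to the $p = 1$ relation genuinely drop out (they are multiplied by components of $\mu$ that vanish, namely $\mu_0$, or they only affect the zero part), so that $F_1$ is a strict chain map and hence an honest isomorphism of complexes rather than merely a quasi-isomorphism. A secondary, purely formal point is to make sure that "strict isomorphism" in the sense of the definition of $\eta_B$ is exactly the hypothesis needed to apply Proposition \ref{stricthopyiso}; this is immediate from the definitions but should be stated. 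Everything else is a direct translation between the strict data of a strong equivalence and the homotopy-level data of a homotopy equivalence.
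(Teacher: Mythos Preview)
Your approach is essentially the paper's: check that $F_1$ is an isomorphism of chain complexes (hence a homotopy equivalence of complexes), and deduce essential surjectivity of $H^0F$ from Proposition \ref{stricthopyiso}. The paper's proof is two sentences saying exactly this.

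One point of bookkeeping in your write-up is off. Your justification for why the $F_0$-terms drop out of the $p=1$ relation is not correct: if $F_0$ were nonzero, terms such as $\mu'_2(F_1,F_0)$, $\mu'_2(F_0,F_1)$, $\mu'_3(F_0,F_1,F_0)$, \dots\ genuinely appear on the right-hand side of \eqref{cainf functor} for $p=1$, and they are neither ``paired with $\mu_0$'' nor confined to the zero part. The actual reason they are absent is that $F \in \CC^1_{\infty}(\AAA,\BBB)_f$ is an $A_\infty$-morphism, not merely a $cA_\infty$-morphism; the distinction between $\CC^1_{\infty}$ and $\CC^1_{c\infty}$ is precisely that $F_0 = 0$. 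With that in hand, the $p=1$ identity reduces to $F_1$ commuting with $\mu_1$, so $F_1$ is an honest isomorphism of complexes, and your essential surjectivity argument via Proposition \ref{stricthopyiso} is then exactly the paper's.
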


\begin{proof}
Since $F_1$ is an isomorphism of chain complexes, it is certainly a homotopy-equivalence. Essential surjectivity of $H^0F$ follows from Proposition \ref{stricthopyiso}.
\end{proof}

\subsection{Normalized Hochschild complex}\label{parnorm}
Let $(\AAA, \mu)$ be a strictly unital $cA_{\infty}$-category with Hochschild complex $\CC(\AAA)$.
In this section we introduce the sub $B_{\infty}$-algebra of normalized cochains.

\begin{definition}\label{defnorm}
A cochain $\phi\in \CC(\AAA)$ is called \emph{i-normalized} if and only if $\forall n\in\mathbb{N}$ we have that $\phi_n(f_1,\ldots,f_n)=0$ when there exist an $1\le k\le i$ such that $f_k=1_A$.

A cochain $\phi\in \CC(\AAA)$ is called \emph{normalized} if it is $i$-normalized for every $i\ge 1$. 
\end{definition}

\begin{proposition}\label{propnorm}
Let $\AAA$ be a cA$_\infty$-category and $\CC_N(\AAA)$ the normalized Hochschild complex consisting of the normalized cochains, then the canonical inclusion $\CC_N(\AAA)\lra\CC(\AAA)$ is a quasi-isomorphism.
\end{proposition}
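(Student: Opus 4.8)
The plan is to adapt the classical argument that the normalized bar complex computes the same (co)homology as the full bar complex, carried out in the curved $A_\infty$ setting. The key point is that $\CC_N(\AAA)$ is exactly the set of cochains vanishing on any tensor factor equal to a strict unit $1_A$, so the quotient $\CC(\AAA)/\CC_N(\AAA)$ is built out of ``degenerate'' tensors and should be acyclic. Concretely, for each $i\ge 1$ I would introduce the subcomplex $\CC^{(i)}(\AAA)$ of $i$-normalized cochains, giving a decreasing filtration
$$\CC(\AAA) = \CC^{(0)}(\AAA) \supseteq \CC^{(1)}(\AAA) \supseteq \dots \supseteq \bigcap_i \CC^{(i)}(\AAA) = \CC_N(\AAA),$$
and prove each inclusion $\CC^{(i)}(\AAA) \hookrightarrow \CC^{(i-1)}(\AAA)$ is a quasi-isomorphism; since in each Hochschild degree the filtration stabilizes after finitely many steps (a cochain $\phi$ with $\phi_n = 0$ for $n$ large, or more precisely working componentwise in the arity $n$, only the constraints $k\le n$ are nontrivial), the passage to the limit $\CC_N(\AAA)$ will follow, using that cohomology commutes with the relevant (eventually constant) filtered limit.

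The heart of the matter is therefore the single step: show $\CC^{(i)}(\AAA)\hookrightarrow\CC^{(i-1)}(\AAA)$ is a quasi-isomorphism, equivalently that the quotient complex $Q^{(i)} = \CC^{(i-1)}(\AAA)/\CC^{(i)}(\AAA)$ is acyclic. First I would verify this is a genuine subcomplex, i.e. that $d_{Hoch} = [\mu,-]$ preserves $i$-normalization; this uses the strict unit axioms (U1), (U2), (U$n$) from \S\ref{parunital} to check that the brace terms $\mu\{\phi\}$ and $\phi\{\mu\}$ land a unit in a ``bad'' slot only when cancellation occurs, exactly as in the uncurved case — note $\mu_0$ contributes nothing problematic since it inserts no new tensor factors. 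Then I would write down an explicit contracting homotopy on $Q^{(i)}$: on the $i$-th tensor slot one uses the operator $h$ sending a cochain to the cochain obtained by the standard ``insert a unit in position $i$'' / degeneracy trick, and checks $[d_{Hoch},h] = \mathrm{id}$ on $Q^{(i)}$ modulo $\CC^{(i)}$. This is precisely the curved analogue of the simplicial contraction witnessing that the degenerate subcomplex of the bar construction is acyclic, and the curvature term $\mu_0$ again only produces summands that are already $i$-normalized or cancel, because a unit never appears adjacent in a way $\mu_0$ can interact with.

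The main obstacle I anticipate is bookkeeping with signs and with the $cA_\infty$ (as opposed to dg) terms: the homotopy $h$ must be defined compatibly across all arities, and the identity $[d_{Hoch},h]=\mathrm{id}$ on the quotient requires carefully tracking which of the many summands in $\mu\{\phi\}$ and $\phi\{\mu\}$ survive modulo $i$-normalized cochains — in particular the higher $\mu_n$ terms and the curvature $\mu_0$ must be shown to contribute only to $\CC^{(i)}(\AAA)$ when hit by $h$. A clean way to organize this is to observe that $\CC(\AAA)$ is the Hochschild complex of the cocategory $B\AAA$ and that $i$-normalization corresponds to working over the quotient cocategory killing unit-degeneracies up to level $i$; the contraction is then inherited from the coalgebra-level statement, which is the curved counterpart of the well-known fact for the (co)bar construction of an augmented dg (co)algebra. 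Once the single-step quasi-isomorphism is in hand, assembling the full result is routine, so I would spend essentially all the effort on constructing $h$ and verifying $[d_{Hoch},h]=\mathrm{id}$ modulo $\CC^{(i)}(\AAA)$.
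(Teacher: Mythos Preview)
Your plan uses the same key ingredient as the paper --- the ``insert a unit at position $i$'' operator $s_i$ --- but your filtration packaging has a genuine gap in the curved case. Your claim that ``$\mu_0$ contributes nothing problematic since it inserts no new tensor factors'' is incorrect: in the brace expression $\phi\{\mu\}$, the $k=0$ part produces terms $\phi_{n+1}(a_1,\dots,a_p,\mu_0,a_{p+1},\dots,a_n)$, so $\mu_0$ \emph{is} inserted as an extra argument, shifting all subsequent positions by one. Now take $\phi$ $i$-normalized and evaluate $(d_{Hoch}\phi)(a_1,\dots,a_n)$ with $a_i = 1_A$. For each $p < i$ the term $\phi_{n+1}(a_1,\dots,a_p,\mu_0,a_{p+1},\dots,a_n)$ has the unit sitting in slot $i+1$ of $\phi_{n+1}$, which $i$-normalization does not cover; and these terms cannot cancel against anything, since they are the only contributions involving $\phi_{n+1}$. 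Hence $\CC^{(i)}(\AAA)$ is \emph{not} a subcomplex when $\mu_0\neq 0$, and you cannot form the quotient complexes $Q^{(i)}$ on which you want a contracting homotopy.

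The paper avoids this by not filtering at all. It defines $s_i$ exactly as you describe, sets $h_i(c) = c - D(s_i c) - s_i(Dc)$ on the whole of $\CC(\AAA)$, and checks (following Lazarev, with the curved terms absorbed in the same bookkeeping) that $h_i$ sends $i$-normalized cochains to $(i+1)$-normalized cochains. Each $h_i$ is a chain map homotopic to the identity by construction, so the composite $H$ given by $h_n\circ\dots\circ h_0$ on arity $n$ is a chain deformation retraction of $\CC(\AAA)$ onto $\CC_N(\AAA)$. The point is that one never needs $D$ to preserve $i$-normalization; one only needs the specific combination $c - [D,s_i](c)$ to be $(i+1)$-normalized, and the problematic $\mu_0$-shifts coming from $Dc$ are exactly compensated by those coming from $D(s_i c)$. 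Reorganizing your argument along these lines (drop the subcomplex claim, verify directly that $h_i$ increases the normalization index) will work.
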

\begin{proof}
The proof is a slight alternation of the proof in \cite[Theorem 4.4]{NormilizedHochschild}, where it is proven for an A$_\infty$-algebra. 
We define a sequence of maps $h_i: \CC(\AAA)\lra \CC(\AAA)$ as follows. Take $c\in \prod_{A_0,\ldots,A_n}$Hom$(\AAA(A_{n-1},A_n)\otimes\ldots\otimes\AAA(A_0,A_1),\AAA(A_0,A_n))$, then we define $s_i(c)$ to be the cochain in 
\tiny
$$\prod_{A_0,\ldots,A_{n-1}}\textrm{Hom}(\AAA(A_{n-2},A_{n-1})\otimes\ldots\AAA(A_{i},A_{i+1})\otimes\AAA(A_i,A_i)\otimes\AAA(A_{i-1},A_i)\otimes\ldots\otimes\AAA(A_0,A_1),\AAA(A_0,A_{n-1}))$$
\normalsize
given by
$$(s_i(c))(a_1,\ldots,a_{n-1})=(-1)^{|a_1|+\ldots+|a_i|+i+1}c(a_1,\ldots,a_i, 1, a_{i+1},\ldots,a_{n-1})$$
Since this is for some arbitrary $n$, we can extend it linearly to obtain a morphism $s_i$ defined on $\CC(\AAA)$. We now define 
$$h_i(c)= c-D(s_i(c))-s_i(D(c))$$

Completely analogously to \cite{NormilizedHochschild} one shows that these $h_i$ take a $i$-normalized Hochschild cochain to a $i+1$-normalized Hochschild cochain. We thus have that the morphism $H: \CC(\AAA)\lra \CC_N(\AAA)$, given by $h_n\circ\ldots\circ h_0$ on $\prod_{A_0,\ldots,A_n}$Hom$(\AAA(A_{n-1},A_n)\otimes\ldots\otimes\AAA(A_0,A_1),\AAA(A_0,A_n))$ is a chain deformation retraction, inducing the fact that the canonical inclusion $\CC_N(\AAA)\lra\CC(\AAA)$ is a quasi-isomorphism.
\end{proof}

\section{Twisted objects}\label{parpartwisted}

In this section we introduce an important kind of models for triangulated categories, of which we will investigate deformations in \S \ref{parpardef}. 
These models originate from two constructions. For the first construction (\S \ref{partwisted}), we start with an arbitrary $cA_{\infty}$-category $\AAA$ and construct the category $\Free(\AAA)$ with as objects infinite sums of shifts of $\AAA$-objects, as morphisms column finite matrices, and as $cA_{\infty}$-structure the trivial extension of the structure for $\AAA$.

For the second construction (\S \ref{partwistvar}), we start with an arbitrary $cA_{\infty}$-category $(\AAA, \mu)$ and a so called \emph{choice of connections} on $\AAA$. Here, a \emph{connection} on an object $A \in \AAA$ is simply an element $\delta_A \in \AAA(A,A)^1$ and a choice of connections consists of a collection $(\Delta_A)_{A \in \AAA}$ of subsets $\Delta_A \subseteq \AAA(A,A)^1$. The \emph{twisted version} of $\AAA$ with respect to $\Delta$ is the quiver $\AAA_{\Delta}$ with as objects couples $(A, \delta_A)$ with $\delta_A \in \Delta_A$ and the $cA_{\infty}$-structure ``twisted'' with respect to the Hochschild 1-element $\delta = (\delta_A)_{(A, \delta_A)}$ to the ``embrace'' expression
\begin{equation}\label{embrintro}
\mathrm{embr}_{\delta}(\mu) = \sum_{m = 0}^{\infty} \mu\{\psi^{\otimes m}\}
\end{equation}
(where we obviously have to take care that this expression makes sense). Categories that originate as the combined construction $\Free(\AAA)_{\Delta}$ for a choice of connections on $\AAA$ are called categories of \emph{twisted objects} over $\AAA$. The primordial example of a category of twisted objects is of course the original dg-category of twisted complexes over a dg algebra \cite{bondalkapranov}, and since this construction a number of variants have been considered in the literature (\cite{lefevre}, \cite{drinfeld}, \cite{lowencompositio}). In \S \ref{parmodder}, \ref{parmodhopy}, \ref{parmodcontra}, we describe how a number of homotopy and derived categories can be modeled by this construction, and in Proposition \ref{propdeltacone}, we give natural conditions for these general models to be strongly pre-triangulated.

\subsection{Trivial variants}\label{partrivvar}
Let $\AAA$ be a quiver. 
Consider a collection of sets $\xxx = (\xxx_A)_A$ indexed by the objects $A \in \Ob(\AAA)$. The corresponding \emph{trivial variant} $\AAA_{\xxx}$ is the quiver with $\Ob(\AAA_\xxx) = \coprod_{A \in \Ob(\AAA)}\xxx_A$ and, for $X_A \in \xxx_A$, $Y_B \in \xxx_B$:
$$\AAA_{\xxx}(X_A, Y_B) = \AAA(A, B).$$
If an element $\phi \in \CC^{m+1}(\AAA)$ is determined by elements
$$\phi_n \in \Hom^m_k(\Sigma\AAA(A_{n-1},A_n) \otimes \dots \otimes \Sigma \AAA(A_0, A_1), \Sigma \AAA(A_0, A_n))$$
for $A_0, \dots, A_n \in \AAA$, then there results an obvious element $\phi_{\xxx} \in \CC^{m+1}(\AAA_{\xxx})$ determined by
$$\phi_n \in \Hom^m_k(\Sigma\AAA(X_{A_{n-1}},X_{A_n}) \otimes \dots \otimes \Sigma\AAA(X_{A_0}, X_{A_1}), \Sigma\AAA(X_{A_0}, X_{A_n}))$$
for $X_{A_0}, \dots, X_{A_n} \in \AAA_{\xxx}$.
We thus obtain a morphism
$$(-)_{\xxx}: \CC(\AAA) \lra \CC(\AAA_{\xxx}).$$

\begin{proposition}\label{trivexp}
$(-)_{\xxx}$ is a brace algebra morphism.
\end{proposition}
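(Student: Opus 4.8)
The plan is to show that the assignment $(-)_{\xxx}\colon \CC(\AAA)\lra\CC(\AAA_{\xxx})$ is compatible with all brace operations, i.e.\ that
$$
(\phi\{\phi_1,\dots,\phi_n\})_{\xxx} = \phi_{\xxx}\{(\phi_1)_{\xxx},\dots,(\phi_n)_{\xxx}\}
$$
for Hochschild cochains $\phi,\phi_1,\dots,\phi_n\in\CC(\AAA)$. Everything reduces to bookkeeping on objects, since by construction $\AAA_{\xxx}(X_A,Y_B)=\AAA(A,B)$ and the component maps of $\phi_{\xxx}$ are literally the component maps of $\phi$, only re-indexed along the projection $p\colon\Ob(\AAA_{\xxx})\lra\Ob(\AAA)$, $X_A\mapsto A$. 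The key observation is that the identity-like element $J$ entering the brace formula
$$
\phi\{\phi_1,\dots,\phi_n\} = \sum \phi(J\otimes\dots\otimes\phi_1\otimes J\otimes\dots\otimes\phi_n\otimes J\otimes\dots\otimes J)
$$
is, for both $\AAA$ and $\AAA_{\xxx}$, the canonical identity-type element (the one satisfying $J_g\circ J_f = J_{gf}$ from Remark~\ref{rembrace}); under $p$ these correspond, so the same combinatorial sum computes both sides.

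Concretely, first I would fix a string of composable morphisms $f_n\otimes\dots\otimes f_1$ in $\AAA_{\xxx}$, say with $f_j\in\AAA_{\xxx}(X_{A_{j-1}},X_{A_j})=\AAA(A_{j-1},A_j)$, and evaluate both sides of the claimed identity on it. On the right-hand side, each summand in the brace formula for $\phi_{\xxx}\{(\phi_1)_{\xxx},\dots,(\phi_n)_{\xxx}\}$ is obtained by inserting the $(\phi_i)_{\xxx}$ at various positions, with the remaining slots filled by the identity-like element for $\AAA_{\xxx}$; since each $(\phi_i)_{\xxx}$ acts on the underlying $\AAA$-morphisms exactly as $\phi_i$ does, and the $\AAA_{\xxx}$-identity element restricts to the $\AAA$-identity element along $p$, the summand equals the corresponding summand of $(\phi\{\phi_1,\dots,\phi_n\})_{\xxx}$ evaluated on $f_n\otimes\dots\otimes f_1$, viewed in $\AAA$. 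Summing over all insertion patterns (which are the same index set in both cases) gives the equality. Taking $n=1$ recovers in particular compatibility with the Gerstenhaber bracket, and the $n=0$ case $\phi_{\xxx} = (\phi)_{\xxx}$ is the definition.

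I would then observe that grading and signs are preserved: the degree of $\phi_{\xxx}$ equals that of $\phi$ by construction, and all Koszul signs appearing in the brace formula depend only on the degrees $|f_j|$ of the arguments, which are unchanged under the identification $\AAA_{\xxx}(X_A,Y_B)=\AAA(A,B)$. Hence $(-)_{\xxx}$ is a morphism of graded $k$-modules compatible with the full brace system, i.e.\ a brace algebra morphism. (If one wants, compatibility with the differentials $d_{Hoch}$ on $\CC(\AAA)$ and $\CC(\AAA_{\xxx})$ once a $cA_\infty$-structure is fixed is then automatic, being expressed through the bracket with $\mu$, but this is not needed for the present statement.)

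The main obstacle, such as it is, is purely notational: one must set up a clean index-matching between the summands of the brace formula over $\AAA_{\xxx}$ and those over $\AAA$, keeping careful track of the fact that a single $\AAA$-object $A$ may give rise to many objects $X_A\in\xxx_A$, so that the "same" composable string in $\AAA$ lifts to many strings in $\AAA_{\xxx}$ — but for a \emph{fixed} choice of lifts $X_{A_0},\dots,X_{A_n}$ the computation is term-by-term identical. Once this is said, there is no real content beyond unwinding definitions, so I would present the argument compactly rather than writing out the signs in full.
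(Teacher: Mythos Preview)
Your proposal is correct and is exactly the routine verification one would expect. The paper itself states this proposition without proof, treating it as immediate from the definitions; your unwinding via the projection $p\colon\Ob(\AAA_{\xxx})\to\Ob(\AAA)$ and term-by-term matching of brace summands is precisely the argument that justifies it.
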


Note that by choosing some $\xxx_A = \varnothing$, we can eliminate objects from $\AAA$. If we take each $\xxx_A$ either $\varnothing$ or a singleton, $\AAA_{\xxx}$ obviously corresponds to a full subquiver $\BBB \subseteq \AAA$ and $(-)_{\xxx}$ is the usual ``limited functoriality'' morphism.
If $\xxx_A \neq \varnothing$ for all $A \in \AAA$, we call $\AAA_{\xxx}$ a \emph{trivial enlargement} of $\AAA$.

\begin{proposition}\label{trivexp2}
If $\AAA_{\xxx}$ is a trivial enlargement of $\AAA$, then $(-)_{\xxx}$ is a brace algebra isomorphism.
\end{proposition}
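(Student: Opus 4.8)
The plan is to show that when $\AAA_{\xxx}$ is a trivial enlargement (i.e.\ every $\xxx_A$ is nonempty), the brace algebra morphism $(-)_{\xxx}: \CC(\AAA) \lra \CC(\AAA_{\xxx})$ of Proposition \ref{trivexp} is bijective, hence an isomorphism of brace algebras. Since we already know from Proposition \ref{trivexp} that $(-)_{\xxx}$ is a morphism of brace algebras, the only thing left to check is that the underlying map of $k$-modules is a bijection; a bijective morphism of brace algebras is automatically an isomorphism because its set-theoretic inverse is then forced to be a brace algebra morphism as well. So the proof reduces to a bookkeeping argument about the indexing of the product defining the Hochschild object.

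First I would unwind the definitions. For each tuple $(X_{A_0}, \dots, X_{A_n})$ of objects of $\AAA_{\xxx}$, the component of $\CC(\AAA_{\xxx})$ is $\Hom_k^m(\Sigma\AAA(A_{n-1},A_n)\otimes\dots\otimes\Sigma\AAA(A_0,A_1),\Sigma\AAA(A_0,A_n))$, which depends only on the underlying tuple $(A_0,\dots,A_n)$ of objects of $\AAA$ obtained by applying the canonical surjection $p: \Ob(\AAA_{\xxx}) = \coprod_A \xxx_A \lra \Ob(\AAA)$. Thus $\CC(\AAA_{\xxx}) = \prod_{n}\prod_{(X_{A_0},\dots,X_{A_n})} \Hom_k^m(\cdots)$ is literally the product of $\CC(\AAA) = \prod_n \prod_{(A_0,\dots,A_n)}\Hom_k^m(\cdots)$ "pulled back" along $p^{\times(n+1)}$, and the map $(-)_{\xxx}$ is precisely the diagonal: it sends $\phi = (\phi_{(A_0,\dots,A_n)})$ to the family whose $(X_{A_0},\dots,X_{A_n})$-component is $\phi_{(p(X_{A_0}),\dots,p(X_{A_n}))}$.

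Now the key step: since each $\xxx_A$ is nonempty, $p$ is surjective, so every tuple $(A_0,\dots,A_n)$ over $\Ob(\AAA)$ lifts to at least one tuple over $\Ob(\AAA_{\xxx})$; this gives injectivity of $(-)_{\xxx}$ (if $\phi_{\xxx} = \psi_{\xxx}$, then evaluating on a lift of each $(A_0,\dots,A_n)$ recovers $\phi_{(A_0,\dots,A_n)} = \psi_{(A_0,\dots,A_n)}$). For surjectivity, I would construct the inverse explicitly: pick, once and for all, a section $s: \Ob(\AAA) \lra \Ob(\AAA_{\xxx})$ of $p$ (possible because $p$ is surjective), and define $(-)^{\xxx}: \CC(\AAA_{\xxx}) \lra \CC(\AAA)$ by sending $\Phi = (\Phi_{(X_0,\dots,X_n)})$ to the family with $(A_0,\dots,A_n)$-component $\Phi_{(s(A_0),\dots,s(A_n))}$; this makes sense since the relevant $\Hom_k$-modules only depend on the images under $p$, and $p\circ s = \mathrm{id}$. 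One then checks $(-)^{\xxx}\circ(-)_{\xxx} = \mathrm{id}$ immediately, and $(-)_{\xxx}\circ(-)^{\xxx} = \mathrm{id}$ because for any $\Phi$ and any tuple $(X_{A_0},\dots,X_{A_n})$, the $(X_{A_0},\dots,X_{A_n})$-component of $(\Phi^{\xxx})_{\xxx}$ is $\Phi_{(s(p(X_{A_0})),\dots,s(p(X_{A_n})))}$, which equals $\Phi_{(X_{A_0},\dots,X_{A_n})}$ since both tuples have the same image under $p$ (recall $p(X_{A_i}) = A_i = p(s(A_i))$) and the $\Hom_k$-components of $\CC(\AAA_{\xxx})$ indexed by two tuples with the same $p$-image are one and the same $k$-module, with the "component of $\Phi$" there being a single well-defined element. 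I do not expect any real obstacle here; the one point requiring mild care is that $(-)_{\xxx}$ is not injective/surjective in a naive "same index set" sense — rather $\Ob(\AAA_{\xxx}) \lra \Ob(\AAA)$ is many-to-one, so one must be slightly careful that the value of a cochain of $\AAA_{\xxx}$ genuinely depends only on the $\AAA$-tuple, which is exactly what the definition $\AAA_{\xxx}(X_A,Y_B) = \AAA(A,B)$ guarantees. Once that is observed, everything is formal, and being a bijective brace algebra morphism, $(-)_{\xxx}$ is a brace algebra isomorphism.
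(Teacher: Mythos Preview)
The paper states this proposition without proof, so there is no argument to compare against. Your injectivity argument is fine, but the surjectivity argument contains a real error. You assert that for an arbitrary $\Phi \in \CC(\AAA_{\xxx})$ and two tuples $(X_{A_0},\dots,X_{A_n})$, $(s(A_0),\dots,s(A_n))$ lying over the same tuple in $\AAA$, the components $\Phi_{(X_{A_0},\dots,X_{A_n})}$ and $\Phi_{(s(A_0),\dots,s(A_n))}$ agree because the corresponding factor modules are literally the same $k$-module. This conflates equality of factors with equality of components: the Hochschild object $\CC(\AAA_{\xxx})$ is a product $\prod_{(X_0,\dots,X_n)}\Hom_k(\cdots)$ indexed by tuples of objects of $\AAA_{\xxx}$, and an element of a product $\prod_{i \in I} M$ with constant factor $M$ is a function $I \to M$, which need not be constant. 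Nothing forces a general $\Phi$ to take the same value at two distinct indices lying in the same fibre of $p^{\times(n+1)}$.

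Concretely, already the zero part makes this visible: $(\CC(\AAA_{\xxx}))_0 = \prod_{X \in \AAA_{\xxx}} \AAA(p(X),p(X))$ is strictly larger than $(\CC(\AAA))_0 = \prod_{A \in \AAA} \AAA(A,A)$ whenever some $\xxx_A$ has at least two elements, and the image of $(-)_{\xxx}$ on the zero part is only the diagonal. Thus $(-)_{\xxx}$ is an injective brace algebra morphism but is not a bijection unless every $\xxx_A$ is a singleton. Either the proposition is intended in a weaker sense (for instance, as an isomorphism onto the sub-brace-algebra of cochains constant on the fibres of $p$, which is exactly the image of $(-)_{\xxx}$), or it is imprecisely stated; in any case your proposed inverse $(-)^{\xxx}$ is only a one-sided inverse, and the identity $(-)_{\xxx}\circ(-)^{\xxx}=\mathrm{id}$ fails.
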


\subsection{Twisted variants}\label{partwistvar}
Let $\AAA$ be a quiver. 

\begin{definition}
A \emph{connection} on $A \in \AAA$ is an element $\psi_A \in \AAA^1(A,A)$. A \emph{choice of connections on $\AAA$} is a collection $\Psi = (\Psi_A)_{A \in \AAA}$ of subsets $\Psi_A \subseteq \AAA^1(A,A)$. 
\end{definition}

We denote by $con$ the choice of connections with $con_A = \AAA^1(A,A)$.

For a choice of connections $\Psi$ in $\AAA$, we consider the associated quiver $\AAA_{\Psi}$ as in \S \ref{partrivvar} and we denote the objects of $\AAA_{\Psi}$ by  $(A, \psi_A)$ with $\psi_A \in \Psi_A$.
Next we consider $\psi \in \CC^1(\AAA_{\Psi})_0$ determined by the elements
$$\psi_A \in \AAA_{\Psi}((A, \psi_A), (A, \psi_A))^1 = \AAA(A,A)^1.$$
We are interested in the situation where we can transport Hochschild elements, in particular $cA_{\infty}$-strucures, from $\AAA$ to $\AAA_{\Psi}$ by ``twisting'' with respect to the element $\psi$.
To do so, we suppose that $\psi$ is right allowable with respect to the image $S$ of 
$(-)_{\Psi}: \CC(\AAA) \lra \CC(\AAA_{\Psi})$.
In this case, by Proposition \ref{propembr}, we obtain the brace algebra morphism
$$\embr_{\psi} = \embr_{\psi}((-)_{\Psi}): \CC(\AAA) \lra \CC(\AAA_{\Psi}).$$
If $\mu$ is a $cA_{\infty}$-structure on $\AAA$, we obtain a new $cA_{\infty}$-structure $\embr_{\psi}(\mu)$ on $\AAA_{\Psi}$, a \emph{twisted variant} of $\mu$.

We analyze the situation a bit further in case the quivers $\AAA$ and $\AAA_{\Psi}$ are considered as discrete quivers in the definition of allowability.

\begin{definition}\label{defanil}
The collection $\Psi = (\Psi_A)_{A \in \AAA}$ of subsets $\Psi_A \subseteq \AAA^1(A,A)$ is \emph{$\AAA$-nilpotent} if for every $\psi \in \CC^1(\AAA)_0$ with $\psi_A \in \Psi_A$ for every $A$, for every $\phi \in \CC(\AAA)$ and for every $(f_n, \dots, f_1) \in \AAA(A_{n-1}, A_n) \otimes \dots \otimes \AAA(A_0, A_1)$, there is an $m_0 \in \N$ such that for every $m \geq m_0$
$$\phi_{m + n}\{\psi^{\otimes m}\}(f_n, \dots, f_1) = 0.$$
\end{definition}

\begin{proposition}\label{propanil}
The collection $(\Psi_A)_{A \in \AAA}$ of subsets $\Psi_A \subseteq \AAA^1(A,A)$ is $\AAA$-nilpotent if and only if the corresponding element $\psi \in \CC(\AAA_{\Psi})$ is right allowable with respect to $S$. \end{proposition}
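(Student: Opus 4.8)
The plan is to unwind both sides of the claimed equivalence into the same pointwise vanishing condition. On the right-hand side, recall that $\psi \in \CC^1(\AAA_\Psi)_0$ is the Hochschild element determined by the connections, and that $S = \Beeld((-)_\Psi)$ consists of the cochains $\phi_\Psi$ for $\phi \in \CC(\AAA)$. By the Lemma characterizing allowability in the discrete case, $\psi$ is right allowable with respect to $S$ precisely when, for every $\phi_\Psi \in S$ and every tensor $(g_n, \dots, g_1) \in \AAA_\Psi((A_{n-1},\psi_{A_{n-1}}),(A_n,\psi_{A_n})) \otimes \dots \otimes \AAA_\Psi((A_0,\psi_{A_0}),(A_1,\psi_{A_1}))$, there is an $m_0$ with $(\phi_\Psi)_{n+m}\{\psi^{\otimes m}\}(g_n, \dots, g_1) = 0$ for all $m \geq m_0$.

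First I would make the dictionary between the two setups fully explicit. An object $(A,\psi_A)$ of $\AAA_\Psi$ has $\AAA_\Psi((A,\psi_A),(B,\psi_B)) = \AAA(A,B)$, so a tensor in $\AAA_\Psi$ over a chain of objects $(A_0,\psi_{A_0}), \dots, (A_n,\psi_{A_n})$ is literally the same data as a tensor $(f_n, \dots, f_1) \in \AAA(A_{n-1},A_n) \otimes \dots \otimes \AAA(A_0,A_1)$ together with a choice $\psi_{A_i} \in \Psi_{A_i}$ at each vertex. Under this identification, the brace operation $(\phi_\Psi)_{n+m}\{\psi^{\otimes m}\}(f_n,\dots,f_1)$ is computed by inserting the connections $\psi_{A_i}$ in all admissible slots and applying $\phi_{n+m}$; because $(-)_\Psi$ is a brace algebra morphism (Proposition \ref{trivexp}) and $\psi_A \in \Psi_A \subseteq \AAA^1(A,A)$, this equals the image under $(-)_\Psi$ of the corresponding expression $\phi_{n+m}\{\psi^{\otimes m}\}(f_n,\dots,f_1)$ computed in $\AAA$ with the element $\psi \in \CC^1(\AAA)_0$ given by those same $\psi_{A_i}$. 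Since $(-)_\Psi$ is injective on Hom-modules (it is the identity on each $\AAA(A,B)$), the $\AAA_\Psi$-side expression vanishes if and only if the $\AAA$-side expression does.

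With that translation in hand the two conditions match up termwise. For the forward direction, assume $\Psi$ is $\AAA$-nilpotent: given $\phi_\Psi \in S$, write $\phi_\Psi = (\phi)_\Psi$, and given a tensor in $\AAA_\Psi$, read off the underlying $(f_n,\dots,f_1)$ in $\AAA$ and the connections $\psi_{A_i}$, which assemble into $\psi \in \CC^1(\AAA)_0$ with $\psi_A \in \Psi_A$; Definition \ref{defanil} supplies the required $m_0$, and by the dictionary this is the $m_0$ needed for allowability. Conversely, if $\psi$ is right allowable with respect to $S$, then given any $\phi \in \CC(\AAA)$, any $\psi \in \CC^1(\AAA)_0$ with $\psi_A \in \Psi_A$, and any $(f_n,\dots,f_1)$, one promotes these to the object $(A_i,\psi_{A_i})$ in $\AAA_\Psi$, to $\phi_\Psi \in S$, and to the corresponding tensor in $\AAA_\Psi$; the allowability hypothesis gives $m_0$, and the dictionary transports the vanishing back to $\AAA$.

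The main obstacle is not conceptual but bookkeeping: one must check that the brace insertion of the $\psi$'s in $\AAA_\Psi$ really does agree with the brace insertion in $\AAA$ of the element $\psi$ built from the \emph{same} family of connections — in particular that no ``extra'' terms appear because distinct objects $(A,\psi_A)$ and $(A,\psi_A')$ with the same underlying $A$ coexist in $\AAA_\Psi$. This is handled by noting that in the brace formula all the inserted slots along a fixed chain of objects are forced to use the connections of exactly those objects appearing in the chain, so each term of the $\AAA_\Psi$-brace corresponds to a unique term of an $\AAA$-brace for one specific choice of $\psi$, and conversely. Once this is spelled out, the equivalence is immediate; the whole argument is essentially an application of the discrete-case allowability Lemma together with the fact that $(-)_\Psi$ is an injective brace algebra morphism.
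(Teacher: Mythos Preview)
The paper states this proposition without proof, so your unwinding via the discrete--allowability Lemma is exactly the intended approach, and your backward direction is correct.

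The forward direction has a genuine gap that your final paragraph flags but does not close. When a chain in $\AAA_\Psi$ passes through two objects $(A,\psi_i)$ and $(A,\psi_j)$ with the same underlying $A$ but $\psi_i \neq \psi_j$, the connections along the chain do \emph{not} ``assemble into $\psi \in \CC^1(\AAA)_0$'': an element of $\CC^1(\AAA)_0$ assigns a single connection to each object of $\AAA$, so it cannot record both $\psi_i$ and $\psi_j$ at $A$. Your proposed fix---matching individual terms of the $\AAA_\Psi$-brace to terms of an $\AAA$-brace---does not help, because Definition~\ref{defanil} only controls the full sums $\phi_{n+m}\{(\psi')^{\otimes m}\}(f_n,\dots,f_1)$ for a single consistent $\psi'$, not the individual summands. (A related edge case: if some $\Psi_B = \varnothing$ then Definition~\ref{defanil} is vacuous while the allowability condition on $\AAA_\Psi$ is not, so strictly speaking the equivalence needs all $\Psi_A \neq \varnothing$.)

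One clean way to close the gap, assuming all $\Psi_A \neq \varnothing$: apply the $n=0$ instance of Definition~\ref{defanil} at a fixed object $A$. Since $\phi$ ranges over all of $\CC(\AAA)$, this forces each $\psi_A \in \Psi_A$ to satisfy $\psi_A^{\otimes m} = 0$ in $\AAA(A,A)^{\otimes_k m}$ for $m \geq m_A(\psi_A)$. Now for an arbitrary chain $(A_0,\psi_0),\dots,(A_n,\psi_n)$ in $\AAA_\Psi$, set $M = \max_i m_{A_i}(\psi_i)$; if $m > (n+1)(M-1)$ then in every summand of $(\phi_\Psi)_{n+m}\{\psi^{\otimes m}\}(f_n,\dots,f_1)$ some block $\psi_i^{\otimes \alpha_i}$ has $\alpha_i \geq M$ by pigeonhole, and that summand vanishes. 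This gives the required $m_0$.
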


\begin{remark}
The construction of the category of twisted variants with respect to a choice of connections constitutes a fundamental step in the definition of Fukaya type categories. Indeed, these categories are first defined as $cA_{\infty}$-categories, and by attaching connections (also called \emph{bounding cochains}) to these objects, a maximal twisted version is constructed for which the curvature elements are such that they allow for the calculation of cohomology groups \cite{FOOO1}.
\end{remark}

\subsection{Twisted objects}\label{partwisted}
An important source of examples of twisted variants is given by so called quivers of twisted objects. This notion was introduced directly in \cite{lowencompositio}.

To obtain quivers of twisted objects over a quiver $\AAA$, we need one additional step. Namely, we first construct the category $\Free(\AAA)$. An object of $\Free(\AAA)$ is a formal expression
$M = \oplus_{i \in I}\Sigma^{m_i}A_i$ with $I$ an arbitrary index set, $A_i \in \AAA$ and $m_i \in \Z$. 
For another $N = \oplus_{j \in J}\Sigma^{n_i}B_i \in \Free(\AAA)$, we have the graded Hom-space
$$\Free(\AAA)(M, N) = \prod_i \oplus_j \Sigma^{n_j - m_i}\AAA(A_i, B_j).$$
An element $f \in \Free(\AAA)(M,N)$ is represented by a matrix $f = (f_{ji})$ where $f_{ji}$ represents the element $\sigma^{n_j - m_i}f_{ji}$.
We naturally have a fully faithful embedding of $k$-quivers
$$\AAA \lra \Free(\AAA): A \longmapsto A$$
and a trivial way of extending Hochschild elements mimicking matrix multiplication (see \cite[Proposition 3.2]{lowencompositio}):
$$\iota: \CC(\AAA) \lra \CC(\Free(\AAA)): \phi \longmapsto \phi.$$
Now we consider a choice of connections $\Psi$ on $\Free(\AAA)$
and construct the quiver $\Free(\AAA)_{\Psi}$. We suppose that the resulting $\psi \in \CC^1(\Free(\AAA)_{\Psi})_0$ is $S$-allowable with respect to the image $S$ of
$$(-)_{\Psi}\iota: \CC(\AAA) \lra \CC(\Free(\AAA)) \lra \CC(\Free(\AAA)_{\Psi}).$$
Hence, we obtain the brace algebra morphism
$$\embr_{\psi} = \embr_{\psi}((-)_{\Psi} \iota): \CC(\AAA) \lra \CC(\Free(\AAA)_{\Psi}).$$
If $\mu$ is a $cA_{\infty}$-structure on $\AAA$, we thus obtain a $cA_{\infty}$-structure $\embr_{\psi}(\mu)$ on $\Free(\AAA)_{\Psi}$. We call $\Free(\AAA)_{\Psi}$ with this structure a \emph{category of twisted objects over $\AAA$}.

A choice of connections $\Psi$ on $\Free(\AAA)$ is called \emph{pure} if there exists a full subcategory $\Free'(\AAA) \subseteq \Free(\AAA)$ with
$$\Psi_M = \begin{cases} \Free(\AAA)(M,M)^1 & \,\,\text{if} \,\, M \in \Free'(\AAA) \\ \varnothing  & \,\, \text{else.}
\end{cases}$$
Thus, in this case we have $\Free(\AAA)_{\Psi} = \Free'(\AAA)_{con}$. The corresponding $cA_{\infty}$-category is called a \emph{pure} category of twisted objects over $\AAA$. 

Even if $\AAA$ is considered as a discrete quiver, we can endow $\Free(\AAA)$ and $\Free(\AAA)_{\Psi}$ with the pointwise topologies on the Hom-modules. More precisely, we endow the module $\Free(\AAA)(M,N)$ above with the product topology over $i$. We use these topologies to define allowability.

\begin{definition}\label{deflocanil}
The choice of connections $\Psi = (\Psi_M)_{M \in \Free(\AAA)}$, with $$\Psi_M \subseteq \Free(\AAA)^1(M, M),$$ is \emph{locally $\AAA$-nilpotent} if for every $\psi \in \CC^1(\AAA)_0$ with $\psi_M \in \Psi_M$ for every $M$, for every $\phi \in \CC(\AAA)$, for every $(f_n, \dots, f_1) \in \Free(\AAA)(M_{n-1}, M_n) \otimes \dots \otimes \Free(\AAA)(M_0, M_1)$ with $M_0 = \oplus_{i \in I}\Sigma^{n_i}A_i$, and for every $i$ there is an $m_0 \in \N$ such that for every $m \geq m_0$
$$\phi_{m + n}\{\psi^{\otimes m}\}(f_n, \dots, f_1)(i) = 0.$$
\end{definition}

\begin{proposition}\label{proplocanil}
The choice of connections $(\Psi_A)_{A \in \AAA}$ is localy $\AAA$-nilpotent if and only if the corresponding element $\psi \in \CC(\AAA_{\Psi})$ is $S$-allowable. \end{proposition}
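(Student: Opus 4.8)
The plan is to unravel the definition of $S$-allowability in the pointwise topology, in the same spirit as Proposition \ref{propanil}, the one extra ingredient being that the topology on $\Free(\AAA)$ is a \emph{coordinatewise} one, so that the bound $m_0$ is allowed to depend on the coordinate $i$. Recall that $S$ is the image of
$$(-)_{\Psi}\iota : \CC(\AAA) \lra \CC(\Free(\AAA)) \lra \CC(\Free(\AAA)_{\Psi}),$$
and that $\psi \in \CC^1(\Free(\AAA)_{\Psi})_0$ is the element determined by the components $\psi_M \in \Free(\AAA)(M,M)^1$ attached to the objects $(M, \psi_M)$ of $\Free(\AAA)_{\Psi}$; restricting $\psi$ to a list of objects is precisely the same datum as a choice $(\psi_M)_M$ of connections with $\psi_M \in \Psi_M$ as in Definition \ref{deflocanil}. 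By definition, $\psi$ being $S$-allowable says that for every $\phi \in \CC(\AAA)$ the series $\embr_{\psi}((-)_{\Psi}\iota(\phi)) = \sum_{m \geq 0}((-)_{\Psi}\iota(\phi))\{\psi^{\otimes m}\}$ converges in $\CC(\Free(\AAA)_{\Psi})$ for the pointwise topology, and this ranges over exactly the same cochains $\phi$ as the condition in Definition \ref{deflocanil}.

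First I would evaluate a single brace term on a concrete input. Fix objects $(M_0, \psi_{M_0}), \dots, (M_n, \psi_{M_n})$ of $\Free(\AAA)_{\Psi}$ and $(f_n, \dots, f_1) \in \Free(\AAA)(M_{n-1}, M_n) \otimes \dots \otimes \Free(\AAA)(M_0, M_1)$. Since $(-)_{\Psi}$ is the identity on Hom-modules (Proposition \ref{trivexp}), $\iota$ extends Hochschild cochains by matrix multiplication (\cite[Proposition 3.2]{lowencompositio}), and the inserted copies of $\psi$ lie in the zero part and hence only populate the gaps of the sequence $f_n, \dots, f_1$, the term $((-)_{\Psi}\iota(\phi))_{m+n}\{\psi^{\otimes m}\}(f_n, \dots, f_1)$ is a matrix in $\Free(\AAA)(M_0, M_n)$ whose value in the coordinate indexed by $i$ --- where $M_0 = \bigoplus_{i \in I}\Sigma^{n_i}A_i$ --- is exactly the quantity $\phi_{m+n}\{\psi^{\otimes m}\}(f_n, \dots, f_1)(i)$ appearing in Definition \ref{deflocanil}.

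Next I would make convergence explicit. The pointwise topology on $\CC(\Free(\AAA)_{\Psi})$ is induced from the topologies on the modules $\Free(\AAA)_{\Psi}(M_0, M_n) = \Free(\AAA)(M_0, M_n)$, and the latter is, by construction, a product over the index set $I$ of $M_0$ of modules each of which is a (shifted) direct sum of Hom-modules of $\AAA$; as $\AAA$ is discrete these are discrete, so a series converges in the product if and only if, for every $i$, its $i$-th coordinates vanish from some index on (an element of a direct sum being $0$ iff all its components are). Applying coordinatewise the characterisation of allowability for discrete target quivers (the Lemma after \eqref{embr}), we obtain that $\psi$ is $S$-allowable if and only if for every $\phi \in \CC(\AAA)$, every input $(f_n, \dots, f_1)$ as above, and every $i$, there is an $m_0$ with $\phi_{m+n}\{\psi^{\otimes m}\}(f_n, \dots, f_1)(i) = 0$ for $m \geq m_0$, which is word for word the statement that $(\Psi_M)_M$ is locally $\AAA$-nilpotent. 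Both implications follow.

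The step that needs the most care is the middle one: checking that passing $\phi \in \CC(\AAA)$ through $\iota$ and $(-)_{\Psi}$ and then through the brace with the zero-part $\psi$ gives, in the coordinate $i$, precisely the expression of Definition \ref{deflocanil} --- in particular that matrix multiplication does not entangle the column index $i$ of the source object with the other indices, and that each connection $\psi_{M_l}$ is inserted only between consecutive entries $f_l$. This is a direct but somewhat tedious bookkeeping with the brace formula and \cite[Proposition 3.2]{lowencompositio}. The only further point to watch is the mild abuse in Definition \ref{deflocanil} of writing $\psi \in \CC^1(\AAA)_0$ for a choice $(\psi_M)$ of connections on $\Free(\AAA)$, reconciled with the tautological $\psi$ on $\Free(\AAA)_{\Psi}$ by the restriction remark in the first paragraph; there is no genuine obstacle, the proposition being a definition chase once the coordinatewise topology is made explicit.
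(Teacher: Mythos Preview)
Your proposal is correct and is precisely the definition-unwinding that the paper has in mind; in fact the paper states Proposition \ref{proplocanil} (like Proposition \ref{propanil}) without proof, treating it as immediate from Definition \ref{deflocanil} and the description of the pointwise (product over $i$) topology on $\Free(\AAA)(M_0,M_n)$. Your identification of the single nontrivial bookkeeping step --- that $((-)_{\Psi}\iota(\phi))_{m+n}\{\psi^{\otimes m}\}(f_n,\dots,f_1)(i)$ coincides with the expression $\phi_{m+n}\{\psi^{\otimes m}\}(f_n,\dots,f_1)(i)$ of Definition \ref{deflocanil} --- is exactly the content, and your argument for it is sound.
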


\subsection{Models for derived categories}\label{parmodder}

Let $\AAA$ be a quiver. 
For an element $$f = (f_{ji}) \in \Free(\AAA)(M,N) = \prod_{i \in I} \oplus_{j \in J} \Sigma^{n_j - m_i} \AAA(A_i, B_j)$$
and a subset $I' \subseteq I$, we put $N_f(I') =\{ j \in J \,\, |\,\,\exists i \in I'\,\, f_{ji} \neq 0\}$.
Recall from \cite{lowencompositio} that the element $f$ is called \emph{intrinsically locally nilpotent} if for every $i \in I$ there exists an $n \in \N$ with $N_f^n(\{i\}) = \varnothing$.

Let $iln$ be the choice of connections on $\Free(\AAA)$ with $iln_M \subseteq \Free(\AAA)(M,M)^1$ consisting of the intrinsically locally nilpotent connections.
According to \cite[Proposition 3.6]{lowencompositio}, $iln$ is a locally $\AAA$-nilpotent choice of connections. Consequently, if $\mu$ is a $cA_{\infty}$-structure on $\AAA$, we obtain a $cA_{\infty}$-structure $\embr_{iln}(\mu)$ on $\Free(\AAA)_{iln}$. 

This category of twisted complexes $\Free(\AAA)_{iln}$ is in fact the extension to the $cA_\infty$-setting of  the finite version $\free(\AAA)_{iln}$, containing only the objects $M = \oplus_{i =1}^n \Sigma^{n_i}A_i$, which was the original category of twisted complexes over a dg-category $\AAA$ as introduced in \cite{bondalkapranov}. Restricting to the $A_\infty$-part results in the extension to the $A_\infty$-setting as described in \cite{lefevre}. Furthermore, if we work over a  dg-category $\AAA$, it is known that the dg-part  of the infinite version, $(\Free(\AAA)_{iln})_\infty$, forms a model for the derived category $D(\AAA)$ of $\AAA$, i.e.
\begin{equation}\label{aim}
H^0(\Free(\AAA)_{iln})_\infty \cong D(\AAA).
\end{equation}
In this section we prove that \eqref{aim} holds for an arbitrary $A_{\infty}$-category $\AAA$ (Proposition \ref{compconj}, \cite[Remark 3.17]{lowencompositio}).

The notion of modules over an $A_\infty$-algebra was first introduced by Keller, and generalized by Lef{\`e}vre-Hasegawa to modules over an $A_\infty$-category in \cite{lefevre}. Following ideas of Seidel  \cite{seidelbook} for dg-modules, Lyubashenko describes module categories as $A_{\infty}$-functor categories. Precisely, an $A_{\infty}$-module over an $A_{\infty}$-category $\AAA$ is an $A_{\infty}$-functor $F: \AAA^{\op} \lra \mathsf{Com}(k)$ where $\Com(k)$ is the dg-category of complexes of $k$-modules of Example \ref{expcom}. With $A_{\infty}$- transformations as morphisms, $A_{\infty}$-modules over $\AAA$ can be organized into an $A_{\infty}$-category $\Mod_{\infty}(\AAA)$. We refer the reader to \cite{Lyu_CatAinfCats,Lyu_Ainfbimod_Serre} for further details. The derived category $D(\AAA)$ of $\AAA$ is by definition the localization of the homotopy category $H^0(\Mod_{\infty}(\AAA))$ by the quasi-isomorphisms.

Let $\AAA$ be an $A_{\infty}$-category. Every object $A \in \AAA$ gives rise to a representable module
$$\AAA(-,A): \AAA^{\op} \lra \Com(k): B \longmapsto \AAA(B,A)$$
By \cite{Lyu_Ainfbimod_Serre} we know that these modules yield an $A_\infty$-Yoneda functor
$$Y: \AAA \lra \Mod_{\infty}(\AAA): A \longmapsto \AAA(-,A).$$
If we denote by $\mathrm{Rep}(\AAA) \subseteq \Mod_{\infty}(\AAA)$ the full subcategory of representable modules, then we know from \cite[A.9]{Lyu_Ainfbimod_Serre} that
$$Y: \AAA \lra \mathrm{Rep}(\AAA)$$
is a homotopy equivalence.

We now extend this Yoneda-embedding to an $A_{\infty}$-functor 
\begin{equation}\label{modelfunctor}
Y: (\Free(\AAA)_{iln})_\infty \lra \Mod_{\infty}(\AAA).
\end{equation}
which is given by the underlying morphism $y:\Ob((\Free(\AAA)_{iln})_\infty)\lra\Ob(\Mod_\infty(\AAA)):\bigoplus\Sigma A_i\mapsto\bigoplus\Sigma \AAA(-,A_i)$, and 
\begin{align*}
Y^1_{M,N}&:\Free(\AAA)(M,N)\lra\Mod_\infty(\AAA)(y(M), y(N)):f\mapsto(\embr_\delta \mu)_2(f,-);\\
Y^2_{M,N}&:\Free(\AAA)^{\otimes 2}(M,N)\lra\Mod_\infty(\AAA)(y(M), y(N)):(g,f)\mapsto(\embr_\delta \mu)_3(g,f,-);\\
\vdots
\end{align*}
The module $y(M)=\bigoplus\Sigma \AAA(-,A_i)$ here is defined as the $A_\infty$-functor with underlying morphism $\AAA^{\op}\lra \Com(k): A\mapsto \left(\bigoplus\Sigma\AAA(A,A_i), d=(\embr_\delta \mu)_1\right)$ and 
\begin{align*}
M^1&:\AAA(A,B)\lra \Com(k)(M(B),M(A)):f\mapsto (\embr_\delta \mu)_2(-,f);\\
M^2&:\AAA^{\otimes 2}(A,B)\lra \Com(k)(M(B),M(A)):(g,f)\mapsto (\embr_\delta \mu)_3(-,g,f);\\
\vdots
\end{align*}

\begin{proposition}\cite[Remark 3.17]{lowencompositio}\label{compconj}
The functor 
$$\pi H^0(Y): H^0((\Free(\AAA)_{iln})_\infty) \lra H^0(\Mod(\AAA)) \lra D(\AAA)$$ is an equivalence of categories.
\end{proposition}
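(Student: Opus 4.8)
The plan is to factor the claimed equivalence through the full subcategory $\mathrm{Rep}(\AAA) \subseteq \Mod_{\infty}(\AAA)$ of representable modules and to reduce the statement to two inputs: (i) the classical description of $D(\AAA)$ as a Bousfield-type localization generated by representables, and (ii) the fact that the construction $(\Free(\AAA)_{iln})_{\infty}$ is, up to homotopy, the closure of $\AAA$ under shifts, finite (in fact arbitrary column-finite) extensions, and direct sums inside $H^0(\Mod_{\infty}(\AAA))$. Concretely, I would first check that the data $Y^1, Y^2, \dots$ written down above genuinely assemble into an $A_{\infty}$-functor $Y: (\Free(\AAA)_{iln})_{\infty} \lra \Mod_{\infty}(\AAA)$; this is a bookkeeping verification that the $A_{\infty}$-functor identities \eqref{cainf functor} for $Y$ follow from the $A_{\infty}$-structure equations \eqref{inftyform} for $\embr_{\delta}(\mu)$ on $\Free(\AAA)_{iln}$, using that $\embr_{\delta}$ is a brace algebra morphism (Proposition \ref{propembr}) and that the $y(M)$ are well-defined $A_{\infty}$-modules (the differential $d = (\embr_{\delta}\mu)_1$ squares to zero precisely because $\delta$ is intrinsically locally nilpotent, so the twist converges). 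On objects $Y$ restricts on $\AAA \subseteq (\Free(\AAA)_{iln})_{\infty}$ to the $A_{\infty}$-Yoneda functor, which by \cite[A.9]{Lyu_Ainfbimod_Serre} is a homotopy equivalence onto $\mathrm{Rep}(\AAA)$.

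Next I would establish \emph{fully faithfulness} of $H^0(Y)$ as a functor into $H^0(\Mod_{\infty}(\AAA))$, i.e.\ that
$$Y^1_{M,N}: \Free(\AAA)(M,N) \lra \Mod_{\infty}(\AAA)(y(M), y(N))$$
is a quasi-isomorphism of complexes for all $M, N \in (\Free(\AAA)_{iln})_{\infty}$. For $M, N \in \AAA$ this is the homotopy-fully-faithfulness of Yoneda. The general case follows by dévissage: any $M = \oplus_i \Sigma^{m_i}A_i$ is an iterated cone/telescope of shifts of representables, and both the source and target of $Y^1$ send finite direct sums, shifts, and mapping cones in the first and second variable to the corresponding homotopy (co)limits of complexes — for the source this is built into the column-finiteness and the twisted $\embr_{\delta}\mu$-structure; for the target, $\Mod_{\infty}(\AAA)$ is a stable $A_{\infty}$-category closed under arbitrary coproducts and $\mathrm{Rep}(\AAA)$ is closed under shifts, so the functor-category mapping complexes behave well. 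Taking the five-lemma in cohomology along the relevant triangles upgrades the $\AAA$-level quasi-isomorphism to all of $(\Free(\AAA)_{iln})_{\infty}$. I would be careful here about the infinite direct sums: one needs that $\Mod_{\infty}(\AAA)(-, \oplus_j \Sigma^{n_j}\AAA(-,B_j))$ computes as the product over $j$ of the individual mapping complexes, matching the product-over-$i$, sum-over-$j$ shape of $\Free(\AAA)(M,N)$; this is where the column-finiteness of matrices in $\Free(\AAA)$ is essential.

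Finally, composing with the localization $\pi: H^0(\Mod_{\infty}(\AAA)) \lra D(\AAA)$, fully faithfulness together with essential surjectivity yields the equivalence. Fully faithfulness of $\pi H^0(Y)$ follows because the essential image of $H^0(Y)$ consists of objects whose underlying modules are, termwise up to homotopy, coproducts of shifts of representables, and such objects are already "fibrant/cofibrant enough" that maps in $D(\AAA)$ out of them agree with maps in $H^0(\Mod_{\infty}(\AAA))$ — i.e.\ representables are homotopically projective, a standard fact for $A_{\infty}$-modules. Essential surjectivity of $\pi H^0(Y)$ amounts to showing every $A_{\infty}$-module is quasi-isomorphic to one built from representables by shifts, cones, and coproducts; this is the $A_{\infty}$ analogue of the statement that $D(\AAA)$ is compactly generated by the representables $\AAA(-,A)$, which can be deduced from the existence of free/bar resolutions of $A_{\infty}$-modules, each term of which is a coproduct of shifted representables, exactly as in the dg case. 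I expect the \textbf{main obstacle} to be the careful homotopy-theoretic handling of the \emph{infinite} coproducts — ensuring that the telescope/resolution arguments and the identification of mapping complexes are valid without finiteness hypotheses, and that intrinsic local nilpotence of the connections $iln$ is exactly the condition that makes all the relevant twisted sums converge and the resolutions land inside $(\Free(\AAA)_{iln})_{\infty}$ rather than some larger completion. Everything else is a routine, if lengthy, transcription of the dg-category proof of \eqref{aim} into the $A_{\infty}$-language, using the brace-algebra formalism of \S\ref{parparcurved} in place of explicit sign-laden computations.
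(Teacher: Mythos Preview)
Your outline is correct and would work with care, but the paper takes a somewhat slicker route to homotopy fully faithfulness that avoids your d\'evissage argument entirely. Instead of applying the $A_{\infty}$-Yoneda lemma only to $\AAA$ and then bootstrapping to all of $(\Free(\AAA)_{iln})_{\infty}$ by five-lemma arguments along cones and telescopes, the paper applies Yoneda directly to the \emph{large} category $(\Free(\AAA)_{iln})_{\infty}$, obtaining a homotopy equivalence
\[
Y: (\Free(\AAA)_{iln})_{\infty} \lra \mathrm{Rep}((\Free(\AAA)_{iln})_{\infty})
\]
in one stroke from \cite[A.9]{Lyu_Ainfbimod_Serre}. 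It then observes that the restriction functor $R: \mathrm{Rep}((\Free(\AAA)_{iln})_{\infty}) \lra \Mod_{\infty}(\AAA)$ is homotopy fully faithful, with an explicit homotopy inverse on hom-complexes given by extending a transformation along the direct sums and shifts. The composite $R \circ Y$ is the functor \eqref{modelfunctor}, so homotopy fully faithfulness is immediate without ever having to analyze how $\Mod_{\infty}(\AAA)(-,-)$ interacts with infinite coproducts in each variable --- precisely the point you flagged as the main obstacle. Your approach buys a more explicit understanding of the mapping complexes, but at the cost of the delicate bookkeeping you anticipate; the paper's factorization through $\mathrm{Rep}$ of the big category trades that for a single invocation of the general Yoneda result plus one direct computation. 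For essential surjectivity both arguments coincide: the image of $H^0(Y)$ is $\mathrm{tria}_{\oplus}(H^0(\mathrm{Rep}(\AAA)))$ inside $H^0(\Mod_{\infty}(\AAA))$, and this is $D(\AAA)$.
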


\begin{proof}
By \cite[A.9]{Lyu_Ainfbimod_Serre} we know that there is a $A_\infty$-Yoneda functor yielding an homotopy equivalence
$$Y:(\Free(\AAA)_{iln})_\infty\lra\textrm{Rep}((\Free(\AAA)_{iln})_\infty)$$

We will now construct a homotopy fully faithful functor $R:\textrm{Rep}((\Free(\AAA)_{iln})_\infty)\lra\Mod_\infty(\AAA)$. The underlying morphism is given by restricting the $(\Free(\AAA)_{iln})_\infty$-modules to $\AAA$, so we have
$$R:\Ob(\textrm{Rep}((\Free(\AAA)_{iln})_\infty))\lra\Ob(\Mod_\infty(\AAA)):(\Free(\AAA)_{iln})_\infty(-,M)\mapsto \Big(\oplus\Sigma \AAA(-,A_i),d\Big)$$ 
\footnotesize
\begin{align*}
R:\textrm{Rep}((\Free(\AAA)_{iln})_\infty)\Big((\Free(\AAA)_{iln})_\infty(-,M),(\Free(\AAA)_{iln})_\infty(-,N)\Big)&\lra\Mod_\infty(\AAA)\Big((\oplus\Sigma \AAA(-,A_i),d),(\oplus\Sigma \AAA(-,B_j),d)\Big)\\
f&\longmapsto f|_\AAA
\end{align*}
\normalsize
where $d=\mu_1+\mu_2(\delta_M,-)+\mu_3(\delta_M,\delta_M,-)+\ldots$ . The homotopy-inverse to this map is given by extending $g\in \Mod_\infty(\AAA)\Big((\oplus\Sigma A_i,d),(\oplus\Sigma B_j,d)\Big)$ along the direct sums and shifts.

Composing this functor with the Yoneda functor $Y$, we see that its composition \eqref{modelfunctor} is homotopy fully faithful as well.

Since $(\Free(\AAA)_{iln})_\infty$ is the formal construction of adding arbitrary direct sums and cones, it is clear that this homotopy fully faithful functor induces an equivalence of categories
$$H^0((\Free(\AAA)_{iln})_\infty)\lra\textrm{tria}_\oplus\left(H^0(\textrm{Rep}(\AAA))\right)$$
where tria$_\oplus$ is taken in $H^0(\Mod(\AAA))$.
This gives the announced equivalence, since $\textrm{tria}_\oplus H^0(\left(\textrm{Rep}(\AAA)\right))\cong D(\AAA)$.
\end{proof}

\subsection{Models for homotopy categories}\label{parmodhopy}
As discussed in \cite[\S 3.5]{lowencompositio}, categories of (pre)complexes can also be described using twisted objects.
Let $(\AAA, \mu)$ be a linear category. We define a pure category of twisted objects for which $\Pre(\AAA) = \Free'(\AAA) \subseteq \Free(\AAA)$ consists of the objects $M = \oplus_{i \in \Z} \Sigma^i A_i$. 
We denote the corresponding pure choice of connections by $pre$ and obtain the corresponding cdg-category $\Free(\AAA)_{pre} = \Pre(\AAA)_{con}$. This category is canonically strictly isomorphic to the cdg-category $\mathsf{PCom}(\AAA)$ of precomplexes of $\AAA$-objects, and its infinity part is canonically strictly isomorphic to the dg-category $\mathsf{Com}(\AAA)$ of complexes of $\AAA$-objects.

Suppose $\AAA$ has a zero object. Let the full subcategory $\Pre^+(\AAA)$ (resp. $\Pre^-(\AAA)$, resp. $\Pre^b(\AAA)$) of $\Pre(\AAA)$ consist of the objects $M = \oplus_{i \in \Z} \Sigma^i A_i$ with $A_i = 0$ for $i \leq n_0$ for some $n_0$ (resp. for $i \geq n_0$ for some $n_0$, resp. for $i \leq n_0$ and for $i \geq n_1$ for some $n_0$ and $n_1$). We thus obtain the pure category of twisted objects $\Pre^+(\AAA)_{con}$ (resp. $\Pre^-(\AAA)_{con}$, resp. $\Pre^b(\AAA)_{con}$). This category is canonically strictly isomorphic to the cdg-category $\mathsf{PCom}^+(\AAA)$ of bounded below precomplexes (resp.  $\mathsf{PCom}^-(\AAA)$ of bounded above precomplexes, resp. $\mathsf{PCom}^b(\AAA)$ of bounded precomplexes) of $\AAA$-objects, and its infinity part is canonically strictly isomorphic to the dg-category $\mathsf{Com}^+(\AAA)$ of bounded below complexes (resp.  $\mathsf{Com}^-(\AAA)$ of bounded above complexes, resp. $\mathsf{Com}^b(\AAA)$ of bounded complexes) of $\AAA$-objects.

\subsection{Models for qdg- and $qA_{\infty}$-modules}\label{parmodcontra}

Let $\AAA$ be a cdg-category. It is known that due to curvature, in general there is no satisfactory notion of a derived category for $\AAA$ (see for instance \cite{kellerlowennicolas} for a discussions of the problems that arise). On the other hand, in \cite{positselskicontrader}, Positselski defines a number of so called \emph{derived categories of the second kind} over $\AAA$. These categories should not be seen as analogues of ordinary derived categories over dg-categories, but rather as certain universal constructions sitting in between a (non-existing) derived category and the entire homotopy category. 
In general, one of the shortcomings of these categories is that they may contain little information (in particular too little information to recover $\AAA$ itself, see \cite{kellerlowennicolas} for some examples where the categories vanish altogether). This situation should not be too surprising given the fact that the objects of $\AAA$ itself cannot naturally be made into cdg-modules over $\AAA$. They can, however, be made into so called qdg-modules \cite{positselskihh2}.
In this section, we investigate the relation of the category $\Free(\AAA)_{con}$ with the category $\Mod_{qdg}(\AAA)$ of qdg-modules from \cite{positselskihh2}.

Let $\AAA$ and $\BBB$ be cdg-categories. Recall from \cite{positselskihh2} that a qdg-functor from $\AAA$ to $\BBB$ with underlying map $f: \Ob(\AAA) \lra \Ob(\BBB)$ consists of the same datum $F \in \CC^1(\AAA, \BBB)_f$, but from the conditions \eqref{cdg0}, \eqref{cdg1}, \eqref{cdg2}, condition \eqref{cdg0} is omitted. A qdg-module over $\AAA$ is by definition a strict qdg-functor from $\AAA^{\op}$ to the cdg-category $\mathsf{PCom}(k)$ of precomplexes of $k$-modules (see Example \ref{expcom}). Similarly, a cdg-module over $\AAA$ is a strict cdg-functor from $\AAA^{\op}$ to $\mathsf{PCom}(k)$. Thus, a qdg-module $M$ is given by a map
$$\Ob(\AAA) \lra \Ob(\mathsf{PCom}(k)): A \longmapsto M(A)$$ and $k$-linear maps
$$M_{A,A'}: \AAA(A,A') \lra \Hom(M(A'), M(A)): f \longmapsto M(f).$$
For qdg-modules $M$ and $N$, we put $\Hom(M,N) \subseteq \prod_{A \in \AAA} \Hom(M(A), N(A))$ the graded $k$-module of natural transformations, i.e. a natural transformation of degree $n$ is given by a collection $(\rho_A)$ with $\rho_A \in \Hom^n(M(A), N(A))$ with for all $f \in \AAA(A,A')$:
$$\mu_2'(\rho_{A'}, M(f)) = (-1)^{n|f|}\mu_2'(N(f), \rho_A).$$
This defines the quiver $\Mod_{qdg}(\AAA)$ of qdg-modules over $\AAA$. We denote the cdg-structure on $\AAA$ by $\mu$ and the one on $\mathsf{PCom}(k)$ by $\mu'$. The cdg-structure $\mu''$ on $\Mod_{qdg}(\AAA)$ is such that $\mu_2''$ is the composition of natural transformations based upon $\mu_2'$,
$$((\mu''_1)_{M,N})_A = (\mu_1')_{M(A), N(A)},$$
and
$$((\mu''_0)_M)_A = (\mu'_0)_{M(A)} - M((\mu_0)_A).$$ 
Clearly, if we let $\Mod_{cdg}(\AAA)$ denote the dg-category of cdg-modules on $\AAA$, we have
$$(\Mod_{qdg}(\AAA))_{\infty} = \Mod_{cdg}(\AAA).$$

Every object $A \in \AAA$ determines a representable qdg-module
$$\AAA(-,A): \AAA^{\op} \lra \mathsf{PCom}(k): B \longmapsto (\AAA(B,A), (\mu_1)_{B,A}),$$
$$\AAA(-,A):\AAA(B,B') \lra \Hom(\AAA(B',A), \AAA(B,A)): f \longmapsto \AAA(f,A)=\mu_2(-,f).$$
Indeed, for $f\in\AAA(B,B'),g\in \AAA(B',B'')$ we have
\begin{enumerate}
\item $\AAA(-,A)(\mu_1(f))=\mu_2(-,\mu_1(f))=\mu_1(\mu_2(-,f))-\mu_2(\mu_1,f)=\mu_1'(\AAA(f,A)).$
\item $\AAA(-,A)(\mu_2(f,g))=\mu_2(-,\mu_2(f,g))=\mu_2(\mu_2(-,f),g)=\mu_2'(\AAA(-,A)(f),\AAA(-,A)(g)).$
\end{enumerate}

We thus obtain a Yoneda embedding:
\begin{lemma}\label{lemyoneda}
There is a fully faithful strict cdg-embedding 
$$Y: \AAA \lra \Mod_{qdg}(\AAA): A \longmapsto \AAA(-,A),$$
$$Y: \AAA(A,A') \lra \Hom(\AAA(-,A), \AAA(-,A')): g \longmapsto (\mu_2(g, -))_{B \in \AAA}.$$
\end{lemma}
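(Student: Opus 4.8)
The plan is to verify the three things packaged into the statement: that $Y$ lands in $\Mod_{qdg}(\AAA)$, that it is a strict cdg-functor (a strict cdg-morphism in the sense of the earlier section), and that it is fully faithful, i.e. that each $Y_{A,A'}$ is a $k$-linear isomorphism. The first two points are essentially bookkeeping. The two displayed identities labelled (1) and (2) just above the lemma show precisely that for each object $A \in \AAA$, the assignment $\AAA(-,A)$ is a strict qdg-functor $\AAA\op \lra \mathsf{PCom}(k)$: identity (1) is the condition \eqref{cdg1} (compatibility with $\mu_1$, recalling that for a strict qdg-functor $F_0 = 0$ so the $\mu_2'(F_1\otimes F_0)$ and $\mu_2'(F_0\otimes F_1)$ terms drop out), and identity (2) is \eqref{cdg2} (compatibility with $\mu_2$); condition \eqref{cdg0} is the one that is omitted for qdg-functors, so nothing further is needed. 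Hence each $\AAA(-,A)$ is a genuine object of $\Mod_{qdg}(\AAA)$.

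Next I would check that $Y$ on morphisms is well defined and strict cdg. Well-definedness means that for $g \in \AAA(A,A')^0$, the family $(\mu_2(g,-))_{B}$ is a natural transformation $\AAA(-,A) \Rightarrow \AAA(-,A')$ of degree equal to $|g|$; this is exactly the $A_\infty$/cdg identity $\mu_2(1\otimes\mu_2) = \mu_2(\mu_2\otimes 1)$ applied to the triple $(g, -, f)$, i.e. associativity of $\mu_2$, combined with the Koszul sign, and it is the content of Proposition~\ref{prop1isos}(3)-type manipulations (here we do not even need $g$ to be an isomorphism — only associativity of $\mu_2$). That $Y$ respects $\mu_1$ on Hom-spaces, i.e. $Y(\mu_1(g)) = \mu_1''(Y(g))$, follows again from the associativity relation together with the relation defining $\mu_1$ from $\mu_0,\mu_2$; that $Y$ respects $\mu_2$ is the same associativity relation once more. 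Since $Y$ is given by a single linear map (no higher components, no $F_0$), it is automatically a \emph{strict} morphism, and there are no curvature terms to match because $((\mu_0'')_{\AAA(-,A)})_B = (\mu_0')_{\AAA(B,A)} - \mu_2(-,(\mu_0)_B)$ is the built-in curvature of the representable module, not something $Y$ must intertwine.

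For full faithfulness, the point is that $Y_{A,A'}\colon \AAA(A,A') \lra \Hom(\AAA(-,A),\AAA(-,A'))$, $g \mapsto (\mu_2(g,-))_B$, is a $k$-linear isomorphism. The inverse is evaluation at an identity-like element: strictly speaking $\AAA$ need not be unital here, so I would instead use the standard trick — a natural transformation $\rho = (\rho_B)$ is determined by $\rho_A(1_A)$ when units exist, and in the non-unital cdg setting one argues directly that the components $\rho_B$ are forced by naturality together with the $\mu_2$-relations, so that $\rho = \mu_2(\rho_A(\cdot),-)$ up to the relevant identification. Concretely: given $\rho \in \Hom(\AAA(-,A),\AAA(-,A'))$, naturality applied with $f \in \AAA(B,A)$ reads $\mu_2(\rho_A, \AAA(-,A)(f)) = \pm\,\mu_2(\AAA(-,A')(f), \rho_B)$, which after unwinding $\AAA(-,A)(f) = \mu_2(-,f)$ and using associativity expresses $\rho_B$ in terms of $\rho_A$; this both shows injectivity of $Y_{A,A'}$ and, reversing the computation, surjectivity. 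The cleanest route, and the one I would actually take, is to cite the fact that the analogous dg Yoneda embedding $\Com(k)$-valued is fully faithful (Seidel, or the $A_\infty$ Yoneda of \cite{Lyu_Ainfbimod_Serre} specialised to the cdg case) and observe that passing from cdg-modules to qdg-modules does not change the Hom-complexes on representables, since the representable qdg-module and the representable cdg-module have the same underlying precomplexes and the same action maps — only their curvature differs, and curvature does not enter the definition of $\Hom(M,N)$.

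The main obstacle I anticipate is the careful sign bookkeeping in the naturality condition and in matching $\mu_1''$ on $\Hom(\AAA(-,A),\AAA(-,A'))$ with $\mu_1$ on $\AAA(A,A')$ under $Y$; the structural content is entirely the associativity of $\mu_2$ and the formula for $\mu_1$ in a cdg-category (Example~\ref{expcom} and the displayed cdg-relations), but getting every Koszul sign to cancel is where the real work lies. A secondary subtlety is the absence of strict units: one should phrase the inverse of $Y_{A,A'}$ without writing "$\rho \mapsto \rho_A(1_A)$", instead deducing injectivity/surjectivity from naturality alone, exactly as in the standard enriched-Yoneda argument for non-unital categories.
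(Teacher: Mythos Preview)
Your outline covers most of the ground, but there is one genuine gap: you dismiss the curvature condition \eqref{cdg0} as ``not something $Y$ must intertwine''. This is exactly backwards. The lemma asserts that $Y$ is a \emph{cdg}-embedding, not merely a qdg-embedding, and the only difference between the two notions is precisely whether \eqref{cdg0} holds. For a strict morphism ($F_0=0$) that condition reads $F_1(\mu_0)=\mu_0''$, i.e.\ $Y((\mu_0)_A)=(\mu_0'')_{\AAA(-,A)}$, and it must be checked. The paper does this explicitly: at $B\in\AAA$ one has $Y((\mu_0)_A)_B=\mu_2((\mu_0)_A,-)$, and the cdg identity $\mu_1\mu_1+\mu_2(1\otimes\mu_0)-\mu_2(\mu_0\otimes 1)=0$ rewrites this as $\mu_1\mu_1+\mu_2(-,(\mu_0)_B)=(\mu_0')_{\AAA(B,A)}-\AAA(-,A)((\mu_0)_B)$, which is the defining formula for $(\mu_0'')_{\AAA(-,A)}$ in $\Mod_{qdg}(\AAA)$. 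Without this step you have only shown $Y$ is a strict qdg-functor.

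On full faithfulness your discussion is more elaborate than necessary. The paper simply invokes the Yoneda lemma for the underlying $\Z$-graded $k$-linear categories: the map $g\mapsto (\mu_2(g,-))_B$ and the natural-transformation condition involve only $\mu_2$, so the question reduces to the classical (graded) enriched Yoneda lemma, which gives the $k$-linear isomorphism immediately. Your concern about non-unitality is not addressed in the paper, which tacitly works at the graded-linear level where the usual Yoneda argument applies; your alternative route via naturality would also work but is not needed here.
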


\begin{proof}
The existence of the fully faithful embedding is based upon the Yoneda Lemma for the underlying $\Z$-graded $k$-linear categories. One verifies that the resulting functor satisfies the cdg-axioms.
By definition of the multiplications on $\Mod_{qdg}(\AAA)$ we have
\begin{enumerate}
\item $Y(\mu_0)=\mu_2(\mu_0,-)=\mu_1(\mu_1)+\mu_2(-,\mu_0)=\mu_0''.$
\item $Y(\mu_1)=\mu_2(\mu_1,-)=\mu_1(\mu_2(-,-))-\mu_2(-,\mu_1) = -\mu''_1(Y).$
\item $Y(\mu_2)=\mu_2(\mu_2(-,-),-)=\mu_2(-,\mu_2(-,-))=\mu_2''(Y,Y)$, \end{enumerate}
where the second equality in $(3)$ comes from the fact that there are no higher order multiplications.
\end{proof}

In a unital cdg-category $(\CCC, \mu)$, natural notions of isomorphisms, direct sums and shifts exist (see \cite[\S 1.2]{positselskihh2}).  Further, for $C \in \CCC$ consider an arbirary element $\tau \in \CCC(C,C)^1$. A \emph{twist} of $C$ with respect to $\tau$ is defined in \cite[\S 1.2]{positselskihh2}) as an element $D = C(\tau)$ together with morphisms $i \in \CCC(C,D)^0$, $j \in \CCC(D,C)^0$ with $\mu_2(j,i) = 1_C$, $\mu_2(i,j) = 1_D$ and $\mu_2(j, \mu_1(i)) = \tau$.

\begin{example}
Consider the cdg-category $\mathsf{PCom}(\AAA)$ over a linear category $\AAA$ of Example \ref{expcom}. For a precomplex $M = (M, d_M)$ and element $\tau \in \Hom(M,M)^1$, a twist of $(M, d_M)$ by $\tau$ is given by the precomplex $M(\tau) = (M, d_M + \tau)$. Similarly, for a cdg-category $\AAA$, an object $M \in \Mod_{qdg}(\AAA)$, and an element $\tau = (\tau_A)_A \in \Hom(M,M)^1$, a twist of $M$ by $\tau$ is given by 
$$M(\tau): \AAA^{\op} \lra \mathsf{PCom}(\AAA): A \longmapsto M(A)(\tau_A).$$
\end{example}

\begin{proposition}\label{propcdgextensie}
Let $\AAA$ be a cdg-category and let $\CCC$ be a cdg-category with direct sums, shifts and arbitrary twists of objects. Consider a strict cdg-functor $F: \AAA \lra \CCC$. There is a strict cdg-functor
$\hat{F}: \Free(\AAA)_{con} \lra \CCC$ extending $F$ and compatible with direct sums, shifts and twists, and $\hat{F}$ is unique up to natural isomorphism of strict cdg-functors.
\end{proposition}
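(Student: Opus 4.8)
The plan is to build $\hat{F}$ in two stages, reflecting the two steps in the construction of $\Free(\AAA)_{con}$: first extend $F$ over the formal direct sums and shifts to a strict cdg-functor $\tilde{F} \colon \Free(\AAA) \lra \CCC$ (here $\Free(\AAA)$ carries the trivial extension $\iota(\mu)$ of $\mu$, which is a cdg-structure since $\mu_n = 0$ for $n \geq 3$), and then extend over the twists. For the first stage, for each object $M = \oplus_{i \in I}\Sigma^{m_i}A_i$ one chooses in $\CCC$ a direct sum $\tilde{F}(M) = \oplus_{i \in I}\Sigma^{m_i}F(A_i)$ with its closed degree-zero structure morphisms $e^M_i \colon \Sigma^{m_i}F(A_i) \lra \tilde{F}(M)$ and $p^M_i \colon \tilde{F}(M) \lra \Sigma^{m_i}F(A_i)$; for a morphism $f = (f_{ji}) \colon M \lra N$ of $\Free(\AAA)$, column-finiteness of the matrix makes $\tilde{F}_1(f) := \sum_{j,i}\mu_2(e^N_j, \mu_2(F(f_{ji}), p^M_i))$ a well-defined morphism $\tilde{F}(M) \lra \tilde{F}(N)$. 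That $\tilde{F} = (0, \tilde{F}_1)$ is then a strict cdg-functor follows from the defining properties of direct sums and shifts in a cdg-category (cf.\ \cite[\S 1.2]{positselskihh2}): identity \eqref{cdg2} is matrix multiplication together with functoriality of $F$; \eqref{cdg1} uses that the $e^M_i, p^M_i$ are closed and the entrywise-plus-shift-sign description of $\mu_1$ on $\Free(\AAA)$; and \eqref{cdg0}, using $F_0 = 0$, uses that the curvature of a direct sum of shifts is ``diagonal'', so that it matches the image under $\tilde{F}_1$ of the curvature of $M$.

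For the second stage, recall that every object of $\Free(\AAA)_{con}$ is the twist $M(\delta_M)$ of some $M = \oplus_i \Sigma^{m_i}A_i \in \Free(\AAA)$ by a connection $\delta_M \in \Free(\AAA)(M,M)^1$, and that, since $\mu$ has no components of arity $\geq 3$, the embrace-twisted structure is explicitly $(\embr_\delta \mu)_2 = \mu_2$, $(\embr_\delta \mu)_1 = \mu_1 + \mu_2\{\delta\}$, and $(\embr_\delta \mu)_0$ has $M$-component $(\mu_0)_M + \mu_1(\delta_M) + \mu_2(\delta_M, \delta_M)$ (up to signs), where $\delta$ is the zero-part Hochschild element given by the connections. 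One sets $\tau_M := \tilde{F}_1(\delta_M) \in \CCC(\tilde{F}(M), \tilde{F}(M))^1$, chooses a twist $\hat{F}(M(\delta_M)) := \tilde{F}(M)(\tau_M)$ in $\CCC$ with structure maps $i_M, j_M$ (so $\mu_2(j_M, i_M) = 1$, $\mu_2(i_M, j_M) = 1$, $\mu_2(j_M, \mu_1(i_M)) = \tau_M$), and on morphisms conjugates: for $f \colon M(\delta_M) \lra M'(\delta_{M'})$, which by the trivial-variant construction is the same datum as a morphism $M \lra M'$ of $\Free(\AAA)$, one puts $\hat{F}_1(f) := \mu_2(i_{M'}, \mu_2(\tilde{F}_1(f), j_M))$. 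That $\hat{F} = (0, \hat{F}_1)$ is a strict cdg-functor extending $F$ and compatible with direct sums, shifts and twists is what remains: \eqref{cdg2} is immediate from $\mu_2(j_M, i_M) = 1$ together with the corresponding identity for $\tilde{F}$, while for \eqref{cdg1} and \eqref{cdg0} one expands $\mu_1^\CCC(\hat{F}_1(f))$ and the curvature of $\tilde{F}(M)(\tau_M)$ using the cdg-identities in $\CCC$ and the three twist axioms, the point being that conjugation by $i_M, j_M$ turns the $\CCC$-differential into ``$\mu_1 + [\tau_M, -]$'' and the curvature into $(\mu_0)_{\tilde{F}(M)} + \mu_1(\tau_M) + \mu_2(\tau_M, \tau_M)$ --- which, by the cdg-functoriality of $\tilde{F}$ from the first stage, are exactly the images under $\hat{F}_1$ of $(\embr_\delta \mu)_1$ and $(\embr_\delta \mu)_0$.

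I expect this last matching to be the main obstacle. Unlike the strict isomorphisms of \S\ref{parunital}, the twist structure maps $i_M, j_M$ are \emph{not} closed --- one has $\mu_2(j_M, \mu_1(i_M)) = \tau_M$ --- so Proposition \ref{prop1isos} cannot be applied directly, and conjugating through the differential produces correction terms coming from $\mu_1(i_M)$ and $\mu_1(j_M)$. One therefore has to re-derive the appropriate twist-version of the identities of Proposition \ref{prop1isos} from the cdg-identities \eqref{cdg0}--\eqref{cdg2} alone and match the resulting signs against those occurring in the embrace formula. This is a finite but sign-sensitive computation.

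Finally, for uniqueness up to natural isomorphism: if $\hat{F}'$ is another strict cdg-functor extending $F$ and compatible with the three operations, then on $\AAA$ it equals $F$; for $M = \oplus_i \Sigma^{m_i}A_i$ both $\hat{F}(M)$ and $\hat{F}'(M)$ realize a direct sum of the same shifted objects $\Sigma^{m_i}F(A_i)$ of $\CCC$, so the universal property of direct sums supplies a canonical closed degree-zero isomorphism $\theta_M \colon \hat{F}(M) \lra \hat{F}'(M)$ compatible with the structure maps, hence natural over $\Free(\AAA)$ and intertwining curvatures; for a twisted object $M(\delta_M)$ the two values are twists, by $\tau_M$ and its $\theta_M$-transport $\tilde{F}'_1(\delta_M)$, of isomorphic objects, hence canonically isomorphic by the essential uniqueness of twists in $\CCC$ (\cite[\S 1.2]{positselskihh2}), and conjugating $\theta$ by the twist structure maps yields a natural isomorphism $\hat{F} \cong \hat{F}'$ of strict cdg-functors.
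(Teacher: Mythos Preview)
Your proposal is correct and follows essentially the same two-stage strategy as the paper: first extend $F$ to $\Free(\AAA)$ using direct sums and shifts, then extend to $\Free(\AAA)_{con}$ by sending $(M,\delta_M)$ to the twist of $\hat{F}(M)$ by $\hat{F}(\delta_M)$. The paper's proof is considerably more terse---it does not spell out the action on morphisms in the second stage, and for the cdg-functor verification simply remarks that twisting by $\hat{F}(\delta)$ supplies exactly the correction term needed on the target side---whereas you make explicit the conjugation by the twist structure maps $i_M,j_M$ and correctly isolate the sign-sensitive computation arising from the non-closedness of these maps as the crux of the verification; your uniqueness argument via universal properties is likewise a straightforward elaboration of what the paper leaves implicit.
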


\begin{proof}
We first define a strict cdg-functor
$$\hat{F}: \Free(\AAA) \lra \CCC: \oplus_{i \in I} \Sigma^{n_i} A_i \longmapsto \oplus_{i \in I} \Sigma^{n_i} F(A_i)$$
making use of the direct sums and shifts in $\Mod_{qdg}(\AAA)$.
Next, for an object $(M, \delta_M) \in \Free(\AAA)_{con}$, we consider the map
$$\hat{F}: \Free(\AAA)(M,M)^1 \lra \Hom(\hat{F}(M), \hat{F}(M))^1$$
and consider $\hat{F}(\delta_M) \in \Hom(\hat{F}(M), \hat{F}(M))^1$. Let $\hat{F}(M)(\hat{F}(\delta_M))$ be the twist of $\hat{F}(M)$ by the element $\hat{F}(\delta_M)$ in $\Mod_{qdg}(\AAA)$.
We obtain a further strict cdg-functor
$$\hat{F}: \Free(\AAA)_{con} \lra \CCC: (M, \delta_M) \longmapsto \hat{F}(M)(\hat{F}(\delta_M))$$
with the required properties.
This is still a cdg-functor, because the twist is taken with the element $\hat{F}(\delta)$, which is the extra element needed in the cdg-functor identity on the right-hand side because of the strictness of $F$.
\end{proof}

\begin{remark}
The statement of Proposition \ref{propcdgextensie} can be adapted to encompass choices of connections $\Delta$ different from $con$.
\end{remark}

\begin{proposition}
The Yoneda embedding $Y: \AAA \lra \Mod_{qdg}(\AAA)$ has an extension
$$\hat{Y}: \Free(\AAA)_{con} \lra \Mod_{qdg}(\AAA)$$
which is a fully faithful strict cdg-embedding with the graded free qdg-modules as essential image.
\end{proposition}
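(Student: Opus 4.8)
The plan is to combine Proposition \ref{propcdgextensie} with Lemma \ref{lemyoneda}. First I would note that $\Mod_{qdg}(\AAA)$ is a cdg-category with direct sums, shifts, and arbitrary twists of objects: the direct sums and shifts are computed pointwise in $\mathsf{PCom}(k)$, and the twist of $M$ by $\tau = (\tau_A)_A$ is the qdg-module $M(\tau)$ described in the Example above, with structure morphisms $i$, $j$ the identity and $\mu_2''(j, \mu_1''(i)) = \tau$ following from $((\mu_1'')_{M,N})_A = (\mu_1')_{M(A),N(A)}$ and the definition of the twist in $\mathsf{PCom}(k)$. Hence the hypotheses of Proposition \ref{propcdgextensie} are met with $\CCC = \Mod_{qdg}(\AAA)$ and $F = Y$ the strict cdg-embedding of Lemma \ref{lemyoneda}, yielding a strict cdg-functor $\hat{Y}: \Free(\AAA)_{con} \lra \Mod_{qdg}(\AAA)$ extending $Y$, compatible with direct sums, shifts, and twists, unique up to natural isomorphism.

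Next I would identify the essential image. On an object $(M, \delta_M)$ with $M = \oplus_{i \in I} \Sigma^{n_i} A_i$, the construction of $\hat{Y}$ in the proof of Proposition \ref{propcdgextensie} gives $\hat{Y}(M, \delta_M) = \hat{Y}(M)(\hat{Y}(\delta_M))$, the twist by $\hat{Y}(\delta_M)$ of the qdg-module $\hat{Y}(M) = \oplus_{i \in I} \Sigma^{n_i} \AAA(-, A_i)$. By definition this is precisely a graded free qdg-module over $\AAA$: its underlying graded module is a direct sum of shifts of representables, and its predifferential is the one of $\oplus_i \Sigma^{n_i}\AAA(-,A_i)$ twisted by an arbitrary degree-one endomorphism. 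Conversely, any graded free qdg-module arises this way, since such a module is by definition specified by an index set $I$, objects $A_i$, shifts $n_i$, and a degree-one twist on $\oplus_i \Sigma^{n_i}\AAA(-,A_i)$, which one reads back as an element $\delta_M \in \Free(\AAA)(M,M)^1$ via the matrix description of $\Free(\AAA)$; these are exactly the objects of $\Free(\AAA)_{con}$. So the essential image is the full subcategory of graded free qdg-modules.

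It remains to check that $\hat{Y}$ is fully faithful. On $\Hom$-modules, $\hat{Y}$ on $\Free(\AAA)(M,N)$ is assembled from $Y$ by matrix multiplication (direct-sum compatibility) and then corrected by the twists on source and target; since $Y$ is fully faithful by Lemma \ref{lemyoneda}, and passing to a twist $C(\tau)$ of an object only changes the predifferential while keeping the underlying graded $\Hom$-module $\CCC(C, C')$ intact, the induced map on graded $\Hom$-spaces $\Free(\AAA)(M,N) \lra \Hom(\hat{Y}(M)(\hat{Y}(\delta_M)), \hat{Y}(N)(\hat{Y}(\delta_N)))$ is still the block-diagonal assembly of the isomorphisms $\AAA(A_i, B_j) \xrightarrow{\sim} \Hom(\AAA(-,A_i), \AAA(-,B_j))$, hence a $k$-linear isomorphism in each degree. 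I expect the main obstacle to be purely bookkeeping: matching the sign and shift conventions in the matrix description of $\Free(\AAA)(M,N) = \prod_i \oplus_j \Sigma^{n_j - m_i}\AAA(A_i, B_j)$ against the convention for natural transformations between (twisted, shifted) direct sums of representable qdg-modules, and verifying that infinite direct sums on the source side really do land in the product-over-$i$, sum-over-$j$ pattern that defines morphisms of qdg-modules — i.e. that column-finiteness of matrices in $\Free(\AAA)$ corresponds exactly to well-definedness of the natural transformation. None of this is deep, but it is where a careful argument must spend its effort.
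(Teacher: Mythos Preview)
Your proposal is correct and follows essentially the same approach as the paper: both apply Proposition \ref{propcdgextensie} to the Yoneda embedding of Lemma \ref{lemyoneda}, then argue fully faithfulness in two stages (first on $\Free(\AAA)$ via direct sums and shifts of representables, then noting that passing to twists leaves the underlying graded Hom-modules unchanged), with the essential image claim being immediate from the definition of graded free qdg-modules. Your write-up is in fact more explicit than the paper's about why $\Mod_{qdg}(\AAA)$ has the required structure and about the column-finiteness bookkeeping.
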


\begin{proof}
This is an application of Proposition \ref{propcdgextensie}.
We first obtain the strict cdg-functor
$$\hat{Y}: \Free(\AAA) \lra \Mod_{qdg}(\AAA): \oplus_{i \in I} \Sigma^{n_i} A_i \longmapsto \oplus_{i \in I} \Sigma^{n_i} \AAA(-, A_i)$$
where we use the direct sums and shifts in $\Mod_{qdg}(\AAA)$. Using their universal properties and Lemma \ref{lemyoneda}, this is easily seen to define a fully faithful strict cdg-functor.
Obviously the further cdg-functor
$$\Free(\AAA)_{con} \lra \Mod_{qdg}(\AAA): (M, \delta_M) \longmapsto \hat{Y}(M)(\hat{Y}(\delta_M))$$
is also fully faithful. The statement about the essential image is clear.
\end{proof}

\begin{proposition}
The restriction
$$(\hat{Y})_{\infty}: (\Free(\AAA)_{con})_{\infty} \lra \Mod_{cdg}(\AAA)$$
is a fully faithful dg embedding with the graded free cdg-modules as essential image.
In particular, if $\AAA$ is graded Artinian, $(\Free(\AAA)_{con})_{\infty}$ is a model for the contraderived category of $\AAA$ in the sense of \cite{positselskicontrader}.
\end{proposition}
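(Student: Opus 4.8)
The plan is to deduce the statement from the previous proposition by restricting the fully faithful strict cdg-embedding $\hat{Y}\colon \Free(\AAA)_{con} \lra \Mod_{qdg}(\AAA)$ to the full subcategories of curvature-free objects on both sides, and then to invoke \cite{positselskicontrader} for the concluding clause.

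First I would use that $\hat{Y}$ is a \emph{strict} cdg-functor: by \eqref{cdg0} with vanishing zeroth component, $\hat{Y}_1$ carries the curvature element of $\Free(\AAA)_{con}$ at an object $(M, \delta_M)$ to the curvature element of $\Mod_{qdg}(\AAA)$ at $\hat{Y}(M)(\hat{Y}(\delta_M))$. Since $\hat{Y}$ is fully faithful, every $\hat{Y}_1$ is a $k$-linear isomorphism, so the curvature at $(M,\delta_M)$ vanishes if and only if the curvature at its image does. Consequently $\hat{Y}$ restricts to a functor
$$(\hat{Y})_{\infty}\colon (\Free(\AAA)_{con})_{\infty} \lra (\Mod_{qdg}(\AAA))_{\infty} = \Mod_{cdg}(\AAA),$$
where the identification on the right was already recorded in this subsection. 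As the restriction of a fully faithful cdg-functor to full (dg-)subcategories, $(\hat{Y})_{\infty}$ is again fully faithful; and since the essential image of $\hat{Y}$ is the class of graded free qdg-modules, the essential image of $(\hat{Y})_{\infty}$ is precisely the class of graded free qdg-modules with vanishing curvature, i.e. the graded free cdg-modules. This establishes the first assertion.

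For the final clause I would appeal to Positselski's description of the contraderived category: when $\AAA$ is graded Artinian, the homotopy category of graded free cdg-modules over $\AAA$ is equivalent to the contraderived category $\mathrm{D}^{\mathrm{ctr}}(\AAA)$ of \cite{positselskicontrader}. Combined with the dg-equivalence between $(\Free(\AAA)_{con})_{\infty}$ and the dg-category of graded free cdg-modules just obtained, this shows that $(\Free(\AAA)_{con})_{\infty}$ is a model for $\mathrm{D}^{\mathrm{ctr}}(\AAA)$.

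The only step needing any care is the compatibility of $\hat{Y}$ with curvature, which is immediate from strictness together with full faithfulness; the rest is bookkeeping and a citation, so I do not expect a genuine obstacle in this proposition.
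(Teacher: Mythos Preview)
Your proposal is correct and follows essentially the same approach as the paper, which simply writes ``Immediate from \cite[\S 3.8]{positselskicontrader}.'' You have merely unpacked the word ``immediate'': the first assertion is the restriction of the previous proposition to the infinity parts, with strictness and full faithfulness of $\hat{Y}$ guaranteeing that curvature vanishes on an object if and only if it vanishes on its image; the second assertion is the citation to Positselski.
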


\begin{proof}
Immediate from \cite[\S 3.8]{positselskicontrader}.
\end{proof}

We end this section by remarking that the natural setting to encompass the results in sections \S \ref{parmodder} and \S \ref{parmodcontra} is that of $qA_{\infty}$-functors between $cA_{\infty}$-categories. Precisely, for $cA_{\infty}$-categories $(\AAA, \mu)$ and $(\BBB, \mu')$ and an underlying map $f: \Ob(\AAA) \lra \Ob(\BBB)$, we define a $qA_{\infty}$-functor from $\AAA$ to $\BBB$ to consist of an element
$F\in\CC^1(\AAA,\BBB)_f$ (replacing the datum of a $cA_{\infty}$-functor) and an extra datum $G \in \CC^2(\AAA,\BBB)_f$ such that
\begin{equation}\label{qainf functor}
\sum_{j+k+l=p}(-1)^{jk+l}F_{j+l+1}(1^{\otimes j}\otimes \mu_k\otimes 1^{\otimes l})+G_p=\sum_{i_1+\ldots+i_r=p}(-1)^s\mu'_r(F_{i_1},\ldots,F_{i_r})
\end{equation}
where for $p\ge 2$ we have $s=\sum_{2\le u\le r}\left((1-i_u)\sum_{1\le v\le u-1}i_v\right)$, for $p=1$ we have that $s=1$, and for $p=0$, $s=0$. As with $cA_\infty$-functors, the right-hand side of \eqref{qainf functor} for $p=0$ is given by
$$\mu'_0+\mu'_1(F_0)+\mu'_2(F_0, F_0) \ldots$$
A $qA_{\infty}$-module over $\AAA$ is given by a $qA_{\infty}$-functor $\AAA^{\op} \lra \mathsf{PCom}(\AAA)$. One can define natural $cA_{\infty}$-categories of $qA_{\infty}$-functors and -modules (let us denote the latter by $\Mod_{q\infty}(\AAA)$). In these categories, the curvature of the functor represented by $(F, G) \in \CC^1(\AAA,\BBB)_f \times \CC^2(\AAA,\BBB)_f$ is given by $G$.

Let $(\AAA,\mu)$ be a $cA_\infty$-category, and $\Delta$ an allowable collection on $\AAA$. We then obtain a Yoneda $cA_\infty$-functor $$Y:(\Free(\AAA)_\Delta,\embr_\delta\mu)\lra \Mod_{q\infty}(\AAA)$$
given by the underlying map $M=\bigoplus\Sigma A_i\mapsto y(M)=\bigoplus\Sigma \AAA(-,A_i)$ and 
\begin{align*}
Y^0_M&=(\embr_\delta\mu)_1\in \Mod_{q\infty}(\AAA)(y(M),y(M));\\
Y^1_{M,N}&:\Free(\AAA)_\Delta(M,N)\lra\Mod_{q\infty}(\AAA)(y(M), y(N)):f\mapsto(\embr_\delta \mu)_2(f,-);\\
Y^2_{M,N}&:\Free(\AAA)_\Delta^{\otimes 2}(M,N)\lra\Mod_{q\infty}(\AAA)(y(M), y(N)):(g,f)\mapsto(\embr_\delta \mu)_3(g,f,-);\\
\vdots
\end{align*}
where the expressions $(\embr_\delta\mu)_k$ should be considered as the associated transformations of $qA_\infty$-functors in stead of as multiplications on the category. The $qA_\infty$-module $y(M)=\bigoplus\Sigma \AAA(-,A_i)$ is defined by the underlying map $\AAA^{\op}\lra\pre(k):A\mapsto (\bigoplus\Sigma\AAA(A,A_i),d=0)$
 and
\begin{align*}
M^0_A&=(\embr_\delta\mu)_1\in \pre(k)(M(A),M(A));\\
M^1_{A,B}&:\AAA(A,B)\lra \pre(k)(M(B),M(A)):f\mapsto (\embr_\delta \mu)_2(-,f);\\
M^2_{A,B}&:\AAA^{\otimes 2}(A,B)\lra \pre(k)(M(B),M(A)):(g,f)\mapsto (\embr_\delta \mu)_3(-,g,f);\\
\vdots\\\
G^0_A&=(\embr_\delta\mu)_2((\embr_\delta\mu)_0,-)\in \pre(k)(M(A),M(A));\\
G^1_{A,B}&:\AAA(A,B)\lra \pre(k)(M(B),M(A)):f\mapsto (\embr_\delta \mu)_3((\embr_\delta\mu)_0,-,f);\\
G^2_{A,B}&:\AAA^{\otimes 2}(A,B)\lra \pre(k)(M(B),M(A)):(g,f)\mapsto (\embr_\delta \mu)_4((\embr_\delta\mu)_0,-,g,f);\\
\vdots
\end{align*}

In analogy with Proposition \ref{compconj}, one would like to think of this Yoneda functor as being ``homotopy fully faithful'', but this notion does not immediately make sense because we cannot take $H^0$ of $cA_{\infty}$-categories. In order to make mathematical sense of such a statement, one would need some kind of  ``homotopy category of $cA_{\infty}$-categories'', and such a construction is currently not known. On the other hand, one might envisage defining one based upon some natural candidate homotopy equivalences like the one we just propose. More details about this matter will appear elsewhere

\subsection{Strict units for twisted objects}

Let $\AAA$ be a strictly unital $cA_{\infty}$-category.
Let $\Delta$ be an allowable collection of connections on $\Free(\AAA)$ and consider
$\embr_{\delta}(\mu)$ on $\Free(\AAA)_{\Delta}$.
Consider an object $M = (\oplus_{i \in I} \Sigma^{n_i} A_i, \delta_M)$ and for $J \subseteq I$, the object $N = (\oplus_{i \in J} \Sigma^{n_i} A_i, \delta_M|_N)$.
There is a canonical element
$$s \in \Free(\AAA)_{\Delta}(N,M)$$
determined by the elements
$1_{A_j} \in \AAA(A_j, A_j) \subseteq \oplus_{i \in I}\AAA(A_j, A_i)$ for $j \in J$
and a canonical element
$$p \in \Free(\AAA)_{\Delta}(M,N)$$
determined by the elements
$1_{A_i} \in \AAA(A_i, A_i) \subseteq \oplus_{j \in J}\AAA(A_i, A_j)$ for $i \in J$
and $0 \in \oplus_{j \in J}\AAA(A_i, A_j)$ for $i \in I \setminus J$.

\begin{proposition}\label{ps}
Suppose $\mu$ is strictly unital.

We have $\embr_{\delta}(\mu)_1(s) = 0$ and $\embr_{\delta}(\mu)_1(p) = 0$.
\end{proposition}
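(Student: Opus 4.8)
The plan is to expand $\embr_{\delta}(\mu)_1$ explicitly on the matrices $s$ and $p$ and to let the strict unit axioms (U1), (U2) and (U$n$) do all the work; no input is needed beyond the description of $\embr_{\delta}(\mu)$ as the twist of the matrix-extended structure $\iota(\mu)$.

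First I would record the shape of $\embr_{\delta}(\mu)_1$. Writing $\mu$ also for its matrix extension to $\Free(\AAA)_{\Delta}$, the defining formula \eqref{embrintro} gives, for any degree-zero morphism $f\colon N\to M$,
$$\embr_{\delta}(\mu)_1(f)=\sum_{a,b\geq 0}(-1)^{\epsilon(a,b)}\,\mu_{a+b+1}\bigl(\delta_M^{\otimes a},f,\delta_N^{\otimes b}\bigr),$$
where, because $\delta\in\CC^1(\Free(\AAA)_{\Delta})_0$ is supported on the diagonal, the only composable insertions put the connection $\delta_M$ of the target to the left of $f$ and the connection $\delta_N$ of the source to its right.

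Next, apply this with $f=s$. Every nonzero entry of the column-finite matrix $s$ is a strict unit $1_{A_j}$, so three things happen. The term $a=b=0$ is $\mu_1(s)$, computed entrywise as $\mu_1(1_{A_j})$ or $\mu_1(0)$, hence zero by (U1). Every term with $a+b\geq 2$ unfolds, once the matrix multiplications are carried out, into a sum of expressions $\mu_{a+b+1}(\dots,1_{A_j},\dots)$ with $a+b+1\geq 3$, hence zero by (U$n$). Only the two $a+b=1$ terms remain, namely $\mu_2(\delta_M,s)$ and $\mu_2(s,\delta_N)$ (the latter carrying a sign), and by (U2) they reduce entrywise to $\mu_2((\delta_M)_{i'j},1_{A_j})=(\delta_M)_{i'j}$ and $\mu_2(1_{A_i},(\delta_N)_{ij})=(\delta_N)_{ij}=(\delta_M)_{ij}$; since $\delta_N=\delta_M|_N$ and $s$ is the inclusion of the $J$-summands, these two contributions cancel and $\embr_{\delta}(\mu)_1(s)=0$. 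The argument for $p$ is formally the same with source and target exchanged: $\mu_1(p)=0$ by (U1), the terms with $a+b\geq 2$ vanish by (U$n$), and the two surviving $\mu_2$-terms $\mu_2(\delta_N,p)$ and $\mu_2(p,\delta_M)$ cancel by (U2) together with $\delta_N=\delta_M|_N$.

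The one genuinely delicate point is the sign bookkeeping in the embrace/brace formula: I must verify that the two surviving $\mu_2$-terms occur with opposite signs, so that the common contribution coming from $\delta_M|_N$ cancels rather than doubles; this is where the definition $N=(\oplus_{i\in J}\Sigma^{n_i}A_i,\delta_M|_N)$ enters in an essential way. Everything else is mechanical, resting only on the facts that the nonzero entries of $s$ and of $p$ are strict units and that a strict unit inside $\mu_n$ annihilates the term for $n\geq 3$ while merely deleting it for $n=2$. The same bookkeeping shows, more generally, that $\embr_{\delta}(\mu)$ is itself strictly unital with the identity matrices as units, which is presumably the governing fact behind this subsection.
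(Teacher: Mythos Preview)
Your proof is correct and follows essentially the same approach as the paper: expand $\embr_{\delta}(\mu)_1(s)$ as $\sum_{n}\mu_n\{\delta^{\otimes n-1}\}(s)$, kill the $n=1$ term by (U1), the $n\geq 3$ terms by (U$n$), and use (U2) together with $\delta_N=\delta_M|_N$ to cancel the two $\mu_2$-contributions; then argue symmetrically for $p$. You are somewhat more explicit about the entrywise matrix computation and flag the sign check as the one delicate point, which the paper compresses into the single line $\mu_2\{\delta\}(s)=\mu_2(\delta_M,s)-\mu_2(s,\delta_N)=\delta_M|_N-\delta_N=0$.
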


\begin{proof}
We have $$\embr_{\delta}(\mu)_1(s) = \mu_1(s) + \mu_2\{\delta\}(s) + \dots + \mu_n\{\delta^{\otimes n-1}\}(s) + \dots.$$
The first term is zero by (U1). The second term is
$$\mu_2\{\delta\}(s) = \mu_2(\delta_M, s) - \mu_2(s, \delta_N) = \delta_M|_N - \delta_N = 0.$$
The higher terms are zero by (U$n$).
The proof for $p$ is similar.
\end{proof}

\begin{proposition}\label{proptwunit}
If $\mu$ is strictly unital, then so is $\embr_{\delta}(\mu)$.
\end{proposition}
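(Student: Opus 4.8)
The plan is to exhibit an explicit strict unit on $\Free(\AAA)_{\Delta}$ and verify the axioms (U1), (U2), (U$n$) for $\embr_{\delta}(\mu)$ directly, reducing everything to strict unitality of $\mu$ on $\AAA$ together with the exact bookkeeping already used in the proof of Proposition \ref{ps}. For an object $M = (\oplus_{i\in I}\Sigma^{n_i}A_i, \delta_M)$ I would take $1_M \in \Free(\AAA)_{\Delta}(M,M)^0$ to be the ``identity matrix'': the element with diagonal entries the strict units $1_{A_i} \in \AAA(A_i,A_i)$ (placed in the $\Sigma^0\AAA(A_i,A_i)$ slot) and all off-diagonal entries zero. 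This has exactly one nonzero entry per column, so it is a genuine (column-finite, degree-$0$) element of the Hom-module, and the claim will be that $1 = (1_M)_M$ is a strict unit for $\embr_{\delta}(\mu)$.

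The first step is to record the routine fact that the matrix-extended structure, i.e.\ the image of $\mu$ under $(-)_{\Delta}\iota\colon \CC(\AAA)\lra\CC(\Free(\AAA)_{\Delta})$, is \emph{already} strictly unital with the $1_M$ as units. Indeed $((-)_{\Delta}\iota(\mu))_1(1_M)$ is obtained by applying $\mu_1$ entrywise, hence vanishes by (U1) on the diagonal and trivially off it; $((-)_{\Delta}\iota(\mu))_2(1_N,f)$ and $((-)_{\Delta}\iota(\mu))_2(f,1_M)$ equal $f$ because the matrix-index sum collapses onto a single term and each surviving entry is $\mu_2(1_{B_j},f_{ji}) = f_{ji}$, respectively $\mu_2(f_{ji},1_{A_i}) = f_{ji}$, by (U2); and for $n\geq 3$ the entries of $((-)_{\Delta}\iota(\mu))_n(\dots,1_M,\dots)$ are finite sums of terms $\mu_n(\dots,1_{A_k},\dots)=0$ by (U$n$). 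Column-finiteness is exactly what makes all these sums finite.

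The heart of the argument is then the observation, already implicit in the proof of Proposition \ref{ps}, that $\embr_{\delta}(\mu)_n = \sum_{m\geq 0}((-)_{\Delta}\iota(\mu))_{n+m}\{\delta^{\otimes m}\}$ (only finitely many $m$ contributing on any fixed tuple of arguments, by allowability of $\Delta$), so inserting copies of the arity-zero loops $\delta$ only \emph{raises} arity. For (U1): $\embr_{\delta}(\mu)_1(1_M) = \mu_1(1_M) + \mu_2\{\delta\}(1_M) + \sum_{m\geq 2}(\dots)_{1+m}\{\delta^{\otimes m}\}(1_M)$; the first term is $0$ by the previous step, the second is $\mu_2(\delta_M,1_M) - \mu_2(1_M,\delta_M) = \delta_M - \delta_M = 0$ by (U2) (this is literally the computation in the proof of Proposition \ref{ps} with $N=M$), and each term of the remaining sum applies some $\mu_{1+m}$ with $1+m\geq 3$ to a tuple containing $1_M$, hence vanishes by (U$(1+m)$). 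For (U2): $\embr_{\delta}(\mu)_2(1_N,f) = \mu_2(1_N,f) + \sum_{m\geq 1}(\dots)_{2+m}\{\delta^{\otimes m}\}(1_N,f) = f + 0 = f$, since each higher term applies some $\mu_{\geq 3}$ to a tuple containing $1_N$; symmetrically $\embr_{\delta}(\mu)_2(f,1_M) = f$. For (U$n$) with $n\geq 3$: every term of $\embr_{\delta}(\mu)_n(f_{n-1},\dots,1_M,\dots,f_1)$ applies some $\mu_{n+m}$ with $n+m\geq 3$ to a tuple containing $1_M$, hence vanishes. This gives all three families of identities.

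The only places where any care is needed are the sign convention in the brace $\mu_2\{\delta\}$ (the $-$ in $\mu_2(\delta_M,1_M) - \mu_2(1_M,\delta_M)$, taken verbatim from the proof of Proposition \ref{ps}) and the collapsing of matrix-index sums in the first step; neither is a genuine obstacle. I expect no substantive difficulty: once one notices that twisting by a choice of connections only feeds arity-zero elements into $\mu$, and that strict unitality in arities $\geq 3$ is ``absorbing'', the statement is essentially formal.
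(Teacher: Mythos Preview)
Your proposal is correct and follows essentially the same approach as the paper: define $1_M$ as the diagonal identity matrix, invoke the computation of Proposition \ref{ps} for (U1), and observe that all higher-arity terms in the embrace expansion vanish by (U$n$) since inserting copies of $\delta$ only raises arity. The paper's proof is much terser (it dispatches (U2) and (U$n$) with ``the other identities we have to check follow in a similar fashion''), but your explicit breakdown into first checking strict unitality of the matrix extension and then of the twist is exactly the content behind that phrase.
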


\begin{proof}
For $M = (\oplus_{i \in I} \Sigma^{n_i} A_i, \delta_M)$, we define $1_M \in \Free(\AAA)_{\Delta}(M,M)$ to be determined by the elements $1_{A_i} \in \AAA(A_i, A_i) \subseteq \oplus_{j \in J}\AAA(A_i, A_j)$ for $i \in I$. This is a special case of both $s$ and $p$ above, so by Proposition \ref{ps}, 
$\embr_{\delta}(\mu)^1(1_M) = 0$. 
The other identities we have to check follow in a similar fashion.
\end{proof}

\begin{example}\label{yoneda}
Let $\AAA$ be a $cA_{\infty}$-category and let $\Delta$ be an allowable collection of connections on $\AAA$ with $0 \in \Delta_{A} \subseteq \AAA(A,A)^1$ for all $A \in \AAA$. Put $\AAA' = \Free_{\Delta}(\AAA)$. Consider $$f: \Ob(\AAA) \lra \Ob(\Free_{\Delta}(\AAA)): A \longmapsto (A, \delta_A = 0)$$ and $J \in [\Sigma \AAA, \Sigma \AAA']^0_f$ given by
$$J_{A,A'} = 1_{\AAA(A,A')}: \AAA(A,A') \lra \AAA(A,A') = \AAA'(A,A').$$
Further, we have
$$\mu_n'(a_n, \dots, a_1) = \mu_n(a_n, \dots, a_1) + \sum \mu_{n+k}(\delta, \dots, a_n, \dots, \delta, \dots, a_1)$$
but since all the connections $\delta_A = 0$, the higher terms vanish and $J$ satisfies the condition of Proposition \ref{propstrictcond}. Hence, $J$ is a fully faithful strict $cA_{\infty}$-morphism.
\end{example}

\subsection{Triangles of twisted objects}

Let $\AAA$ be a quiver. We now discuss some constructions in the quiver $\Free(\AAA)_{con}$.
Consider objects $M = (\oplus_{i \in I}\Sigma^{m_i}A_i, \delta_M)$ and $N = (\oplus_{j \in J}\Sigma^{n_j}B_j, \delta_N)$ in $\Free(\AAA)_{con}$.
We define the \emph{shift} of $M$ to be the object
$$\Sigma M = \oplus_{i \in I} \Sigma^{m_i + 1}A_i$$
endowed with the connection corresponding to $\delta_M$ through
$$\Free(\AAA)(M,M) \cong \Free(\AAA)(\Sigma M, \Sigma M).$$

We have $$\Free(\AAA)_{con}(M,N) = \prod_{i \in I} \oplus_{j \in J} \AAA(A_i, A_j).$$ 
Consider an element $f = (f_{ji}) \in \Free(\AAA)_{con}(M,N)$. We define the \emph{cone} of $f$ to be the object
$$\cone(f) = N \oplus \Sigma M = \oplus_{j \in J} \Sigma^{n_j} B_j \oplus \oplus_{i \in I} \Sigma^{m_i +1} A_i$$
endowed with the connection
$$\delta_{\cone(f)} = \begin{pmatrix} \delta_N & f \\ 0 & -\delta_M \end{pmatrix}.$$

Finally, suppose we have a collection of objects $(M_i, \delta_i)_{i \in I}$. We define the \emph{direct sum} to be the object $\oplus_{i \in I} M_i$ endowed with the natural ``diagonal'' connection obtained from the $\delta_i$.

Now suppose we have a $cA_{\infty}$-structure $\mu$ on $\AAA$ for which $\Delta$ is allowable, and consider the induced structure $\embr_{\delta}(\mu)$ on $\Free(\AAA)_{\Delta}$.

The curvature of $M$ is given by
$$c_M = \embr_{\delta}(\mu)^M_0 = \mu_0^M + \mu_1(\delta_M) + \mu_2(\delta_M, \delta_M) + \dots$$
and the curvature of $N$ is given by
$$c_N = \embr_{\delta}(\mu)^N_0 = \mu_0^N + \mu_1(\delta_N) + \mu_2(\delta_N, \delta_N) + \dots .$$
We define the \emph{curvature of $f$} to be
$$c_f = \embr_{\delta}(\mu)_1(f) = \mu_1(f) + \mu_2(\delta_N, f) + \mu_2(f, \delta_M) + \dots .$$

Suppose we consider other objects $M', N' \in \Free(\AAA)_{con}$ and $f' \in \Free(\AAA)^0_{con}(M',N')$ and elements $\alpha \in \Free(\AAA)^0_{con}(M,M'), \beta \in \Free(\AAA)^0_{con}(N,N')$. 
There is a natural element
$$\gamma = \begin{pmatrix} \beta & 0 \\ 0 & \sigma \alpha \end{pmatrix} \in \Free(\AAA)_{con}(\cone(f), \cone(f')).$$
We put $$c_{\alpha, \beta} = \embr_{\delta}(\mu)_2(\beta, f) - \embr_{\delta}(\mu)_2(f', \alpha).$$

\begin{lemma}
\begin{enumerate}
\item $$c_{\cone(f)} = \begin{pmatrix} c_N & c_f \\ 0 & c_M \end{pmatrix}.$$
\item $$c_{\gamma} = \begin{pmatrix} c_{\beta} & c_{\alpha, \beta} \\ 0 & c_{\alpha} \end{pmatrix}.$$
\end{enumerate}
\end{lemma}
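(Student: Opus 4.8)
The plan is to compute both curvatures block by block, exploiting that $\cone(f) = N \oplus \Sigma M$ and that the operations $\mu_n$ of the twisted structure $\embr_{\delta}(\mu)$ on $\Free(\AAA)$ act on matrices by the usual entrywise ``sum over composable paths'' rule, so that they preserve the direct sum decomposition into blocks. First I would unwind the definition $c_{\cone(f)} = \mu_0^{\cone(f)} + \sum_{n \geq 1} \mu_n(\delta_{\cone(f)}^{\otimes n})$, where $\delta_{\cone(f)} = \begin{pmatrix} \delta_N & f \\ 0 & -\delta_M \end{pmatrix}$ is block upper triangular and $\mu_0^{\cone(f)}$ is block diagonal. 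Since a product of block upper triangular matrices is block upper triangular, each $\mu_n(\delta_{\cone(f)}^{\otimes n})$ is block upper triangular, so the $(2,1)$-block of $c_{\cone(f)}$ vanishes immediately.

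For the remaining three blocks: a tensor word contributing to the $(1,1)$-block starts and ends in the $N$-summand, and since the $(2,1)$-block of $\delta_{\cone(f)}$ is zero no inner factor can leave the $N$-summand; hence the $(1,1)$-block of $\mu_n(\delta_{\cone(f)}^{\otimes n})$ is $\mu_n(\delta_N^{\otimes n})$, and summing over $n$ together with $\mu_0^N$ gives $c_N$. Dually, a word contributing to the $(2,2)$-block never leaves the $\Sigma M$-summand, so that block equals $\sum_n \mu_n$ applied to tensor powers of the connection carried by $\Sigma M$, namely the shift of $-\delta_M$; transporting back along $\Free(\AAA)(M,M) \cong \Free(\AAA)(\Sigma M, \Sigma M)$ and bookkeeping the Koszul signs, this is exactly $c_M$. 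Finally a word contributing to the $(1,2)$-block must pass from the $\Sigma M$-summand to the $N$-summand, which is only possible through the entry $f$ and necessarily exactly once; collecting the $\delta_N$'s to its left and the (shifted) $-\delta_M$'s to its right reproduces $\embr_{\delta}(\mu)_1(f) = c_f$ after the same sign reconciliation.

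Part (2) is handled the same way: writing $c_{\gamma} = \embr_{\delta}(\mu)_1(\gamma) = \sum_{p,q \geq 0} \mu_{p+q+1}(\delta_{\cone(f')}^{\otimes p}, \gamma, \delta_{\cone(f)}^{\otimes q})$ (target connection $\delta_{\cone(f')}$ on the left, source connection $\delta_{\cone(f)}$ on the right), and using that $\gamma = \begin{pmatrix} \beta & 0 \\ 0 & \sigma\alpha \end{pmatrix}$ is block diagonal while the two cone connections are block upper triangular, one gets that $c_{\gamma}$ is block upper triangular, so the $(2,1)$-block is $0$. The $(1,1)$- and $(2,2)$-blocks are computed exactly as before (no summand switch is possible), giving $c_{\beta}$ and $c_{\alpha}$. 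For the $(1,2)$-block, a contributing word switches summand exactly once: either on the source side, through the entry $f$ of $\delta_{\cone(f)}$, in which case $\gamma$ necessarily contributes its block $\beta$ and the remaining factors assemble into $\embr_{\delta}(\mu)_2(\beta, f)$; or on the target side, through the entry $f'$ of $\delta_{\cone(f')}$, in which case $\gamma$ contributes $\sigma\alpha$ and the remaining factors assemble, with the sign produced by the shift, into $-\embr_{\delta}(\mu)_2(f', \alpha)$. Adding these gives $c_{\alpha, \beta}$.

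The combinatorial part above is routine; the step I expect to be the main obstacle is the sign bookkeeping — checking that the shift identification $\Free(\AAA)(M,M) \cong \Free(\AAA)(\Sigma M, \Sigma M)$ together with the $-\delta_M$ in $\delta_{\cone(f)}$ and the $\sigma\alpha$ in $\gamma$ produce \emph{exactly} $c_M$, $c_f$, $c_\alpha$ and $-\embr_{\delta}(\mu)_2(f', \alpha)$ rather than sign-twisted variants (these conventions being chosen precisely so the signs cancel). One way to organize this cleanly, at least for (1), is to note $\delta_{\cone(f)} = \delta_{N \oplus \Sigma M} + \hat{f}$ with $\hat{f}$ the degree $1$ endomorphism of $N \oplus \Sigma M$ having $f$ as its only nonzero block; since $\delta_{N \oplus \Sigma M}$ lies in the zero part, Proposition \ref{sumformula} gives $\embr_{\delta_{\cone(f)}}(\mu) = \embr_{\hat{f}}(\embr_{\delta_{N \oplus \Sigma M}}(\mu))$, and as $\nu_k(\hat{f}^{\otimes k}) = 0$ for $k \geq 2$ (two composable copies of $\hat{f}$ would run $\Sigma M \to N \to ?$) the outer embrace collapses to $\nu_0 + \nu_1(\hat{f})$ for $\nu = \embr_{\delta_{N \oplus \Sigma M}}(\mu)$; identifying $\nu$ as the direct-sum structure, whose curvature is $\mathrm{diag}(c_N, c_M)$ and whose $\mu_1$ sends $\hat{f}$ to the matrix with $(1,2)$-block $c_f$, isolates the remaining sign issue into the single statement ``the curvature of a direct sum of twisted objects is the diagonal of the curvatures''. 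For (2) the analogous reduction is two-sided, so I would there carry out the direct block computation, paying attention to the Koszul sign attached to $\sigma$ which is what produces the minus in $c_{\alpha,\beta}$.
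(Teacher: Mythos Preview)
Your proposal is correct and matches the paper's approach: the paper's proof reads in its entirety ``This follows from straightforward computation,'' and what you have written is precisely a careful unwinding of that computation, block by block, with the right attention to where the sign issues lie. Your alternative organization of part (1) via Proposition \ref{sumformula}, splitting $\delta_{\cone(f)} = \delta_{N \oplus \Sigma M} + \hat f$ and collapsing the outer embrace using $\nu_k(\hat f^{\otimes k}) = 0$ for $k \geq 2$, is a genuinely cleaner way to isolate the sign question than a raw term-by-term expansion, and is not something the paper spells out.
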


\begin{proof}
This follows from straightforward computation.
\end{proof}

\begin{proposition}\label{propconeiso}
Consider the $cA_{\infty}$-category $\Free(\AAA)_{\Delta}$ over some $cA_{\infty}$-category $\AAA$.
Let $\alpha$ and $\beta$ be isomorphisms.
\begin{enumerate}
\item $c_M = 0$ if and only if $c_{M'} = 0$; $c_N = 0$ if and only if $c_{N'} = 0$.
\item $0=(\embr_\delta\mu)_1(\alpha)=c_{\alpha} =-\mu_2(\delta_{M'}, \alpha) + \mu_2(\alpha, \delta_M)$;\\
$0=(\embr_\delta\mu)_1(\beta)=c_{\beta} = -\mu_2(\delta_{N'}, \beta) + \mu_2(\beta, \delta_{N})$.
\item $c_{\alpha, \beta} = \mu_2(\beta, f) - \mu_2(f', \alpha)$.
\item If $c_{\alpha, \beta} = 0$, then $\gamma$ is an isomorphism.
\item If $c_{\alpha, \beta} = 0$, then $c_f = 0$ if and only if $c_{f'} = 0$.
\end{enumerate}
\end{proposition}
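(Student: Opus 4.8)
The plan is to carry out every argument inside the $cA_{\infty}$-category $(\Free(\AAA)_{\Delta}, \nu)$, where I abbreviate $\nu := \embr_{\delta}(\mu)$, exploiting that by hypothesis $\alpha$, $\beta$ (and, via the canonical shift isomorphisms on Hom-modules, the shift $\sigma\alpha$) are strict isomorphisms for $\nu$, i.e.\ they satisfy (Iso1), (Iso2) and (Iso$n$) for $n \geq 3$. The crucial preliminary step is the following transfer observation. By Proposition~\ref{sumformula} applied with $\psi = -\delta$ one has $\embr_{-\delta}(\nu) = \embr_{-\delta}(\embr_{\delta}(\mu)) = \embr_{0}(\mu) = \mu$, so that $\mu_{n}(g_{n},\dots,g_{1}) = \sum_{j \geq 0}\nu_{n+j}\{(-\delta)^{\otimes j}\}(g_{n},\dots,g_{1})$ for all $n$. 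If one of the $g_{i}$ is one of our isomorphisms and $n \geq 3$, then in every summand that isomorphism sits as an argument of a $\nu_{n+j}$ with $n+j \geq 3$, so (Iso$n$) kills the whole sum; hence $\mu_{n}(\dots,\mathrm{iso},\dots) = 0$ for $n \geq 3$. Feeding this back into $\nu_{2} = \mu_{2} + \sum_{m \geq 1}\mu_{m+2}\{\delta^{\otimes m}\}$ shows that the embrace corrections disappear as soon as an isomorphism occupies one of the two slots: $\nu_{2}(x,\mathrm{iso}) = \mu_{2}(x,\mathrm{iso})$ and $\nu_{2}(\mathrm{iso},x) = \mu_{2}(\mathrm{iso},x)$. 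In particular $\mu_{2}(\beta,-) = \nu_{2}(\beta,-)$, $\mu_{2}(-,\beta) = \nu_{2}(-,\beta)$, $\mu_{2}(\sigma\alpha,-) = \nu_{2}(\sigma\alpha,-)$ and $\mu_{2}(-,\sigma\alpha) = \nu_{2}(-,\sigma\alpha)$ are $k$-linear isomorphisms, by (Iso2) for $\beta$ and $\sigma\alpha$.

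Granting this, the five items become routine. For (2): $c_{\alpha} = \nu_{1}(\alpha) = 0$ is exactly (Iso1), while expanding $\nu_{1}(\alpha) = \mu_{1}(\alpha) + \mu_{2}\{\delta\}(\alpha) + \sum_{m \geq 2}\mu_{m+1}\{\delta^{\otimes m}\}(\alpha)$ and discarding the tail (its terms carry $\alpha$ inside a $\mu_{\geq 3}$) reduces $c_{\alpha} = 0$ to the stated explicit relation between $\mu_{1}(\alpha)$, $\mu_{2}(\delta_{M'},\alpha)$ and $\mu_{2}(\alpha,\delta_{M})$; likewise for $\beta$. For (1): Proposition~\ref{prop1isos}(1) applied to the strict isomorphism $\alpha$ for $\nu$ gives $\nu_{2}(c_{M'},\alpha) = \nu_{2}(\alpha,c_{M})$ (recalling $c_{M} = \nu_{0,M}$), and since $\nu_{2}(\alpha,-)$ and $\nu_{2}(-,\alpha)$ are isomorphisms by (Iso2), one side vanishes iff the other does, so $c_{M} = 0 \Leftrightarrow c_{M'} = 0$; the statement for $N$, $N'$ is obtained the same way from $\beta$. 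For (3): by definition $c_{\alpha,\beta} = \nu_{2}(\beta,f) - \nu_{2}(f',\alpha)$, and the transfer observation rewrites this as $\mu_{2}(\beta,f) - \mu_{2}(f',\alpha)$.

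For (4): assume $c_{\alpha,\beta} = 0$ and verify the three conditions for $\gamma = \left(\begin{smallmatrix}\beta & 0\\ 0 & \sigma\alpha\end{smallmatrix}\right)$. By the block formulas of the Lemma above, $c_{\gamma} = \left(\begin{smallmatrix}c_{\beta} & c_{\alpha,\beta}\\ 0 & c_{\alpha}\end{smallmatrix}\right)$, which is $0$ by (2) and the hypothesis --- that is (Iso1). Since $\gamma$ is block diagonal along $\cone(f) = N \oplus \Sigma M$ and $\cone(f') = N' \oplus \Sigma M'$, the transfer observation turns $\nu_{2}(\gamma,-)$ into the block-diagonal operator with diagonal parts $\mu_{2}(\beta,-)$ and $\mu_{2}(\sigma\alpha,-)$, hence an isomorphism, and symmetrically for $\nu_{2}(-,\gamma)$ --- that is (Iso2). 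Expanding $\nu_{n}(\dots,\gamma,\dots)$ over the entries of $\gamma$, each surviving term has $\beta$ or $\sigma\alpha$ among the arguments of a $\mu_{\geq 3}$ and so vanishes --- that is (Iso$n$), $n \geq 3$. For (5): with $c_{\alpha,\beta} = 0$, part (4) makes $\gamma$ an isomorphism, so Proposition~\ref{prop1isos}(1) gives $\nu_{2}(c_{\cone(f')},\gamma) = \nu_{2}(\gamma,c_{\cone(f)})$; by the transfer observation and the block shapes $c_{\cone(f)} = \left(\begin{smallmatrix}c_{N} & c_{f}\\ 0 & c_{M}\end{smallmatrix}\right)$, $c_{\cone(f')} = \left(\begin{smallmatrix}c_{N'} & c_{f'}\\ 0 & c_{M'}\end{smallmatrix}\right)$, multiplying out and reading the upper-right corner gives $\mu_{2}(\beta,c_{f}) = \mu_{2}(c_{f'},\sigma\alpha)$, whence $c_{f} = 0 \Leftrightarrow c_{f'} = 0$ by injectivity of $\mu_{2}(\beta,-)$ and $\mu_{2}(-,\sigma\alpha)$.

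The only real content is the transfer observation of the first paragraph: because $\mu = \embr_{-\delta}(\nu)$, the higher vanishing conditions (Iso$n$) descend from $\nu$ to $\mu$, so all embrace corrections evaporate next to an isomorphism, after which (1)--(5) are $2 \times 2$ matrix manipulations. The loose ends to tidy are that $-\delta$ is still allowable (clear, since allowability only concerns eventual vanishing), so that $\embr_{-\delta}$ and Proposition~\ref{sumformula} apply, and that $\sigma\alpha$ inherits (Iso1), (Iso2), (Iso$n$) from $\alpha$ through the shift isomorphism on Hom-modules.
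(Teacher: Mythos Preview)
Your proof is correct and follows essentially the same route as the paper's: verify (Iso1), (Iso2), (Iso$n$) for $\gamma$ blockwise, and reduce (1), (5) to Proposition~\ref{prop1isos}. The one genuine addition is your ``transfer observation'': using Proposition~\ref{sumformula} to write $\mu = \embr_{-\delta}(\nu)$ and thereby deduce that (Iso$n$) for $\nu$ forces $\mu_{n}(\dots,\alpha,\dots) = 0$ for $n \geq 3$. The paper uses this implicitly (``since all higher terms vanish with $\alpha$ and $\beta$ being isomorphisms''), but as stated the hypothesis only gives (Iso$n$) for $\nu$, so your inversion argument is exactly what is needed to make that step rigorous. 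For (5) the paper simply cites Proposition~\ref{corisos} (the $\nu_{2}$-commutative square with $c_{\alpha,\beta} = 0$), which is a bit shorter than your block-curvature computation via $\gamma$, but the content is the same.
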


\begin{proof}
(1) follows from Proposition \ref{prop1isos}(1).
(2) and (3) immediately follow from (Iso$n$) for $\alpha$ and $\beta$. (5) is an application of Proposition \ref{corisos}. (4) We first look at (Iso$n$) for $n \geq 3$. In the expression of $\embr_{\delta}(\mu)_n(\varphi_n, \dots, \gamma, \dots, \varphi_1)$, all terms are easily seen to vanish since $\alpha$ and $\beta$ satisfy (Iso$n$) for $n \geq 3$.
Next, we have $\embr_{\delta}(\mu)_1(\gamma) = c_{\gamma} = 0$ by the assumptions.
It remains to look into (Iso2). Consider an arbitrary object $(P, \delta_P) \in \Free_{\Delta}(\AAA)$. The morphism
$$\embr_{\delta}(\mu)_2(\gamma, -): \Free(\AAA)(P, \cone(f)) \lra \Free(\AAA)(P, \cone(f'))$$
is isomorphic to a morphism
$$H: \Free(\AAA)(P, N) \oplus \Free(\AAA)(P, \Sigma M) \lra \Free(\AAA)(P, N') \oplus \Free(\AAA)(P,\Sigma M').$$
We claim that $H = \embr_{\delta}(\mu)_2(\beta,-) \oplus \embr_{\delta}(\mu)_2(\sigma \alpha,-)$, and thus the morphism $H$ is an isomorphism as desired. To see this, consider $\kappa \in \Free(\AAA)(P, N)$, $\rho \in \Free(\AAA)(P, \Sigma M)$ and the corresponding $\begin{pmatrix} \kappa \\ \rho \end{pmatrix} \in \Free(\AAA)(P, \cone(f))$.
We have
$$\embr_{\delta}(\mu)_2(\begin{pmatrix} \beta & 0 \\ 0 & \sigma \alpha \end{pmatrix}, \begin{pmatrix} \kappa \\ \rho \end{pmatrix}) = \mu_2(\begin{pmatrix} \beta & 0 \\ 0 & \sigma \alpha \end{pmatrix}, \begin{pmatrix} \kappa \\ \rho \end{pmatrix}) = \begin{pmatrix} \mu_2(\beta, \kappa) \\ \mu_2(\alpha, \rho) \end{pmatrix} = \begin{pmatrix} \embr_{\delta}(\mu)_2(\beta, \kappa) \\ \embr_{\delta}(\mu)_2(\alpha, \rho) \end{pmatrix}$$
since all higher terms vanish with $\alpha$ and $\beta$ being isomorphism.
\end{proof}

\begin{definition}\cite{bondalkapranov, drinfeld}
\begin{enumerate}
\item A $cA_{\infty}$-category $\AAA$ is \emph{strongly c-triangulated} provided the natural functor
$$\AAA \lra \free(\AAA)_{iln}$$
is a strong equivalence.
\item An $A_{\infty}$-category $\AAA$ is \emph{strongly pre-triangulated} provided the natural functor
$$\AAA \lra (\free(\AAA)_{iln})_{\infty}$$
is a strong equivalence.
\end{enumerate}
\end{definition}

\begin{proposition}
If an $A_{\infty}$-category $\AAA$ is strongly pre-triangulated, then the category $H^0(\AAA)$ is canonically triangulated.
\end{proposition}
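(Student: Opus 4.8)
The plan is to obtain the triangulated structure on $H^0\AAA$ by transporting it, along the strong equivalence, from the homotopy category of the category of twisted complexes. Write $\BBB = (\free(\AAA)_{iln})_{\infty}$ and let $F\colon \AAA \lra \BBB$ be the natural functor (the strict embedding $A\mapsto(A,\delta_A=0)$, cf.\ Example~\ref{yoneda}), which by hypothesis is a strong equivalence. I would first reduce to the case that $\AAA$ is strictly unital — replacing it by a strictly unital homotopy-equivalent model affects neither the hypothesis nor the conclusion — so that, by Proposition~\ref{proptwunit}, $\BBB$ is strictly unital and $H^0\BBB$ is a genuine $k$-linear category. Since $F$ is a strict embedding, $F_1$ is an isomorphism of hom-complexes and $H^0F$ is fully faithful; and the isomorphisms $\eta_B\colon fg(B)\lra B$ supplied by the strong equivalence descend to isomorphisms in $H^0\BBB$, whence $H^0F$ is essentially surjective. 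Thus $H^0F\colon H^0\AAA\lra H^0\BBB$ is an equivalence of $k$-linear categories; since a triangulated structure transports uniquely along an equivalence, it remains to produce a canonical one on $H^0\BBB$, the resulting structure on $H^0\AAA$ then being canonical because $F$, hence $H^0F$, is.

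For the triangulated structure on $H^0\BBB$ I would use the shift and cone constructions of the subsection on triangles of twisted objects. First one checks that $\free(\AAA)_{iln}$ is stable under shifts and under cones of morphisms: for a closed $f\in\BBB^0(M,N)$ the block-triangular connection $\left(\begin{smallmatrix}\delta_N&f\\0&-\delta_M\end{smallmatrix}\right)$ on $\cone(f)=N\oplus\Sigma M$ is again intrinsically locally nilpotent — it is assembled from the iln connections on $M$ and $N$ and one off-diagonal block — and, by the block formula for $c_{\cone(f)}$, this object has vanishing curvature exactly when $f$ is closed, so $\cone(f)\in\BBB$. The shift on objects descends, via $\Free(\AAA)(M,M)\cong\Free(\AAA)(\Sigma M,\Sigma M)$, to an autoequivalence $\Sigma$ of $H^0\BBB$. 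One then calls a triangle in $H^0\BBB$ distinguished iff it is isomorphic to a standard triangle $M\lra N\lra\cone(f)\lra\Sigma M$ with $f$ closed and with the canonical inclusion and projection as the remaining maps; these last two are closed by Proposition~\ref{ps}, so the standard triangle is a genuine diagram in $H^0\BBB$. Axiom (TR1) is then immediate, (TR2) follows from the explicit homotopy equivalence $\cone(N\lra\cone(f))\simeq\Sigma M$ in $\BBB$ (with the usual sign on $\Sigma f$), and (TR3) follows from Proposition~\ref{propconeiso}, whose comparison maps $\gamma$ — corrected by the term $c_{\alpha,\beta}$ when the given square only $\mu_2$-commutes up to homotopy — and whose invertibility criterion~(4) furnish the required morphism of triangles. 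This part parallels the classical treatment of twisted complexes in \cite{bondalkapranov, drinfeld, lefevre}.

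The step I expect to be the real obstacle is the octahedral axiom (TR4): given composable closed morphisms $M\lra N\lra P$, one must build the octahedron out of the twisted complexes $\cone(M\lra N)$, $\cone(N\lra P)$ and $\cone(M\lra P)$ together with explicit comparison morphisms between them, verify that each such morphism is closed, and check that the two new triangles are distinguished. As before this reduces to block-matrix bookkeeping with the curvature formulas of that subsection together with an application of Proposition~\ref{propconeiso}(4), following the classical pattern of \cite{bondalkapranov}; it is laborious but formal. Once (TR1)--(TR4) are in place, $H^0\BBB$ is triangulated, and pulling this structure back along the equivalence $H^0F$ yields the asserted canonical triangulated structure on $H^0\AAA$.
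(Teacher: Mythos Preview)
Your approach is essentially the same as the paper's: both transport the triangulated structure from $H^0((\free(\AAA)_{iln})_{\infty})$ along the equivalence induced by the canonical embedding. The paper's proof is simply terser --- it invokes Proposition~\ref{propstronghopy} to obtain the equivalence $H^0\AAA \to H^0\BBB$ in one line (rather than arguing full faithfulness and essential surjectivity directly as you do), and it takes the triangulated structure on $H^0\BBB$ as known from \cite{bondalkapranov, lefevre} rather than sketching the verification of the axioms.
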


\begin{proof}
By Proposition \ref{propstronghopy}, the functor $H^0(\AAA) \lra H^0((\free(\AAA)_{iln})_{\infty})$ is an equivalence of categories, so $H^0(\AAA)$ inherits the triangulated structure from $H^0((\free(\AAA)_{iln})_{\infty})$. 
\end{proof}

\begin{proposition}\label{propdeltacone}
Let $\AAA$ be a strictly unital $cA_{\infty}$-category with a choice of connections $\Delta$ on $\Free(\AAA)$.
\begin{enumerate}
\item Suppose for $f \in \Free(\AAA)^0(M,N)$, $\delta_M \in \Delta_M$ and $\delta_N \in \Delta_N$ we have $\delta_{\cone(f)} \in \Delta_{\cone(f)}$. Then $\Free(\AAA)_{\Delta}$ is strongly c-triangulated.
\item Suppose for $f \in \Free(\AAA)^0(M,N)$, $\delta_M \in \Delta_M$ and $\delta_N \in \Delta_N$ with $c_N = 0$, $c_M = 0$, $c_f = 0$, we have $\delta_{\cone(f)} \in \Delta_{\cone(f)}$. Then $(\Free(\AAA)_{\Delta})_{\infty}$ is strongly pre-triangulated.
\end{enumerate}

Furthermore, in case (2), a collection of standard triangles in $H^0((\Free(\AAA)_{\Delta})_{\infty})$ is given by the images of 
$$\xymatrix{ {(M, \delta_M)} \ar[r]_-f & {(N, \delta_N)} \ar[r]_-s & {(\cone(f), \delta_{\cone(f)})} \ar[r]_-p & {(\Sigma M, \delta_{\Sigma M}).}}$$
\end{proposition}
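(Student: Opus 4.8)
The plan is to show, in both cases, that the relevant natural functor is a \emph{strong equivalence} in the sense recalled before Proposition~\ref{propstronghopy}, i.e.\ fully faithful together with an essential-surjectivity datum $(g,(\eta_{B})_{B})$; the concluding statement about standard triangles is then read off from this description. Write $J$ for the natural functor $M\mapsto(M,0)$ from $\Free(\AAA)_{\Delta}$ to $\free(\Free(\AAA)_{\Delta})_{iln}$ in case~(1), and for its $\infty$-part in case~(2) (note that $J(P)=(P,0)$ has curvature $c_{P}=0$ when $P$ lies in the $\infty$-part, so $J$ does land where it should). Since $J$ attaches the zero connection and $0\in iln_{M}$ for every $M$, the argument of Example~\ref{yoneda} applies verbatim and exhibits $J$ as the strict $cA_{\infty}$-morphism whose first component is the canonical identification of Hom-modules and whose higher components vanish; in particular $J$ is fully faithful.

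For essential surjectivity I would \emph{flatten}. An object of $\free(\Free(\AAA)_{\Delta})_{iln}$ has the form $\mmm=(\oplus_{i=1}^{n}\Sigma^{n_{i}}M_{i},\delta)$ with $M_{i}=(\oplus_{l}\Sigma^{p_{il}}A_{il})\in\Free(\AAA)_{\Delta}$ carrying a connection $\sigma_{i}\in\Delta_{M_{i}}$, and $\delta$ intrinsically locally nilpotent; I would set $g(\mmm)=\Phi(\mmm):=(\oplus_{i,l}\Sigma^{n_{i}+p_{il}}A_{il},\hat\delta)$, where $\hat\delta$ has $(i,i)$-block $\sigma_{i}$ and, for $i\neq j$, $(i,j)$-block the corresponding component of $\delta$. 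The crucial point is that $\hat\delta\in\Delta_{\Phi(\mmm)}$: since $\delta$ is intrinsically locally nilpotent and $n<\infty$, its ``support quiver'' is a finite acyclic quiver, so after reindexing the $M_{i}$ the matrix $\delta$ is strictly triangular; hence $\Phi(\mmm)$ is obtained from the objects $(M_{i},\sigma_{i})\in\Free(\AAA)_{\Delta}$ by $n-1$ successive cone constructions inside $\Free(\AAA)$, the $k$-th step forming $\cone$ of the degree-$0$ map $f_{k}$ assembled from the relevant column of $\delta$, and the closure hypothesis of~(1) then gives inductively that every intermediate connection, and so $\hat\delta$, lies in $\Delta$. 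In case~(2) the extra curvature conditions are met at every step: by triangularity of $\hat\delta$ and the lemma preceding Proposition~\ref{propconeiso} the block-diagonal part of $c_{\Phi(\mmm)}$ is $\mathrm{diag}(c_{M_{1}},\dots,c_{M_{n}})$ while its remaining blocks are the $c_{f_{k}}$, all of which vanish because $\mmm$ lies in the $\infty$-part; so each sub-object and connecting map in the tower has vanishing curvature and the weaker hypothesis of~(2) applies at each stage. Thus $\Phi(\mmm)\in\Free(\AAA)_{\Delta}$ (resp.\ its $\infty$-part).

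It remains to produce $\eta_{\mmm}\colon J(\Phi(\mmm))\lra\mmm$, and I would take it to be the canonical ``reorganisation of nested direct sums'' matrix: it intertwines $\hat\delta$ on the two sides, hence satisfies (Iso1); it is given by an invertible matrix, hence satisfies (Iso2); and the higher twisted operations vanish on it (exactly as for the strict units in Propositions~\ref{ps}--\ref{proptwunit}, or by running Proposition~\ref{propconeiso}(4) along the cone tower), hence (Iso$n$). So $\eta_{\mmm}$ is a $cA_{\infty}$-isomorphism and $(g,(\eta_{\mmm})_{\mmm})$ exhibits $J$ as a strong equivalence, proving~(1) and~(2); conceptually this is the idempotency of the twisted-object construction of \cite{bondalkapranov}, the hypotheses on $\Delta$ being precisely what is needed for the flattening of $\free(\Free(\AAA)_{\Delta})_{iln}$ to land back in $\Free(\AAA)_{\Delta}$.

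For the last assertion, under the equivalence of homotopy categories induced by $J$ the standard triangles of the (strongly pre-triangulated) target are those of the shape $P\to Q\to\cone(g)\to\Sigma P$ formed inside the twisted-object category, cf.\ \cite{bondalkapranov}. The flattening-versus-cone compatibility used above identifies such a triangle with $(M,\delta_{M})\to(N,\delta_{N})\to(\cone f,\delta_{\cone f})\to(\Sigma M,\delta_{\Sigma M})$, the maps $s$ and $p$ being the canonical inclusion and projection of the strict-unit section (Proposition~\ref{ps}), which are precisely the structural maps of the standard triangle; hence these squares map to standard triangles. I expect the main obstacle to be exactly this flattening step: checking that $\Phi$ intertwines the two cone constructions with the correct Koszul signs and shift identifications — the sign $-\delta_{M}$ in $\delta_{\cone(f)}$ is presumably chosen for precisely this purpose — so that the inductive use of the closure hypotheses is legitimate, and that the reorganisation matrix is a genuine $cA_{\infty}$-isomorphism; these are routine but sign-sensitive computations with the bar-construction conventions of \S\ref{parparcurved}.
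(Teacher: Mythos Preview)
Your proposal is correct and follows essentially the same strategy as the paper: full faithfulness of the canonical embedding is immediate (Example~\ref{yoneda}), and essential surjectivity comes from the fact that an intrinsically locally nilpotent connection on a finite sum is strictly triangular after reordering, so the object is an iterated cone and the $\Delta$-closure hypothesis applies inductively.

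The packaging differs. The paper factors the argument through three auxiliary lemmas: Lemma~\ref{lemsuccone} (every object of $\free(\BBB)_{iln}$ is a successive cone of image objects), Lemma~\ref{lemconebew} (the explicit isomorphism $\varphi(\cone(f))\cong\cone(\varphi(f))$, verified by a direct computation with the two connection pairs $(\delta_{(1)},\tilde\delta_{(1)})$ and $(\delta_{(2)},\tilde\delta_{(2)})$), and Lemma~\ref{lemisocone} (the inductive step lifting an isomorphism along a cone, using Proposition~\ref{propconeiso}). You instead do one global ``flattening'' $\Phi(\mmm)$ and one global reorganisation isomorphism $\eta_{\mmm}$. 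Your approach is more direct and makes the idempotency of the twisted-object construction explicit; the paper's approach isolates the single-cone comparison $\varphi(\cone(f))\cong\cone(\varphi(f))$ as a reusable lemma and handles the curvature bookkeeping in case~(2) via Proposition~\ref{propconeiso} rather than by reading off block entries of $c_{\Phi(\mmm)}$. Both routes face the same sign-sensitive verification (the $-\delta_{M}$ in $\delta_{\cone(f)}$ and the vanishing of higher $\embr$-terms on the identity-type matrices), which the paper carries out in the proof of Lemma~\ref{lemconebew} and which you correctly flag as the main technical point.
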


\begin{proof}
(1) Let $\mu$ denote the $cA_{\infty}$-structure on $\AAA$, $\mu' = \embr_{\delta}(\mu)$ the structure on $\AAA' = \Free(\AAA)_{\Delta}$, and $\mu'' = \embr_{iln}(\mu')$ the structure on $\AAA'' = \free(\AAA')_{iln}$.
Consider the natural fully faithful functor
$$\varphi: \AAA' = \Free(\AAA)_{\Delta} \lra \free(\Free(\AAA)_{\Delta})_{iln} = \AAA''$$
and  an object $X \in \AAA''$. 
By Lemma \ref{lemsuccone}, there are finitely many objects $M_i \in \AAA'$ such that $X$ is isomorphic
to a successive cone in $\AAA''$ between the objects $\varphi(M_i)$. 
Using Lemma \ref{lemisocone}, we conclude by induction that there is an isomorphism $\varphi(M) \lra X$ for some $M \in \AAA'$.

(2) Now we look at the induced functor $\varphi_{\infty}: \AAA'_{\infty} \lra \AAA''_{\infty}$ and take $X \in \AAA''_{\infty}$. In this case, by Lemma \ref{lemsuccone},  the successive cone can be realized using objects $M_i$ with $c_{\varphi(M_i)} = 0$ in $\AAA''$ and maps $\rho$ with $c_{\rho} = 0$ in $\AAA''$. Then by Lemma \ref{lemisocone}, we inductively find an element $M \in \AAA'_{\infty}$ with an isomorphism $\varphi(M) \lra X$.
\end{proof}

\begin{lemma}\label{lemconebew}
Consider the natural functor $\varphi: \AAA' \lra \AAA''$ and consider objects $(M, \delta_M), (N, \delta_N) \in \AAA'$ and an element $f \in {\AAA'}(M,N)^0$.
\begin{enumerate}
\item Under condition (1) in Proposition \ref{propdeltacone}, the object $(\cone(f), \delta_{\cone(f)}) \in \Free(\AAA)_{con}$ belongs to $\Free(\AAA)_{\Delta}$ and there is a canonical isomorphism
$$\varphi(\cone(f)) \cong \cone(\varphi(f)).$$
\item Suppose moreover that $c_N = 0$, $c_M = 0$ and $c_f = 0$ and that condition (2) in Proposition \ref{propdeltacone} holds. Then furthermore $(\cone(f), \delta_{\cone(f)})$ belongs to $(\Free(\AAA)_{\Delta})_{\infty}$.
\end{enumerate}
\end{lemma}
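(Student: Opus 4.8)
The plan is to establish the two assertions of (1) in turn and then deduce (2). The first assertion is immediate: since $(M,\delta_M),(N,\delta_N)\in\AAA'=\Free(\AAA)_{\Delta}$ we have $\delta_M\in\Delta_M$, $\delta_N\in\Delta_N$, and $f\in\AAA'(M,N)^0=\Free(\AAA)(M,N)^0$ because the twisted variant construction leaves Hom-modules unchanged; hence condition (1) of Proposition \ref{propdeltacone} applies literally and yields $\delta_{\cone(f)}\in\Delta_{\cone(f)}$, so $(\cone(f),\delta_{\cone(f)})\in\Free(\AAA)_{\Delta}=\AAA'$, and in particular $\varphi(\cone(f))$ is a well-defined object of $\AAA''=\free(\AAA')_{iln}$.

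For the canonical isomorphism $\varphi(\cone(f))\cong\cone(\varphi(f))$, the idea is that both objects are ``the same'' once the two nested free completions are flattened. Their common flattened underlying graded $\AAA$-object is $N\oplus\Sigma M$, and in both cases the flattened connection equals $\delta_{\cone(f)}=\bigl(\begin{smallmatrix}\delta_N & f\\ 0 & -\delta_M\end{smallmatrix}\bigr)$: for $\varphi(\cone(f))$ it is the trivial $iln$-twist plus the internal connection $\delta_{\cone(f)}$ of the single summand, while for $\cone(\varphi(f))=\varphi(N)\oplus\Sigma\varphi(M)$ it is the $iln$-twist $\bigl(\begin{smallmatrix}0 & \varphi(f)\\ 0 & 0\end{smallmatrix}\bigr)$ plus the internal connections $\delta_N$ and $-\delta_M$ of the two summands. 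I would then invoke that $\iota$ and the trivial variants $(-)_{\xxx}$ are brace algebra morphisms (Proposition \ref{trivexp} and its analogue for $\iota$), compatible with the standard flattening of iterated free completions, together with Proposition \ref{sumformula} in the form $\embr_{\psi+\delta}=\embr_{\psi}\circ\embr_{\delta}$ (applied with $\delta$ the internal $\Delta$-connection and $\psi$ the $iln$-twist), to identify the $cA_{\infty}$-structure $\mu''$ on $\AAA''$ on the relevant Hom-complexes with $\embr_{\delta_{\cone(f)}}(\mu)$ over $\Free(\AAA)$.

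Granting this identification, the morphism corresponding to $1_{(\cone(f),\delta_{\cone(f)})}$ defines $u\colon\varphi(\cone(f))\to\cone(\varphi(f))$ together with a two-sided matrix inverse $v$ obtained by reading the identification backwards. Since $\mu$ is strictly unital, so is $\embr_{\delta_{\cone(f)}}(\mu)$ (Proposition \ref{proptwunit}); hence $u$ satisfies (Iso1) and (Iso$n$) for $n\geq 3$, because $\embr_{\delta_{\cone(f)}}(\mu)_n$ annihilates any argument tuple containing the strict unit (cf.\ the proofs of Propositions \ref{ps} and \ref{proptwunit}), while $\mu''_2(v,u)$ and $\mu''_2(u,v)$ are the respective strict units, so that (Iso2) follows from the criterion (Iso2'). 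Thus $u$ is an isomorphism. I expect the structure-identification of the previous paragraph to be the main obstacle: unravelled by hand it comes down to a single cancellation in $\mu''_1(u)=\embr_{iln}(\mu')_1(u)$ between the contribution of the internal connections (through the structure map $\mu'_1$ of $\AAA'$, which produces $\bigl(\begin{smallmatrix}0 & -f\\ 0 & 0\end{smallmatrix}\bigr)$) and the contribution of the $iln$-twist (which produces $\bigl(\begin{smallmatrix}0 & f\\ 0 & 0\end{smallmatrix}\bigr)$) -- exactly the phenomenon encoded by Proposition \ref{sumformula}; all remaining terms contain a strict-unit entry of $u$ and vanish by (U$n$). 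The same bookkeeping handles (Iso$n$) and the identities $\mu''_2(v,u)=1$, $\mu''_2(u,v)=1$.

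Assertion (2) should then be immediate. Its hypotheses $\delta_M\in\Delta_M$, $\delta_N\in\Delta_N$, $c_M=c_N=c_f=0$ are precisely those of condition (2) of Proposition \ref{propdeltacone}, which therefore gives $\delta_{\cone(f)}\in\Delta_{\cone(f)}$, so $(\cone(f),\delta_{\cone(f)})\in\Free(\AAA)_{\Delta}$; and by the curvature-of-a-cone computation (the lemma preceding Proposition \ref{propconeiso}),
$$c_{\cone(f)}=\begin{pmatrix}c_N & c_f\\ 0 & c_M\end{pmatrix}=0.$$
Since $(\Free(\AAA)_{\Delta})_{\infty}$ is the full subcategory of $\Free(\AAA)_{\Delta}$ on the objects of vanishing curvature, this gives $(\cone(f),\delta_{\cone(f)})\in(\Free(\AAA)_{\Delta})_{\infty}$; and by strictness of the natural functor $\varphi$ (Example \ref{yoneda}) the object $\cone(\varphi(f))$ likewise has curvature $\bigl(\begin{smallmatrix}c_N & c_f\\ 0 & c_M\end{smallmatrix}\bigr)=0$, so that the isomorphism of (1) already lives in $\AAA''_{\infty}$.
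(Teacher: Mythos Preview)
Your proposal is correct and follows essentially the same route as the paper's own proof. Both arguments take the ``identity matrix'' $\bigl(\begin{smallmatrix}1_N & 0\\ 0 & 1_M\end{smallmatrix}\bigr)$ as the candidate isomorphism, verify (Iso$n$) for $n\geq 3$ and (Iso2') via strict unitality (higher terms vanish on unit entries), and reduce (Iso1) to the single cancellation between the $\Delta$-layer and the $iln$-layer contributions of $f$ in $\mu''_1$; the paper carries this out by writing the four connections $\delta_{(1)},\tilde\delta_{(1)},\delta_{(2)},\tilde\delta_{(2)}$ explicitly and computing $\mu''_1(I_{(a)}) = -\mu_2(\delta_{(2)}+\tilde\delta_{(2)},I_{(a)}) + \mu_2(I_{(a)},\delta_{(1)}) = -\delta_{(1)}+\delta_{(1)}=0$, which is exactly your hand-unravelled cancellation (your signs on the two pieces are swapped relative to the paper's convention, but the sum is the same). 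Your additional framing via flattening and Proposition~\ref{sumformula} is a pleasant conceptual gloss on this computation, though as you note it is not strictly needed once one does the explicit check.
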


\begin{proof}
We describe the isomorphism $\varphi(\cone(f)) \cong \cone(\varphi(f))$, all other claims are clear.   
By definition of the hom-spaces in $\AAA'$ and $\AAA''$, we have canonical isomorphisms
$$\AAA''(\varphi(\cone(f)), \cone(\varphi(f))) \cong \AAA'(\cone(f), N) \oplus \Sigma \AAA'(\cone(f), M)$$
which is further isomorphic to
\begin{equation}\label{matrix}
\begin{pmatrix} \AAA'(N,N) & \Sigma^{-1}\AAA'(M,N) \\ \Sigma\AAA'(N,M) & \AAA'(M,M) \end{pmatrix}.
\end{equation}
We claim that the element represented by $I_{(a)} = \begin{pmatrix} 1_N & 0 \\ 0 & 1_M \end{pmatrix}$ is an isomorphism. 
The hom-space $\AAA''(\cone(\varphi(f)), \varphi(\cone(f)))$ is also isomorphic to \eqref{matrix}, and the element represented by $I_{(b)} = \begin{pmatrix} 1_N & 0 \\ 0 & 1_M \end{pmatrix}$ serves as an inverse isomorphism. We further have the identity elements $I_{(1)}$ on $\varphi(\cone(f))$ and $I_{(2)}$ on $\cone(\varphi(f))$, all represented by the same matrix. Before we perform some computations, we list the different connections considered on $N \oplus \Sigma M$. For $\varphi(\cone(f))$, the relevant connection in $\AAA'$ is $\delta_{(1)} = \begin{pmatrix} \delta_N & f \\ 0 & - \delta_M \end{pmatrix}$, and the additional connection in $\AAA''$ is $\tilde{\delta}_{(1)} = 0$. For $\cone(\varphi(f))$ the relevant connection in $\AAA'$ is $\delta_{(2)} = \begin{pmatrix} \delta_N & 0 \\ 0 & - \delta_M \end{pmatrix}$ and the additional connection in $\AAA''$ is $\tilde{\delta}_{(2)} = \begin{pmatrix} 0 & f \\ 0 & 0 \end{pmatrix}$.
If we compute for instance $\mu''_2(I_{(b)}, I_{(a)})$, then this can be brought back to $\mu_2(I_{(b)}, I_{(a)}) = I_{(1)}$ plus higher order terms in $\mu_n$ that vanish since one of the arguments is an identity element. For the same reason, $\mu_n''$ with $n \geq 3$ vanishes as soon as one of the arguments is $I_{(a)}$. It remains to calculate 
$$\begin{aligned}
\mu''_1(I_{(a)}) & = \mu'_1(I_{(a)}) - \mu'_2(\tilde{\delta}_{(2)}, I_{(a)}) + \mu_2'(I_{(a)}, \tilde{\delta}_{(1)}) \\
& = \mu_1(I_{(a)}) - \mu_2(\delta_{(2)}, I_{(a)}) + \mu_2(I_{(a)}, \delta_{(1)}) - \mu_2(\tilde{\delta}_{(2)}, I_{(a)}) \\
& = -\mu_2(\delta_{(2)} + \tilde{\delta}_{(2)}, I_{(a)}) + \mu_2(I_{(a)}, \delta_{(1)}) \\
& = -\delta_{(1)} + \delta_{(1)} = 0 
\end{aligned}$$
as desired.
\end{proof}

\begin{lemma}\label{lemisocone}
For isomorphisms $\eta': \varphi(M') \lra X'$, $\eta'': \varphi(M'') \lra X''$ and an element $\rho \in {\AAA''}(X', X'')^0$, there is an isomorphism $\varphi(N) \lra \cone(\rho)$ for some $N \in \AAA'$. If moreover $c_{X'} = 0$, $c_{X''} = 0$ and $c_{\rho} = 0$ in $\AAA''$, then there is an isomorphism with $N \in \AAA'_{\infty}$.
\end{lemma}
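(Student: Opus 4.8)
The plan is to pull the morphism $\rho$ back along the fully faithful functor $\varphi: \AAA' \lra \AAA''$ to a morphism between $\varphi(M')$ and $\varphi(M'')$, form its cone inside $\AAA'$, and then identify that cone with $\cone(\rho)$ by means of Proposition \ref{propconeiso}.

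Since $\eta''$ is an isomorphism, (Iso2) provides a unique element $\rho'' \in \AAA''(\varphi(M'), \varphi(M''))^0$ with $\mu_2(\eta'', \rho'') = \mu_2(\rho, \eta')$. Thus the square with horizontal edges $\rho''$ (top), $\rho$ (bottom) and vertical isomorphisms $\eta'$ (left), $\eta''$ (right) is $\mu_2$-commutative by construction, i.e. $c_{\eta', \eta''} = \mu_2(\eta'', \rho'') - \mu_2(\rho, \eta') = 0$. As $\varphi$ is fully faithful, $\rho'' = \varphi(\bar{\rho})$ for a unique $\bar{\rho} \in \AAA'(M', M'')^0$.

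Next, set $N = \cone(\bar{\rho})$, the cone formed in $\Free(\AAA)_{con}$. By Lemma \ref{lemconebew}(1), condition (1) of Proposition \ref{propdeltacone} yields $N \in \Free(\AAA)_{\Delta} = \AAA'$ together with a canonical isomorphism $\varphi(N) = \varphi(\cone(\bar{\rho})) \cong \cone(\varphi(\bar{\rho})) = \cone(\rho'')$. Applying Proposition \ref{propconeiso}(4) to the $\mu_2$-commutative square above, with $\alpha = \eta'$, $\beta = \eta''$, $f = \rho''$, $f' = \rho$ and $c_{\alpha, \beta} = 0$, the block morphism $\gamma \in \AAA''(\cone(\rho''), \cone(\rho))^0$ is an isomorphism. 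Composing the two isomorphisms produces $\varphi(N) \lra \cone(\rho)$ (one uses here that isomorphisms in the sense of (Iso1)--(Iso$n$) are closed under $\mu_2$-composition, a routine consequence of $\mu\{\mu\} = 0$ and the defining vanishing conditions, already implicit in the induction of Proposition \ref{propdeltacone}), which proves the first assertion.

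For the refinement, assume $c_{X'} = c_{X''} = c_{\rho} = 0$. Since $\eta'$ and $\eta''$ are isomorphisms, Proposition \ref{prop1isos}(1) forces $c_{\varphi(M')} = c_{\varphi(M'')} = 0$, and hence $c_{M'} = c_{M''} = 0$ in $\AAA'$, because $\varphi$ attaches the zero connection to its objects and so preserves curvatures; likewise, the $\mu_2$-commutativity of the square together with Proposition \ref{corisos}, applied inside $\AAA''$, gives $c_{\rho''} = 0$, whence $c_{\bar{\rho}} = 0$. Lemma \ref{lemconebew}(2) (now invoking condition (2) of Proposition \ref{propdeltacone}) then places $N = \cone(\bar{\rho})$ in $(\Free(\AAA)_{\Delta})_{\infty} = \AAA'_{\infty}$, and the isomorphism $\varphi(N) \lra \cone(\rho)$ constructed above is the one required. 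The only points that genuinely need care — the main technical obstacle — are the compatibility of $\varphi$, and of the passage $\bar{\rho} \mapsto \rho''$, with $\mu_1$ and with curvatures, and the closure of isomorphisms under composition; granting these, the lemma is assembled directly from Lemma \ref{lemconebew}, Proposition \ref{propconeiso}, Proposition \ref{prop1isos} and Proposition \ref{corisos}.
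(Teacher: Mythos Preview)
Your argument is correct and follows essentially the same route as the paper's own proof: pull $\rho$ back through the isomorphisms $\eta', \eta''$ to a morphism $\rho'' = \varphi(\bar{\rho})$ (the paper calls these $\kappa$ and $f$), use Lemma \ref{lemconebew} to identify $\varphi(\cone(\bar{\rho}))$ with $\cone(\rho'')$, apply Proposition \ref{propconeiso}(4) to the $\mu_2$-commutative square to get an isomorphism $\cone(\rho'') \cong \cone(\rho)$, and compose. The curvature-free refinement is also handled identically, via Proposition \ref{propconeiso}(1),(5) (which you unpack as Proposition \ref{prop1isos}(1) and Proposition \ref{corisos}) together with Lemma \ref{lemconebew}(2).
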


\begin{proof}
Since $\eta''$ is an isomorphism, there exists a unique $\kappa \in \AAA''(\varphi(M'), \varphi(M''))^0$ such that the following diagram $\mu''_2$-commutes:
$$\xymatrix{ {\varphi(M')} \ar[r]^{\eta'} \ar[d]_{\kappa} & {X'} \ar[d]^{\rho} \\ {\varphi(M'')} \ar[r]_{\eta''} & {X''}}$$
i.e. $\mu''_2(\rho, \eta') = \mu''_2(\eta'', \kappa)$. In the earlier notations, we thus have $c_{\eta', \eta''} = 0$ and consequently, by Proposition \ref{propconeiso}, the element
$\eta''' = \begin{pmatrix} \rho & 0 \\ 0 & \sigma \kappa \end{pmatrix} \in \AAA''(\cone(\kappa), \cone(\rho))$ is an isomorphism. Since $\varphi$ is fully faithful, there is a unique $f \in \AAA'(M', M'')^0$ with $\varphi(f) = \kappa$, and by Lemma \ref{lemconebew}, there is a further isomorphism $\theta: \varphi(\cone(f)) \lra \cone(\kappa)$. Finally, the composition $\mu_2(\eta''', \theta)$ is the desired isomorphism $\varphi(\cone(f)) \lra \cone(\rho)$.

Now suppose $c_{X'} = 0$, $c_{X''} = 0$ and $c_{\rho} = 0$ in $\AAA''$. By Proposition \ref{propconeiso}, we have $c_{\varphi(M')} = 0$, $c_{\varphi(M'')} = 0$, $c_{\kappa} = 0$ in $\AAA''$. It follows that also $c_{M'} = 0$, $c_{M''} = 0$ and $c_f = 0$ in $\AAA'$, and consequently $c_{\cone(f)} = 0$ as desired. \end{proof}

\begin{lemma}\label{lemsuccone}
For a $cA_{\infty}$-category $\BBB$, every object in $\free(\BBB)_{iln}$ is isomorphic to a successive cone starting from objects in the image of $\BBB$. If $\BBB$ is an $A_{\infty}$-category, then every object in $\free(\BBB)_{iln, \infty}$ is isomorphic to a successive cone starting from objects in the image of $\BBB$ and using maps $f$ with $c_f = 0$.
\end{lemma}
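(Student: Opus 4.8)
The plan is to induct on the number $n$ of direct summands of a given object $M=(\bigoplus_{i=1}^{n}\Sigma^{m_i}B_i,\delta_M)$ of $\free(\BBB)_{iln}$, recognising $M$ as a cone of two objects with strictly fewer summands.

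The first observation is that, since $M$ has finitely many summands, intrinsic local nilpotency of $\delta_M$ is equivalent to acyclicity of the directed graph on $\{1,\dots,n\}$ with an edge $i\to j$ precisely when the component $(\delta_M)_{ji}\in\AAA(B_i,B_j)$ is nonzero (a cycle reachable from a vertex $i$ would force $N_{\delta_M}^{\ell}(\{i\})\neq\varnothing$ for infinitely many $\ell$). Relabelling the summands, I may therefore assume $(\delta_M)_{ji}\neq0\Rightarrow j<i$, so that vertex $n$ is a source. Writing $N=(\bigoplus_{i=1}^{n-1}\Sigma^{m_i}B_i,\delta^{(n-1)})$ with $\delta^{(n-1)}$ the upper-left block of $\delta_M$, and $M'=\Sigma^{m_n-1}B_n$ with the zero connection, one has on $N\oplus\Sigma M'$
$$\delta_M=\begin{pmatrix}\delta^{(n-1)} & g\\ 0 & 0\end{pmatrix},$$
where $g$ is the column of entries $(\delta_M)_{j,n}$, $j<n$. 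A degree count identifies $g$ with a degree-$0$ element $f\in\Free(\BBB)(M',N)$, and by the very definition of the cone one gets $\cone(f)=M$. Since $\delta^{(n-1)}$ is a restriction of $\delta_M$ it is again intrinsically locally nilpotent, so $N\in\free(\BBB)_{iln}$, while $M'$ is a shift of an object in the image of $\BBB$. By induction on $n$ this exhibits $M$ as a successive cone of shifts of objects of $\BBB$, which proves the first assertion.

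For the second assertion, with $\BBB$ an $A_\infty$-category, I would carry curvatures along the same induction. Since $\mu_0=0$ and the graph of $\delta_M$ is acyclic there are no closed directed walks of positive length, so the curvature
$$c_M=\big(\embr_{\delta_M}(\mu)\big)_0=\sum_{k\ge1}\mu_k\big(\delta_M^{\otimes k}\big),$$
viewed as a matrix in $\Free(\BBB)(M,M)^2$, has vanishing diagonal; thus $c_M=0$ means exactly that all its entries vanish. The entries of $c_M$ are sums over directed walks in the graph of $\delta_M$ — the $(j,i)$-entry being the sum of the contributions of all walks from $i$ to $j$. Because the new vertex $n$ is a source, no walk among $\{1,\dots,n-1\}$ can pass through $n$, so the upper-left $(n-1)\times(n-1)$ block of $c_M$ is precisely $c_N=\big(\embr_{\delta^{(n-1)}}(\mu)\big)_0$; and every walk from $n$ to some $j<n$ has all its intermediate vertices in $\{1,\dots,n-1\}$, so the $n$-th column of $c_M$ is precisely $c_f=\sum_{k\ge0}\mu_{k+1}\big((\delta^{(n-1)})^{\otimes k},f\big)$. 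Hence $c_M=0$ forces $c_N=0$ and $c_f=0$, so $N\in(\free(\BBB)_{iln})_\infty$ and $f$ has zero curvature; together with $c_{M'}=0$ (a single object, $\mu_0=0$) the induction then produces $M$ as a successive cone of shifts of objects of $\BBB$ using only maps of vanishing curvature.

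I expect the main obstacle to be the two block identifications $c_N=(c_M)_{\le n-1,\,\le n-1}$ and $c_f=(c_M)_{\cdot,\,n}$. Establishing them cleanly requires expanding $\embr_{\delta}(\mu)$ into its sum over directed walks in the $\delta_M$-graph and checking, using the source property of $n$, that precisely the same families of walks contribute to $c_N$ (resp.\ $c_f$) as to the relevant block (resp.\ column) of $c_M$. The remaining ingredients — that a permutation of the summands yields an isomorphic object, the degree bookkeeping identifying $g$ with $f$, and the stability of the $iln$ condition under restriction — are routine.
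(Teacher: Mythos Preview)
Your argument is correct and is precisely the classical construction the paper alludes to; the paper's own proof consists of the single sentence ``This is classical in the $A_{\infty}$ case, and the same construction goes through in the $cA_{\infty}$ case,'' so you are supplying details the authors omit rather than taking a different route. One remark: the block identifications you flag as the ``main obstacle'' in the second part are already recorded in the paper as the formula $c_{\cone(f)} = \begin{pmatrix} c_N & c_f \\ 0 & c_{M'} \end{pmatrix}$, so once you have identified $M$ with $\cone(f)$ you may simply invoke that lemma instead of re-deriving it via the directed-walk expansion of $\embr_{\delta}(\mu)_0$.
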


\begin{proof}
This is classical in the $A_{\infty}$ case, and the same construction goes through in the $cA_{\infty}$ case.
\end{proof}

\section{Deformations}\label{parpardef}

In this section, we investigate first order deformations of the models for triangulated categories that we introduced in \S \ref{parpartwisted}. We restrict our attention to first order deformations for technical simplicity. The type of deformation we consider can actually be defined for an arbitrary Hochschild 2-cocycle $\phi$ on an arbitrary $cA_{\infty}$-category $\AAA$.
First of all, this cocycle gives rise to a $cA_{\infty}$-category $\AAA_{\phi}[\epsilon]$ where the component $\phi_A \in \AAA(A,A)^2$ contributes a curvature $\phi_A \epsilon$ to the object $A$ in the deformation (\S \ref{firstobject}).  Since we are not strictly interested in the effect of $\phi$, but in the effect of any cocycle that determines the same class in the second Hochschild cohomology, we may consider changing $\phi$ into $\phi + d_{Hoch}(\psi)$ in order to obtain an uncurved deformation of the object $A$. In fact, the only relevant point is whether there exists $\psi_A \in \AAA^1(A,A)$ with $m_1(\psi_A) = \phi_A$, for in this case $(\phi - d_{Hoch}(\psi_A))_A = \phi_A - m_1(\psi_A) = 0$. Thus, by defining objects in the \emph{curvature compensating deformation} $\AAA_{\phi}^{cc}[\epsilon]$ (\S \ref{pardefcc}) to consist of couples $(A, \psi_A)$ with $m_1(\psi_A) = \phi_A$, we realize that the obstruction agains deforming $A$ is given by $[\phi_A] \in H^0\AAA(A,A)^2$, and if this obstruction vanishes, the freedom for deforming $A$ corresponds to $H^0\AAA(A,A)^1$. On the straightforward extension $\AAA_{\Psi}$ of $\AAA$ to this new object set, we naturally consider the extension of $\phi$ and the Hochschild 1-element $\psi = (\psi_A)_{(A, \psi_A) \in \AAA_{\Psi}}$ built up from all the $\psi_A$'s, and we define the curvature compensating deformation to be
$$\AAA^{cc}_{\phi}[\epsilon] = (\AAA_{\Psi})_{\phi + d_{Hoch}(\psi)}[\epsilon],$$
thus spreading out Hochschild 1-elements which would normally correspond to \emph{curved} isomorphisms of deformations. This turns out to be an effective way of eliminating these undesirable - at least from the $A_{\infty}$ point of view - isomorphisms, and we show that cohomologous Hochschild cocycles actually give rise to $A_{\infty}$-isomorphic curvature compensating deformations (Proposition \ref{propindep}). 

From \S \ref{partwistcurv} on, we are concerned with curvature compensating deformations of categories $\AAA' = (\Free(\AAA)_{\Delta})_{\infty}$ of twisted objects, relative to Hochschild 2-cocycles $\phi'$ induced by $\phi$ on $\AAA$. In Proposition \ref{thefreeincar} we describe the curvature compensating deformation ${\AAA'}_{\phi'}^{cc}[\epsilon]$ as a category of twisted objects over the linear deformation $\AAA_{\phi}[\epsilon]$. This is possible since the curvature compensating deformation is in fact itself a twisted variant construction in the sense of \S \ref{partwistvar}. This description allows us to ``lift'' the property on $\Delta$ from Proposition \ref{propdeltacone} ensuring strongly pre-triangulatedness. After introducing a notion of \emph{purity} on $\Delta$ in \S \ref{parpure} which ensures deformations to remain ``of the same nature'', we discuss  applications to various derived and homotopy categories in  \S  \ref{pardefderived}, \ref{pardefhopy}, \ref{pardefderab}, \ref{pardefcontra}, and compare this with underlying Hochschild cohomology comparisons (some of which are obtained in \S \ref{parappendix}).

\subsection{Linear deformations}\label{firstobject}
Let $(\AAA, \mu)$ be a $cA_{\infty}$-category.
As discussed in \cite[\S 4.5]{lowencompositio}, the Hochschild complex of $\AAA$ governs its $cA_{\infty}$-deformations with fixed object set. 
Precisely, a Hochschild 2-cocycle $\phi \in Z\CC^2(\AAA)$ gives rise to an linear first order deformation
$$\AAA_{\phi}[\epsilon] = (\AAA \oplus \AAA\epsilon, \mu + \phi\epsilon).$$
Now consider two cocycles $\phi, \phi' \in Z\CC^2(\AAA)$ and an element $\eta \in \CC^1(\AAA)$ with
$$d_{Hoch}(\eta) = \phi' - \phi.$$
Let us analyze the deformation $\AAA_{\phi'}[\epsilon]$. The $cA_{\infty}$-structure is given by
$$\mu + (\phi + d_{Hoch}(\eta))\epsilon = \mu + (\phi + [\mu, \eta])\epsilon = \mu + (\phi + \mu\{\eta\} - \eta\{\mu\})\epsilon.$$

\begin{proposition}\label{1plus}
We have inverse $cA_{\infty}$-isomorphisms
$$1 - \eta\epsilon: \AAA_{\phi}[\epsilon] \lra \AAA_{\phi'}[\epsilon]$$ 
and
$$1 + \eta \epsilon: \AAA_{\phi'}[\epsilon] \lra \AAA_{\phi}[\epsilon].$$
\end{proposition}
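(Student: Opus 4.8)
The plan is to verify the two conditions of Definition \ref{defcAinftyisomorphism}: that $1-\eta\epsilon$ and $1+\eta\epsilon$ are $cA_{\infty}$-morphisms in the indicated directions, and that their two $\ast$-composites are the relevant identity morphisms. Everything happens over $k[\epsilon]/(\epsilon^2)$, and the single mechanism behind all the computations is that a term which is at least quadratic in $\eta$ carries a factor $\epsilon^2$ and hence vanishes; in particular all the $\embr$- and $\ast$-sums that occur are finite, so the allowability hypotheses needed to apply Propositions \ref{map1} and \ref{compexpl} are automatic.

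Since $\AAA_{\phi}[\epsilon]$ and $\AAA_{\phi'}[\epsilon]$ share the underlying quiver $\AAA \oplus \AAA\epsilon$, I would write $1-\eta\epsilon = J + \psi$ with $J = I_{\AAA}$ (underlying map the identity on objects) and $\psi = -\eta\epsilon$, and apply Proposition \ref{map1}. The condition to check is
$$\embr_{-\eta\epsilon}\bigl(\mu + (\phi + \mu\{\eta\} - \eta\{\mu\})\epsilon\bigr) = (1-\eta\epsilon)\{\mu + \phi\epsilon\}.$$
On the left only the $m = 0$ and $m = 1$ terms of $\embr_{-\eta\epsilon}$ survive, and in the $m=1$ term $\mu'\{\eta\}$ may be replaced by $\mu\{\eta\}$ modulo $\epsilon^2$, so the left-hand side equals $\mu + (\phi - \eta\{\mu\})\epsilon$. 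On the right, the brace is additive in its outer argument and $I_{\AAA}$ acts as the identity under the brace, so $(1-\eta\epsilon)\{\mu + \phi\epsilon\} = \mu + \phi\epsilon - \eta\{\mu\}\epsilon$, which agrees. Hence $1-\eta\epsilon$ is a $cA_{\infty}$-morphism $\AAA_{\phi}[\epsilon] \to \AAA_{\phi'}[\epsilon]$; running the same computation with $\phi$ and $\phi'$ swapped and $\eta$ replaced by $-\eta$ (so that $d_{Hoch}(-\eta) = \phi - \phi'$, i.e. $1+\eta\epsilon = 1-(-\eta)\epsilon$) shows $1+\eta\epsilon$ is a $cA_{\infty}$-morphism $\AAA_{\phi'}[\epsilon] \to \AAA_{\phi}[\epsilon]$.

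For the inverse claim I would expand $(1+\eta\epsilon)\ast(1-\eta\epsilon)$ and $(1-\eta\epsilon)\ast(1+\eta\epsilon)$ via the composition formula of Proposition \ref{compexpl}. Both composites have the identity as underlying object map. In the expansion $\sum_n G \circ (F^{\otimes n}\Delta^{(n-1)})$, any contribution that uses a non-identity component of $F = 1 \mp \eta\epsilon$ together with a non-identity component of $G = 1 \pm \eta\epsilon$ — including every appearance of the zero parts $\mp\eta_0\epsilon$ and $\pm\eta_0\epsilon$ forced in by $\Delta^{(n-1)}$ — is quadratic in $\eta$, hence zero; what remains is, component by component, exactly the identity element, because the surviving first-order pieces add up to $\mp\eta\epsilon \pm \eta\epsilon = 0$. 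Thus $(1+\eta\epsilon)\ast(1-\eta\epsilon) = I_{\AAA_{\phi}[\epsilon]}$ and $(1-\eta\epsilon)\ast(1+\eta\epsilon) = I_{\AAA_{\phi'}[\epsilon]}$, and since the underlying maps are mutually inverse bijections, Definition \ref{defcAinftyisomorphism} yields that $1\mp\eta\epsilon$ are inverse $cA_{\infty}$-isomorphisms.

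The only place where care is needed is this last bookkeeping step: one must be sure that in Proposition \ref{compexpl} the zero-part component $\eta_0 \in \CC^1(\AAA)_0$ and all the higher components $\eta_n$ produce no first-order survivors other than the ones that cancel. I do not expect a real obstacle here — every term involving two of the perturbations, or one perturbation together with an $\eta_0$-insertion coming from the iterated comultiplication, is $O(\epsilon^2)$ — but it is the part of the argument that has to be written out with some attention rather than dismissed as formal.
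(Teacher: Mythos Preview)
Your argument is correct and follows essentially the same route as the paper: you use Proposition~\ref{map1} to verify that $1-\eta\epsilon$ is a $cA_\infty$-morphism via the identical $\embr$-computation the paper carries out, and your inline expansion of $(1\pm\eta\epsilon)\ast(1\mp\eta\epsilon)$ is exactly the content of the paper's Lemma~\ref{epscomp}, which the paper factors out as a separate statement rather than proving in place.
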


\begin{proof}
First we check that by Proposition \ref{map1}, $1- \eta \epsilon$ is a $cA_{\infty}$-morphism.
We compute that
$$\begin{aligned}
\embr_{-\eta \epsilon}(\mu + (\phi + \mu\{\eta\} - \eta\{\mu\})\epsilon) &= \mu + (\phi + \mu\{\eta\} - \eta\{\mu\})\epsilon - \mu\{\eta \epsilon\} \\
& = (\mu + \phi \epsilon) - (\eta \epsilon)\{\mu\} \\
& = (\mu + \phi \epsilon) - (\eta \epsilon)\{\mu + \phi \epsilon\}
\end{aligned}$$
as desired. Further, we have $(1 + \eta\epsilon) \ast (1 - \eta \epsilon) = 1$ by Lemma \ref{epscomp}.
\end{proof}

\begin{lemma}\label{epscomp}
Let $\AAA$ be a $k$-quiver and consider the $k[\epsilon]$-quiver $\AAA[\epsilon]$.  
For arbitrary elements $\eta, \psi \in \CC^1(\AAA)$, we consider the elements $1 + \eta \epsilon$ and $1 + \psi \epsilon$ in $\CC^1(\AAA[\epsilon])$.
We have $(1 + \psi \epsilon) \ast (1 + \eta \epsilon) = 1 + \psi \epsilon + \eta \epsilon$.
\end{lemma}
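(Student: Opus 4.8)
The plan is to unwind the composition $\ast$ using the explicit formula from Proposition~\ref{compexpl}, and to observe that once we pass to the ring $k[\epsilon]$ with $\epsilon^2 = 0$, only the lowest-order terms survive. Write $F = I_{\AAA[\epsilon]} + \eta\epsilon$ and $G = I_{\AAA[\epsilon]} + \psi\epsilon$ for the extendable elements in $\CC^1_{exb}(\AAA[\epsilon])$; here the components in degrees $\geq 2$ are zero, so both $F$ and $G$ have only an order-zero part (the identity $J$) and an order-one part supported in bar-length $1$. According to Proposition~\ref{compexpl}, $p_1(G \ast F) = G \circ F = \sum_n G\circ(F^{\otimes n}\Delta^{(n-1)})$. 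First I would note that $G$ has only a length-$0$ component (identity on $\Sigma\AAA$) and a length-$1$ component $\psi\epsilon$, so $G\circ(F^{\otimes n}\Delta^{(n-1)})$ can only be nonzero for $n = 0$ or $n = 1$ — or, more precisely, for $n=1$ the length-$1$ part of $G$ contributes, and all higher $n$ contribute only through products of the order-one parts of $F$, which vanish since $\epsilon^2 = 0$.

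Concretely, the $n=1$ term gives $G\circ F = (J + \psi\epsilon)\circ(J + \eta\epsilon) = J\circ J + J\circ(\eta\epsilon) + (\psi\epsilon)\circ J + (\psi\epsilon)\circ(\eta\epsilon)$. By Remark~\ref{rembrace} (the identity-like elements satisfy $J\circ J = J$), the first term is just $J = I_{\AAA[\epsilon]}$. The two middle terms are $\eta\epsilon$ and $\psi\epsilon$ respectively (composing with the identity $J$ does nothing to the nontrivial components). The last term is proportional to $\epsilon^2 = 0$. The terms with $n \geq 2$ in $\sum_n G\circ(F^{\otimes n}\Delta^{(n-1)})$ force at least two tensor factors to be fed through the length-$\geq 1$ behaviour; since $G$ only reaches bar-length $1$ and $F$'s only nontrivial higher part is $\eta\epsilon$, any such term already carries a factor $\epsilon^2$, hence vanishes. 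Thus $p_1(G\ast F) = I_{\AAA[\epsilon]} + \eta\epsilon + \psi\epsilon = I_{\AAA[\epsilon]} + (\eta + \psi)\epsilon$, which is exactly the extendable element defining $1 + \psi\epsilon + \eta\epsilon$, and since an extendable element is determined by its value under $p_1$, the two cocategory morphisms coincide.

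The only mildly delicate point is the bookkeeping that shows the higher-$n$ terms genuinely vanish rather than merely being ``higher order'' — one has to be a little careful that $\Delta^{(n-1)}$ applied to a bar element and then followed by $F^{\otimes n}$ does not spread a single $\epsilon$ across several slots in a way that circumvents the $\epsilon^2=0$ argument. But since $F = J + \eta\epsilon$ has trivial components in all bar-lengths except $0$ and $1$, and $J$ has length exactly $0$, each of the $n$ tensor factors produced by $F^{\otimes n}$ is either an $\epsilon$-free scalar-type contribution or carries one factor of $\epsilon$; to land in the length-$\leq 1$ image that $G$ can see nontrivially, at most one factor may be nonscalar, so at most one factor of $\epsilon$ appears, and the argument closes. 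I expect this is the main (and only real) obstacle, and it is routine; everything else is a direct application of Proposition~\ref{compexpl} and Remark~\ref{rembrace}.
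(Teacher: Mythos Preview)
Your overall strategy is right and matches the paper's one-line proof: apply the explicit composition formula of Proposition~\ref{compexpl} and use $\epsilon^2 = 0$. However, the detailed bookkeeping rests on a misreading of what $\eta, \psi \in \CC^1(\AAA)$ means. The superscript $1$ is the \emph{cohomological} degree, not the bar-length: an element of $\CC^1(\AAA)$ has components $\eta_n \in [\Sigma\AAA^{\otimes n}, \Sigma\AAA]^0$ for \emph{every} $n \geq 0$. Consequently your assertions that ``the components in degrees $\geq 2$ are zero'', that ``$G$ only reaches bar-length $1$'', and that ``$J$ has length exactly $0$'' are all incorrect (the identity $I_{\AAA}$ sits purely in bar-length $1$, while $\psi\epsilon$ contributes in every bar-length). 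In particular the claim that the $n \geq 2$ terms in $\sum_n G_n(F^{\otimes n}\Delta^{(n-1)})$ vanish is wrong: for $n = m \geq 2$ with all $i_j = 1$ one obtains $G_m(F_1, \ldots, F_1)$, whose $\epsilon$-linear part is $\psi_m\epsilon$ --- a nonzero contribution that is \emph{needed} for the answer.

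The correct count, still a direct application of Proposition~\ref{compexpl}, goes as follows. In $(G\ast F)_m = \sum_n \sum_{i_1+\cdots+i_n = m} G_n(F_{i_1}, \ldots, F_{i_n})$ substitute $G_n = I_n + \psi_n\epsilon$ and $F_{i_j} = I_{i_j} + \eta_{i_j}\epsilon$, recalling that $I_k \neq 0$ only for $k = 1$. The $\epsilon^0$-part forces $n = 1$ and $i_1 = 1$, giving $I_m$. The $\epsilon^1$-part picks up exactly one $\epsilon$: either from $G_n$ (then every $F_{i_j}$ must be $\epsilon$-free, hence all $i_j = 1$ and $n = m$, yielding $\psi_m\epsilon$) or from a single $F_{i_j}$ (then $G_n$ must be $\epsilon$-free, hence $n = 1$, yielding $\eta_m\epsilon$). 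Every remaining term is $O(\epsilon^2) = 0$. Thus $(G\ast F)_m = I_m + (\psi_m + \eta_m)\epsilon$ for all $m \geq 0$, which is $(1 + \psi\epsilon + \eta\epsilon)_m$ as claimed.
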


\begin{proof}
This follows from Proposition \ref{compexpl}.
\end{proof}

\subsection{Curvature compensating deformations}\label{parccdef} \label{pardefcc}
Let $(\AAA, \mu)$ be a $cA_{\infty}$-category and let $\phi \in \CC^2(\AAA)$ be a Hochschild cocycle. We denote by $\AAA_{\phi}[\epsilon]$ the corresponding linear first order deformation.
The main idea behind the curvature compensating deformation we will now introduce, is that changing $\phi$ by adding a Hochschild boundary $[\mu, \psi]$ with $\psi \in \CC^1(\AAA)_0$ can turn curved objects in the deformation into uncurved objects, and does not change the deformation up to equivalence in the sense of \S \ref{firstobject}. However, it may change the deformation if we are only interested in the uncurved infinity part. We compensate this by allowing all possible curvature influencing boundaries at once, in the following way.

We consider the trivial enlargement $\AAA_{\Psi}$ with
$$\Psi_A = \AAA(A,A)^1$$
and denote the objects of $\AAA_{\Psi}$ by $(A, \psi_A)$ with $A \in \AAA$. 
We obtain a resulting element $\psi \in \CC^1(\AAA_{\Psi})_0$.
The Hochschild cocycle we consider on $\AAA_{\Psi}$ is 
$$\phi + d_{Hoch}(\psi) = \phi + [\mu, \psi] = \phi + \mu \{ \psi \}.$$

We define the \emph{total deformation} of $\AAA$ relative to $\phi$ to be
$$\AAA_{\phi}^{tot}[\epsilon] = (\AAA_{\Psi})_{\phi + \mu\{\psi\}}[\epsilon].$$
We define the \emph{curvature compensating deformation} of $\AAA$ relative to $\phi$ to be the
infinity part
$$\AAA_{\phi}^{cc}[\epsilon] = (\AAA_{\phi}^{tot}[\epsilon])_{\infty}.$$
In other words, it contains the objects $(A, \psi_A)$ for which the curvature
\begin{equation}\label{curvature}
(\mu_0)_A +((\phi_0)_A + \mu_1(\psi_A))\epsilon
\end{equation}
vanishes.

Now we consider the $k[\epsilon]$-linear $cA_{\infty}$-category $\AAA_{\phi}[\epsilon]$ and perform a twisted version relative to ${\Psi}\epsilon$ with
$$(\Psi \epsilon)_A = \AAA(A,A)^1\epsilon \subseteq \AAA_{\phi}[\epsilon]^1(A,A).$$
We denote the objects of $(\AAA_{\phi}[\epsilon])_{\Psi\epsilon}$ by $(A, \psi\epsilon)$ with $A \in \AAA$ and $\psi \in \AAA(A,A)^1$.
We obtain a resulting element $\psi \epsilon \in \CC^1((\AAA_{\phi}[\epsilon])_{\Psi\epsilon})_0$
We use the brace algebra morphism 
$$\embr_{\psi \epsilon}: \CC(\AAA_{\phi}[\epsilon]) \lra \CC((\AAA_{\phi}[\epsilon])_{\Psi\epsilon})$$
to transport the $cA_{\infty}$-structure $(\mu + \phi \epsilon)$ to $(\AAA_{\phi}[\epsilon])_{\Psi\epsilon}$.

\begin{proposition}\label{ccdeftwist}
We have $$\AAA_{\phi}^{tot}[\epsilon] = ((\AAA_{\phi}[\epsilon])_{\Psi\epsilon}, \embr_{\psi \epsilon} (\mu + \phi \epsilon))$$
after identification of the object $(A, \psi_A)$ on the left hand side with the object $(A, \psi_A \epsilon)$ on the right hand side.
\end{proposition}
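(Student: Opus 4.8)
The plan is to unwind both sides of the claimed identity to their defining data and check they agree term by term. On the left, $\AAA_{\phi}^{tot}[\epsilon]$ is by definition $(\AAA_{\Psi})_{\phi + \mu\{\psi\}}[\epsilon]$, i.e.\ the linear first order deformation of the trivial enlargement $\AAA_{\Psi}$ with respect to the Hochschild cocycle $\phi + d_{Hoch}(\psi) = \phi + \mu\{\psi\}$; so its underlying quiver is $\AAA_{\Psi} \oplus \AAA_{\Psi}\epsilon$ with objects $(A,\psi_A)$ and $cA_\infty$-structure $\mu + (\phi + \mu\{\psi\})\epsilon$. On the right, $(\AAA_{\phi}[\epsilon])_{\Psi\epsilon}$ is the twisted variant of the linear deformation $\AAA_{\phi}[\epsilon] = (\AAA \oplus \AAA\epsilon, \mu + \phi\epsilon)$ with respect to the choice of connections $(\Psi\epsilon)_A = \AAA(A,A)^1\epsilon$, whose objects are $(A,\psi_A\epsilon)$ and whose $cA_\infty$-structure is $\embr_{\psi\epsilon}(\mu + \phi\epsilon)$. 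So the first step is simply to record that under the stated identification $(A,\psi_A) \leftrightarrow (A,\psi_A\epsilon)$, the two quivers and their object sets coincide (the trivial enlargement and the twisted variant share the same $\Hom$-modules $\AAA(A,A')\oplus\AAA(A,A')\epsilon$ by \S\ref{partrivvar}), and that the extension of $\phi$ to $\AAA_{\Psi}$ and of $\mu$ to $\AAA[\epsilon]$ are compatible with this identification. That reduces the statement to the algebraic identity
$$\mu + (\phi + \mu\{\psi\})\epsilon = \embr_{\psi\epsilon}(\mu + \phi\epsilon)$$
of $cA_\infty$-structures on the common quiver.

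Next I would expand the right-hand side using the definition \eqref{embr} of the embrace morphism: $\embr_{\psi\epsilon}(\mu + \phi\epsilon) = \sum_{m\geq 0}(\mu + \phi\epsilon)\{(\psi\epsilon)^{\otimes m}\}$. Since we work over $k[\epsilon]$ with $\epsilon^2 = 0$, every summand with $m \geq 2$ vanishes (it carries a factor $\epsilon^m = 0$), and in the $m = 1$ term only the $\epsilon^0$-part of $\mu + \phi\epsilon$ survives, namely $\mu$. Hence
$$\embr_{\psi\epsilon}(\mu + \phi\epsilon) = (\mu + \phi\epsilon) + \mu\{\psi\epsilon\} = \mu + \phi\epsilon + \mu\{\psi\}\epsilon = \mu + (\phi + \mu\{\psi\})\epsilon,$$
where I have pulled the $\epsilon$ out of the brace using $k[\epsilon]$-linearity and the fact that $\psi$ sits in the zero part. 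This is exactly the left-hand $cA_\infty$-structure, so the identity holds. One should also note that $\psi\epsilon$ being right allowable with respect to $\mu + \phi\epsilon$ is automatic here precisely because the sum is finite (everything past $m=1$ dies), so the embrace expression is legitimately defined — consistent with Proposition \ref{propanil} and the discrete-quiver allowability setup.

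The only genuinely delicate point, and the one I would spend the most care on, is the bookkeeping of the identification itself: checking that the trivial-enlargement-then-deform operation and the deform-then-twist operation really produce the *same* cocategory structure and not merely isomorphic ones, including the compatibility of the comultiplications on the bar constructions and the signs in $\hat{\mu}_n$. Concretely, one must verify that transporting $\mu$ along $\AAA \hookrightarrow \AAA_{\Psi}$ (the map $(-)_{\Psi}$ of \S\ref{partrivvar}, a brace algebra morphism by Proposition \ref{trivexp}) and then forming the twisted variant coincides, object-label by object-label, with first passing to $\AAA[\epsilon]$ and then applying $(-)_{\Psi\epsilon}$. This is a routine but slightly tedious diagram-chase among the various brace algebra morphisms; all the needed compatibilities are the functoriality statements already established (Propositions \ref{trivexp}, \ref{trivexp2}, \ref{propembr}), so no new idea is required — it is purely a matter of matching conventions. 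I would present this as a short verification and then state the displayed $\epsilon^2=0$ computation above as the heart of the proof.
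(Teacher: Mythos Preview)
Your proposal is correct and follows essentially the same approach as the paper: the paper's proof is a terse version of exactly your computation, expanding $\embr_{\psi\epsilon}(\mu+\phi\epsilon)$ and using $\epsilon^2=0$ to truncate the sum at $m=1$, yielding $(\mu+\phi\epsilon)+\mu\{\psi\epsilon\}=\mu+(\phi+\mu\{\psi\})\epsilon$. The paper simply declares the quiver identification as ``direct inspection'' whereas you spell it out, but there is no difference in substance.
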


\begin{proof}
This follows from direct inspection of the $cA_{\infty}$-structures. The structure on the right hand side is
$$\embr_{\psi \epsilon} (\mu + \phi \epsilon) = (\mu + \phi\epsilon) + \mu\{\psi\epsilon\}$$
since $\epsilon^2 = 0$, and this corresponds precisely to the structure $\mu + (\phi + \mu\{\psi\})\epsilon$ on the left hand side.
\end{proof}

\begin{remark}\label{remMC}
Note that although we restrict our attention here to first order deformations for simplicity, the construction can also be applied to solutions of the Maurer Cartan equation to obtain higher order curvature compensating deformations, and even formal deformations if one takes into account the necessary completions.. A detailed treatment of these situations is work in progress and will appear elsewhere.
\end{remark}

\subsection{Dependence on the Hochschild representative}

Consider two cocycles $\phi, \phi' \in Z\CC^2(\AAA)$ and an element $\eta \in \CC^1(\AAA)$ with
$$d_{Hoch}(\eta) = \phi' - \phi.$$
Both total deformations $\AAA_{\phi}^{tot}[\epsilon]$ and $\AAA_{\phi'}^{tot}[\epsilon]$ are linear deformations of the trivial enlargement $\AAA_{\Psi}$ of \S \ref{parccdef}. They are given by
$$\AAA^{tot}_{\phi}[\epsilon] = (\AAA_{\Psi})_{\phi + [\mu, \psi]}[\epsilon]$$
and
$$\AAA^{tot}_{\phi'}[\epsilon] = (\AAA_{\Psi})_{\phi + [\mu, \psi] + [\mu, \eta]}[\epsilon].$$
According to Proposition \ref{1plus}, we have a $cA_{\infty}$-isomorphism 
$$1- \eta \epsilon: \AAA^{tot}_{\phi}[\epsilon] \lra \AAA^{tot}_{\phi'}[\epsilon].$$
Let us first analyze the case in which $\eta \in \CC^1(\AAA)_0$ only has non zero components $\eta_A \in \AAA^1(A,A)$.
The translations
$$\AAA^1(A,A) \lra \AAA^1(A,A): \psi_A \longmapsto \psi_A - \eta_A$$
for $A \in \AAA$ give rise to a bijection
$$f: \Ob(\AAA_{\Psi}) \lra \Ob(\AAA_{\Psi}): (A, \psi_A) \longmapsto (A, \psi_A - \eta_A)$$
and the element $1_f \in \CC^1(\AAA_{\Psi}[\epsilon], \AAA_{\Psi}[\epsilon])_f$ determined by the identity maps
$$\AAA_{\Psi}[\epsilon]((A, \psi), (A', \psi')) = \AAA[\epsilon](A, A') \lra \AAA[\epsilon](A,A') = \AAA_{\Psi}[\epsilon]((A, \psi - \eta_A), (A', \psi' - \eta_{A'}))$$
gives rise to an isomorphism of $k[\epsilon]$-quivers
$$1_f: \AAA_{\Psi}[\epsilon] \lra \AAA_{\Psi}[\epsilon]$$
which constitutes a strict isomorphism of $cA_{\infty}$-categories
$$1_f: \AAA_{\phi}^{tot}[\epsilon] = (\AAA_{\Psi})_{\phi + [\mu, \psi]}[\epsilon] \lra (\AAA_{\Psi})_{\phi + [\mu, \psi + \eta]}[\epsilon] = \AAA_{\phi'}^{tot}[\epsilon].$$
Further, composition with $1_f$ gives rise to canonical isomorphisms
$$\CC(\AAA_{\Psi}, \AAA_{\Psi}) \lra \CC(\AAA_{\Psi}, \AAA_{\Psi})_f: \kappa \longmapsto \kappa_f.$$

Next we consider the case where $\eta \in \CC^1(\AAA)$ is arbitrary. We write
$$\eta = \eta^0 + \eta'$$
where $\eta^0 \in \prod_{A \in \AAA}\AAA^1(A,A)$ is the projection of $\eta$ on the zero part of the Hochschild complex. Consequently, the projection of $\eta'$ on the zero part is zero.

According to Proposition \ref{1plus} and Lemma \ref{epscomp}, we can write $(1 - \eta \epsilon)$ as a composition
$$\xymatrix{ {(\AAA_{\Psi})_{\phi + [\mu, \psi]}[\epsilon]} \ar[r]_-{1- \eta^0\epsilon} &
{(\AAA_{\Psi})_{\phi + [\mu, \psi + \eta^0]}[\epsilon]} \ar[r]_-{1 - \eta'\epsilon} & {(\AAA_{\Psi})_{\phi + [\mu, \psi + \eta]}[\epsilon].}}$$

\begin{proposition}\label{propindep}
We have an (uncurved) $cA_{\infty}$-isomorphism
$$1_f - \eta'_f \epsilon: \AAA^{tot}_{\phi}[\epsilon] \lra \AAA^{tot}_{\phi'}[\epsilon]$$
which restricts to an $A_{\infty}$-isomorphism
$$1_f - \eta'_f \epsilon: \AAA^{cc}_{\phi}[\epsilon] \lra \AAA^{cc}_{\phi'}[\epsilon].$$
\end{proposition}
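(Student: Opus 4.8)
The plan is to produce the map of the statement as an honest composite of two isomorphisms: the object--relabelling isomorphism $1_f$ that was introduced in the discussion above, followed by the \emph{uncurved} half $1-\eta'\epsilon$ of the factorisation of $1-\eta\epsilon$, and then to check that this composite restricts to the curvature--compensating parts. Concretely, applying the first--case analysis above to $\eta^{0}\in\CC^{1}(\AAA)_{0}$ in place of $\eta$, I would take the strict $cA_{\infty}$--isomorphism
$$1_f\colon \AAA^{tot}_{\phi}[\epsilon]=(\AAA_{\Psi})_{\phi+[\mu,\psi]}[\epsilon]\lra (\AAA_{\Psi})_{\phi+[\mu,\psi+\eta^{0}]}[\epsilon]$$
with underlying object bijection $f$. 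On the other side, Proposition \ref{1plus}, applied to the Hochschild cocycles $\phi+[\mu,\psi+\eta^{0}]$ and $\phi+[\mu,\psi+\eta]$ on $\AAA_{\Psi}$ (whose difference is $d_{Hoch}(\eta')=[\mu,\eta']$), yields a $cA_{\infty}$--isomorphism $1-\eta'\epsilon\colon (\AAA_{\Psi})_{\phi+[\mu,\psi+\eta^{0}]}[\epsilon]\lra(\AAA_{\Psi})_{\phi+[\mu,\psi+\eta]}[\epsilon]=\AAA^{tot}_{\phi'}[\epsilon]$, which is uncurved since the zero--part of $\eta'$ vanishes by construction. I would then define the morphism of the statement to be $(1-\eta'\epsilon)\ast 1_f$, an isomorphism as a composite of isomorphisms, with underlying object map $f$.

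Next I would identify this composite with the expression $1_f-\eta'_f\epsilon$ and verify uncurvedness. Since $1_f$ is strict, the explicit composition formula of Proposition \ref{compexpl} shows that $\ast$--composing with $1_f$ merely relabels all arguments along $f$; hence $(1-\eta'\epsilon)\ast 1_f$ has first component $1_f-(\eta')_{1,f}\epsilon$ and $n$--th component $-(\eta')_{n,f}\epsilon$ for $n\neq 1$, i.e. it is exactly $1_f-\eta'_f\epsilon$, where $\eta'_f$ is the image of $\eta'$ under $\kappa\mapsto\kappa_f$. Its zero component is $(\,(1-\eta'\epsilon)\ast 1_f\,)_0=(1-\eta'\epsilon)_0=0$ (the strict factor $1_f$ contributes nothing, and the zero--part of $\eta'$ is zero), so $1_f-\eta'_f\epsilon$ is uncurved.

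Then I would handle the restriction to the curvature--compensating parts $\AAA^{cc}_{\phi}[\epsilon]=(\AAA^{tot}_{\phi}[\epsilon])_{\infty}$ and $\AAA^{cc}_{\phi'}[\epsilon]=(\AAA^{tot}_{\phi'}[\epsilon])_{\infty}$. An uncurved $cA_{\infty}$--isomorphism preserves vanishing of object curvature: evaluating \eqref{cainf functor} at $p=0$ for $F=1_f-\eta'_f\epsilon$ (so $F_0=0$) gives $F_1((\mu_0)_A)=(\mu'_0)_{f(A)}$, whence $(\mu_0)_A=0$ forces $(\mu'_0)_{f(A)}=0$; the inverse $1_{f^{-1}}\ast(1+\eta'\epsilon)$ (Lemma \ref{epscomp}) is again uncurved, so the converse holds as well. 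Therefore $f$ restricts to a bijection between the object sets of the two $cc$--deformations, $1_f-\eta'_f\epsilon$ restricts to an isomorphism between these full subcategories, and, since on them all curvatures vanish, they are $A_{\infty}$--categories and the restricted uncurved morphism is an $A_{\infty}$--isomorphism.

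The only step that is not pure bookkeeping is the input recalled from the discussion above, namely that $1_f$ really is a \emph{strict} $cA_{\infty}$--isomorphism: one must check that relabelling every object $(A,\psi_A)$ to $(A,\psi_A-\eta^{0}_A)$ exactly compensates the change of the twisting Hochschild $1$--element from $\psi$ to $\psi+\eta^{0}$ in the embrace formula \eqref{embrintro}, so that condition \eqref{Jcinftymorph} of Proposition \ref{propstrictcond} holds identically (and in particular the curvature \eqref{curvature} of $(A,\psi_A)$ on the source equals that of $(A,\psi_A-\eta^{0}_A)$ on the target). Once this is granted, everything else is manipulation of the $\ast$--product together with $\epsilon^{2}=0$.
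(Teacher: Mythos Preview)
Your proof is correct and follows essentially the same route as the paper: both factor $1_f - \eta'_f\epsilon$ as the strict isomorphism $1_f$ (built from $\eta^{0}$) followed by the uncurved morphism $1 - \eta'\epsilon$, invoking Proposition~\ref{1plus} and the $\ast$--composition formula. The only cosmetic difference is in verifying the restriction to the infinity parts: the paper compares the explicit curvature expressions \eqref{curvature} of $(A,\psi_A)$ in the source and $(A,\psi_A - \eta^{0}_A)$ in the target and checks they coincide (using that $[\mu,\eta']^{0}_A = 0$ since $\mu^{0}_A = 0$ and $(\eta')^{0}_A = 0$), whereas you deduce the same conclusion from the $p=0$ case of the $cA_{\infty}$--functor identity~\eqref{cainf functor} applied to an uncurved isomorphism and its uncurved inverse.
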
 

\begin{proof}
The morphism $1_f - \eta'_f\epsilon$ is the composition of the strict $cA_{\infty}$-isomorphism
$$1_f: \AAA_{\phi}^{tot}[\epsilon] = (\AAA_{\Psi})_{\phi + [\mu, \psi]}[\epsilon] \lra (\AAA_{\Psi})_{\phi + [\mu, \psi + \eta^0]}[\epsilon]$$
constructed with respect to $\eta^0$, followed by the uncurved $cA_{\infty}$-morphism
$$1 - \eta' \epsilon: (\AAA_{\Psi})_{\phi + [\mu, \psi + \eta^0]}[\epsilon] \lra (\AAA_{\Psi})_{\phi + [\mu, \psi + \eta]}[\epsilon].$$
To see that it restricts to uncurved objects, we have to compare the curvature elements of an object $(A, \psi_A)$ and its image $(A, \psi_A - \eta^0_A)$ for the respective $cA_{\infty}$-structures. 
Obviously, de condition $\mu^0_A = 0$ is present and equivalent on both sides. Suppose  that this condition is not fulfilled.
The $\epsilon$-part of the curvature of $(A, \psi_A)$ in the domain is
$$c_1 = \phi^0_A + \mu^1(\psi_A)$$
according to \eqref{curvature}. The $\epsilon$-part of the curvature of $(A, \psi_A - \eta^0_A)$ in the codomain is
$$c_2 =\phi^0_A + [\mu, \eta']^0_A + \mu^1_A(\eta_A^0) + \mu^1_A(\psi^A - \eta^0_A)$$
where the last term is the contribution of $\psi$ to the object $(A, \psi_A - \eta^0_A)$.
To see that $c_2 = c_1$, it suffices to note that since $\mu^0_A = 0$ and $(\eta')^0_A = 0$, the term $[\mu, \eta']^0_A = 0$.
\end{proof}

\begin{remark}
Further invariance results, like invariance of curvature compensating deformations with respect to homotopy equivalences, will be treated separately in the context of deformations of $cA_{\infty}$-functors with $A_{\infty}$ (and $cA_{\infty}$, $qA_{\infty}$)-functor categories as the main underlying tools. This treatment, which makes use of the $A_{\infty}$-functor category description of the Hochschild complex, is work in progress.
\end{remark}

\subsection{Strictly unital deformations}

Let $(\AAA, \mu)$ be a strictly unital $cA_{\infty}$-category.
By \S \ref{parnorm}, the normalized Hochschild complex $\CC_N(\AAA)$ is a quasi-isomorphic sub $B_{\infty}$-algebra of the Hochschild complex $\CC(\AAA)$. Thus for every Hochschild 2-cocycle $\phi' \in Z\CC^2(\AAA)$, there exist $\phi \in Z\CC^2_N(\AAA)$ and $\eta \in \CC^1(\AAA)$ with $d_{Hoch}(\eta) = \phi' - \phi$. 
We first look at the linear deformation $(\AAA_{\phi}[\epsilon], \mu + \phi\epsilon)$. By definition of $\phi$ being normalized, it is readily seen that the deformation remains strictly unital with the same srict unit $1$ (considered as an element of $\prod_{A \in \AAA} \AAA_{\phi}[\epsilon](A,A)^0$).
By Propositions \ref{ccdeftwist} and \ref{proptwunit}, it follows that the total deformation $\AAA^{tot}_{\phi}[\epsilon]$, and hence also the curvature compensating deformation $\AAA^{cc}_{\phi}[\epsilon]$, are strictly unital as well. Finally, we conclude by Proposition \ref{propindep} that $\AAA^{tot}_{\phi'}[\epsilon]$ is (uncurved) $cA_{\infty}$-isomorphic to a strictly unital $cA_{\infty}$-category, and $\AAA^{cc}_{\phi'}[\epsilon]$ is $A_{\infty}$-isomorphic to a strictly unital $A_{\infty}$-category.

\subsection{Twisted objects and curvature compensating deformations}\label{partwistcurv}

In this section, we investigate the compatibility between twisted objects and curvature compensating deformations. The description of a curvature compensating deformation as a twisted variant of a linear deformation of Proposition \ref{ccdeftwist} facilitates this investigation. 

Consider a quiver $\AAA$. Suppose $\mu \in \CC(\AAA)$ is a $cA_{\infty}$-structure on $\AAA$. Let $\Delta$ be a collection of connections on $\Free(\AAA)$ such that $(\mu, \Delta)$ is allowable. 
Let $\phi \in \CC(\AAA)$ be a Hochschild $2$-cocycle on $\AAA$. In the following constructions, we'll need $\embr_\delta(\phi)$, so we have to introduce yet another compatibility condition. Either we take $\phi$ fixed and assume $\Delta$ from above to be such that $(\phi,\Delta)$ is allowable, or either we take $\Delta$ as above and assume $\phi$ is such that $(\phi,\Delta)$ is allowable. We compare two constructions.

For the first one we start with the category $\Free(\AAA)_{\Delta}$ with the $cA_{\infty}$-structure $\mu' = \embr_{\delta}(\mu)$. We are interested in the total deformation of this category relative to the Hochschild $2$-cocycle $\phi' = \embr_{\delta}(\phi)$ on $\Free(\AAA)_{\Delta}$ induced by $\phi$.
According to Proposition \ref{ccdeftwist}, we can first construct the linear deformation 
$$(\Free(\AAA)_{\Delta})_{\phi'}[\epsilon].$$
The objects of this category are given by $(M, \delta_M)$ with $M \in \Free(\AAA)$ and $\delta_M \in \Delta_M \subseteq \Free(\AAA)^1(M,M)$. The $cA_{\infty}$-structure is given by
$$\embr_{\delta}(\mu) + \embr_{\delta}(\phi)\epsilon = \embr_{\delta}(\mu + \phi \epsilon).$$
Next we have to consider
$\Psi$ on $\Free(\AAA)_{\Delta}$ with $\Psi_{(M,\delta_M)} = \Free(\AAA)^1(M,M)$ and we consider $\Psi \epsilon = \Free(\AAA)^1(M,M)\epsilon$ on $\Free(\AAA)_{\Delta}[\epsilon]$.
According to Proposition \ref{ccdeftwist}, the total deformation we are interested in is 
$$(\Free(\AAA)_{\Delta})_{\phi'}^{tot}[\epsilon] = (\Free(\AAA)_{\Delta}[\epsilon])_{\Psi\epsilon}, \embr_{\psi \epsilon}(\embr_{\delta}(\mu + \phi \epsilon))).$$
We claim that this category can be described as a category of twisted objects over the 
linear deformation $\AAA_{\phi}[\epsilon]$ endowed with the $cA_{\infty}$-structure $\mu + \phi \epsilon$.
More precisely, on $\Free(\AAA_{\phi}[\epsilon]) = \Free(\AAA) \oplus \Free(\AAA)\epsilon$ we consider 
the choice of connections $\Delta + \Psi\epsilon$ consisting of elements $\delta_M + \psi_M \epsilon$ with $\delta_M \in \Delta_M$ and $\psi_M \in \Psi_M$. 

\begin{proposition}\label{thefreeincar}
There is a canonical strict isomorphism of $cA_{\infty}$-categories
$$(\Free(\AAA)_{\Delta})_{\phi'}^{tot}[\epsilon] \cong \Free(\AAA_{\phi}[\epsilon])_{\Delta + \Psi\epsilon}.$$
\end{proposition}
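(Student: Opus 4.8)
The plan is to unravel both sides of the claimed isomorphism as concrete quivers with explicit $cA_{\infty}$-structures, exhibit an identification of objects, and then check that the structures agree term by term using only the definitions of $\Free(-)$, trivial enlargements, twisted variants, and the compatibility of $\embr$ with these operations (Propositions \ref{propembr}, \ref{trivexp2}, and especially the ``sum formula'' Proposition \ref{sumformula}). First I would describe the object sets. On the left, by Proposition \ref{ccdeftwist}, the objects of $(\Free(\AAA)_{\Delta})_{\phi'}^{tot}[\epsilon]$ are couples $(M,\delta_M)$ together with a connection $\psi_M \in \Free(\AAA)^1(M,M)$, packaged as $(M,\delta_M,\psi_M\epsilon)$ after the identification in that proposition. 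On the right, an object of $\Free(\AAA_{\phi}[\epsilon])_{\Delta+\Psi\epsilon}$ is a formal sum $M = \oplus_i \Sigma^{m_i} A_i$ of shifts of $\AAA$-objects (note $\Free(\AAA_{\phi}[\epsilon]) = \Free(\AAA)\oplus\Free(\AAA)\epsilon$ as a $k[\epsilon]$-quiver, since $\Free$ commutes with the scalar extension) together with a connection of the form $\delta_M + \psi_M\epsilon$ with $\delta_M\in\Delta_M$, $\psi_M\in\Psi_M$. The map $(M,\delta_M,\psi_M\epsilon)\mapsto (M,\delta_M+\psi_M\epsilon)$ is the obvious bijection on objects, and on Hom-modules both sides are literally $\Free(\AAA)(M,N)\oplus\Free(\AAA)(M,N)\epsilon$, so the underlying $k[\epsilon]$-quiver isomorphism is the identity. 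It remains only to match the two $cA_{\infty}$-structures, so the morphism will automatically be a \emph{strict} isomorphism.

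Next I would compute the two structures. The left-hand structure is $\embr_{\psi\epsilon}\bigl(\embr_{\delta}(\mu+\phi\epsilon)\bigr)$, where $\embr_{\delta}$ is the twisted-variant map $\CC(\AAA[\epsilon])\to\CC(\Free(\AAA)_{\Delta}[\epsilon])$ built from $\iota$ followed by $(-)_{\Delta}$, and $\embr_{\psi\epsilon}$ is the subsequent twisted-variant map implementing the connections $\Psi\epsilon$. The right-hand structure is $\embr_{\delta+\psi\epsilon}(\mu+\phi\epsilon)$, the single twisted-variant construction on $\Free(\AAA_{\phi}[\epsilon])$ with respect to the combined choice of connections $\Delta+\Psi\epsilon$. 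Here $\delta\in\CC^1(\dots)_0$ is the Hochschild $1$-element collecting all the $\delta_M$, which lies in the zero part, while $\psi\epsilon$ also lies in the zero part; the element $\delta+\psi\epsilon$ is precisely the zero-part $1$-element attached to the objects of the right-hand category. The key identity is then exactly Proposition \ref{sumformula}: with $\delta\in\CC(\AAA[\epsilon])_0$ we have
$$\embr_{\delta+\psi\epsilon}(\mu+\phi\epsilon) = \embr_{\psi\epsilon}\bigl(\embr_{\delta}(\mu+\phi\epsilon)\bigr),$$
which is literally the equality of the two $cA_{\infty}$-structures, once one has checked that the bookkeeping of index sets (the matrix/``free'' extension $\iota$) is compatible with composing the two embrace maps in this order — i.e. that performing $\Free$ first and then twisting by $\Delta$ and then by $\Psi\epsilon$ lands in the same place as first forming $\AAA_{\phi}[\epsilon]$, then $\Free$, then twisting by $\Delta+\Psi\epsilon$. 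This is a formal consequence of the functoriality of $\Free$ and of the fact that $(-)_{\xxx}$ is a brace algebra morphism (Proposition \ref{trivexp}), together with $J_g J_f = J_{gf}$ from Remark \ref{rembrace}.

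I would then spell out the allowability bookkeeping, since $\embr$ only makes sense when the relevant couples are allowable: the hypotheses state that $(\mu,\Delta)$ and $(\phi,\Delta)$ are allowable, hence so is $(\mu+\phi\epsilon,\Delta)$; the connections in $\Psi$ are arbitrary degree-$1$ endomorphisms but appear only with a factor of $\epsilon$, and since $\epsilon^2=0$ every $\embr_{\psi\epsilon}$-sum truncates after the linear term, so $(-,\Psi\epsilon)$ is automatically allowable; and the combined $\delta+\psi\epsilon$ is allowable with respect to $\mu+\phi\epsilon$ for the same reason combined with the allowability of $\delta$. With all embrace expressions legitimate, Proposition \ref{sumformula} applies verbatim. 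The main obstacle — and the only genuinely non-formal point — is verifying that the ``free completion'' step $\iota$ commutes with the scalar extension $(-)[\epsilon]$ and with the two successive twistings in the precise sense needed, i.e. that the composite brace algebra map $\CC(\AAA)\xrightarrow{\iota}\CC(\Free\AAA)\xrightarrow{(-)_{\Delta}}\CC(\Free(\AAA)_{\Delta})$ base-changed to $k[\epsilon]$ and then post-composed with $\embr_{\psi\epsilon}$ equals the single map $\CC(\AAA[\epsilon])\xrightarrow{\iota}\CC(\Free(\AAA_{\phi}[\epsilon]))\xrightarrow{(-)_{\Delta+\Psi\epsilon}}\CC(\Free(\AAA_{\phi}[\epsilon])_{\Delta+\Psi\epsilon})$; once this diagram of brace algebra morphisms is seen to commute (which it does, by inspection of the matrix-multiplication formula defining $\iota$ and by Propositions \ref{trivexp}, \ref{trivexp2}), the result is immediate from Proposition \ref{sumformula}. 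I expect the write-up to consist mostly of making this diagram explicit and then a one-line invocation of \ref{sumformula}.
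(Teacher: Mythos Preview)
Your proposal is correct and follows essentially the same approach as the paper: identify the underlying quivers via $(M,\delta_M,\psi_M)\leftrightarrow(M,\delta_M+\psi_M\epsilon)$, then match the $cA_{\infty}$-structures by invoking Proposition \ref{sumformula} to rewrite $\embr_{\psi\epsilon}(\embr_{\delta}(\mu+\phi\epsilon))$ as $\embr_{\delta+\psi\epsilon}(\mu+\phi\epsilon)$. Your additional discussion of allowability and the compatibility of $\iota$ with scalar extension is more careful bookkeeping than the paper provides, but not a different strategy.
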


\begin{proof}
Clearly, there is a canonical isomorphism as quivers, which identifies an object $((M, \delta_M), \psi_M)$ on the left hand side with the object $(M, \delta_M + \psi_M \epsilon)$ on the right hand side. It then remains to compare the $cA_{\infty}$-structures. If we interpret the collections of connections $\Delta$ and $\Psi \epsilon$ separately on the right hand side, the structure corresponding isomorphically to the left hand side structure can also be written as $$\embr_{\psi \epsilon}(\embr_{\delta}(\mu + \phi \epsilon)).$$ By Proposition \ref{sumformula}, this structure equals 
$$\embr_{\delta + \psi \epsilon}(\mu + \phi \epsilon)$$
which is by definition the structure on the right hand side.
\end{proof}

\begin{proposition}
Let $\phi$ be a normalized Hochschild $2$-cocycle on a strictly unital $cA_{\infty}$-category $\AAA$. Suppose $\Delta$ satisfies the condition in Proposition \ref{propdeltacone} (1) (resp (2)). Then the corresponding total deformation $(\Free(\AAA)_{\Delta})^{tot}_{\phi'}[\epsilon]$ is strongly c-triangulated (resp. the curvature compensating deformation $(\Free(\AAA)_{\Delta})_{\phi'}^{cc}[\epsilon]$ is strongly triangulated).
\end{proposition}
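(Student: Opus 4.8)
The plan is to reduce the whole statement to Proposition \ref{propdeltacone}, applied not to $\AAA$ itself but to the linear deformation $\AAA_{\phi}[\epsilon]$. The key input is Proposition \ref{thefreeincar}, which provides a \emph{canonical strict} isomorphism of $cA_{\infty}$-categories
$$(\Free(\AAA)_{\Delta})^{tot}_{\phi'}[\epsilon] \cong \Free(\AAA_{\phi}[\epsilon])_{\Delta + \Psi\epsilon}.$$
The right-hand side is a category of twisted objects over $\AAA_{\phi}[\epsilon]$, and both strong c-triangulatedness and strong pre-triangulatedness of the infinity part are properties that transport along strict $cA_{\infty}$-isomorphisms (the construction $\free(-)_{iln}$ and its infinity part are functorial for strict morphisms and respect isomorphisms, and taking the infinity part commutes with the above strict isomorphism). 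Hence it suffices to verify the hypotheses of Proposition \ref{propdeltacone}(1) (resp.\ (2)) for the $cA_{\infty}$-category $\AAA_{\phi}[\epsilon]$ together with the choice of connections $\Delta + \Psi\epsilon$ on $\Free(\AAA_{\phi}[\epsilon]) = \Free(\AAA)\oplus\Free(\AAA)\epsilon$. (Allowability of $(\mu+\phi\epsilon,\Delta+\Psi\epsilon)$ is not an issue: since $\epsilon^2=0$, everything reduces to the already-assumed allowability of $(\mu,\Delta)$ and $(\phi,\Delta)$ together with the definition of $\embr_{\psi\epsilon}$.)

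First, $\AAA_{\phi}[\epsilon]$ is strictly unital with the same strict unit $1$ as $\AAA$; this is exactly the point recalled in \S on strictly unital deformations, namely that a \emph{normalized} cocycle $\phi$ contributes nothing to the strict-unitality identities. Next I would check the cone condition. Let $M,N$ carry connections $\delta_M + \psi_M\epsilon$ and $\delta_N + \psi_N\epsilon$ in $\Free(\AAA_{\phi}[\epsilon])_{\Delta+\Psi\epsilon}$, and let $f = f_0 + f_1\epsilon \in \Free(\AAA_{\phi}[\epsilon])^0(M,N)$ with $f_0,f_1 \in \Free(\AAA)^0(M,N)$. The connection on $\cone(f)$ over $\AAA_{\phi}[\epsilon]$ is the matrix with entries $\delta_N+\psi_N\epsilon$, $f$, $0$, $-(\delta_M+\psi_M\epsilon)$, and splitting off the $\epsilon$-part this equals
$$\delta_{\cone(f)} = \begin{pmatrix}\delta_N & f_0 \\ 0 & -\delta_M\end{pmatrix} + \begin{pmatrix}\psi_N & f_1 \\ 0 & -\psi_M\end{pmatrix}\epsilon,$$
where the first summand is precisely the connection $\delta_{\cone(f_0)}$ on the cone of $f_0$ in $\Free(\AAA)_{con}$, while the $\epsilon$-coefficient is an \emph{arbitrary} element of $\Free(\AAA)^1(\cone(f_0),\cone(f_0)) = \Psi_{\cone(f_0)}$ because $\Psi$ is the maximal choice of connections. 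Thus $\delta_{\cone(f)} \in (\Delta+\Psi\epsilon)_{\cone(f)}$ as soon as $\delta_{\cone(f_0)} \in \Delta_{\cone(f_0)}$. Under hypothesis (1) for $\Delta$ this is automatic, so Proposition \ref{propdeltacone}(1) applies and $(\Free(\AAA)_{\Delta})^{tot}_{\phi'}[\epsilon]$ is strongly c-triangulated.

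For case (2) one must additionally see that the vanishing-curvature hypotheses transfer correctly. Expanding the curvature of $(M,\delta_M+\psi_M\epsilon)$ over $\AAA_{\phi}[\epsilon]$ in powers of $\epsilon$, using \eqref{curvature} and Proposition \ref{sumformula} together with $\epsilon^2=0$, its $\epsilon^0$-component is $\embr_{\delta}(\mu)^M_0 = c_M$ as computed in $\Free(\AAA)_{\Delta}$; similarly the $\epsilon^0$-components of the curvatures of $N$ and of $f$ are $c_N$ and $c_{f_0}$ in $\Free(\AAA)_{\Delta}$. Hence $c_M = c_N = c_f = 0$ over $\AAA_{\phi}[\epsilon]$ forces $c_M = c_N = c_{f_0} = 0$ in $\Free(\AAA)_{\Delta}$, which are exactly the hypotheses under which condition (2) on $\Delta$ gives $\delta_{\cone(f_0)} \in \Delta_{\cone(f_0)}$; combined with the $\epsilon$-splitting above this yields the cone condition of Proposition \ref{propdeltacone}(2) for $\Delta+\Psi\epsilon$. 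Proposition \ref{propdeltacone}(2) then says $\Free(\AAA_{\phi}[\epsilon])_{\Delta+\Psi\epsilon}$ has strongly pre-triangulated infinity part, and since $(\Free(\AAA)_{\Delta})^{cc}_{\phi'}[\epsilon]$ is by definition the infinity part of the total deformation, it is strongly (pre-)triangulated. The step I expect to demand the most care is precisely this bookkeeping in case (2): verifying that ``$c_M = 0$'' etc.\ in the sense of Proposition \ref{propdeltacone} applied to $\AAA_{\phi}[\epsilon]$ projects cleanly onto the corresponding vanishing in $\Free(\AAA)_{\Delta}$ rather than entangling the $\epsilon^0$ and $\epsilon^1$ data; everything else is the routine matrix manipulation for cones, reorganized by the $\epsilon$-grading.
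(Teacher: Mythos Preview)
Your proposal is correct and follows essentially the same approach as the paper: invoke Proposition \ref{thefreeincar} to rewrite the total deformation as $\Free(\AAA_{\phi}[\epsilon])_{\Delta + \Psi\epsilon}$, then verify the cone condition of Proposition \ref{propdeltacone} for $\Delta + \Psi\epsilon$ by splitting the cone connection into its $\epsilon^0$- and $\epsilon^1$-parts. You are in fact more careful than the paper's own (terse) proof: you explicitly check that strict unitality survives (needed to apply Proposition \ref{propdeltacone}) and, for case (2), that the curvature conditions over $\AAA_{\phi}[\epsilon]$ project onto the required vanishing $c_M = c_N = c_{f_0} = 0$ over $\AAA$, both of which the paper leaves implicit.
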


\begin{proof}
We look at case (2). By Proposition \ref{thefreeincar}, the curvature compensating deformation is isomorphic to
$$\CCC = (\Free(\AAA_{\phi}[\epsilon])_{\Delta + \Psi \epsilon})_{\infty}.$$
Thus, it suffices to check the condition in Proposition \ref{propdeltacone}. For connections $\delta_M + \psi_M\epsilon$ on $M$ and $\delta_N + \psi_N\epsilon$ on $N$ and an element $f + f'\epsilon \in \Free(\AAA_{\phi}[\epsilon])(M,N)^1$, we have
$$\delta_{\cone(f + f'\epsilon)} = \begin{pmatrix} \delta_N + \psi_N\epsilon & f + f'\epsilon \\ 0 & -(\delta_M + \psi_M\epsilon) \end{pmatrix} = \begin{pmatrix} \delta_N & f \\ 0 & -\delta_M \end{pmatrix} + \begin{pmatrix} \psi_N & f' \\ 0 & -\psi_M \end{pmatrix}\epsilon.$$
By the assumption, $\begin{pmatrix} \delta_N & f \\ 0 & -\delta_M \end{pmatrix}$ is in $\Delta$, and obviously $\begin{pmatrix} \psi_N & f' \\ 0 & -\psi_M \end{pmatrix}$ is in $\Psi$ so we are done.
\end{proof}

\subsection{Pure choices of connections}\label{parpure}
In concrete cases (see \S \ref{parapplic}), we are interested in understanding the precise relation between a linear deformation $\AAA_{\phi}[\epsilon]$ and the corresponding total (resp. curvature compensating) deformation of a particular category $\Free(\AAA)_{\Delta}$ (resp. $(\Free(\AAA)_{\Delta})_{\infty}$) of twisted objects.
This relation is described by Proposition \ref{thefreeincar}. The main shortcoming that can be read off from the formula, is that whereas the choice of connections $\Delta$ that is used in the definition of $\Free(\AAA)_{\Delta}$ can be more restrictive than $con$, the connections in $\Psi$ that are added as coefficients of $\epsilon$ are arbitrary in the definition of the total and curvature compensating deformations. As such, the newly obtained choice of connections $\Delta + \Psi \epsilon$ is somewhat out of balance, and will potentially describe a - relatively - larger category of twisted objects over $\AAA_{\phi}[\epsilon]$ than the original category was over $\AAA$. A notable exception occurs for pure choices of connections. 

Indeed, let $\Delta$ be a pure choice of connections on $\Free(\AAA)$. This means that there is a full subcategory $\Free(\AAA)' \subseteq \Free(\AAA)$ with 
$$\Delta_M = \begin{cases} \Free(\AAA)(M,M)^1 & \,\, \text{if} \,\, M \in \Free(\AAA)' \\
\varnothing & \,\, \text{else} \end{cases}$$
and so
$\Free(\AAA)_{\Delta} = \Free(\AAA)'_{con}$.
The objects of $\AAA_{\phi}[\epsilon]$ are in 1-1 correspondence with those of $\AAA$, and similarly the objects of $\Free(\AAA_{\phi}[\epsilon])$ are in 1-1 correspondence with those of $\Free(\AAA)$. 
The choice $\Delta + \Psi \epsilon$ of connections on $\Free(\AAA_{\phi}[\epsilon])$ by definition has
$$(\Delta + \Psi \epsilon)_M = \begin{cases} \Free(\AAA_{\phi}[\epsilon])(M,M)^1 & \,\, \text{if} \,\, M \in \Free(\AAA)' \\
\varnothing & \,\, \text{else} \end{cases}$$
and 
$\Free(\AAA_{\phi}[\epsilon])_{\Delta + \Psi \epsilon} = \Free(\AAA_{\phi}[\epsilon])'_{con}$
for $\Free(\AAA_{\phi}[\epsilon])' \subseteq \Free(\AAA_{\phi}[\epsilon])$ containing the same objects as $\Free(\AAA)' \subseteq \Free(\AAA)$.
Thus, it is clear that the total deformation can be considered as a perfect analogue of the original category (and the same analogy holds between the restrictions of both categories to their infinity parts).

In the next three sections, we look into deformations of the models discussed in \S \ref{parmodder}, \S \ref{parmodhopy}, \S \ref{parmodcontra}.

\subsection{Deformations of derived $A_{\infty}$-categories}\label{parapplic}\label{pardefderived}

In this section we look into the derived category of a strictly unital $A_{\infty}$-category $(\AAA, \mu)$. We use the model
$\AAA' = (\Free(\AAA)_{iln})_{\infty}$ from \S \ref{parmodder}.
We use
$$\embr_{\delta}: \CC(\AAA) \lra \CC(\AAA'): \phi \longmapsto \phi' = \embr_{\delta}(\phi)$$
to transport Hochschild cocycles. According to \cite{lowencompositio, lowenvandenberghhoch} (or rather its adaptation from dg to $A_{\infty}$), $\embr_{\delta}$ is a quasi-isomorphism of $B_{\infty}$-algebras. This means that every Hochschild cocycle for $\AAA'$ is equivalent to a cocycle $\embr_{\delta}(\phi)$ for $\phi$ a cocycle for $\AAA$. We now compare linear deformations of $\AAA$ on the one hand and curvature compensation deformations of $\AAA'$ on the other hand.

 Let $\phi$ be a normalized Hochschild 2-cocycle for $\AAA$. First, we look at the case where $\phi^0_A = 0$ for every $A \in \AAA$. This means that the $cA_{\infty}$-structure $\mu + \phi \epsilon$ on the linear deformation $\AAA_{\phi}[\epsilon]$ is in fact an $A_{\infty}$-structure. The category  $\AAA'$ is endowed with the induced $A_{\infty}$-structure $\mu' = \embr_{\delta}(\mu)$ and the induced Hochschild 2-cocycle $\phi' = \embr_{\delta}(\phi)$. By Proposition \ref{thefreeincar}, the curvature compensating deformation of $\AAA'$ with respect to $\phi'$ is 
$$\AAA'^{cc}_{\phi'}[\epsilon] \cong (\Free(\AAA_{\phi}[\epsilon])_{\Delta + \Psi \epsilon})_{\infty}$$
where $\Delta = iln$ and $\Psi = con$. Clearly, if a connection $f + f' \epsilon \in \Free(\AAA_{\phi}[\epsilon])(M,M)^1$ is intrinsically locally nilpotent, so are the connections $f \in \Free(\AAA)(M,M)^1$ and $f' \in \Free(\AAA)(M,M)^1$. Hence, for $\Free(\AAA_{\phi}[\epsilon])$ we have $iln \subseteq \Delta + \Psi \epsilon$ and so the canonical model $(\Free(\AAA_{\phi}[\epsilon])_{iln})_{\infty}$ for the derived category of $\AAA_{\phi}[\epsilon]$ is a full subcategory of the curvature compensating deformation:
$$(\Free(\AAA_{\phi}[\epsilon])_{iln})_{\infty} \subseteq \AAA'^{cc}_{\phi'}[\epsilon].$$
In general, this inclusion will not be a homotopy equivalence since more connections are allowed in the curvature compensating deformation.

\begin{example}\label{examplegrowth}
Let $k$ be a field and consider the ring $k[\epsilon]$ as a first order deformation of $k$. For $k$, the inclusion
$$(\Free(k)_{iln})_{\infty} \subseteq (\Free(k)_{con})_{\infty}$$
is a homotopy equivalence and both categories are models for the derived category of $k$-modules.
It is readily seen that the curvature compensating deformation of $(\Free(k)_{iln})_{\infty}$ is homotopy equivalent to $(\Free(k[\epsilon])_{con})_{\infty}$, which is a model for the homotopy category of $k[\epsilon]$, which in turn is \emph{not} equivalent to the derived category of $k[\epsilon]$. 
\end{example}

Next, we look at the case where $\phi^0_A$ is arbitrary, so $\AAA_{\phi}[\epsilon]$ is a $cA_{\infty}$-category. It is well known that for arbitrary $cA_{\infty}$-categories, there is no satisfactory notion of a derived category due to the presence of curvature \cite{kellerlowennicolas}. However, drawing the parallel with the first case of $A_{\infty}$-deformations which we just discussed, we may expect anything that comes close to a derived category of $\AAA_{\phi}[\epsilon]$ to be contained inside the curvature compensating deformation $\AAA'^{cc}_{\phi'}[\epsilon]$.

Recently, Positselski has developed a theory of so-called \emph{semiderived} categories for $cA_{\infty}$-algebras over complete local rings, with the curvature divisible by the maximal ideal \cite{positselskisemi}. This setting obviously applies to classical deformation setups, including the first order deformations we discuss in this paper. Roughly speaking, the semiderived category is the further localization of the contraderived category (\cite{positselskicontrader}) by the the morphisms that become acyclic upon reduction by the maximal ideal. In the case of a deformation $\AAA_{\phi}[\epsilon]$ as above, at least when $\AAA$ is assumed to be graded Artinian, the curvature compensating deformation $\AAA'_{\phi'}[\epsilon]$ is a model for the semiderived category of $\AAA_{\phi}[\epsilon]$. Indeed, this follows from 
\cite[Lemma 2.1.2]{positselskisemi} and \S \ref{parmodcontra}, \S \ref{pardefcontra}.

Example \ref{examplegrowth} illustrates the responsibility of the fact that $k[\epsilon]$ has infinite homological dimension for the fact that the curvature compensating deformation is larger than the derived category. This principle was also remarked by Positselski, as he expects the semiderived category to be a better candidate derived category in the case of formal deformations, as opposed to infinitesimal deformations \cite[\S 0.20]{positselskisemi}.

We end this section with a word of warning. Although it may seem like the categories ${\AAA'}_{\phi'}^{cc}[\epsilon]$ are larger than we want them to be, a much graver problem is that they are in fact often too small. This is due to the fact that the curvature compensating deformation is constructed as the infinity part of a reasonably sized total deformation, so if the total deformation is largely curved, the curvature compensating deformation can shrink alarmingly. The most convincing example of this phenomenon is given by the graded field from \cite{kellerondg}, which was studied further in \cite{kellerlowen} and \cite{kellerlowennicolas}. Let $k$ be a field and let $A = k[\xi]$ and $B = k[\xi, \xi^{-1}]$ be the graded algebras with $\xi$ placed in degree 2. For both algebras, the element $\xi$ can be interpreted as a Hochschild 2-cocycle, which will add the curvature element $\xi \epsilon$ to the corresponding first order deformations $A_{\xi}[\epsilon]$ and $B_{\xi}[\epsilon]$. 

Now consider the curvature compensating deformation $(\Free(\BBB)_{iln})^{cc}_{\xi}[\epsilon]$ of $\Free(\BBB)_{iln}$, where $\BBB$ is the one-point category description of $B$, and take $(M,\delta,\psi)\in(\Free(\BBB)_{iln})^{cc}_{\xi}[\epsilon]$. We know that this implies that 
$$m_2(\delta,\delta)=0$$
$$m_2(\psi,\delta)-m_2(\delta,\psi)=\xi$$
where all the expressions are the extensions to $\Free(\BBB)_{iln}$.
Define $$h\in (\Free(\BBB)_{iln})^{cc}_{\xi}[\epsilon](M,M)^{-1}$$ as given by the same description as $\psi$, but where all the powers of $\xi$ are one less. It is clear that we have that $m_2(h,\delta)-m_2(\delta,h)=Id$ and $m_2(h,\psi)-m_2(\psi,h)=0$. Since the Hochschild cocycle is normalized, we have
\begin{align*}
\mu_1(h)&=(\embr_\delta m)_1(h)+\big((\embr_\delta\phi)_1(h)+(\embr_\delta m)_2\{\psi\}(h)\big)\epsilon\\
&=m_2(h,\delta)-m_2(\delta,h)+\big(m_2(h,\psi)-m_2(\psi,h)\big)\epsilon=Id
\end{align*}
This shows that $Id_M$ is nullhomotopic, and thus that
$$H^0\Big((\Free(\BBB)_{iln})^{cc}_{\xi}[\epsilon]\Big)=0$$

This phenomenon does not occur in the curvature compensating deformation $(\Free(\AAA)_{iln})^{cc}_{\xi}[\epsilon]$ of $\Free(\AAA)_{iln}$, where $\AAA$ is the one-point category description of $A$. Take for example the object $(*\oplus*[1],\delta,\psi)$, where 
$$\delta=\begin{pmatrix}0&0\\a\xi&0\end{pmatrix}\ \ \ \ \textrm{ and }\ \ \ \ \psi=\begin{pmatrix}0&a^{-1}\\0&0\end{pmatrix}$$
It is clear that this is an element of the curvature compensating deformation, and since this is the complex
$$\ldots 0\rightarrow0\rightarrow k\xrightarrow{0}k\xrightarrow{a}k\xrightarrow{0}k\xrightarrow{a}k\xrightarrow{0}\ldots$$
it is not contractible.

\begin{remark}
Note that our point of view in this paper is entirely centered around the \emph{construction} of a certain kind of deformation starting from a Hochschild cocycle, and, in contrast with the situation for - for instance - abelian deformations \cite{lowenvandenberghab}, we do not define what is a ``curvature compensating deformation'' of an $A_{\infty}$-category \emph{as such}. Never the less, a candidate definition is certainly at hand. All one has to specify is which reduction functor to use to reduce from $k[\epsilon]$-linear models to $k$-linear models. Recall that for abelian deformations, one uses a simple linear functor category construction, mapping an abelian $k[\epsilon]$-category $\ddd$ to the category $\Add(k, \ddd)$ of additive functors from the one point category $k$ to $\ddd$, which simply amounts to selecting objects of $\ddd$ with a $k$-linear structure. The correct analogue for our triangulated models is to use the parallel $A_{\infty}$-functor category construction as reduction. However, the example of $B_{\xi}[\epsilon]$ above illustrates a major difficulty.  Namely, the ``deformation'' we propose can become zero, whence its reduction will also be zero, and hence different from the original category! The reason is that the true ``total'' deformations take place in the curved world, and so do the reductions (based upon $cA_{\infty}$-categories of $qA_{\infty}$-functors, see the end of \S \ref{parmodcontra}). Actually, a possibility if one is interested in the $A_{\infty}$ parts, is to consider this reduction we propose as a way to determine ``how good'' a curvature compensating deformation we construct actually is in particular cases. A treatment along these lines is work in progress.
\end{remark}

\subsection{Deformations of homotopy categories}\label{pardefhopy}
Let $(\AAA, \mu)$ be a linear category with a zero object. We consider the pure categories of twisted objects $\AAA' = (\Pre^{\star}(\AAA)_{con})_{\infty}$ with $\star \in \{ \varnothing, +, -, b\}$ of \S \ref{parmodhopy}, which are models for the corresponding categories of complexes of $\AAA$-objects. We use
$$\embr_{\delta}^{\star}: \CC(\AAA) \lra \CC(\AAA'): \phi \longmapsto \phi' = \embr_{\delta}(\phi)$$
to transport Hochschild cocycles.
The discussion at the end of \S \ref{partwistcurv} applies and the curvature compensating deformation $((\Pre^{\star}(\AAA)_{con})_{\infty})^{cc}_{\phi'}$ is canonically strictly isomorphic to $(\Pre^{\star}(\AAA_{\phi}[\epsilon])_{con})_{\infty}$. 
Hence, the map $\embr_{\delta}^{\star}$ is parallelled on the level of deformations by
\begin{equation}\label{defstar}
\Def_{lin}(\AAA) \lra \Def_{cc}((\Pre^{\star}(\AAA)_{con})_{\infty}): \BBB \longmapsto (\Pre^{\star}(\BBB)_{con})_{\infty}
\end{equation}
where $\Def_{lin}$ stands for linear deformations and $\Def_{cc}$ stands for curvature compensating deformations.

Furthermore, for $\star \in \{ +, -, b\}$ the map $\embr_{\delta}^{\star}$ is a quasi-isomorphism of $B_{\infty}$-algebras by \cite{lowencompositio}.

\subsection{Deformations of derived abelian categories}\label{pardefderab}

In this section we look at some implications of \S \ref{pardefhopy} for deformations of abelian categories in the sense of \cite{lowenvandenberghab}.
Let $\ccc$ be an abelian $k$-category with enough injectives, and denote by $\Inj(\ccc)$ the $k$-linear category of injective objects. We know from \cite{lowenvandenberghab} that a first order \emph{abelian} deformation of $\ccc$ has enough injectives, and in fact we have an equivalence
\begin{equation}\label{definj}
\Def_{ab}(\ccc) \lra \Def_{lin}(\Inj(\ccc)): \ddd \lra \Inj(\ddd)
\end{equation}
between abelian deformations of $\ccc$ and linear deformations of $\Inj(\ccc)$.
Further, the dg-category $(\Pre^+(\Inj(\ccc))_{con})_{\infty}$ is a model for the bounded below derived category of $\ccc$. Thus, composing \eqref{definj} with \eqref{defstar}, we obtain the correspondence
$$\Def_{ab}(\ccc) \lra \Def_{cc}((\Pre^+(\Inj(\ccc))_{con})_{\infty}): \ddd \longmapsto (\Pre^+(\Inj(\ddd))_{con})_{\infty}.$$
Now suppose $\ccc$ is a Grothendieck category such that the unbounded derived category of $\ccc$ is defined. A model for this derived category is given by the subcategory
\begin{equation}\label{inchopyinj}
(\Pre(\Inj(\ccc))_{hopy-inj})_{\infty} \subseteq (\Pre(\Inj(\ccc))_{con})_{\infty}
\end{equation}
consisting of twisted objects corresponding to homotopically injective complexes of injectives.
It follows from Proposition \ref{theone} that 
$$\embr_{\delta}: \CC(\Inj(\ccc)) \lra \CC((\Pre(\Inj(\ccc))_{hopy-inj})_{\infty}): \phi \lra \phi'$$
is a quasi-isomorphism of $B_{\infty}$-algebras.
However, if a 2-cocycle $\phi$ for $\Inj(\ccc)$ corresponds to an abelian deformation $\ddd$ of $\ccc$, in general the induced curvature compensating deformation $((\Pre(\Inj(\ccc))_{hopy-inj})_{\infty})^{cc}_{\phi'}$ will not be a model for the unbounded derived category of $\ddd$. This is due to the fact that the category of twisted objects in question is not pure, and so its deformation will ``relatively grow'' just like in
the case discussed in \S \ref{pardefderived}. In fact, the same Example \ref{examplegrowth} can be modified to illustrate this point.

Another option is to focus on a larger category altogether, namely the homotopy category of injectives itself, and more precisely its model $(\Pre(\Inj(\ccc))_{con})_{\infty}$. 
Thus we look at \eqref{defstar} with $\AAA = \Inj(\ccc)$ and $\star = \varnothing$. In the general setup of \S \ref{pardefhopy}, it does not follow that the underlying map $\embr_{\delta}$ is a quasi-isomorphism.
However, it follows from Proposition \ref{theone2}, 
$$\embr_{\delta}: \CC(\Inj(\ccc)) \lra \CC((\Pre(\Inj(\ccc))_{con})_{\infty}): \phi \lra \phi'$$
does become a quasi-isomorphism of $B_{\infty}$-algebras under the assumption that $\ccc$ is a locally noetherian Grothendieck category. 
Thus, for these categories, we may conclude that the Hochschild cohomology of the category $\Inj(\ccc)$, which naturally describes abelian deformations of $\ccc$, also naturally describes deformations of the homotopy category of injectives via induced curvature compensating deformations.

The contrast with the failure of this statement for derived categories is most clearly illustrated by the case of a smooth noetherian scheme $X$ over a field $k$, for which the inclusion \eqref{inchopyinj} is known to be a homotopy equivalence. Clearly, the curvature compensating deformation theory chooses the side of the homotopy category of injectives interpretation, and after deforming in the direction of the non-smooth groundring $k[\epsilon]$, this homotopy category of injectives is no longer equivalent to the derived category. Again, Example \ref{examplegrowth} goes to show our point.

\subsection{Deformations of graded free qdg-modules}\label{pardefcontra}
Let $\AAA$ be a $cA_{\infty}$-category and consider the pure category of twisted objects $\Free(\AAA)_{con}$. According to \S \ref{parmodcontra}, if $\AAA$ is actually a cdg-category, $\Free(\AAA)_{con}$ is a model for the full subcategory of graded free modules inside $\Mod_{qdg}(\AAA)$, and consequently by \cite[\S 3.8]{positselskicontrader}, for a graded Artinian $\AAA$, $(\Free(\AAA)_{con})_{\infty}$ is a model for the contraderived category in the sense of \cite{positselskicontrader}.

Since $\Free_{con}(\AAA)$ is a \emph{pure} category of twisted objects, we obtain maps
$$\Def_{lin}(\AAA) \lra \Def_{tot}(\Free(\AAA)_{con}): \BBB \longmapsto \Free(\BBB)_{con}$$
and
$$\Def_{lin}(\AAA) \lra \Def_{cc}((\Free(\AAA)_{con})_{\infty}): \BBB \longmapsto (\Free(\BBB)_{con})_{\infty}$$
from the transportation of Hochschild cocycles by means of
$$\embr_{\delta}: \CC(\AAA) \lra \CC(\Free(\AAA)_{con}).$$
If the $cA_{\infty}$-category $(\AAA, \mu)$ we are interested in is non-trivially curved, and $\mu_n = 0$ for $n \geq n_0$, it may be a better idea to compare the deformation maps with the cohomology of the morphisms
$$\embr_{\delta}^{\oplus}: \CC_{\oplus}(\AAA) \lra \CC_{\oplus}(\Free(\AAA)_{con})$$
and
$$\embr_{\delta}^{\oplus}: \CC_{\oplus}(\AAA) \lra \CC_{\oplus}((\Free(\AAA)_{con})_{\infty})$$
that are seen to be quasi-isomorphism based upon the comparison results in \cite{positselskihh2}.

Furthermore, in some rather specific cases like the curved algebras associated to categories of matrix factorizations, the inclusion $$\CC_{\oplus}((\Free(\AAA)_{con})_{\infty}) \subseteq \CC((\Free(\AAA)_{con})_{\infty})$$ 
is a quasi-isomorphism \cite{caldararutu, positselskihh2} so we do not have to modify our interpretation of curvature compensating deformations in this case. On the side of $\CC_{\oplus}(\AAA)$ however, it is clear that linear deformations are organized somewhat differently. First of all, only deformations into $cA_{\infty}$-structures with finitely many components are allowed. Secondly, between deformations, only isomorphisms with finitely many components are counted as isomorphisms. Note that this finite components philosophy naturally carries over to the construction of total and curvature compensating deformations. Indeed, if one starts with a Hochschild 2-cocycle $\phi \in Z\CC^2_{\oplus}(\AAA)$, the cocycle $\phi + d_{Hoch}(\psi)$ one uses for the extended category $\AAA_{\Psi}$ in \S \ref{pardefcc} is contained in $Z\CC^2_{\oplus}(\AAA_{\Psi})$.

\section{Appendix: Hochschild cohomology comparisons}\label{parappendix}

In this appendix we present some Hochschild cohomology comparison results based upon the techniques developed in \cite{lowenvandenberghhoch}. These results are used in \S \ref{pardefderab}.
For simplicity, we assume the ground ring $k$ to be a field.

\subsection{Localizations of derived dg-categories}

In \cite{porta} it was shown that wellgenerated algebraic triangulated categories can be realized as localizations of derived dg-categories. In this section we take such a localization as the starting point. 
Let $\GGGG$ and $\ttt$ be dg-categories, let $\Mod_{dg}(\GGGG)$ be the dg-category of dg-modules over $\GGGG$, and let $u: \GGGG \lra \ttt$ be a fully faithful dg-functor which is such that the induced dg-functor
$$\iota: \ttt \lra \Mod_{dg}(\GGGG): T \longmapsto \ttt(u(-), T)$$
induces a fully faithful functor $H^0(\ttt) \lra D(\GGGG)$ where $D(\GGGG)$ is the derived category of $\GGGG$-modules. We will call such a functor $u: \GGGG \lra \ttt$ \emph{localization generating}.

The following result improves \cite[Theorem 4.4.1]{lowenvandenberghhoch}:

\begin{proposition}\label{proplochh}
Let $u: \GGGG \lra \ttt$ be a localization generating functor.
The restriction $\CC(\ttt) \lra \CC(\GGGG)$ is a quasi-isomorphism of $B_{\infty}$-algebras.
\end{proposition}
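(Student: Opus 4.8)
The statement is a Hochschild cohomology comparison of the same flavour as \cite[Theorem 4.4.1]{lowenvandenberghhoch}, and the natural strategy is to reduce the localization-generating situation to the situation treated there, or to re-run the argument of that theorem directly in the present setting. The plan is as follows. First I would make precise the restriction map: since $u: \GGGG \lra \ttt$ is fully faithful, it induces a morphism of $B_{\infty}$-algebras $\CC(\ttt) \lra \CC(\GGGG)$ by the ``limited functoriality'' of \S \ref{partrivvar} (Proposition \ref{trivexp}), upon identifying $\GGGG$ with its essential image in $\ttt$. Next I would invoke the key input from \cite{lowenvandenberghhoch}: the relevant comparison there is between a dg-category and a suitable full subcategory which ``generates'' a localization of its derived module category, the mechanism being that the Hochschild complex only sees the triangulated closure together with the direct summands, and hence is invariant under passing between a generating dg-category and the larger localized category. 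One reformulates the hypothesis that $u$ is localization generating precisely as: the composite $\ttt \lra \Mod_{dg}(\GGGG)$ identifies $H^0(\ttt)$ with a full triangulated subcategory of $D(\GGGG)$ containing $H^0(\GGGG)$ and closed under the relevant operations.

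The core of the argument would then be to factor the comparison $\CC(\ttt) \lra \CC(\GGGG)$ through $\CC(\Mod_{dg}(\GGGG))$, or rather through the Hochschild complex of a sufficiently large full subcategory $\mathcal{P} \subseteq \Mod_{dg}(\GGGG)$ of (h-projective or perfect) dg-modules containing both the image of $\GGGG$ and the image of $\ttt$. One has two restriction maps $\CC(\mathcal{P}) \lra \CC(\GGGG)$ and $\CC(\mathcal{P}) \lra \CC(\ttt)$, and I would show each is a quasi-isomorphism by the standard argument that Hochschild cohomology is invariant under adding objects that lie in the triangulated (and idempotent-complete) closure of a given full subcategory --- this is exactly the technique of \cite{lowenvandenberghhoch}, where one produces explicit contracting homotopies built from the triangles exhibiting the new objects as cones and retracts of old ones. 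Since $H^0(\ttt)$ and $H^0(\GGGG)$ generate the same localization-theoretic closure inside $D(\GGGG)$ (by the localization-generating hypothesis), both restrictions become quasi-isomorphisms with the same ``target class'', and a two-out-of-three argument on the commuting triangle
\begin{equation*}
\xymatrix{ {\CC(\mathcal{P})} \ar[r] \ar[dr] & {\CC(\ttt)} \ar[d] \\ & {\CC(\GGGG)} }
\end{equation*}
yields that $\CC(\ttt) \lra \CC(\GGGG)$ is a quasi-isomorphism. That these maps are $B_{\infty}$-morphisms, not merely chain maps, follows since all the maps in sight are the brace-algebra morphisms of Proposition \ref{trivexp}.

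\textbf{Main obstacle.} The delicate point is the invariance statement: showing that restriction along a fully faithful inclusion $\BBB \subseteq \CCC$ of dg-categories is a quasi-isomorphism on Hochschild complexes whenever $H^0(\CCC)$ lies in the triangulated (or localizing, or idempotent-complete) closure of $H^0(\BBB)$. In the bounded/perfect setting this is classical (Morita invariance of Hochschild cohomology), but here the categories involved --- $\Mod_{dg}(\GGGG)$, the h-projectives, and $\ttt$ --- are genuinely large, so one must be careful that the localization is ``compactly controlled'' enough for the explicit homotopies of \cite{lowenvandenberghhoch} to converge; this is presumably where the precise definition of \emph{localization generating} (mirroring the hypotheses of \cite{porta}) does the real work. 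A secondary point worth spelling out is why the hypothesis involves $D(\GGGG)$ rather than the perfect derived category: one needs the full subcategory $\mathcal{P}$ to be simultaneously large enough to receive $\ttt$ and small enough (generated over $\GGGG$) that the homotopy argument applies, and verifying such a $\mathcal{P}$ exists is the technical heart of the proof.
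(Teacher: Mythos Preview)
Your approach is workable in outline but takes a different and more circuitous route than the paper. The paper's proof is essentially one line: it applies (the dual of) \cite[Proposition 4.3.4]{lowenvandenberghhoch}, which gives a direct criterion for the restriction $\CC(\ttt) \lra \CC(\GGGG)$ to be a quasi-isomorphism --- namely that the canonical maps
\[
\ttt(T,T') \lra \RHom_{\GGGG}(\ttt(u(-),T),\, \ttt(u(-),T'))
\]
be quasi-isomorphisms for all $T,T' \in \ttt$. But this is exactly the statement that $H^0(\iota): H^0(\ttt) \lra D(\GGGG)$ is fully faithful, i.e.\ the definition of $u$ being localization generating. No intermediate category, no two-out-of-three, no triangulated-closure homotopies.

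Your factorization through a large $\mathcal{P} \subseteq \Mod_{dg}(\GGGG)$ could be pushed through --- the key point you would need is that since $\GGGG \subseteq \ttt$, any $\mathcal{P}$ generated by $\GGGG$ is a fortiori generated by $\ttt$ --- but it hides real work you do not address. First, the hypothesis only gives that $\iota$ is \emph{quasi}-fully-faithful, so $\ttt$ is not literally a full dg-subcategory of $\mathcal{P}$; you must replace it by one and invoke invariance of $\CC$ under quasi-equivalence, and then check that the replacement still contains $\GGGG$ on the nose so that the triangle of restrictions commutes. Second, the ``main obstacle'' you flag about convergence of the explicit homotopies in the large setting is genuine for the triangulated-closure argument, and is precisely what the paper's approach avoids: the criterion of \cite[Proposition 4.3.4]{lowenvandenberghhoch} is a direct bar-type comparison that needs only the pointwise quasi-isomorphism hypothesis, not any finiteness or compact generation.
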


\begin{proof}
This is an application of the dual version of \cite[Proposition 4.3.4]{lowenvandenberghhoch}. We have to look at the canonical maps
$$\ttt(T, T') \lra \RHom_{\GGGG}(\ttt(u(-),T), \ttt(\ttt(u(-),T'))$$
which are quasi-isomorphisms by the assumption on $\iota$.
\end{proof}

\subsection{Grothendieck categories}

Let $\ccc$ be a Grothendieck category and $D_{dg}(\ccc)$ the dg-category of homotopically injective complexes of injective $\ccc$ objects, which is a model for the unbounded derived category. We recall the following:

\begin{theorem}\cite[Theorem 5.2.2]{lowenvandenberghhoch}\label{theoremuh}
Let $\GGG$ be a set of generators of $\ccc$, and choose for each $G \in \GGG$ an injective resolution $E(G) \in D_{dg}(\ccc)$. The full subcategory $u: \GGGG \subseteq D_{dg}(\ccc)$ spanned by the objects $E(G)$ for $G \in \GGG$ is localization generating.
\end{theorem}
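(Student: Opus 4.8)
The statement to prove is Theorem \ref{theoremuh}: given a set of generators $\GGG$ of a Grothendieck category $\ccc$ and injective resolutions $E(G)\in D_{dg}(\ccc)$ for $G\in\GGG$, the full dg-subcategory $\GGGG\subseteq D_{dg}(\ccc)$ spanned by the $E(G)$ is localization generating, i.e.\ the induced functor $\iota: D_{dg}(\ccc)\lra\Mod_{dg}(\GGGG)$ restricts on $H^0$ to a fully faithful functor $H^0(D_{dg}(\ccc))\lra D(\GGGG)$. Since $D_{dg}(\ccc)$ is already a model for the unbounded derived category $D(\ccc)$, this amounts to showing that $\ccc\mapsto D(\ccc)$ fully faithfully embeds into $D(\GGGG)$ via the restricted Yoneda functor $X\mapsto \ttt(u(-),X)$. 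The plan is to invoke the general machinery: the objects $E(G)$ are \emph{compact in $D_{dg}(\ccc)$ up to the appropriate size}, or rather, more to the point, they form a \emph{generating set} of $D(\ccc)$ in the triangulated sense because $\GGG$ generates $\ccc$. First I would recall that a set of objects $\{E(G)\}$ in a triangulated category $\tria$ with arbitrary coproducts is \emph{localizing-generating} — meaning the smallest localizing subcategory containing them is all of $\tria$ — precisely when the only object $Y$ with $\Hom(E(G),\Sigma^n Y)=0$ for all $G,n$ is $Y=0$. For $\tria = D(\ccc)$ and $E(G)$ an injective resolution of a generator $G$, one has $\Hom_{D(\ccc)}(E(G),\Sigma^n Y)=\Hom_{D(\ccc)}(G,\Sigma^n Y)=\Ext^n_{\ccc}(G,Y)$ (using that $E(G)$ is K-injective so $\RHom(G,Y)$ is computed naively, and $G$ can be taken in degree $0$). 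If all these vanish, then in particular $\Hom_{\ccc}(G,Y^0)$ controls things and a standard spectral sequence / devissage argument using that $\GGG$ generates $\ccc$ forces the cohomology of $Y$ to vanish, hence $Y\cong 0$ in $D(\ccc)$.

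Granting that $\{E(G)\}_{G\in\GGG}$ generates $D(\ccc)$ as a localizing subcategory, the next step is the standard argument (essentially Keller's theorem on localizations of derived dg-categories, or Porta's \cite{porta}) identifying $D(\ccc)$ with a \emph{localization} of $D(\GGGG) = D(\Mod_{dg}(\GGGG))$. Concretely, the restricted Yoneda functor $\iota: D_{dg}(\ccc)\lra\Mod_{dg}(\GGGG)$ has a left adjoint on the level of derived categories (tensoring up), and $\iota$ is fully faithful on $H^0$ iff the counit of this adjunction is an isomorphism on each $E(G)$ — which holds because $\GGGG$ is by construction the full subcategory on the $E(G)$, so $\iota(E(G))$ is exactly the representable $\GGGG$-module and the adjunction counit at a representable is an isomorphism. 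Then fully faithfulness propagates from the $E(G)$ to the whole localizing subcategory they generate, i.e.\ to all of $D(\ccc)$, by a five-lemma / infinite-dévissage argument: the class of objects $X$ such that $\ttt(u(-),X)\lra\RHom_{\GGGG}(\ttt(u(-),E),\ttt(u(-),X))$ is a quasi-isomorphism for all $E$ in range is a localizing subcategory containing the generators, hence everything. This is precisely the hypothesis needed to feed into Proposition \ref{proplochh}.

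In more detail on the middle step: I would verify that for $G,G'\in\GGG$ the map
$$\ttt(E(G),E(G'))\lra\RHom_{\GGGG}\bigl(\ttt(u(-),E(G)),\ttt(u(-),E(G'))\bigr)$$
is a quasi-isomorphism. But the right-hand side, for $E(G)$ inside the defining subcategory $\GGGG$, is just $\RHom$ between a representable $\GGGG$-module and another $\GGGG$-module, which by the dg-Yoneda lemma equals $\ttt(u(-),E(G'))$ evaluated at $E(G)$, i.e.\ $\ttt(E(G),E(G'))$ itself. So the map is literally an isomorphism for objects of $\GGGG$. Then one bootstraps: fix $E = E(G)$ and let $\sss$ be the full subcategory of $D_{dg}(\ccc)$ on objects $X$ for which $\ttt(E, X)\lra\RHom_{\GGGG}(\iota E,\iota X)$ is a quasi-isomorphism; $\sss$ is triangulated and closed under coproducts (here one uses that $E(G)$ is a \emph{compact} object of $D(\ccc)$ — which is true when $\ccc$ is such that its generators have finite-length injective resolutions, or more robustly one argues with $\RHom$ commuting with the relevant coproducts because $\iota E(G)$ is a compact/perfect $\GGGG$-module) and contains all $E(G')$, hence $\sss = D_{dg}(\ccc)$. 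Symmetrically let $E$ vary, obtaining the statement for all pairs, which is exactly the input to the dual of \cite[Proposition 4.3.4]{lowenvandenberghhoch} as used in Proposition \ref{proplochh}.

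\textbf{The main obstacle.} The delicate point is the interplay between coproducts and $\RHom_{\GGGG}$ in the bootstrapping step: $\RHom_{\GGGG}(\iota E(G),-)$ commutes with coproducts only if $\iota E(G)$ is a compact object of $D(\GGGG)$, and more fundamentally one needs $E(G)$ itself to behave well with respect to the infinite coproducts in $D(\ccc)$ (homotopy colimits of homotopically injective complexes need not be homotopically injective, so some care with the model is required). In the setting of \cite[Theorem 5.2.2]{lowenvandenberghhoch} this is handled by working with the correct dg-model and citing \cite{porta}; I expect the cleanest route is to not reprove compact generation from scratch but to reduce the statement to \cite{porta}'s identification of $D(\ccc)$ with a Bousfield localization of the derived category of a small dg-category, whose generating objects are exactly the $E(G)$, and then observe that ``localization generating'' in the sense defined just before Proposition \ref{proplochh} is a reformulation of being such a localization. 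The remaining content — that $\{E(G)\}$ generates — is then the soft observation above that $\Ext^*_{\ccc}(G,Y)=0$ for all generators $G$ forces $Y\simeq 0$.
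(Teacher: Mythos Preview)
The paper does not prove Theorem \ref{theoremuh}: it is stated as a citation of \cite[Theorem 5.2.2]{lowenvandenberghhoch} with no proof given, and is immediately used to deduce Corollary \ref{corres}. So there is no proof in this paper to compare your proposal against.

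That said, a brief comment on your sketch: the overall strategy (Yoneda on the generators, then d\'evissage to the localizing subcategory they generate) is the standard one and is essentially what underlies the cited result. You have correctly located the genuine subtlety: the bootstrap in the second variable requires knowing that the class $\{X : L\iota X \to X \text{ is an isomorphism}\}$ is closed under coproducts, and this hinges on $\iota$ interacting well with coproducts, which is \emph{not} automatic since the $E(G)$ need not be compact in $D(\ccc)$. Your instinct to route around this by invoking \cite{porta} (or the original argument in \cite{lowenvandenberghhoch}) rather than reproving it is sound; attempting to close the gap via compactness of $E(G)$ in $D(\ccc)$ would fail in general. The point that $\iota E(G)$ is representable and hence compact in $D(\GGGG)$ is correct and is what makes the first bootstrap step go through; it is the second step (varying the first variable over all of $D(\ccc)$) that genuinely needs the external input.
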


By Proposition \ref{proplochh}, we immediately obtain:

\begin{corollary}\label{corres}
For $\GGGG \subseteq D_{dg}(\ccc)$ as in Theorem \ref{theoremuh}, the restriction $\CC(D_{dg}(\ccc)) \lra \CC(\GGGG)$ is a quasi-isomorphism of $B_{\infty}$-algebras.
\end{corollary}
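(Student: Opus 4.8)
The plan is to deduce Corollary \ref{corres} directly from the two inputs it is explicitly built on: Theorem \ref{theoremuh}, which says that the inclusion $u: \GGGG \subseteq D_{dg}(\ccc)$ of the full subcategory spanned by injective resolutions $E(G)$ of a generating set is localization generating, and Proposition \ref{proplochh}, which says that the restriction along any localization generating functor is a quasi-isomorphism of $B_{\infty}$-algebras. So the ``proof'' is essentially a one-line composition of these two facts, and the only things to verify are that the hypotheses of Proposition \ref{proplochh} are genuinely met in the situation of Theorem \ref{theoremuh}.

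Concretely, first I would invoke Theorem \ref{theoremuh} to record that $u: \GGGG \subseteq D_{dg}(\ccc)$ is localization generating; this is where the Grothendieck hypothesis and the existence of enough injectives are used, and it is imported wholesale from \cite{lowenvandenberghhoch}. Second, I would observe that $D_{dg}(\ccc)$ plays the role of $\ttt$ and $\GGGG$ the role of $\GGGG$ in the statement of Proposition \ref{proplochh}, with the ambient module category being $\Mod_{dg}(\GGGG)$; the induced functor $\iota: D_{dg}(\ccc) \lra \Mod_{dg}(\GGGG)$, $T \longmapsto D_{dg}(\ccc)(u(-),T)$, induces a fully faithful functor $H^0(D_{dg}(\ccc)) \lra D(\GGGG)$ precisely by the definition of ``localization generating''. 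Third, I would apply Proposition \ref{proplochh} to conclude that the restriction $\CC(D_{dg}(\ccc)) \lra \CC(\GGGG)$ is a quasi-isomorphism of $B_{\infty}$-algebras, which is exactly the claim.

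There is essentially no mathematical obstacle here: the work has been done in Theorem \ref{theoremuh} and Proposition \ref{proplochh}, and the corollary is a bookkeeping step. The only point requiring a sentence of care is checking that ``localization generating'' as defined just before Proposition \ref{proplochh} matches ``localization generating'' as used in Theorem \ref{theoremuh} — i.e. that $\GGGG$ being a small dg-category, $u$ being fully faithful, and $H^0(D_{dg}(\ccc)) \hookrightarrow D(\GGGG)$ being fully faithful are all part of the package — but since both statements are phrased in terms of the same notion, this is immediate. Thus the proof is:

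\begin{proof}
By Theorem \ref{theoremuh}, the inclusion $u: \GGGG \subseteq D_{dg}(\ccc)$ of the full subcategory spanned by the injective resolutions $E(G)$, $G \in \GGG$, is localization generating. Proposition \ref{proplochh} therefore applies with $\ttt = D_{dg}(\ccc)$, and the restriction $\CC(D_{dg}(\ccc)) \lra \CC(\GGGG)$ is a quasi-isomorphism of $B_{\infty}$-algebras.
\end{proof}
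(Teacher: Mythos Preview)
Your proposal is correct and matches the paper's own reasoning exactly: the paper simply states that the corollary follows immediately from Proposition \ref{proplochh}, with Theorem \ref{theoremuh} supplying the hypothesis that $u$ is localization generating. There is nothing to add.
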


In \cite{lowenvandenberghhoch}, the Hochschild complex of $\ccc$ was defined to be the Hochschild complex $\CC(\Inj(\ccc))$ where $\Inj(\ccc)$ is the linear category of injective $\ccc$-objects. This is motivated by the fact that there is an equivalence between abelian deformations of $\ccc$ and linear deformations of $\Inj(\ccc)$.

The following is proven along the lines of \cite[Theorem 5.3.1]{lowenvandenberghhoch}:

\begin{proposition}\label{propinjhh}
Let $u: \GGGG \lra D_{dg}(\ccc)$ be a localization generating functor for which the complexes $u(G)$ are all bounded below complexes of injectives. The $\Inj(\ccc) - \GGGG$-bimodule
$$X(G,E) = D_{dg}(\ccc)(u(G), E)$$
gives rise to a quasi-isomorphism of $B_{\infty}$-algebras
$$\CC(\Inj(\ccc)) \cong \CC(\GGGG).$$
\end{proposition}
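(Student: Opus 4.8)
The plan is to deduce this from the machinery of \cite[\S 4]{lowenvandenberghhoch}, exactly as Corollary \ref{corres} was deduced from Proposition \ref{proplochh}, but now comparing $\CC(\Inj(\ccc))$ and $\CC(\GGGG)$ through the bimodule $X$ rather than comparing $\CC(D_{dg}(\ccc))$ and $\CC(\GGGG)$ through the restriction functor. Concretely, $X(G,E) = D_{dg}(\ccc)(u(G),E)$ is a $\GGGG$-$\Inj(\ccc)$-bimodule (contravariant in $E \in \Inj(\ccc)$ after composing with the inclusion $\Inj(\ccc) \hookrightarrow D_{dg}(\ccc)$ sending an injective to the corresponding one-term complex), and the general comparison theorem of \cite{lowenvandenberghhoch} produces a quasi-isomorphism of $B_\infty$-algebras $\CC(\Inj(\ccc)) \cong \CC(\GGGG)$ provided two conditions hold on $X$: first, that for $G, G' \in \GGGG$ the canonical map
$$\GGGG(G,G') \lra \RHom_{\Inj(\ccc)}(X(G,-), X(G',-))$$
is a quasi-isomorphism, and second, the dual faithfulness condition that for $E, E' \in \Inj(\ccc)$ the canonical map
$$\RHom_{\ccc}(E,E') \lra \RHom_{\GGGG}(X(-,E), X(-,E'))$$
is a quasi-isomorphism. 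So the proof reduces to verifying these two conditions.

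First I would verify the condition on the $\GGGG$-side. Here we use that the $u(G)$ are bounded below complexes of injectives, hence homotopically injective, so $X(G,-) = D_{dg}(\ccc)(u(G),-)$ restricted to $\Inj(\ccc)$ computes, upon taking injective resolutions, the functor $E \mapsto \RHom_{\ccc}(-, \text{---})$ evaluated appropriately; more precisely, since every $\ccc$-object has an injective resolution and the $u(G)$ are h-injective, the derived functor $\RHom_{\Inj(\ccc)}(X(G,-),X(G',-))$ is naturally identified with $D_{dg}(\ccc)(u(G),u(G')) = \GGGG(G,G')$ via the standard adjunction/Yoneda argument (this is essentially the argument of \cite[Theorem 5.3.1]{lowenvandenberghhoch}, which is why the proposition says it is ``proven along the lines of'' that theorem). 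Second, for the dual condition, I would invoke that $u: \GGGG \lra D_{dg}(\ccc)$ is localization generating: by definition the induced functor $H^0(\ttt) \to D(\GGGG)$ (in the notation of Proposition \ref{proplochh}, with $\ttt = D_{dg}(\ccc)$) is fully faithful, which translates precisely into the statement that $\RHom_{\ccc}(E,E') \to \RHom_{\GGGG}(X(-,E),X(-,E'))$ is a quasi-isomorphism for $E, E'$ in (the image of) $\Inj(\ccc) \subseteq D_{dg}(\ccc)$. One must check that restricting $E, E'$ to lie in $\Inj(\ccc)$ rather than all of $D_{dg}(\ccc)$ is harmless — it is, since the localization generating hypothesis gives full faithfulness on all of $H^0(D_{dg}(\ccc))$, a fortiori on the subcategory of one-term injective complexes.

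The main obstacle I anticipate is purely bookkeeping: ensuring that the two ``sidedness'' conditions of the abstract comparison result in \cite[Proposition 4.3.4]{lowenvandenberghhoch} are matched up correctly with the (op/co) variances of the bimodule $X$, and that the boundedness hypothesis on the $u(G)$ is genuinely used in the right place — it is needed to guarantee that $X(G,-)$ is a perfect/h-projective object in the relevant derived module category so that $\RHom$ over $\Inj(\ccc)$ behaves as expected, and without it the $\GGGG$-side quasi-isomorphism can fail. Everything else is a direct transcription of the proof of \cite[Theorem 5.3.1]{lowenvandenberghhoch} with $D_{dg}(\ccc)$ in place of a bounded-below derived dg-category, together with the definition of ``localization generating'' supplying the input that the cited theorem obtained by other means.
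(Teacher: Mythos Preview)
Your proposal is correct and follows precisely the approach the paper indicates: the paper gives no proof beyond the remark that the result ``is proven along the lines of \cite[Theorem 5.3.1]{lowenvandenberghhoch}'', and your unpacking of this---applying the two-sided comparison criterion of \cite[\S 4]{lowenvandenberghhoch} to the bimodule $X$, using the bounded-below-injective hypothesis for the $\GGGG$-side and the localization generating hypothesis for the $\Inj(\ccc)$-side---is exactly what that reference entails. Your caveat about matching variances with \cite[Proposition 4.3.4]{lowenvandenberghhoch} is well placed but routine.
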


We can now prove:

\begin{proposition}\label{theone}
Consider the inclusion $\Inj(\ccc) \subseteq D_{dg}(\ccc)$. The restriction $$\CC(D_{dg}(\ccc)) \lra \CC(\Inj(\ccc))$$ is a quasi-isomorphism of $B_{\infty}$-algebras.
\end{proposition}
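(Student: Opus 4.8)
The plan is to factor the inclusion $\Inj(\ccc) \subseteq D_{dg}(\ccc)$ through a suitably chosen localization generating subcategory and then combine the two comparison results already available. Concretely, I would first pick a set $\GGG$ of generators of $\ccc$ and, for each $G \in \GGG$, a \emph{bounded below} injective resolution $E(G)$; such resolutions exist because a Grothendieck category has enough injectives and one can always truncate-and-resolve starting in a fixed degree. Let $u: \GGGG \subseteq D_{dg}(\ccc)$ be the full dg-subcategory on the objects $E(G)$. By Theorem \ref{theoremuh}, $u$ is localization generating, and since the $E(G)$ are bounded below complexes of injectives, the hypotheses of both Proposition \ref{proplochh} and Proposition \ref{propinjhh} are met for this one functor $u$.

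Next I would assemble the diagram of restriction maps. Proposition \ref{proplochh} (equivalently Corollary \ref{corres}) gives a $B_\infty$-quasi-isomorphism $\CC(D_{dg}(\ccc)) \lra \CC(\GGGG)$, namely the restriction along $u$. Proposition \ref{propinjhh} gives a $B_\infty$-quasi-isomorphism $\CC(\Inj(\ccc)) \cong \CC(\GGGG)$, induced by the $\Inj(\ccc)$-$\GGGG$-bimodule $X(G,E) = D_{dg}(\ccc)(u(G), E)$. The key observation is that this bimodule $X$ is exactly the one that implements the inclusion $\GGGG \hookrightarrow D_{dg}(\ccc)$ composed with restriction back to $\Inj(\ccc)$: the restriction functor $\CC(D_{dg}(\ccc)) \lra \CC(\Inj(\ccc))$ factors (up to the canonical $B_\infty$-identifications coming from the bimodule formalism of \cite{lowenvandenberghhoch}) as $\CC(D_{dg}(\ccc)) \lra \CC(\GGGG) \xrightarrow{\ \sim\ } \CC(\Inj(\ccc))$, the first map being restriction along $u$ and the second being the inverse of the Proposition \ref{propinjhh} quasi-isomorphism. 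Since both factors are quasi-isomorphisms of $B_\infty$-algebras, so is the composite, and hence so is the restriction $\CC(D_{dg}(\ccc)) \lra \CC(\Inj(\ccc))$.

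The main obstacle I anticipate is precisely the compatibility claim in the previous paragraph: one must verify that the restriction-along-$u$ map followed by the bimodule-induced map of Proposition \ref{propinjhh} really does agree, as a $B_\infty$-morphism (not merely up to homotopy on cohomology, though that would already suffice for a quasi-isomorphism statement), with the honest restriction along $\Inj(\ccc) \subseteq D_{dg}(\ccc)$. This is a bookkeeping matter about how the machinery of \cite{lowenvandenberghhoch} (Proposition 4.3.4 and its variants) is set up — one needs that the relevant canonical maps
$$\Inj(\ccc)(E, E') \lra \RHom_{\GGGG}(X(-,E), X(-,E'))$$
are the same as the ones appearing in the localization-generating argument, which reduces to the Yoneda-type identification $X(-,E) \simeq \RHom$ and the fact that $\Inj(\ccc)$ sits inside $D_{dg}(\ccc)$ as genuine (degreewise injective) objects. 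Once this identification is in place the statement is immediate; alternatively, if one prefers to avoid tracking the $B_\infty$-structure through the composite, one can instead invoke the two-out-of-three property directly on the triangle of restriction functors, noting that $\CC(D_{dg}(\ccc)) \lra \CC(\GGGG)$ and $\CC(\Inj(\ccc)) \lra \CC(\GGGG)$ are both quasi-isomorphisms and that the triangle commutes up to $B_\infty$-homotopy.
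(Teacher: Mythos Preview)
Your proposal is correct and follows essentially the same route as the paper: choose generators, take their (bounded below) injective resolutions to get a localization generating $\GGGG \subseteq D_{dg}(\ccc)$, apply Corollary~\ref{corres} for $\CC(D_{dg}(\ccc)) \to \CC(\GGGG)$ and Proposition~\ref{propinjhh} for $\CC(\Inj(\ccc)) \cong \CC(\GGGG)$, and then combine. The compatibility check you flag as the ``main obstacle'' is exactly what the paper's proof summarizes as ``keeping track of all involved bimodules, easily yields the desired result''; your more explicit discussion of it is accurate and, if anything, more careful than the paper's sketch.
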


\begin{proof}
From an arbitrary set of generators $\GGG$ of $\ccc$, we construct a localization generating $\GGGG \subseteq D_{dg}(\ccc)$ as in Theorem \ref{theoremuh}. Thus, by Corollary \ref{corres}, the restriction $\CC(D_{dg}(\ccc)) \lra \CC(\GGGG)$ is a quasi-isomorphism. Combining this with Proposition \ref{propinjhh}, keeping track of all involved bimodules, easily yields the desired result.
\end{proof}

\subsection{Locally noetherian Grothendieck categories}

Let $\ccc$ be a locally noetherian Grothendieck category and let $N(\ccc)$ be the abelian subcategory of noetherian objects. Let $\ovl{N(\ccc)} \subseteq D_{dg}(\ccc)$ be the full subcategory spanned by chosen injective resolutions of the objects of $N(\ccc)$. Further, let $\mathsf{Com}_{dg}(\Inj(\ccc))$ be the dg-category of complexes of injective $\ccc$ objects, which is a model for the homotopy category $K(\Inj(\ccc))$ of injective $\ccc$-objects.

\begin{proposition}\label{theone2}
Consider the following diagram of inclusion functors:
$$\xymatrix{ & {\ovl{N(\ccc)}} \ar[d]_{\beta} & \\ {\Inj(\ccc)} \ar[r]_-{\alpha} & {D_{dg}(\ccc)} \ar[r]_-{\gamma} & {\mathsf{Com}_{dg}(\Inj(\ccc)).} }$$
For $\delta \in \{\alpha, \beta, \gamma, \gamma \alpha, \gamma \beta \}$, the induced restriction map $\CC(\delta)$ between Hochschild complexes is a quasi-isomorphism of $B_{\infty}$-algebras.
\end{proposition}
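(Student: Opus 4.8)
The statement involves five restriction maps arranged around a diagram of inclusion functors, and the natural strategy is to prove that three of them are quasi-isomorphisms by direct application of the machinery already available, and then deduce the remaining two formally. First I would handle $\alpha$: this is precisely Proposition~\ref{theone}, so the restriction $\CC(D_{dg}(\ccc)) \lra \CC(\Inj(\ccc))$ is a quasi-isomorphism of $B_\infty$-algebras with no further work. For $\beta$, the functor $\ovl{N(\ccc)} \hookrightarrow D_{dg}(\ccc)$ sends a noetherian object to a chosen injective resolution; since $\ccc$ is locally noetherian, the noetherian objects form a generating set, so $\ovl{N(\ccc)}$ is a full subcategory of $D_{dg}(\ccc)$ of the type produced by Theorem~\ref{theoremuh} (with $\GGG = N(\ccc)$). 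Hence $\ovl{N(\ccc)} \lra D_{dg}(\ccc)$ is localization generating and Proposition~\ref{proplochh} (equivalently Corollary~\ref{corres}) gives that $\CC(\beta)$ is a quasi-isomorphism.

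The subtle map is $\gamma: D_{dg}(\ccc) \hookrightarrow \mathsf{Com}_{dg}(\Inj(\ccc))$, the inclusion of homotopically injective complexes of injectives into all complexes of injectives. To show $\CC(\gamma)$ is a quasi-isomorphism I would again invoke the dual form of \cite[Proposition 4.3.4]{lowenvandenberghhoch}, as in the proof of Proposition~\ref{proplochh}: the point is to check that for objects $E, E'$ of $D_{dg}(\ccc)$, the canonical map
$$\mathsf{Com}_{dg}(\Inj(\ccc))(E,E') \lra \RHom_{D_{dg}(\ccc)}\bigl(\mathsf{Com}_{dg}(\Inj(\ccc))(E,-)|_{D_{dg}(\ccc)},\ \mathsf{Com}_{dg}(\Inj(\ccc))(E',-)|_{D_{dg}(\ccc)}\bigr)$$
is a quasi-isomorphism, i.e.\ that the restriction-of-modules functor from $D(\mathsf{Com}_{dg}(\Inj(\ccc)))$ to $D(D_{dg}(\ccc))$ is fully faithful on representables. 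This reduces to the standard homological fact that every complex of injectives over a locally noetherian Grothendieck category admits a homotopically injective resolution with homotopically injective cone, so that $D_{dg}(\ccc)$ is a ``dense'' enough subcategory of $\mathsf{Com}_{dg}(\Inj(\ccc))$ in the relevant derived sense --- equivalently that $H^0(D_{dg}(\ccc)) = D(\ccc)$ sits inside $K(\Inj(\ccc))$ as a localizing subcategory with the inclusion admitting the expected adjoint. Here the locally noetherian hypothesis is essential (it is exactly where Krause's theorem on $K(\Inj(\ccc))$ enters).

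Once $\CC(\alpha)$, $\CC(\beta)$, $\CC(\gamma)$ are known to be quasi-isomorphisms, the composites $\gamma\alpha$ and $\gamma\beta$ follow by the two-out-of-three property of quasi-isomorphisms, which is compatible with the $B_\infty$-structure since restriction along a fully faithful functor is a morphism of $B_\infty$-algebras. Thus $\CC(\delta)$ is a quasi-isomorphism of $B_\infty$-algebras for every $\delta \in \{\alpha,\beta,\gamma,\gamma\alpha,\gamma\beta\}$.

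**Main obstacle.** The genuinely new ingredient is $\CC(\gamma)$; everything else is bookkeeping with results already in the excerpt. The hard part is verifying the hypothesis of \cite[Proposition 4.3.4]{lowenvandenberghhoch} for the pair $(D_{dg}(\ccc), \mathsf{Com}_{dg}(\Inj(\ccc)))$, i.e.\ controlling $\RHom$ over the large dg-category of \emph{all} complexes of injectives in terms of the homotopically injective ones. I expect the cleanest route is to package this as: restriction $D(\mathsf{Com}_{dg}(\Inj(\ccc))) \to D(D_{dg}(\ccc))$ is fully faithful on the image of the Yoneda embedding, which in turn rests on Krause's description of $K(\Inj(\ccc))$ together with the existence of homotopically injective resolutions preserved under the relevant functors. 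Keeping track of the $B_\infty$-morphism structure (rather than just quasi-isomorphism of complexes) throughout the two-out-of-three arguments is routine but must be stated, since it is the $B_\infty$-statement that is used in \S\ref{pardefderab}.
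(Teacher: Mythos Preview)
Your handling of $\alpha$ and $\beta$ matches the paper exactly. The divergence is in which of the three remaining maps you attack directly. You choose $\gamma$ and aim to verify that $D_{dg}(\ccc) \hookrightarrow \mathsf{Com}_{dg}(\Inj(\ccc))$ is localization generating, i.e.\ that $K(\Inj(\ccc)) \to D(D_{dg}(\ccc))$ is fully faithful. The paper instead attacks $\gamma\beta$: by Krause's theorem the objects of $\ovl{N(\ccc)}$ form a set of \emph{compact generators} for $K(\Inj(\ccc))$, so the inclusion $\gamma\beta: \ovl{N(\ccc)} \hookrightarrow \mathsf{Com}_{dg}(\Inj(\ccc))$ is localization generating on the nose and Proposition~\ref{proplochh} applies immediately. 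Then $\gamma$ and $\gamma\alpha$ drop out by two-out-of-three from $\beta$, $\gamma\beta$, and $\alpha$.

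The paper's route is cleaner for two reasons. First, Krause's result is literally the statement that $\ovl{N(\ccc)}$ compactly generates $K(\Inj(\ccc))$, which is exactly the input Proposition~\ref{proplochh} wants for $\gamma\beta$; no translation is needed. Second, your direct verification for $\gamma$ requires working with $D(D_{dg}(\ccc))$ where $D_{dg}(\ccc)$ is large, and then arguing that restriction along $\gamma$ is fully faithful on representables --- this is plausible (and ultimately reducible to the same Krause input), but it is an extra layer of unpacking that the paper avoids entirely. Your instinct that Krause's theorem is the essential external ingredient is correct; the point you missed is that it plugs most naturally into the composite $\gamma\beta$ rather than into $\gamma$ itself.
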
 

\begin{proof}
Since $N(\ccc)$ consists of a collection of generators of the Grothendieck category $\ccc$, the statement for $\beta$ is contained in Corollary \ref{corres}. The statement for $\alpha$ is Proposition \ref{theone}.
By \cite{krausenoeth}, the objects of $\ovl{N(\ccc)}$ constitute a collection of compact generators for the homotopy category of injectives $K(\Inj(\ccc))$. Thus, the inclusion $\gamma \beta$ is localization generating and the statement for $\gamma \beta$ follows from Proposition \ref{proplochh}. Obviously, the statements for $\gamma$ and $\gamma \alpha$ now also follow. \end{proof}

%\def\cprime{$'$}
%\providecommand{\bysame}{\leavevmode\hbox to3em{\hrulefill}\thinspace}
%\bibliography{Bibfile}
%\bibliographystyle{amsplain}

\def\cprime{$'$} \def\cprime{$'$}
\providecommand{\bysame}{\leavevmode\hbox to3em{\hrulefill}\thinspace}
\providecommand{\MR}{\relax\ifhmode\unskip\space\fi MR }
% \MRhref is called by the amsart/book/proc definition of \MR.
\providecommand{\MRhref}[2]{%
  \href{http://www.ams.org/mathscinet-getitem?mr=#1}{#2}
}
\providecommand{\href}[2]{#2}

\end{document}